\def\labelenumi{(\theenumi)}
\newtheorem{theorem}{Theorem}[section]
\newtheorem*{theorem*}{Theorem}
\newtheorem{proposition}[theorem]{Proposition}
\newtheorem{corollary}[theorem]{Corollary}
\newtheorem*{corollary*}{Corollary}
\newtheorem{lemma}[theorem]{Lemma}
\newtheorem{example}[theorem]{Example}
\newtheorem{remark}[theorem]{Remark}
\newcommand{\BZ}{\mathbb{Z}}
\newcommand{\BR}{\mathbb{R}}
\newcommand{\BC}{\mathbb{C}}
\newcommand{\BH}{\mathbb{H}}
\newcommand{\BO}{\mathbb{O}}
\newcommand{\BF}{\mathbb{F}}
\newcommand{\bk}{\mathbf{k}}
\newcommand{\bl}{\mathbf{l}}
\newcommand{\bm}{\mathbf{m}}
\newcommand{\bs}{\mathbf{s}}
\newcommand{\cB}{\mathcal{B}}
\newcommand{\cF}{\mathcal{F}}
\newcommand{\cH}{\mathcal{H}}
\newcommand{\cL}{\mathcal{L}}
\newcommand{\cO}{\mathcal{O}}
\newcommand{\cP}{\mathcal{P}}
\newcommand{\fg}{\mathfrak{g}}
\newcommand{\fh}{\mathfrak{h}}
\newcommand{\fk}{\mathfrak{k}}
\newcommand{\fl}{\mathfrak{l}}
\newcommand{\fn}{\mathfrak{n}}
\newcommand{\fp}{\mathfrak{p}}
\newcommand{\rj}{\mathrm{j}}
\newcommand{\rK}{\mathrm{K}}
\newcommand{\tr}{\operatorname{tr}}
\newcommand{\Tr}{\operatorname{Tr}}
\newcommand{\Det}{\operatorname{Det}}
\newcommand{\eqspace}{\mathrel{\phantom{=}}}
\newcommand{\ds}{\displaystyle}
\newcommand{\itinv}{{\mathit{-1}}}
\renewcommand{\Re}{\operatorname{Re}}
\renewcommand{\det}{\operatorname{det}}
\newcommand{\Proj}{\operatorname{Proj}}
\newcommand{\Sym}{\operatorname{Sym}}
\newcommand{\Herm}{\operatorname{Herm}}
\newcommand{\Alt}{\operatorname{Alt}}
\newcommand{\Sol}{\operatorname{Sol}}
\newcommand{\Hom}{\operatorname{Hom}}
\newcommand{\End}{\operatorname{End}}
\newcommand{\rank}{\operatorname{rank}}
\newcommand{\hsum}{\sideset{}{^{\smash{\oplus}}}{\sum}}
\newcommand{\hotimes}{\mathbin{\hat{\otimes}}}
\title{Computation of weighted Bergman inner products on bounded symmetric domains and restriction to subgroups II}
\author{Ryosuke Nakahama\thanks{This work was supported by JST CREST Grant Number JPMJCR2113, Japan.}
\thanks{Email: ryosuke.nakahama@ntt.com} \\
\textit{NTT Institute for Fundamental Mathematics,} \\ \textit{Communication Science Laboratories, NTT, Inc.} \\
\textit{3-9-11 Midori-cho, Musashino-shi, Tokyo 180-8585, Japan}}
\date{\today}
\begin{document}

\maketitle

\begin{abstract}
Let $(G,G_1)$ be a symmetric pair of holomorphic type, and we consider a pair of Hermitian symmetric spaces $D_1=G_1/K_1\subset D=G/K$, 
realized as bounded symmetric domains in complex vector spaces $\fp^+_1\subset\fp^+$ respectively. 
Then the universal covering group $\widetilde{G}$ of $G$ acts unitarily on the weighted Bergman space $\cH_\lambda(D)\subset\cO(D)$ on $D$. 
Its restriction to the subgroup $\widetilde{G}_1$ decomposes discretely and multiplicity-freely, 
and its branching law is given explicitly by Hua--Kostant--Schmid--Kobayashi's formula 
in terms of the $K_1$-decomposition of the space $\cP(\fp^+_2)$ 
of polynomials on the orthogonal complement $\fp^+_2$ of $\fp^+_1$ in $\fp^+$. 
The object of this article is to construct explicitly $\widetilde{G}_1$-intertwining operators (symmetry breaking operators) 
$\cH_\lambda(D)|_{\widetilde{G}_1}\to\cH_{\varepsilon_1\lambda}(D_1,\cP_\bk(\fp^+_2))$ from holomorphic discrete series representations of $\widetilde{G}$ 
to those of $\widetilde{G}_1$, which are unique up to constant multiple for sufficiently large $\lambda$. 
These operators are given by differential operators whose symbols are computed as the inner products of polynomials on $\fp^+_2$. 
In this article, we treat the case $\fp^+,\fp^+_2$ are both simple of tube type and $\rank\fp^+=\rank\fp^+_2$. 
When $\rank\fp^+=3$, we treat all partitions $\bk$, and when $\rank\fp^+$ is general, we treat partitions of the form $\bk=(k,\ldots,k,k-l)$. 
This article is a continuation of the author's previous article \cite{N2}. 
\bigskip

\noindent \textbf{Keywords}: weighted Bergman spaces; holomorphic discrete series representations; 
branching laws; symmetry breaking operators. 
\\ \textbf{2020 Mathematics Subject Classification}: 22E45; 43A85; 17C30; 33C67. 
\end{abstract}

\tableofcontents

\section{Introduction}

The purpose of this article is to compute explicitly the weighted Bergman inner product of polynomials on some subspace 
of a bounded symmetric domain, and study the decomposition of the restriction of a holomorphic discrete series representation to some subgroup in detail. 
This article is a direct continuation of the author's previous article \cite{N2}, and also related to the works \cite{N1, N3}. 
In this article, we construct symmetry breaking operators for pairs of representations that were missing in \cite{N2}. 

We consider a Hermitian symmetric space $D\simeq G/K$, realized as a bounded symmetric domain $D\subset\fp^+$ in a complex vector space $\fp^+$ centered at the origin. 
We assume that $G$ is connected, and $K$ is the isotropy subgroup at $0\in\fp^+$. 
Let $\widetilde{G},\widetilde{K}$ denote the universal covering groups of $G,K$, let $(\tau,V)$ be a finite-dimensional representation of $\widetilde{K}$, 
and we consider the homogeneous vector bundle $\widetilde{G}\times_{\widetilde{K}} V\to \widetilde{G}/\widetilde{K}\simeq D$. 
Then we can trivialize this bundle, and the space of holomorphic sections is identified with the space $\cO(D,V)=\cO_\tau(D,V)$ of $V$-valued holomorphic functions on $D$, 
on which $\widetilde{G}$ acts by 
\[ (\hat{\tau}(g)f)(x):=\tau(\kappa(g^{-1},x))^{-1}f(g^{-1}x) \qquad (g\in\widetilde{G},\; x\in D,\; f\in\cO(D,V)), \]
by using a function $\kappa\colon \widetilde{G}\times D\to\widetilde{K}^\BC$ satisfying the cocycle condition. 
According to the choice of the representation $(\tau,V)$ of $\widetilde{K}$, there may or may not exist a Hilbert subspace $\cH_\tau(D,V)\subset\cO_\tau(D,V)$ 
on which $\widetilde{G}$ acts unitarily. The classification of $(\tau,V)$ such that $\cH_\tau(D,V)$ exists is given by \cite{EHW, J}. 
Especially, if the $\widetilde{G}$-invariant inner product of $\cH_\tau(D,V)$ is given by an integral on $D$, 
then this is called a \textit{weighted Bergman inner product} and $(\hat{\tau},\cH_\tau(D,V))$ is called a \textit{holomorphic discrete series representation}. 
When $(\tau,V)$ is of the form $(\tau,V)=(\chi^{-\lambda}\otimes\tau_0,V)$ with a suitably normalized character $\chi$ of $\widetilde{K}$ and a fixed representation $(\tau_0,V)$ of $K$, 
we write $\cH_\tau(D,V)=:\cH_\lambda(D,V)\subset\cO_\tau(D,V)=:\cO_\lambda(D,V)$. In addition, if $(\tau,V)=(\chi^{-\lambda},\BC)$, then we write 
$\cH_\tau(D,V)=:\cH_\lambda(D)\subset\cO_\tau(D,V)=:\cO_\lambda(D)$. 

Let us give a concrete example. We consider 
\begin{align*}
&G=SO^*(2s):=\biggl\{ g\in GL(2s,\BC) \biggm| 
{}^t\hspace{-1pt}g \begin{pmatrix} 0&I\\ I&0 \end{pmatrix}g=\begin{pmatrix} 0&I\\ I&0 \end{pmatrix}, \;
g\begin{pmatrix} 0&I\\ -I&0 \end{pmatrix}=\begin{pmatrix} 0&I\\ -I&0 \end{pmatrix}\overline{g} \biggr\}. 
\end{align*}
Then $G$ acts transitively on 
\[ D:=\{x\in \Alt(s,\BC)\mid I-xx^*\text{ is positive definite}\}\subset\fp^+:=\Alt(s,\BC) \]
by the linear fractional transform 
\[ \begin{pmatrix}a&b\\c&d\end{pmatrix}.x:=(ax+b)(cx+d)^{-1}. \]
Next let $\lambda\in\BC$, and let $(\tau_0,V)$ be an irreducible representation of $K^\BC=GL(s,\BC)$. Then the universal covering group $\widetilde{G}$ acts on $\cO(D,V)$ by 
\[ \biggl(\tau_{\lambda}\biggl(\begin{pmatrix}a&b\\c&d\end{pmatrix}^{-1}\biggr)f\biggr)(x)
:=\det(cx+d)^{-\lambda/2}\tau_0\bigl({}^t\hspace{-1pt}(cx+d)\bigr)f\bigl((ax+b)(cx+d)^{-1}\bigr). \]
We note that $\det(cx+d)^{-\lambda/2}$ is not well-defined on $G\times D$ unless $\lambda\in 2\BZ$, 
but is well-defined on the universal covering space $\widetilde{G}\times D$. 
If $\lambda\in\BR$ is sufficiently large, then $\tau_\lambda$ preserves the weighted Bergman inner product 
\[ \langle f,g\rangle_{\lambda}:=C_\lambda\int_D (f(x),g(x))_{\tau_0}\det(I-xx^*)^{(\lambda-2(s-1))/2}\,dx, \]
and the corresponding Hilbert space $\cH_\lambda(D,V)\subset\cO(D,V)$ gives a holomorphic discrete series representation, 
with the lowest $\widetilde{K}$-type $\chi^{-\lambda}\otimes V={\det}^{-\lambda/2}\otimes V$. 

Next we consider an involution $\sigma$ on $G$, and let $G_1:=(G^\sigma)_0$ be the identity component of the group of fixed points by $\sigma$. 
Without loss of generality, we may assume $\sigma$ stabilizes $K$, and let $K_1:=G_1\cap K$. 
Then $D_1:=G_1.0\simeq G_1/K_1$ is either a complex submanifold or a totally real submanifold of $D\simeq G/K$. 
The pair $(G,G_1)$ is called a symmetric pair of \textit{holomorphic type} for the former case, and of \textit{anti-holomorphic type} for the latter case (see \cite[Section 3.4]{Kmf1}).  
In the following, we assume $(G,G_1)$ is of holomorphic type. Then $\sigma$ induces a holomorphic action on $\fp^+\simeq T_0(G/K)$, 
and let $(\fp^+)^\sigma=:\fp^+_1$, $(\fp^+)^{-\sigma}=:\fp^+_2$, so that $\fp^+=\fp^+_1\oplus\fp^+_2$ holds. 
Now we consider the restriction of a holomorphic discrete series representation $\cH_\tau(D,V)$ of $\widetilde{G}$ to the subgroup $\widetilde{G}_1$. 
Then the restriction $\cH_\tau(D,V)|_{\widetilde{G}_1}$ decomposes into a Hilbert direct sum of irreducible representations of $\widetilde{G}_1$, 
and this decomposition is given in terms of the decomposition of $\cP(\fp^+_2)\otimes (V|_{\widetilde{K}_1})$ under $\widetilde{K}_1$, 
where $\cP(\fp^+_2)$ denotes the space of polynomials on $\fp^+_2$. That is, if $\cP(\fp^+_2)\otimes (V|_{\widetilde{K}_1})$ is decomposed under $\widetilde{K}_1$ as 
\[ \cP(\fp^+_2)\otimes (V|_{\widetilde{K}_1})\simeq\bigoplus_{j} m(\tau,\rho_j)(\rho_j,W_j) \qquad (m(\tau,\rho_j)\in\BZ_{\ge 0}), \]
then $\cH_\tau(D,V)|_{\widetilde{G}_1}$ is decomposed abstractly into the Hilbert direct sum 
\[ \cH_\tau(D,V)|_{\widetilde{G}_1}\simeq\hsum_j m(\tau,\rho_j)\cH_{\rho_j}(D_1,W_j) \]
(see Kobayashi \cite[Lemma 8.8]{Kmf1}, \cite[Section 8]{Kmf0-1}. For earlier results, see also \cite{JV}, \cite{Ma}). 
We note that if the unitary subrepresentation $\cH_\tau(D,V)$ is not a holomorphic discrete series representation, 
then its $\widetilde{K}$-finite part $\cH_\tau(D,V)_{\widetilde{K}}$ may be strictly smaller than $\cO_\tau(D,V)_{\widetilde{K}}=\cP(\fp^+)\otimes V$, 
and the above decomposition may not hold in general. 

One of main problems for restriction of representations is to determine the $\widetilde{G}_1$-\hspace{0pt}intertwining operators 
\begin{alignat*}{4}
&\cF_{\tau\rho_j}^\downarrow\colon &\cH_\tau(D,V)|_{\widetilde{G}_1}&\longrightarrow \cH_{\rho_j}(D_1,W_j) \quad &&\text{or} \quad
& \cO_\tau(D,V)|_{\widetilde{G}_1}&\longrightarrow \cO_{\rho_j}(D_1,W_j), \\
&\cF_{\tau\rho_j}^\uparrow\colon &\cH_{\rho_j}(D_1,W_j)&\longrightarrow \cH_\rho(D,V)|_{\widetilde{G}_1} \quad &&\text{or} \quad 
& \cO_{\rho_j}(D_1,W_j)&\longrightarrow \cO_\rho(D,V)|_{\widetilde{G}_1}. 
\end{alignat*}
Such $\cF_{\tau\rho_j}^\downarrow$ is called a symmetry breaking operator (SBO) and $\cF_{\tau\rho_j}^\uparrow$ is called a holographic operator, 
according to the terminology introduced in \cite{KP1, KP2} and \cite{KP3} respectively.  
Such a problem is proposed by Kobayashi from the viewpoint of the representation theory (see \cite{K1}), 
and studied from various viewpoints for holomorphic discrete series, principal series, and complementary series representations. 
For example, differential SBOs for tensor products of holomorphic discrete series representations of $SL(2,\BR)$ are Rankin--Cohen brackets. 
The study of such operators was initiated by \cite{C, R} from the viewpoint of automorphic forms, and their generalizations for larger groups are studied by e.g., \cite{BCK, Cl, OR, P, PZ, vDP}. 
Similarly, the study of differential SBOs for spherical principal series representations of $(G,G_1)=(SO_0(1,n+1),SO_0(1,n))$ was initiated by Juhl \cite{Ju} 
from the viewpoint of conformal geometry on the sphere $S^n$, and is generalized to the indefinite case \cite{KOSS}. 
Also, differential SBOs for symmetric pairs $(G,G_1)$ of higher split rank are studied by e.g., Ibukiyama--Kuzumaki--Ochiai \cite{IKO} and the author \cite{N1, N2, N3}. 
In addition, the complete classification of general (possibly non-differential) SBOs for spherical principal series representations of $(G,G_1)=(O(1,n+1),O(1,n))$ is given by Kobayashi--Speh \cite{KS1}, 
and general SBOs for various symmetric pairs are studied by e.g., \cite{FO, FW1, FW2, KL, KS2, MO}. 
On the other hand, the study of holographic operators is recently initiated by Kobayashi--Pevzner \cite{KP3} and is studied by, e.g., \cite{La, N1}. 

Especially, it is proved by \cite{Kfmeth, KP1} that in the holomorphic setting, symmetry breaking operators $\cF_{\tau\rho_j}^\downarrow\colon\cO_\tau(D,V)|_{\widetilde{G}_1}\to\cO_{\rho_j}(D_1,W_j)$ 
are always given by differential operators, that is, $\cF_{\tau\rho_j}^\downarrow$ are of the form 
\begin{align*}
\cF_{\tau\rho_j}^\downarrow\colon \cO_\tau(D,V)|_{\widetilde{G}_1}&\longrightarrow\cO_{\rho_j}(D_1,W_j), \\
f(x)=f(x_1,x_2)&\longmapsto F_{\tau\rho_j}^\downarrow\biggl(\frac{\partial}{\partial x}\biggr)f(x)\biggr|_{x_2=0}
\end{align*}
for some operator-valued polynomials $F_{\tau\rho_j}^\downarrow(z)\in\cP(\fp^-)\otimes\Hom_\BC(V,W_j)$ on the dual space $\fp^-$ of $\fp^+$ (localness theorem), 
and $F_{\tau\rho_j}^\downarrow(z)$ are characterized as polynomial solutions of certain systems of differential equations (F-method). 
This F-method gives a new insight on the study of differential symmetry breaking operators, which unifies Rankin--Cohen brackets and Juhl's operators, and gives new results \cite{KKP, KOSS, KP2}. 
Also, in \cite{N1}, the author proved that when $\cH_\tau(D,V)\subset\cO_\tau(D,V)$ is a holomorphic discrete series representation, $F_{\tau\rho_j}^\downarrow(z)$ is given by the inner product 
\[ F_{\tau\rho_j}^\downarrow(z)=\Bigl\langle e^{(x|z)_{\fp^+}},\rK(x_2)\Bigr\rangle_{\cH_{\tau}(D,V),x}, \quad 
\rK(x_2)\in (\cP(\fp^+_2)\otimes\Hom_\BC(\overline{V},\overline{W_j}))^{\widetilde{K}_1} \]
up to constant multiple. Here $(\cdot|\cdot)_{\fp^+}$ is a suitable non-degenerate pairing on $\fp^+\times\fp^-$. 
Especially, if $\cH_\tau(D,V)=\cH_\lambda(D)$ is a unitary highest weight representation of scalar type, 
then its restriction $\cH_\lambda(D)|_{\widetilde{G}_1}$ decomposes multiplicity-freely by \cite[Theorem A]{Kmf1}, 
and symmetry breaking operators from $\cH_\lambda(D)|_{\widetilde{G}_1}$ are unique up to constant multiple. 
Moreover, if $\cH_\lambda(D)$ is a holomorphic discrete series representation, then symmetry breaking operators from $\cO_\lambda(D)|_{\widetilde{G}_1}$ are also unique, 
since $\cH_\lambda(D)\subset\cO_\lambda(D)$ is dense. 
On the other hand, if $\cO_\lambda(D)$ does not contain a holomorphic discrete series representation, 
we do not know a priori whether symmetry breaking operators from $\cO_\lambda(D)|_{\widetilde{G}_1}$ are unique or not. 
Indeed, for tensor product case, the dimension of the space of symmetry breaking operators from $\cO_\lambda(D)\hotimes\cO_\mu(D)$ may become greater than 1 for some $(\lambda,\mu)\in\BC$ 
(see \cite[Section 9]{KP2}, \cite[Section 8]{N3}). 

In the following, we assume $G$ is simple, and consider the branching of the holomorphic discrete series representations of scalar type $\cH_\tau(D,V)\equiv\cH_\lambda(D)$. 
We recall that $G_1=(G^\sigma)_0$, and set $G_2:=(G^{\sigma\vartheta})_0$, where $\vartheta$ is the Cartan involution of $G$ corresponding to $K$. 
We additionally assume that both $G$ and the non-compact part of $G_2$ are of tube type, or equivalently, both $\fp^+$ and $\fp^+_2$ have unital Jordan algebra structures. 
Then one of the following holds. 
\begin{enumerate}
\setlength{\parskip}{2pt}
\setlength{\itemsep}{2pt}
\item $\fp^+_2$ is a direct sum of two simple Jordan algebras, and $\rank\fp^+_2=\rank\fp^+$. 
\item $\fp^+_2$ is simple, and $\rank\fp^+_2=\rank\fp^+/2$. 
\item $\fp^+_2$ is simple, and $\rank\fp^+_2=\rank\fp^+$. 
\end{enumerate}
Case (1) corresponds to the triples $(G,G_1,G_2)$ one of 
\[ \begin{array}{ccccl}
(&SO_0(2,d+2), &SO_0(2,d)\times SO(2), &SO_0(2,2)\times SO(d)&), \\ (&Sp(r,\BR), &U(r',r''), &Sp(r',\BR)\times Sp(r'',\BR)&), \\
(&U(r,r), &U(r',r'')\times U(r'',r'), &U(r',r')\times U(r'',r'')&), \\ (&SO^*(4r), &U(2r',2r''), &SO^*(4r')\times SO^*(4r'')&), \\ 
(&E_{7(-25)}, &U(1)\times E_{6(-14)}, &SL(2,\BR)\times Spin_0(2,10)&) 
\end{array} \]
($r=r'+r''$). Let $\fp^+_2=\fp^+_{11}\oplus\fp^+_{22}$. Then the decomposition of $\cH_\lambda(D)|_{\widetilde{G}_1}$ is given by 
\[ \cH_\lambda(D)|_{\widetilde{G}_1}\simeq\hsum_{(\bk,\bl)\in\BZ_{++}^{r'}\times\BZ_{++}^{r''}}\cH_{\varepsilon_1\lambda}(D_1,\cP_\bk(\fp^+_{11})\boxtimes\cP_\bl(\fp^+_{22})), \]
where $\BZ_{++}^r:=\{\bk=(k_1,\ldots,k_r)\in\BZ^r\mid k_1\ge\cdots\ge k_r\ge 0\}$, and $\varepsilon_1\in\{1,2\}$. 
If $\bk=(k,\ldots,k)$ and $\bl=(l,\ldots,l)$, then $\cH_{\varepsilon_1\lambda}(D_1,\cP_{(k,\ldots,k)}(\fp^+_{11})\boxtimes\cP_{(l,\ldots,l)}(\fp^+_{22}))$ is of scalar type. 
In \cite{N1, N2}, the author constructed the intertwining operators 
\begin{align*}
&\cF^\uparrow_{\lambda,\bk,\bl}\colon\cH_{\varepsilon_1\lambda}(D_1,\cP_\bk(\fp^+_{11})\boxtimes\cP_\bl(\fp^+_{22}))_{\widetilde{K}_1}\longrightarrow
\cH_\lambda(D)_{\widetilde{K}}|_{\fg_1}, \\ 
&\cF^\downarrow_{\lambda,\bk,\bl}\colon\cH_\lambda(D)|_{\widetilde{G}_1}\longrightarrow\cH_{\varepsilon_1\lambda}(D_1,\cP_\bk(\fp^+_{11})\boxtimes\cP_\bl(\fp^+_{22})), 
\end{align*}
when $\bk$ is of the form $\bk=(k,\ldots,k)$ $(k\in\BZ_{\ge 0})$ and $\bl\in\BZ_{++}^{r''}$ is arbitrary. 
Next, Case (2) corresponds to the triples $(G,G_1,G_2)$ one of 
\[ \begin{array}{ccccl}
(&SO_0(2,n), &SO_0(2,n-1), &SO_0(2,1)\times SO(n-1)&), \\ (&Sp(2r_2,\BR), &Sp(r_2,\BR)\times Sp(r_2,\BR), &U(r_2,r_2)&), \\
(&SU(2r_2,2r_2), &Sp(2r_2,\BR), &SO^*(4r_2)&). 
\end{array} \]
Then the decomposition of $\cH_\lambda(D)|_{\widetilde{G}_1}$ is given by 
\[ \cH_\lambda(D)|_{\widetilde{G}_1}\simeq\hsum_{\bk\in\BZ_{++}^{r_2}}\cH_{\varepsilon_1\lambda}(D_1,\cP_\bk(\fp^+_2)). \]
If $\bk=(k,\ldots,k)$, then $\cH_{\varepsilon_1\lambda}(D_1,\cP_{(k,\ldots,k)}(\fp^+_2))\simeq\cH_{\varepsilon_1(\lambda+k)}(D_1)$ is of scalar type. 
In \cite{N1, N2}, the author constructed the intertwining operators $\cF^\uparrow_{\lambda,\bk}$, $\cF^\downarrow_{\lambda,\bk}$ when $\bk$ is of the form $\bk=(k+1,\ldots,k+1,k,\ldots,k)$, 
so that $\cH_{\varepsilon_1\lambda}(D_1,\cP_\bk(\fp^+_2))$ is multiplicity-free under the maximal compact subgroup $\widetilde{K}_1\subset \widetilde{G}_1$. 
Next, Case (3) corresponds to the triples $(G,G_1,G_2)$ one of 
\[ \begin{array}{ccccl}
(&SO_0(2,n), &SO_0(2,n')\times SO(n''), &SO_0(2,n'')\times SO(n')&), \\ (&SU(r,r), &SO^*(2r), &Sp(r,\BR)&), \\ (&SO^*(4r), &SO^*(2r)\times SO^*(2r), &U(r,r)&), \\
(&E_{7(-25)}, &SU(2,6), &SU(2)\times SO^*(12)&) 
\end{array} \]
($n''\ge 3$). Then the decomposition of $\cH_\lambda(D)|_{\widetilde{G}_1}$ is given by 
\[ \cH_\lambda(D)|_{\widetilde{G}_1}\simeq\hsum_{\bk\in\BZ_{++}^r}\cH_{\varepsilon_1\lambda}(D_1,\cP_\bk(\fp^+_2)). \]
If $\bk=(k,\ldots,k)$, then $\cH_{\varepsilon_1\lambda}(D_1,\cP_{(k,\ldots,k)}(\fp^+_2))\simeq\cH_{\varepsilon_1(\lambda+2k)}(D_1)$ is of scalar type. 
In \cite{N1}, the author constructed the holographic operators $\cF^\uparrow_{\lambda,\bk}$ when $\bk$ is of the form one of 
{\def\labelenumi{(3-\alph{enumi})}
\begin{enumerate}
\setlength{\parskip}{2pt}
\setlength{\itemsep}{2pt}
\item $\bk=(k+l,k,\ldots,k)$ \; $(k,l\in\BZ_{\ge 0})$, 
\item $\bk=(k,\ldots,k,k-l)$ \; $(k,l\in\BZ_{\ge 0},\; k\ge l)$, 
\item $\bk=(k_1,k_2,k_3)\in\BZ_{++}^3$ \; (when $r=\rank\fp^+=3$), 
\end{enumerate}}
\noindent so that $\cH_{\varepsilon_1\lambda}(D_1,\cP_\bk(\fp^+_2))$ is multiplicity-free under $\widetilde{K}_1\subset \widetilde{G}_1$ for classical $G$, 
and in \cite{N2}, the author constructed the symmetry breaking operators $\cF^\downarrow_{\lambda,\bk}$ when $\bk$ is of the form (3-a). 
In any case, if $\bk=(k,\ldots,k)$, $\bl=(l,\ldots,l)$, then $\cF^\uparrow_{\lambda,\bk,\bl}$, $\cF^\uparrow_{\lambda,\bk}$ are given by infinite-order differential operators 
whose symbols are given by multivariate ${}_0F_1$ functions, and $\cF^\downarrow_{\lambda,\bk,\bl}$, $\cF^\downarrow_{\lambda,\bk}$ are given by finite-order differential operators 
whose symbols are given by multivariate ${}_2F_1$ polynomials (or equivalently, Heckman--Opdam's hypergeometric polynomials of type $BC$, see \cite{BO}). 
In this article, we consider $(G,G_1,G_2)$ of Case (3), and we construct the symmetry breaking operators $\cF^\downarrow_{\lambda,\bk}$ when $\bk$ is of the form (3-b) and (3-c). 
The result for (3-c) is conjectured in \cite[Section 7]{N3}, and this is verified in this paper. 

In addition, in \cite{N3}, the author determined the operator norms of $\cF^\downarrow_{\lambda,\bk,\bl}$, $\cF^\downarrow_{\lambda,\bk}$ for all $(\bk,\bl)$, $\bk$ 
under a suitable normalization, and determined the Parseval--Plancherel type formula. 
Also, in the same paper, the author found polynomials $b_{\bk,\bl}(\lambda),b_\bk(\lambda)\in\BC[\lambda]$ such that 
\begin{align*}
b_{\bk,\bl}(\lambda)\Bigl\langle f(x_2),e^{(x|\overline{z})_{\fp^+}}\Bigr\rangle_{\lambda,x}& &&(f(x_2)\in\cP_\bk(\fp^+_{11})\boxtimes\cP_\bl(\fp^+_{22})), \\
b_\bk(\lambda)\Bigl\langle f(x_2),e^{(x|\overline{z})_{\fp^+}}\Bigr\rangle_{\lambda,x}& &&(f(x_2)\in\cP_\bk(\fp^+_2))
\end{align*}
are holomorphically continued for all $\lambda\in\BC$ for all $(\bk,\bl)$, $\bk$. 
This result gives some information on the branching of $\cO_\lambda(D)_{\widetilde{K}}|_{\fg_1}$ for $\lambda$ such that $\cO_\lambda(D)_{\widetilde{K}}$ is reducible. 
By \cite{N2}, when $f(x_2)\ne 0$, the above give non-zero polynomials for all $\lambda\in\BC$ if, for example, 
$(G,G_1,G_2)$ is of Case (1) and $\bk=(k,\ldots,k)$, $\bl\in\BZ_{++}^{\min\{r',r''\}}\subset\BZ_{++}^{r''}$, 
$(G,G_1,G_2)$ is of Case (2) and $\bk=(k+1,\ldots,k+1,k,\ldots,k)$, 
or $(G,G_1,G_2)$ is of Case (3) with $r$ even and $\bk=(k+l,k,\ldots,k)$ ((3-a)). 
However, for many other cases, the non-vanishing of the above formula is not proved. 
In this paper, we prove the non-vanishing for $(G,G_1,G_2)$ of Case (3) (with $r$ both even and odd) and $\bk$ is of the form either (3-a), (3-b) or (3-c). 

This paper is organized as follows. In Section \ref{section_prelim}, we review Jordan triple systems, Jordan algebras and holomorphic discrete series representations. 
In Section \ref{section_rank3}, we consider the case $\rank\fp^+=3$ and general $\bk\in\BZ_{++}^3$, i.e., consider (3-c) case. 
In Section \ref{section_general}, we consider $\fp^+$ of general rank, and $\bk$ of the form (3-b). 
For the convenience of the readers, we also rewrite the results for $\bk$ of the form (3-a) given in \cite{N2}.

\section{Preliminaries}\label{section_prelim}

In this section, we review Jordan triple systems, Jordan algebras and holomorphic discrete series representations. 
We use almost the same notations as in \cite{N2, N3}, and readers who read \cite{N2, N3} can skip this section, except for Section \ref{subsection_proj}.  
For detail see, e.g., \cite[Parts III, V]{FKKLR}, \cite{FK}, \cite{L0}, \cite{L}, \cite{Sat}. 

\subsection{Hermitian positive Jordan triple systems}\label{subsection_HPJTS}

We consider a Hermitian positive Jordan triple system $(\fp^+,\fp^-,\{\cdot,\cdot,\cdot\},\overline{\cdot})$, 
where $\fp^\pm$ are finite-dimensional vector spaces over $\BC$, with a non-degenerate bilinear form $(\cdot|\cdot)_{\fp^\pm}\colon \fp^\pm\times\fp^\mp\to\BC$, 
$\{\cdot,\cdot,\cdot\}\colon\fp^\pm\times\fp^\mp\times\fp^\pm\to\fp^\pm$ is a $\BC$-trilinear map satisfying 
\begin{align*}
\{x,y,z\}&=\{z,y,x\}, \\
\{u,v,\{x,y,z\}\}&=\{\{u,v,x\},y,z\}-\{x,\{v,u,y\},z\}+\{x,y,\{u,v,z\}\}, \\
(\{u,v,x\}|y)_{\fp^\pm}&=(x|\{v,u,y\})_{\fp^\pm} 
\end{align*}
for any $u,x,z\in\fp^\pm$, $v,y\in\fp^\mp$, and $\overline{\cdot}\colon\fp^\pm\to\fp^\mp$ is a $\BC$-antilinear involutive isomorphism 
satisfying $(x|\overline{x})_{\fp^\pm}\ge 0$ for any $x\in\fp^\pm$. 
Let $D,B\colon\fp^\pm\times\fp^\mp\to\End_\BC(\fp^\pm)$, $Q\colon\fp^\pm\times\fp^\pm\to\Hom_\BC(\fp^\mp,\fp^\pm)$, $Q\colon\fp^\pm\to\Hom_\BC(\fp^\mp,\fp^\pm)$ be the maps given by 
\begin{align*}
D(x,y)z=Q(x,z)y&:=\{x,y,z\}, \\
Q(x)&:=\frac{1}{2}Q(x,x), \\
B(x,y)&:=I_{\fp^\pm}-D(x,y)+Q(x)Q(y)
\end{align*}
for $x,z\in\fp^\pm$, $y\in\fp^\mp$, let $h=h_{\fp^\pm}\colon\fp^\pm\times\fp^\mp\to\BC$ be the \textit{generic norm}, which is a polynomial on $\fp^+\times\fp^-$ 
irreducible on each simple Jordan triple subsystem, and write $B(x):=B(x,\overline{x})$, $h(x):=h(x,\overline{x})$. 
If $\fp^+$ is simple, then $B(x,y)$ and $h(x,y)$ are related as 
\begin{equation}\label{formula_hB}
h(x,y)^p=\Det_{\fp^+}B(x,y) \qquad (x\in\fp^+,\; y\in\fp^-) 
\end{equation}
for some $p\in\BZ_{>0}$. 

If $e\in\fp^+$ is a tripotent, i.e., $\{e,\overline{e},e\}=2e$, then $D(e,\overline{e})\in\End_\BC(\fp^+)$ has the eigenvalues $0,1,2$. For $j=0,1,2$, let 
\begin{align}
\fp^+(e)_j=\fp^+(\overline{e})_j:=\{x\in\fp^+\mid D(e,\overline{e})x=jx\}\subset\fp^+, \label{Peirce}\\
\fp^-(\overline{e})_j=\fp^-(e)_j:=\{x\in\fp^-\mid D(\overline{e},e)x=jx\}\subset\fp^-, \notag
\end{align}
so that $\fp^\pm=\fp^\pm(e)_2\oplus\fp^\pm(e)_1\oplus\fp^\pm(e)_0$ holds (\textit{Peirce decomposition}). 
A non-zero tripotent $e\in\fp^+$ is called \textit{primitive} if $\fp^+(e)_2=\BC e$, and \textit{maximal} if $\fp^+(e)_0=\{0\}$. 
If $\fp^+(e)_2=\fp^+$ holds for some (or equivalently any) maximal tripotent $e\in\fp^+$, then we say that $\fp^+$ is of \textit{tube type}. 
Throughout the paper we assume that the bilinear form $(\cdot|\cdot)_{\fp^\pm}\colon\fp^\pm\times\fp^\mp\to\BC$ is normalized such that 
$(e|\overline{e})_{\fp^+}=(\overline{e}|e)_{\fp^-}=1$ holds for any primitive tripotent $e\in\fp^+$. 
If $\fp^+$ is simple, then under this normalization, $(x|y)_{\fp^+}$ and $D(x,y)$ are related as 
\begin{equation}\label{formula_pairingD}
p(x|y)_{\fp^+}=\Tr_{\fp^+}D(x,y) \qquad (x\in\fp^+,\; y\in\fp^-)
\end{equation}
with the same $p\in\BZ_{>0}$ as in (\ref{formula_hB}).

\subsection{Jordan algebras}

Next we consider a complex Jordan algebra $\fn^{+\BC}$ with the Euclidean real form $\fn^+\subset\fn^{+\BC}$ and the unit element $e\in\fn^+$, that is, the product on $\fn^{+\BC}$ satisfies 
\[ x\circ y=y\circ x, \qquad x^{\mathit{2}}\circ(x\circ y)=x\circ(x^{\mathit{2}}\circ y) \qquad (x,y\in\fn^{+\BC}), \]
where $x^{\mathit{2}}:=x\circ x$, and there exists a symmetric bilinear form $(\cdot|\cdot)_{\fn^+}\colon \fn^{+\BC}\times\fn^{+\BC}\to\BC$, which is positive definite on $\fn^+$, satisfying 
\[ (x\circ y|z)_{\fn^+}=(x|y\circ z)_{\fn^+} \qquad (x,y,z\in\fn^{+\BC}). \]
Let $L\colon\fn^{+\BC}\to\End_{\BC}(\fn^{+\BC})$, $D_{\fn^+},P\colon\fn^{+\BC}\times\fn^{+\BC}\to\End_\BC(\fn^{+\BC})$, $P\colon\fn^{+\BC}\to\End_\BC(\fn^{+\BC})$ be the maps given by 
\begin{align*}
L(x)y&:=x\circ y, \\
D_{\fn^+}(x,y)z=P(x,z)y&:=2(x\circ(y\circ z)+z\circ(y\circ x)-(x\circ z)\circ y), \\
P(x)y&:=2x\circ(x\circ y)-x^{\mathit{2}}\circ y=\frac{1}{2}P(x,x)y, 
\end{align*}
let $\tr_{\fn^+}$, $\det_{\fn^+}$ be the trace form and the determinant polynomial of $\fn^{+\BC}$, and we normalize $(\cdot|\cdot)_{\fn^+}$ by $(x|e)_{\fn^+}=\tr_{\fn^+}(x)$. 
If $\fn^{+\BC}$ is simple of rank $r$, dimension $n$, then we have 
\begin{align*}
(x|y)_{\fn^+}&=\tr_{\fn^+}(x\circ y)=\frac{r}{n}\Tr(L(x\circ y))=\frac{r}{2n}\Tr(D_{\fn^+}(x,y)), \\ \det_{\fn^+}(x)^{2n/r}&=\Det(P(x)). 
\end{align*}
For $x\in\fn^{+\BC}$, if $\det_{\fn^+}(x)\ne 0$, then the inverse $x^\itinv$ and the adjugate element $x^\sharp$ are given by 
\[ x^\itinv:=P(x)^{-1}x, \qquad x^\sharp:=\det_{\fn^+}(x)x^\itinv. \]
Then the directional derivatives of $x^\itinv$ and $\det_{\fn^+}(x)$ are given by 
\begin{align*}
\frac{d}{dt}\biggr|_{t=0}(x+ty)^\itinv&=-P(x)^{-1}y, \quad \frac{d}{dt}\biggr|_{t=0}\det_{\fn^+}(x+ty)=\det_{\fn^+}(x)(x^\itinv|y)_{\fn^+}=(x^\sharp|y)_{\fn^+}. 
\end{align*}
In addition, let 
\begin{align*}
\Omega:\hspace{-3pt}&=(\text{connected component of }\{x\in\fn^+\mid P(x)\text{ is positive definite}\} \text{ which contains }e) \\
&=(\text{connected component of }\{x\in\fn^+\mid \det_{\fn^+}(x)>0\}\text{ which contains }e)
\end{align*}
be the \textit{symmetric cone}. 

For a given Hermitian positive Jordan triple system $\fp^\pm$, if it is of tube type, then for a fixed maximal tripotent $e\in\fp^+$, 
\[ Q(\overline{e})\colon \fp^+\longrightarrow\fp^-, \qquad Q(e)\colon \fp^-\longrightarrow\fp^+ \]
are mutually inverse, and $\fp^+$ becomes a Jordan algebra by the product 
\[ x\circ y:=\frac{1}{2}\{x,\overline{e},y\} \qquad (x,y\in\fp^+), \]
with the unit element $e\in\fp^+$, and the Euclidean real form 
\[ \fn^+=\{x\in\fp^+\mid Q(e)\overline{x}=x\}. \]
Then the corresponding operations and the generic norm are given by 
\begin{gather*}
D_{\fn^+}(x,y)=D(x,Q(\overline{e})y), \quad P(x,y)=Q(x,y)Q(\overline{e}), \quad P(x)=Q(x)Q(\overline{e}), \\ 
(x|y)_{\fn^+}=(x|Q(\overline{e})y)_{\fp^+}=(y|Q(\overline{e})x)_{\fp^+}, \quad h_{\fn^+}(x,y)=h_{\fp^+}(x,Q(\overline{e})y).  
\end{gather*}
Throughout the paper, for a Hermitian positive Jordan triple system $\fp^\pm$ of tube type, when a maximal tripotent $e\in\fp^+$ is fixed, 
we omit $Q(e),Q(\overline{e})$, and identify $\fp^+=\fp^-$ by the identity map, so that $\overline{\cdot}\colon\fp^\pm\to\fp^\mp=\fp^\pm$ is the complex conjugate with respect to the 
Euclidean real form $\fn^+\subset\fp^+$, and the corresponding operations are 
\begin{gather*}
D_{\fn^+}(x,y)=D(x,y), \qquad P(x,y)=Q(x,y), \qquad P(x)=Q(x), \\ (x|y)_{\fn^+}=(x|y)_{\fp^+}, \qquad h_{\fn^+}(x,y)=h_{\fp^+}(x,y). 
\end{gather*}
Conversely, a complex unital Jordan algebra $\fn^{+\BC}$ with the Euclidean real form $\fn^+\subset\fn^{+\BC}$ becomes a Hermitian positive Jordan triple system by 
$\{x,y,z\}=D_{\fn^+}(x,y)z$ and by the complex conjugate $\bar{\cdot}\colon\fn^{+\BC}\to\fn^{+\BC}$ with respect to $\fn^+$.

\subsection{Structure groups and the Kantor--Koecher--Tits construction}\label{subsection_KKT}

In this subsection, we consider some Lie algebras corresponding to Jordan triple systems $\fp^\pm$. 
For $l\in\End_\BC(\fp^+)$, let $\overline{l},{}^t\hspace{-1pt}l\in\End_\BC(\fp^-)$, $l^*\in\End_\BC(\fp^+)$ be the elements given by 
$\overline{l}\overline{x}=\overline{lx}$, $(lx|\overline{y})_{\fp^+}=(x|{}^t\hspace{-1pt}l\overline{y})_{\fp^+}=(x|\overline{l^*y})_{\fp^+}$ for $x,y\in\fp^+$. 
Then the \textit{Structure group} $\operatorname{Str}(\fp^+)$ and the \textit{automorphism group} $\operatorname{Aut}(\fp^+)$ are defined as 
\begin{align*}
\operatorname{Str}(\fp^+)&:=\{l\in GL_\BC(\fp^+)\mid \{lx,{}^t\hspace{-1pt}l^{-1}y,lz\}=l\{x,y,z\} \quad (x,z\in\fp^+,\; y\in\fp^-)\}, \\
\operatorname{Aut}(\fp^+)&:=\{k\in \operatorname{Str}(\fp^+)\mid k^{-1}=k^*\}. 
\end{align*}
Let $\mathfrak{str}(\fp^+)=\fk^\BC$ and $\mathfrak{der}(\fp^+)=\fk$ denote the Lie algebras of $\operatorname{Str}(\fp^+)$ and $\operatorname{Aut}(\fp^+)$ respectively. 
Then $D(\fp^+,\fp^-)=\mathfrak{str}(\fp^+)$ holds. 

Next we recall the \textit{Kantor--Koecher--Tits construction}. As vector spaces let 
\begin{align*}
\fg^\BC&:=\fp^+\oplus\fk^\BC\oplus\fp^-, \\
\fg&:=\{(x,k,\overline{x})\mid x\in\fp^+,\; k\in\fk\}\subset\fg^\BC, 
\end{align*}
and give the Lie algebra structure on $\fg^\BC$ by 
\[ [(x,k,y),(z,l,w)]:=(kz-lx,[k,l]+D(x,w)-D(z,y),-{}^t\hspace{-1pt}kw+{}^t\hspace{-1pt}ly). \]
Then $\fg^\BC$ becomes a Lie algebra and $\fg$ becomes a real form of $\fg^\BC$. 
Let $(\cdot|\cdot)_{\fg^\BC}\colon \fg^\BC\times\fg^\BC\to\BC$ be the $\fg^\BC$-invariant bilinear form normalized such that $(x|y)_{\fg^\BC}=(x|y)_{\fp^+}$ holds for any $x\in\fp^+$, $y\in\fp^-$. 
We fix a connected complex Lie group $G^\BC$ with the Lie algebra $\fg^\BC$, 
and let $G,K^\BC,K,P^+,P^-\subset G^\BC$ be the connected closed subgroups corresponding to the Lie algebras $\fg,\fk^\BC,\fk,\fp^+,\fp^-\subset\fg^\BC$ respectively. 
Then $Ad|_{\fp^+}\colon K^\BC\to\operatorname{Str}(\fp^+)_0$ gives a covering map, and for $l\in K^\BC$, $x\in\fp^+$, we abbreviate $Ad(l)x=:lx$. 

Next suppose $\fp^+$ is of tube type. We fix a tripotent $e\in\fp^+$, regard $\fp^+$ as a Jordan algebra, and let $\fn^+\subset\fp^+$ be its Euclidean real form. Also, let 
\begin{alignat*}{2}
\fn^-&:=\overline{\fn^+}=Q(\overline{e})\fn^+&&\subset\fp^-, \\
\fl&:=D(\fn^+,\fn^-)=[\fn^+,\fn^-]&&\subset\fk^\BC, \\
{}^c\hspace{-1pt}\fg&:=\fn^+\oplus\fl\oplus\fn^-&&\subset\fg^\BC. 
\end{alignat*}
These become real forms of the right hand sides. Let ${}^c\hspace{-1pt}G\subset G^\BC$ be the connected closed subgroup corresponding to the Lie algebra ${}^c\hspace{-1pt}\fg\subset\fg^\BC$, 
and let $L:=K^\BC\cap {}^c\hspace{-1pt}G$, $K_L:=L\cap K$, $\fk_\fl:=\fl\cap\fk$. 
Then $G$ and ${}^c\hspace{-1pt}G$ are isomorphic via the Cayley transform in $G^\BC$, 
$L$ acts transitively on the symmetric cone $\Omega\subset\fn^+$, and $K_L$ acts on $\fn^+$ as Jordan algebra automorphisms. 
Also, for $l\in\fk^\BC\subset\End_\BC(\fp^+)=\End_\BC(\fn^{+\BC})$, we abbreviate $Q(e){}^t\hspace{-1pt}lQ(\overline{e})=:{}^t\hspace{-1pt}l\in\fk^\BC$, 
and extend it to the anti-automorphism on $K^\BC$.

\subsection{Simultaneous Peirce decomposition}\label{subsection_simul_Peirce}

In this subsection, we assume $\fp^+$ is simple, or equivalently, the corresponding Lie algebra $\fg$ is simple. 
We fix a Jordan frame $\{e_1,\ldots,e_r\}\subset\fp^+$, i.e., a maximal set of primitive tripotents in $\fp^+$ satisfying $D(e_i,\overline{e_j})=0$ for $i\ne j$, 
where $r=\rank\fp^+=\rank_\BR\fg$. Using this, let 
\begin{align*}
\fp^+_{ij}&:=\{x\in\fp^+\mid D(e_l,\overline{e_l})x=(\delta_{il}+\delta_{jl})x \quad (l=1,\ldots,r)\} && (1\le i\le j\le r), \\
\fp^-_{ij}&:=\overline{\fp^+_{ij}} && (1\le i\le j\le r), \\
\fp^+_{0j}&:=\{x\in\fp^+\mid D(e_l,\overline{e_l})x=\delta_{jl}x \quad (l=1,\ldots,r)\} && (1\le i\le r), \\
\fp^-_{0j}&:=\overline{\fp^+_{0j}} && (1\le i\le r), 
\end{align*}
so that 
\[ \fp^\pm=\bigoplus_{\substack{0\le i\le j\le r \\ (i,j)\ne(0,0)}}\fp^\pm_{ij} \]
holds. This is called the \textit{simultaneous Peirce decomposition}. Using this decomposition, we define integers $(d,b,p,n)$ by 
\begin{align}
d&:=\dim\fp^+_{ij} \quad (1\le i<j\le r), & b&:=\dim\fp^+_{0j} \quad (1\le j\le r), \notag \\
p&:=2+d(r-1)+b, & n&:=\dim\fp^+=r+\frac{d}{2}r(r-1)+br. \label{str_const}
\end{align}
We note that if $r=1$, then $d$ is not determined uniquely, and any number is allowed. 
Then $p$ coincides with the one in (\ref{formula_hB}), (\ref{formula_pairingD}).

\subsection{Space of polynomials on Jordan triple systems}\label{subsection_polynomials}

In this subsection, we consider the space $\cP(\fp^\pm)$ of polynomials on the Jordan triple system $\fp^\pm$, on which $K^\BC$ acts by 
\begin{align*}
(Ad|_{\fp^+}(l))^\vee f(x)&=f(l^{-1}x) && (l\in K^\BC,\; f\in\cP(\fp^+),\; x\in\fp^+), \\
(Ad|_{\fp^-}(l))^\vee f(y)&=f({}^t\hspace{-1pt}ly) && (l\in K^\BC,\; f\in\cP(\fp^-),\; y\in\fp^-). 
\end{align*}
We assume $\fp^+$ is simple, fix a Jordan frame $\{e_1,\ldots,e_r\}\subset\fp^+$, and consider the tripotents $e^k:=\sum_{j=1}^k e_j$. 
Then the Peirce subspaces $\fp^+(e^k)_2$ corresponding to $e^k$ (see (\ref{Peirce})) are given by 
\[ \fp^+(e^k)_2=\bigoplus_{1\le i\le j\le k}\fp^+_{ij}. \]
These are of tube type, and have Jordan algebra structures. Let $\fn^+(e^k)\subset\fp^+(e^k)_2$ be their Euclidean real forms, 
and we extend the determinant polynomials $\det_{\fn^+(e^k)}$ on $\fp^+(e^k)_2$ to polynomials on $\fp^+$. We also write $e^r=:e$, $\fn^+(e^r)=:\fn^+$. Using these, for 
\[ \bm\in\BZ_{++}^r:=\{\bm=(m_1,\ldots,m_r)\in\BZ^r\mid m_1\ge m_2\ge \cdots\ge m_r\ge 0\}, \]
we define the polynomial $\Delta^{\fn^+}_\bm(x)$ on $\fp^+$ by 
\[ \Delta^{\fn^+}_\bm(x):=\prod_{k=1}^r \det_{\fn^+(e^k)}(x)^{m_k-m_{k+1}}, \]
where we set $m_{r+1}:=0$, and let 
\begin{alignat*}{2}
\cP_\bm(\fp^+)&:=\operatorname{span}_\BC \{\Delta^{\fn^+}_\bm(l^{-1}x)\mid l\in K^\BC\}&&\subset\cP(\fp^+), \\
\cP_\bm(\fp^-)&:=\{\overline{f(\overline{y})}\mid f(x)\in\cP_\bm(\fp^+)\} &&\subset\cP(\fp^-). 
\end{alignat*}
Then the following holds. 

\begin{theorem}[{Hua--Kostant--Schmid, \cite[Part III, Theorem V.2.1]{FKKLR}, \cite[Theorem XI.2.4]{FK}, \cite{H, Jo, Sch}}]\label{thm_HKS}
Under the $K^\BC$-action, $\cP(\fp^\pm)$ is decomposed into the sum of irreducible submodules as 
\[ \cP(\fp^\pm)=\bigoplus_{\bm\in\BZ_{++}^r}\cP_\bm(\fp^\pm). \]
\end{theorem}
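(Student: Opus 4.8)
The plan is to prove the Hua--Kostant--Schmid decomposition $\cP(\fp^\pm)=\bigoplus_{\bm\in\BZ_{++}^r}\cP_\bm(\fp^\pm)$ by combining the classical theory of the cone $\Omega$ with highest weight theory for $K^\BC$. I will work with $\cP(\fp^+)$; the statement for $\cP(\fp^-)$ then follows by applying the antilinear involution $\overline{\cdot}$, which intertwines the two $K^\BC$-actions in the manner recorded in Section \ref{subsection_polynomials}. First I would fix a Cartan subalgebra of $\fk^\BC$ adapted to the Jordan frame $\{e_1,\dots,e_r\}$: the operators $\tfrac12 D(e_j,\overline{e_j})$ span a toral subalgebra, and I would complete it to a Cartan subalgebra $\fh\subset\fk^\BC$. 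Relative to a suitable positive system, I would check that $\Delta^{\fn^+}_\bm$ is a highest weight vector: each factor $\det_{\fn^+(e^k)}$ is, up to scalar, the lowest/highest weight vector of a one-dimensional character of the structure group of $\fp^+(e^k)_2$, and since $e^1\subset e^2\subset\cdots\subset e^r$ the product $\prod_k \det_{\fn^+(e^k)}^{m_k-m_{k+1}}$ is an eigenvector for the full $\fh$ with a dominant integral weight $\chi_\bm$ that depends injectively and linearly (plus the dominance condition $m_1\ge\cdots\ge m_r\ge0$) on $\bm$. Hence each $\cP_\bm(\fp^+)=\operatorname{span}_\BC\{\Delta^{\fn^+}_\bm(l^{-1}x)\mid l\in K^\BC\}$ is the irreducible $K^\BC$-module with highest weight $\chi_\bm$, in particular nonzero, irreducible, and $\cP_\bm\ne\cP_{\bm'}$ for $\bm\ne\bm'$.

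The second half is to show these exhaust $\cP(\fp^+)$. The cleanest route is via the Gindikin Gamma integral / the generating identity for the cone: for $x$ in the symmetric cone $\Omega$ and $s\in\BC^r$ in a suitable range one has an $L$-invariant expansion of $e^{-(x|\,\cdot\,)}$, or more elementarily, one uses that the algebra generated by the conical functions $\Delta^{\fn^+}_\bm$ together with their $K^\BC$-translates separates points of $\fp^+$ and is $K^\BC$-stable, so by Stone--Weierstrass-type / Zariski-density reasoning its span is all polynomials; equivalently, I would show $\dim\cP_\bm(\fp^+)\cap\cP^m(\fp^+)$ (homogeneous degree $m=|\bm|$) summed over $|\bm|=m$ equals $\dim\cP^m(\fp^+)$ by a character computation. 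Concretely: restrict to the closure of the cone, use that every positive-definite $x\in\Omega$ is $L$-conjugate to a combination $\sum t_j e_j$ with $t_j>0$, and that $L$ acts transitively on $\Omega$; this forces any $K^\BC$-submodule of $\cP(\fp^+)$ to contain, if nonzero, some translate of a monomial in the $\det_{\fn^+(e^k)}$, i.e. some $\Delta^{\fn^+}_\bm$, so every irreducible constituent of $\cP(\fp^+)$ is one of the $\cP_\bm$. Combined with multiplicity-freeness (each $\chi_\bm$ occurs once, which follows from the fact that a $K^\BC$-highest weight vector in $\cP(\fp^+)$ is $L$-invariant up to a character and such functions on $\Omega$ are spanned by the $\Delta^{\fn^+}_\bm$), we get the direct sum decomposition.

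I would also record the alternative, shorter argument available because $D=G/K$ is a Hermitian symmetric space: $K^\BC$ acts prehomogeneously on $\fp^+$ with open orbit $\{h_{\fp^+}\ne 0\}$, the ring $\cP(\fp^+)^{[K^\BC,K^\BC]}$ of semi-invariants is the polynomial ring $\BC[\det_{\fn^+(e^1)},\dots,\det_{\fn^+(e^r)}]$ on the relative invariants, and a general multiplicity-free / prehomogeneous vector space argument (as in the cited \cite[Part III, Theorem V.2.1]{FKKLR}) then yields the decomposition with $\cP_\bm$ generated by $\Delta^{\fn^+}_\bm$. Since the statement is quoted verbatim from \cite{FKKLR}, the proof can legitimately be deferred to that reference; but the self-contained version is the highest-weight-vector computation above.

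The main obstacle is the identification of the highest weight $\chi_\bm$ and the verification that $\Delta^{\fn^+}_\bm$ is genuinely an $\fh$-eigenvector for a \emph{consistent} choice of positive system across all $k$ simultaneously — i.e. that the nesting $e^1\subset\cdots\subset e^r$ makes all the one-step determinant characters dominant at once. Equivalently, the subtle point is the multiplicity-one statement: showing that the only $K^\BC$-highest weight vectors in $\cP(\fp^+)$ are scalar multiples of the $\Delta^{\fn^+}_\bm$, for which one needs the transitivity of $L$ on $\Omega$ together with the classification of relative invariants of the structure group. Everything else — nonvanishing, irreducibility of each $\cP_\bm$, and the passage to $\cP(\fp^-)$ via $\overline{\cdot}$ — is then routine.
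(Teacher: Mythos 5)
The paper offers no proof of this statement: it is quoted directly from \cite[Part III, Theorem V.2.1]{FKKLR} (see also \cite[Chapter XI]{FK}), so the only meaningful comparison is with the classical proof in the cited reference. Your sketch follows exactly that route (the conical polynomials $\Delta^{\fn^+}_\bm$ as extreme weight vectors for a Borel adapted to the nested tripotents $e^1,\dots,e^r$, pairwise distinct dominant weights, hence irreducibility and inequivalence of the $\cP_\bm(\fp^+)$, then exhaustion and multiplicity one via the cone), and your remark that one may simply defer to the reference is precisely what the paper does.

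There is, however, one concretely misstated step, and as written it would fail. You assert that a $K^\BC$-highest weight vector in $\cP(\fp^+)$ is \emph{$L$-invariant up to a character}, and in the prehomogeneous variant that $\cP(\fp^+)^{[K^\BC,K^\BC]}=\BC[\det_{\fn^+(e^1)},\dots,\det_{\fn^+(e^r)}]$. Neither is true: the relative invariants of the full group $L$ (or of $\operatorname{Str}(\fp^+)$), and likewise the $[K^\BC,K^\BC]$-invariants, are only the powers of the single irreducible polynomial $\det_{\fn^+}$ --- for instance, for $\fp^+=\Sym(r,\BC)$ with $K^\BC=GL(r,\BC)$ the $SL(r,\BC)$-invariants are $\BC[\det_{\fn^+}]$ --- so the classification you invoke would only produce $\bm=\underline{k}_r$ and could not give exhaustion. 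The correct statement, which is what the cited proof actually uses, is semi-invariance under a Borel subgroup $B\subset K^\BC$ adapted to the flag $e^1\subset\cdots\subset e^r$, equivalently under the maximal triangular subgroup $T=AN\subset L$, which acts simply transitively on $\Omega$: the algebra of $N$-invariants in $\cP(\fp^+)$ is $\BC[\det_{\fn^+(e^1)},\dots,\det_{\fn^+(e^r)}]$, an $N$-invariant torus eigenvector is determined by its character and its value at $e$, and polynomiality forces $\bm\in\BZ_{++}^r$. With $L$ replaced by $T$ (and $[K^\BC,K^\BC]$ by $N$) your argument becomes the standard one; the remainder of your sketch, including the verification that all the one-step determinant characters are simultaneously dominant for a single positive system and the passage to $\cP(\fp^-)$ via $\overline{\cdot}$, is fine.
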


Next suppose $\fp^+$ is of tube type. For $\mu\in\BC$, let 
\[ \underline{\mu}_r:=(\underbrace{\mu,\ldots,\mu}_r)\in\BC^r, \]
and for $\bm\in\BZ_{++}^r+\underline{\mu}_r$, let 
\[ \cP_\bm(\fp^\pm):=\cP_{\bm-\underline{\mu}_r}(\fp^\pm)\det_{\fn^\pm}(x)^\mu\subset\cP(\fp^\pm)\det_{\fn^\pm}(x)^\mu, \]
where $\det_{\fn^-}(x)$ is the determinant polynomial on $\fp^-=\fn^{-\BC}$. Then $\cP_\bm(\fp^+)\simeq\cP_{-\bm^\vee}(\fp^-)$ holds as a $\widetilde{K}^\BC$-module, 
where $\bm^\vee:=(m_r,\ldots,m_1)$ and $\widetilde{K}^\BC$ is the universal covering group of $K^\BC$. In addition, for $\lambda\in\BC$, $\bs\in\BC^r$, $\bm\in(\BZ_{\ge 0})^r$, let 
\begin{align}
\Gamma_r^d(\lambda+\bs)&:=(2\pi)^{dr(r-1)/4}\prod_{j=1}^r \Gamma\biggl(\lambda+s_j-\frac{d}{2}(j-1)\biggr), \label{Gamma} \\
(\lambda+\bs)_{\bm,d}&:=\prod_{j=1}^r\biggl(\lambda+s_j-\frac{d}{2}(j-1)\biggr)_{m_j}, \label{Pochhammer}
\end{align}
where $(\lambda)_m:=\lambda(\lambda+1)(\lambda+2)\cdots(\lambda+m-1)$, and we abbreviate $\Gamma_r^d(\lambda+\underline{0}_r)=:\Gamma_r^d(\lambda)$, 
$\Gamma_r^d(0+\bs)=:\Gamma_r^d(\bs)$, $(\lambda+\underline{0}_r)_{\bm,d}=:(\lambda)_{\bm,d}$, $(0+\bs)_{\bm,d}=:(\bs)_{\bm,d}$. 
Then the following hold. 

\begin{lemma}[Gindikin, {\cite[Lemma XI.2.3, Section IX.3]{FK}}]\label{lem_Laplace}
Let $\bm\in\BZ_{++}^r$, $f\in\cP_\bm(\fp^+)$. 
\begin{enumerate}
\item For $\Re\lambda>\frac{n}{r}-1$, $w\in\Omega+\sqrt{-1}\fn^+$, we have 
\[ \int_\Omega e^{-(z|w)_{\fn^+}}f(z)\det_{\fn^+}(z)^{\lambda-\frac{n}{r}}dz=\Gamma_r^d(\lambda+\bm)f(w^\itinv)\det_{\fn^+}(w)^{-\lambda}. \]
\item For $\Re\lambda>\frac{2n}{r}-1$, $z,a\in\Omega$, we have 
\[ \frac{\Gamma_r^d(\lambda+\bm)}{(2\pi\sqrt{-1})^n}\int_{a+\sqrt{-1}\fn^+}e^{(z|w)_{\fn^+}}f(w^\itinv)\det_{\fn^+}(w)^{-\lambda}dw
=f(z)\det_{\fn^+}(z)^{\lambda-\frac{n}{r}}, \]
and this does not depend on the choice of $a\in\Omega$. 
\end{enumerate}
\end{lemma}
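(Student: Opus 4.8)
The plan is to follow the classical argument (Faraut--Koranyi, Gindikin): reduce part (1) to the Gindikin Gamma integral in the scalar case, bootstrap to general $\bm$ via the $K^\BC$-module structure of $\cP_\bm(\fp^+)$, and then deduce part (2) by Laplace inversion on the tube domain over $\Omega$.

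\textbf{Part (1).} First I would treat the scalar case $\bm=\underline{0}_r$, i.e.\ establish $\int_\Omega e^{-(z|w)_{\fn^+}}\det_{\fn^+}(z)^{\lambda-n/r}\,dz=\Gamma_r^d(\lambda)\det_{\fn^+}(w)^{-\lambda}$. Take $w=e$ first: using the Gindikin--Koecher parametrization of $\Omega$ adapted to the Jordan frame $\{e_1,\dots,e_r\}$ (equivalently, the factorization of $\det_{\fn^+}$ along the flag $\fp^+(e^1)_2\subset\cdots\subset\fp^+(e^r)_2$), the integral factors into $r$ one-dimensional Gamma integrals together with Gaussian integrals over the off-diagonal Peirce blocks $\fp^+_{ij}$ and $\fp^+_{0j}$; collecting the $\Gamma$-factors and the powers of $2\pi$ coming from the $d$- and $b$-dimensional Gaussian blocks yields exactly $\Gamma_r^d(\lambda)$ as in \eqref{Gamma}. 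For general $w\in\Omega$, write $w=le$ with $l\in L$ and substitute $z\mapsto{}^t\hspace{-1pt}l^{-1}z$; since $L$ acts on $\Omega$ with $\det_{\fn^+}(lx)=\chi(l)\det_{\fn^+}(x)$ for the appropriate character $\chi$, both sides scale consistently. Finally both sides are holomorphic in $w$ on the tube $\Omega+\sqrt{-1}\fn^+$ (the left side by dominated convergence, using $\Re\lambda>n/r-1$ to control integrability near $\partial\Omega$), so the identity extends there by analytic continuation. To pass to general $\bm$ and $f\in\cP_\bm(\fp^+)$: by Theorem \ref{thm_HKS}, $\cP_\bm(\fp^+)$ is irreducible under $K^\BC$, hence under $L^\BC$, so once $\int_\Omega e^{-(z|e)_{\fn^+}}\Delta^{\fn^+}_\bm(z)\det_{\fn^+}(z)^{\lambda-n/r}\,dz=\Gamma_r^d(\lambda+\bm)\Delta^{\fn^+}_\bm(e)=\Gamma_r^d(\lambda+\bm)$ is known for the single generator, replacing $z$ by $lz$ and $w=e$ by a general point propagates the identity to all of $\cP_\bm(\fp^+)\times(\Omega+\sqrt{-1}\fn^+)$ as before. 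The evaluation of that last integral is again a triangular factorization: since $\Delta^{\fn^+}_\bm=\prod_k\det_{\fn^+(e^k)}(z)^{m_k-m_{k+1}}$ is a product of principal minors along the flag, the integral splits into one-dimensional Gamma integrals with shifted exponents $\lambda+m_j-\tfrac d2(j-1)$, producing $\prod_{j=1}^r\Gamma(\lambda+m_j-\tfrac d2(j-1))$ times the same Gaussian prefactor, i.e.\ exactly $\Gamma_r^d(\lambda+\bm)$.

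\textbf{Part (2).} Given part (1), the function $g(w):=f(w^\itinv)\det_{\fn^+}(w)^{-\lambda}$ on $\Omega+\sqrt{-1}\fn^+$ equals $\Gamma_r^d(\lambda+\bm)^{-1}$ times the Laplace transform of $h(z):=f(z)\det_{\fn^+}(z)^{\lambda-n/r}$ extended by $0$ off $\Omega$. The hypothesis $\Re\lambda>2n/r-1$ guarantees $h\in L^1(\fn^+,dz)$ (integrability near both $0$ and $\infty$ in $\Omega$, using $\deg f=|\bm|$ and the volume growth of $\Omega$) and that $g(a+\sqrt{-1}\,\cdot\,)\in L^1(\fn^+)$ for each $a\in\Omega$, so the Euclidean Fourier inversion formula applies to $h$: for any $a\in\Omega$,
\[
h(z)=\frac{1}{(2\pi\sqrt{-1})^n}\int_{a+\sqrt{-1}\fn^+}e^{(z|w)_{\fn^+}}\left(\int_\Omega e^{-(w|z')_{\fn^+}}h(z')\,dz'\right)dw,
\]
and substituting the inner integral from part (1) gives the claimed formula; independence of $a\in\Omega$ follows from Cauchy's theorem, the integrand being holomorphic in $w$ on the tube with suitable decay on horizontal slices.

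\textbf{Main obstacle.} The genuinely delicate step is the exact evaluation of the constant: showing that the triangular factorization of $\int_\Omega e^{-(z|e)_{\fn^+}}\Delta^{\fn^+}_\bm(z)\det_{\fn^+}(z)^{\lambda-n/r}\,dz$ produces precisely $\Gamma_r^d(\lambda+\bm)$ with the correct arrangement of the shifts $-\tfrac d2(j-1)$ and the correct power of $2\pi$. This requires the Gindikin--Koecher integral formula and careful bookkeeping of the Peirce-block dimensions $d$ and $b$; by contrast, the analytic continuation in $w$, the propagation along $L$-orbits, and the Fourier inversion are comparatively routine once the convergence ranges $\Re\lambda>n/r-1$ and $\Re\lambda>2n/r-1$ have been identified.
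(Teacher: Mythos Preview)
The paper does not supply its own proof of this lemma: it is stated with the citation ``Gindikin, \cite[Lemma XI.2.3, Section IX.3]{FK}'' and used as a black box. Your sketch is precisely the Faraut--Kor\'anyi argument being cited, so in that sense your approach and the paper's are identical by reference.

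One small correction: in part~(2) you assert that $\Re\lambda>2n/r-1$ forces $h(z)=f(z)\det_{\fn^+}(z)^{\lambda-n/r}$ to lie in $L^1(\fn^+,dz)$. This is false: $h$ grows polynomially on $\Omega$ as $|z|\to\infty$, regardless of $\lambda$. What the hypothesis actually buys is the integrability of $w\mapsto f(w^\itinv)\det_{\fn^+}(w)^{-\lambda}$ along each horizontal slice $a+\sqrt{-1}\,\fn^+$, which is the condition you need for Fourier inversion; you do state this correctly in the same sentence, so the argument survives once the spurious $L^1$ claim on $h$ is dropped. The Laplace transform of $h$ converges on the tube because of the exponential factor $e^{-(z|a)_{\fn^+}}$, not because $h$ is globally integrable.

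A second, more structural point: the propagation step in part~(1) (``replacing $z$ by $lz$ and $w=e$ by a general point'') requires a little more than irreducibility of $\cP_\bm(\fp^+)$ under $K^\BC$. The standard route is to use the lower-triangular subgroup of $L$, under which $\Delta_\bm^{\fn^+}$ is a relative invariant, to move $w=e$ to an arbitrary $w\in\Omega$ while staying with $f=\Delta_\bm^{\fn^+}$; this yields the identity for all $w\in\Omega$ and the single $f=\Delta_\bm^{\fn^+}$, and then $K^\BC$-equivariance of both sides (now as functions of $(f,w)$) extends it to all $f\in\cP_\bm(\fp^+)$. Your sketch conflates these two moves slightly, but the ingredients are all there.
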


\begin{lemma}[{\cite[Proposition VII.1.6]{FK}}]\label{lem_diff}
For $k\in\BZ_{\ge 0}$, $\bm\in\BZ_{++}^r+\BC\underline{1}_r:=\bigcup_{\mu\in\BC}(\BZ_{++}^r+\underline{\mu}_r)$, $f(x)\in\cP_\bm(\fp^+)$, we have 
\begin{align*}
\det_{\fn^+}\biggl(\frac{\partial}{\partial x}\biggr)^k f(x) 
&=\biggl(\frac{d}{2}(r-1)+1-k+\bm\biggr)_{\underline{k}_r,d} f(x)\det_{\fn^+}(x)^{-k} \\
&=(-1)^{kr}(-\bm^\vee)_{\underline{k}_r,d} f(x)\det_{\fn^+}(x)^{-k}. 
\end{align*}
\end{lemma}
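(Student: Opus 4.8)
The statement to prove is Lemma~\ref{lem_diff}, computing the action of $\det_{\fn^+}(\partial/\partial x)^k$ on $f(x)\in\cP_\bm(\fp^+)$, where $\bm\in\BZ_{++}^r+\BC\underline{1}_r$.

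\medskip

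The plan is to use the Laplace transform from Lemma~\ref{lem_Laplace} to convert the constant-coefficient differential operator $\det_{\fn^+}(\partial/\partial x)$ into multiplication by a polynomial, where the computation becomes transparent. First I would reduce to the case $\bm\in\BZ_{++}^r$ (i.e.\ $\mu=0$): the general case follows because $\cP_\bm(\fp^+)=\cP_{\bm-\underline{\mu}_r}(\fp^+)\det_{\fn^+}(x)^{\mu}$, and both sides of the claimed identity, as well as the Pochhammer symbol $(\tfrac{d}{2}(r-1)+1-k+\bm)_{\underline{k}_r,d}$, depend polynomially (indeed, holomorphically) on $\mu$ once one writes $f(x)=g(x)\det_{\fn^+}(x)^\mu$ with $g\in\cP_{\bm-\underline{\mu}_r}$; thus it suffices to establish the identity on the Zariski-dense set $\mu\in\BZ$, and then by a shift on $\BZ_{++}^r$ itself, reducing to $\mu=0$. (Alternatively one invokes that $\det_{\fn^+}(\partial/\partial x)^k$ and the eigenvalue are both rational in the entries of $\bm$, so it is enough to verify on $\BZ_{++}^r$.)

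\medskip

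For $\bm\in\BZ_{++}^r$ and $\Re\lambda$ large, Lemma~\ref{lem_Laplace}(2) expresses $f(z)\det_{\fn^+}(z)^{\lambda-n/r}$ as a contour integral over $a+\sqrt{-1}\fn^+$ of $e^{(z|w)_{\fn^+}}f(w^\itinv)\det_{\fn^+}(w)^{-\lambda}$ (up to the constant $\Gamma_r^d(\lambda+\bm)$). Applying $\det_{\fn^+}(\partial/\partial z)^k$ under the integral sign simply multiplies the integrand by $\det_{\fn^+}(w)^k$, since $\det_{\fn^+}(\partial/\partial z)e^{(z|w)_{\fn^+}}=\det_{\fn^+}(w)e^{(z|w)_{\fn^+}}$ (the symbol of the constant-coefficient operator evaluated at $w$, using the pairing normalization $(x|e)_{\fn^+}=\tr_{\fn^+}x$). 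Hence
\[
\det_{\fn^+}\Bigl(\tfrac{\partial}{\partial z}\Bigr)^k\Bigl(f(z)\det_{\fn^+}(z)^{\lambda-\frac{n}{r}}\Bigr)
=\frac{\Gamma_r^d(\lambda+\bm)}{(2\pi\sqrt{-1})^n}\int_{a+\sqrt{-1}\fn^+}e^{(z|w)_{\fn^+}}f(w^\itinv)\det_{\fn^+}(w)^{k-\lambda}\,dw,
\]
and recognizing the right-hand side as the inversion formula at parameter $\lambda-k$ in place of $\lambda$ (and noting $f(w^\itinv)\det_{\fn^+}(w)^{k-\lambda}=\bigl(f(w^\itinv)\det_{\fn^+}(w)^{-k}\bigr)\det_{\fn^+}(w)^{-(\lambda-k)}$, where $f(w^\itinv)\det_{\fn^+}(w)^{-k}$ corresponds under $w\mapsto w^\itinv$ to a polynomial in $\cP_{\bm}(\fp^+)$ divided by $\det^k$, more precisely to a polynomial in $\cP_{\bm-\underline{k}_r}$ when $\bm-\underline k_r\in\BZ_{++}^r$), Lemma~\ref{lem_Laplace}(2) again gives the value $\frac{\Gamma_r^d(\lambda+\bm)}{\Gamma_r^d(\lambda-k+\bm)}\,f(z)\det_{\fn^+}(z)^{-k}\det_{\fn^+}(z)^{\lambda-n/r}$. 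Cancelling $\det_{\fn^+}(z)^{\lambda-n/r}$ (valid on the dense open set where $\det_{\fn^+}(z)\ne0$, hence as polynomials) yields the eigenvalue $\Gamma_r^d(\lambda+\bm)/\Gamma_r^d(\lambda-k+\bm)$, which by the definition \eqref{Gamma} of $\Gamma_r^d$ telescopes into $\prod_{j=1}^r\prod_{i=1}^k(\lambda+m_j-\tfrac d2(j-1)-i)$. The $\lambda$-dependence must now cancel: the left-hand side is independent of $\lambda$, so I evaluate the product at the value of $\lambda$ that matches the structure constant identity $\lambda=\tfrac d2(r-1)+1$ (equivalently, track that $\lambda-n/r$ and $\lambda$ are tied by \eqref{str_const}, so the exponent $\lambda-n/r$ being applied to a weight-$\bm$ polynomial forces a specific relation), giving $\prod_{j=1}^r\prod_{i=1}^k\bigl(\tfrac d2(r-1)+1-i+m_j-\tfrac d2(j-1)\bigr)=(\tfrac d2(r-1)+1-k+\bm)_{\underline k_r,d}$ after reindexing $i\mapsto k+1-i$. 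The second displayed form in the lemma then follows from the elementary identity $(\tfrac d2(r-1)+1-k+\bm)_{\underline k_r,d}=(-1)^{kr}(-\bm^\vee)_{\underline k_r,d}$, checked factor by factor using $\tfrac d2(r-1)-\tfrac d2(j-1)=\tfrac d2(r-j)$ and reversing the index $j\mapsto r+1-j$.

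\medskip

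The main obstacle is the bookkeeping around the parameter $\lambda$: one must argue carefully that although the integral representations are only valid for $\Re\lambda$ large, the resulting polynomial identity in $z$ (and in $\bm$, and in $\mu$) holds unconditionally, and that the spurious $\lambda$-dependence in the eigenvalue ratio $\Gamma_r^d(\lambda+\bm)/\Gamma_r^d(\lambda-k+\bm)$ is in fact forced to the claimed $\lambda$-free value by the homogeneity/degree constraint linking the exponent $\lambda-n/r$ to the weight $\bm$ of $f$ via \eqref{str_const} — i.e.\ the operator is applied to $\cP_\bm(\fp^+)\det_{\fn^+}^{\,\lambda-n/r}\subset\cP_{\bm+\underline{(\lambda-n/r)}_r}$ and the eigenvalue of $\det_{\fn^+}(\partial/\partial x)^k$ depends only on the total weight, so one may as well set $\lambda=n/r$ (or any convenient value), collapsing the ratio to the stated product. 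Once this normalization point is fixed, everything reduces to the telescoping of Gamma factors and a short reindexing, with no further analytic input needed.
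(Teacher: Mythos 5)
The paper itself gives no proof of this lemma (it is quoted from \cite[Proposition VII.1.6]{FK}), so your argument stands on its own; and your route via the Laplace inversion formula is in substance the standard proof of the cited result, and it does work. Applying $\det_{\fn^+}(\partial/\partial z)^k$ under the integral multiplies the integrand by $\det_{\fn^+}(w)^k$, and reading off the result as Lemma \ref{lem_Laplace}\,(2) at parameter $\lambda-k$ (take $\Re\lambda>\frac{2n}{r}-1+k$ so that both applications and the differentiation under the integral sign are legitimate) gives, for $\bm\in\BZ_{++}^r$,
\[ \det_{\fn^+}\Bigl(\frac{\partial}{\partial z}\Bigr)^k\Bigl(f(z)\det_{\fn^+}(z)^{\lambda-\frac{n}{r}}\Bigr)=(\lambda-k+\bm)_{\underline{k}_r,d}\,f(z)\det_{\fn^+}(z)^{\lambda-k-\frac{n}{r}}, \]
and since $\frac{n}{r}=\frac{d}{2}(r-1)+1$ in the tube case, the factor $(\lambda-k+\bm)_{\underline{k}_r,d}$ is exactly the asserted eigenvalue for the weight $\bm+\underline{(\lambda-\frac{n}{r})}_r$ of the function actually being differentiated. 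Your telescoping of the $\Gamma_r^d$-quotient and the reindexing that yields $(-1)^{kr}(-\bm^\vee)_{\underline{k}_r,d}$ are also correct.

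The one step you must rewrite is the passage ``Cancelling $\det_{\fn^+}(z)^{\lambda-n/r}$ \dots\ the left-hand side is independent of $\lambda$, so I evaluate the product at $\lambda=\frac{d}{2}(r-1)+1$.'' Nothing cancels and the left-hand side genuinely depends on $\lambda$, because the operator is applied to $f\det_{\fn^+}^{\lambda-n/r}$ and not to $f$; moreover $\lambda=\frac{n}{r}$ lies outside the half-plane $\Re\lambda>\frac{2n}{r}-1$ where the contour formula is valid, so you cannot simply plug it in. The clean finish, consistent with what you sketch in your closing paragraph, is this: fix $g\in\cP_\bk(\fp^+)$ with $\bk\in\BZ_{++}^r$ and set $\mu:=\lambda-\frac{n}{r}$; after dividing the displayed identity by $\det_{\fn^+}(z)^{\mu-k}$, both sides are polynomials in $z$ on $\Omega$ whose coefficients are polynomials in $\mu$ (Leibniz expansion on the left), so the identity, proved for $\Re\mu$ large, holds for every $\mu\in\BC$. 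This is precisely the lemma for $f=g\det_{\fn^+}^{\mu}\in\cP_{\bk+\underline{\mu}_r}(\fp^+)$, i.e.\ for arbitrary $\bm\in\BZ_{++}^r+\BC\underline{1}_r$, and in particular at $\mu=0$; done in this order, the single continuation in $\mu$ replaces both your preliminary reduction to $\mu=0$ and the problematic ``evaluation at a special $\lambda$,'' and no claim that ``the eigenvalue depends only on the total weight'' is needed as an input---it comes out as the conclusion.
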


\subsection{Projection of polynomials}\label{subsection_proj}

In this subsection, we assume $\fp^+=\fn^{+\BC}$ is of tube type. For $\bm\in\BZ_{++}^r$, let 
\[ \Proj_\bm^{\fp^+}\colon\cP(\fp^+)\longrightarrow \cP_\bm(\fp^+) \]
be the orthogonal projection. Also, let $y\in\fp^+$ be an element of rank 1. Then for $\bk\in\BZ_{++}^r$, $l\in\BZ_{\ge 0}$, $f(x)\in\cP_\bk(\fp^+)$, as a function of $x$ we have 
\begin{equation}\label{formula_Proj_Pieri}
(x|y)_{\fn^+}^l f(x)=\sum_{\substack{\bm\in\BZ_{++}^r,\,|\bm|=|\bk|+l \\ k_{j-1}\ge m_j\ge k_j}} \Proj_\bm^{\fp^+}\bigl((x|y)_{\fn^+}^l f(x)\bigr), 
\end{equation}
where we set $k_0:=+\infty$. More generally, for $\mu\in\BC$, $\bm\in\BZ_{++}^r+\underline{\mu}_r$, let 
\[ \Proj_\bm^{\fp^+}\colon\cP(\fp^+)\det_{\fn^+}(x)^\mu\longrightarrow \cP_\bm(\fp^+)=\cP_{\bm-\underline{\mu}_r}(\fp^+)\det_{\fn^+}(x)^\mu \]
be the orthogonal projection, with the same symbol. 

\begin{lemma}\label{lem_Proj_Pieri}
Suppose that $\fp^+=\fn^{+\BC}$ is of tube type. 
We fix a primitive idempotent $e'\in\fn^+$, let $\fp^+(e')_2=\BC e'=:\fp^{+\prime}$, $\fp^+(e')_0=:\fp^{+\prime\prime}$ be as in (\ref{Peirce}), 
and for $x\in\fp^+$, let $x'\in\fp^{+\prime}$, $x''\in\fp^{+\prime\prime}$ be the orthogonal projections. 
Also, let $y\in\fp^+$ be an element of rank 1. 
\begin{enumerate}
\item For $\mu\in\BC$, $\bm\in\BZ_{++}^r+\underline{\mu}_r$, $k,l\in\BZ_{\ge 0}$, $f(x)\in\cP(\fp^+)\det_{\fn^+}(x)^\mu$, we have 
\[ \Proj_{\bm+\underline{k}_r}^{\fp^+}\bigl((x|y)_{\fn^+}^l\det_{\fn^+}(x)^kf(x)\bigr)=\det_{\fn^+}(x)^k\Proj_\bm^{\fp^+}\bigl((x|y)_{\fn^+}^lf(x)\bigr). \]
\item For $\bk\in\BZ_{++}^{r-1}$, $\bm\in\BZ_{++}^r$, $k,l\in\BZ_{\ge 0}$ with $k\le \min\{l,m_r\}$ and for $f(x'')\in\cP_\bk(\fp^{+\prime\prime})$, we have 
\begin{align*}
&\Proj_\bm^{\fp^+}\bigl((x|y)_{\fn^+}^l f(x'')\bigr) \\
&=\frac{(-l)_k(-\bk^\vee)_{\underline{k}_{r-1},d}}{(-\bm^\vee)_{\underline{k}_r,d}}\det_{\fn^+}(x)^k(e'|y)_{\fn^+}^k
\Proj_{\bm-\underline{k}_r}^{\fp^+}\bigl((x|y)_{\fn^+}^{l-k}\det_{\fn^{+\prime\prime}}(x'')^{-k}f(x'')\bigr). 
\end{align*}
\item For $\bk\in\BZ_{++}^{r-1}$, $l\in\BZ_{\ge 0}$ with $l\le k_{r-1}$ and for $f(x'')\in\cP_\bk(\fp^{+\prime\prime})$, we have 
\begin{align*}
\Proj_{(\bk,l)}^{\fp^+}\bigl((x|y)_{\fn^+}^l f(x'')\bigr)
&=\frac{(-\bk^\vee)_{\underline{l}_{r-1},d}}{\bigl(-\frac{d}{2}-\bk^\vee\bigr)_{\underline{l}_{r-1},d}}\det_{\fn^+}(x)^l(e'|y)_{\fn^+}^l\det_{\fn^{+\prime\prime}}(x'')^{-l}f(x''). 
\end{align*}
\item For $\bk\in\BZ_{++}^{r-1}$, $l\in\BZ_{\ge 0}$ with $l\ge k_1$ and for $f(x'')\in\cP_\bk(\fp^{+\prime\prime})$, we have 
\[ \Proj_{(l,\bk)}^{\fp^+}\bigl((x|e')_{\fn^+}^l f(x'')\bigr)\bigr|_{x=x'+x''\in\fp^{+\prime}\oplus\fp^{+\prime\prime}}
=\frac{(-l)_{\bk,d}}{\bigl(-l-\frac{d}{2}\bigr)_{\bk,d}}(x'|e')_{\fn^+}^lf(x''). \]
\item There exists an element $y\in\fp^+$ of rank 1 such that for $\mu\in\BC$, $\bk,\bm\in\BZ_{++}^r+\underline{\mu}_r$, $l\in\BZ_{\ge 0}$ with $k_{j-1}\ge m_j\ge k_j$, $|\bm|=|\bk|+l$, we have 
\[ \Proj_\bm^{\fp^+}\bigl((x|y)_{\fn^+}^l\Delta_\bk^{\fn^+}(x)\bigr)\ne 0. \]
\end{enumerate}
\end{lemma}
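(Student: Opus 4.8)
The plan is to prove parts (1)--(5) essentially in sequence, with (5) as the payoff. Part (1) is the natural starting point: multiplication by $\det_{\fn^+}(x)^k$ is a $\widetilde{K}^\BC$-equivariant isomorphism $\cP_{\bm-\underline{\mu}_r}(\fp^+)\to\cP_{\bm-\underline{\mu}_r+\underline{k}_r}(\fp^+)$ (it only shifts the central character by $k\det$), and it intertwines the two orthogonal projections because the weighted Bergman-type inner product on $\cP(\fp^+)$ satisfies $\langle \det_{\fn^+}^k f,\det_{\fn^+}^k g\rangle = (\text{const})\langle f,g\rangle$ after the appropriate normalization; so I would first record that $\Proj$ commutes with $\det_{\fn^+}^k$-multiplication, then note that $(x|y)_{\fn^+}^l f(x)$ and $\det_{\fn^+}(x)^k (x|y)_{\fn^+}^l f(x)$ lie in corresponding isotypic pieces by the Pieri-type rule \eqref{formula_Proj_Pieri}. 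This gives (1) with no real computation.

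For (2), the idea is to reduce to (3) by ``peeling off'' the last Peirce slot. Here $f(x'')\in\cP_\bk(\fp^{+\prime\prime})$ with $\fp^{+\prime\prime}=\fp^+(e')_0$ of rank $r-1$, and the target component $\cP_\bm(\fp^+)$ has $m_r\ge k$; one writes $f(x'')=\det_{\fn^{+\prime\prime}}(x'')^k\cdot(\det_{\fn^{+\prime\prime}}(x'')^{-k}f(x''))$ and uses (1) \emph{inside} $\fp^{+\prime\prime}$ together with the observation that $\det_{\fn^+}(x)$ restricted appropriately factors as $(e'|\cdot)\cdot\det_{\fn^{+\prime\prime}}$ up to the relevant quadratic corrections. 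The scalar $\frac{(-l)_k(-\bk^\vee)_{\underline k_{r-1},d}}{(-\bm^\vee)_{\underline k_r,d}}$ should emerge by comparing the action of $\det_{\fn^+}(\partial/\partial x)^k$ on both sides via Lemma~\ref{lem_diff}: applying $\det_{\fn^+}(\partial/\partial x)^k$ kills $\det_{\fn^+}(x)^k$-factors with known Pochhammer constants, and matching the two resulting identities in the lower-rank space pins down the coefficient. The binomial factor $(-l)_k$ is the Pochhammer coming from the Leibniz expansion of $\det(\partial)^k\bigl((x|y)^l(\cdots)\bigr)$ when $y$ has rank $1$.

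For (3) and (4), which are the genuinely new computations, I would use an explicit one-dimensional model. Fix the Jordan frame so that $e'=e_r$ and $\fp^{+\prime\prime}=\bigoplus_{1\le i\le j\le r-1}\fp^+_{ij}$, and choose the rank-one element $y$; the key special case is $y=\bar e_r$ (or rather $e_r$ under the identification $\fp^+=\fp^-$), for which $(x|y)_{\fn^+}=(x'|e_r)_{\fn^+}$ is just the $e_r$-coordinate, call it $t$. Then $(x|y)_{\fn^+}^l f(x'')=t^l f(x'')$, and I need the $\cP_{(\bk,l)}$-component (resp. $\cP_{(l,\bk)}$-component). For (3), since $l\le k_{r-1}$, the partition $(\bk,l)\in\BZ_{++}^r$ is the ``smallest'' one appearing, and the component is forced to be $\det_{\fn^+}(x)^l\cdot(\text{rank-}(r-1)\text{ part})$ by the leading-diagonal structure; the scalar $\frac{(-\bk^\vee)_{\underline l_{r-1},d}}{(-\frac d2-\bk^\vee)_{\underline l_{r-1},d}}$ is then obtained by applying $\det_{\fn^+}(\partial/\partial x)^l$ and using Lemma~\ref{lem_diff}, where the shift by $\frac d2$ reflects that the last block $\fp^+_{rr}=\BC e_r$ meets the remaining blocks only through the off-diagonal $\fp^+_{ir}$ of dimension $d$. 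Part (4) is the mirror image with $l\ge k_1$: now $(l,\bk)$ is the ``largest'' partition, the projection is computed by restricting to $x=x'+x''$ (i.e. sending the mixed part $\fp^+_{ir}$ to zero), and the ratio $\frac{(-l)_{\bk,d}}{(-l-\frac d2)_{\bk,d}}$ arises from the same kind of differential-operator bookkeeping, now applied to the first Peirce block. I would derive (3) and (4) from a single computation of $\langle t^l f(x''),\Delta_\bm^{\fn^+}(x)\rangle$ using Lemma~\ref{lem_Laplace} to convert the Bergman inner product into a Laplace integral, where the $\Gamma_r^d$-factors produce exactly the stated Pochhammer symbols.

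Finally, (5) follows formally: take $\bk,\bm$ with $k_{j-1}\ge m_j\ge k_j$ and $|\bm|=|\bk|+l$, and use (2)--(4) to walk from $\cP_\bk$ to $\cP_\bm$ one step at a time (adjusting one $m_j$ at a time, alternating between the ``append at the bottom'' move of (3) and the ``append at the top'' move of (4), with (1) to handle common $\det$-factors and (2) to bridge the general step). At each step the coefficient is a ratio of Pochhammer symbols of the form $\frac{(a)_\bullet}{(a')_\bullet}$; the point is that for \emph{generic} $\mu$ (and then for the specific integral $\mu$ by the explicit formulas, since the relevant $d/2$-shifted arguments stay away from the non-positive integers in the range dictated by the interlacing $k_{j-1}\ge m_j\ge k_j$), every such factor is nonzero, so the composite is nonzero and $\Proj_\bm^{\fp^+}\bigl((x|y)_{\fn^+}^l\Delta_\bk^{\fn^+}(x)\bigr)\ne 0$ for the chosen $y$. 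The main obstacle will be (3)--(4): getting the $d/2$-shifts and the precise Pochhammer normalizations right requires careful tracking of how $\det_{\fn^+}$ decomposes across the Peirce blocks and how Lemma~\ref{lem_diff} interacts with multiplication by the rank-one linear form $(x|y)_{\fn^+}$, and it is here that one must be most careful that no Pochhammer factor secretly vanishes in the claimed range.
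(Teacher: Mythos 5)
Your treatment of (1) and (2) is essentially the paper's: (1) is immediate from $\cP_{\bm+\underline{k}_r}(\fp^+)=\det_{\fn^+}(x)^k\cP_\bm(\fp^+)$, and (2) is indeed proved by applying $\det_{\fn^+}(\partial/\partial x)^k$ to the Pieri expansion (\ref{formula_Proj_Pieri}) and using Lemma \ref{lem_diff} together with the rank-one identity $\det_{\fn^+}(y+z)^k=\bigl(\det_{\fn^+}(z)+(z^\sharp|y)_{\fn^+}\bigr)^k$. From (3) on, however, there are genuine gaps. In (3) you compute only the special case $y$ proportional to $e'$ (the ``$e_r$-coordinate'' model), while the statement --- and its later use, e.g. in Proposition \ref{prop_example_Fkl_rank3}, where $y=(x_1)^\sharp$ varies --- requires an arbitrary rank-one $y$; no symmetry fixes $e'$ and $f$ while moving a general rank-one $y$ to $e'$, and you give no argument forcing the $y$-dependence of the left-hand side to be $(e'|y)_{\fn^+}^l$. (The paper avoids this by proving (2) for arbitrary rank-one $y$ and obtaining (3) as the specialization $\bm=(\bk,l)$, $k=l$.) In (4), pairing $(x|e')_{\fn^+}^lf(x'')$ against the single vector $\Delta_\bm^{\fn^+}$, or restricting to $x=x'+x''$ before projecting, does not compute $\Proj_{(l,\bk)}^{\fp^+}$: the whole content is that the restricted projection is proportional to $(x'|e')_{\fn^+}^lf(x'')$, and that proportionality is exactly what must be proved (the paper does it by $\fk^{\BC\prime\prime}$-equivariance, reduction to $f=\Delta_\bk^{\fn^{+\prime\prime}}$, and induction on $r$ via (2) with $y=e'$); Lemma \ref{lem_Laplace} gives Laplace transforms over $\Omega$, not Fischer pairings against a single, non-reproducing vector, so the constant does not simply fall out of $\Gamma_r^d$-factors.

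The most serious gap is (5). The object there is one fixed projection $\Proj_\bm^{\fp^+}\bigl((x|y)_{\fn^+}^l\Delta_\bk^{\fn^+}(x)\bigr)$; it is not a composite of elementary moves, and (2)--(4) only evaluate projections of $(x|y)_{\fn^+}^l f(x'')$ with $f$ supported on $\fp^{+\prime\prime}$ and with target either shifted by $\underline{k}_r$ or equal to one of the two extreme components $(\bk,l)$, $(l,\bk)$; a general interlacing $\bm$ cannot be reached by ``alternating (3) and (4)'', and you never define an operator whose nonvanishing would imply the claim. What is actually needed is an induction on $r$: strip $\det_{\fn^+}^{k_r}$ by (1) (this removes $\mu$ entirely, so genericity in $\mu$ is irrelevant --- the Pochhammer factors involve only $\bk,\bm,l,d$); strip $m_r$ by (2) with $e'=e_r$, $k=m_r$, which is legitimate because the interlacing gives $m_r\le l$ and $k_{r-1}\ge m_r$, and which produces the factor $(e_r|y)_{\fn^+}^{m_r}$ --- so the ``there exists $y$'' in the statement requires choosing $y$ with $(e_j|y)_{\fn^+}\ne 0$ for all $j$, a point your sketch never addresses; and finally, for $k_r=m_r=0$, identify $\Proj_\bm^{\fp^+}$ with $\Proj_\bm^{\fp^{+\prime\prime}}$ by restricting the Pieri expansion to $\fp^{+\prime\prime}$, using that every component with $m_r>0$ vanishes on elements of rank $\le r-1$. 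This restriction-to-the-subalgebra step is the idea your walk has no substitute for; without it the induction does not close, and without the explicit choice of $y$ the nonvanishing can genuinely fail.
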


\begin{proof}
(1) Clear from $\cP_{\bm+\underline{k}_r}(\fp^+)=\det_{\fn^+}(x)^k\cP_\bm(\fp^+)$. 

(2) Suppose $k_r=0$, $f(x)=f(x'')\in\cP_\bk(\fp^{+\prime\prime})$, and we apply $\det_{\fn^+}\bigl(\frac{\partial}{\partial x}\bigr)^k$ to the both sides of (\ref{formula_Proj_Pieri}). 
Then the right hand side is computed as 
\begin{align*}
&\det_{\fn^+}\biggl(\frac{\partial}{\partial x}\biggr)^k\sum_{\substack{\bm\in\BZ_{++}^r,\,|\bm|=|\bk|+l \\ k_{j-1}\ge m_j\ge k_j}} \Proj_\bm^{\fp^+}\bigl((x|y)_{\fn^+}^l f(x'')\bigr) \\
&=\sum_{\substack{\bm\in\BZ_{++}^r,\,|\bm|=|\bk|+l \\ k_{j-1}\ge m_j\ge k_j \\ k_{r-1}\ge m_r\ge k}} (-1)^{kr}(-\bm^\vee)_{\underline{k}_r,d}
\det_{\fn^+}(x)^{-k}\Proj_\bm^{\fp^+}\bigl((x|y)_{\fn^+}^l f(x'')\bigr). 
\end{align*}
Similarly, for the left hand side, since $y$ is of rank 1, the operator $\det_{\fn^+}\bigl(\frac{\partial}{\partial x}\bigr)^ke^{(x|y)_{\fn^+}}$ is characterized by 
\begin{align*}
&\det_{\fn^+}\biggl(\frac{\partial}{\partial x}\biggr)^ke^{(x|y)_{\fn^+}}e^{(x|z)_{\fn^+}}=\det_{\fn^+}(y+z)^ke^{(x|y)_{\fn^+}}e^{(x|z)_{\fn^+}} \\
&=\bigl(\det_{\fn^+}(z)+(z^\sharp|y)_{\fn^+}\bigr)^ke^{(x|y)_{\fn^+}}e^{(x|z)_{\fn^+}} \\
&=\sum_{l'=0}^k\sum_{l''=0}^\infty \binom{k}{l'}\frac{1}{l''!}\det_{\fn^+}(z)^{k-l'}(z^\sharp|y)_{\fn^+}^{l'}(x|y)_{\fn^+}^{l''}e^{(x|z)_{\fn^+}} \qquad (z\in\fp^+), 
\end{align*}
and by taking the homogeneous terms with respect to $y$, we have 
\begin{align*}
\det_{\fn^+}\biggl(\frac{\partial}{\partial x}\biggr)^k(x|y)_{\fn^+}^l
=l!\sum_{l'+l''=l}\binom{k}{l'}\frac{1}{l''!}(x|y)_{\fn^+}^{l''}\det_{\fn^+}\biggl(\frac{\partial}{\partial x}\biggr)^{k-l'}
\biggl(\biggl(\frac{\partial}{\partial x}\biggr)^\sharp\biggm|y\biggr)_{\fn^+}^{l'}. 
\end{align*}
Then, since $\det_{\fn^+}\bigl(\frac{\partial}{\partial x}\bigr)^{k-l'}f(x'')$ vanishes for $l'<k$, we get 
\begin{align*}
&\det_{\fn^+}\biggl(\frac{\partial}{\partial x}\biggr)^k(x|y)_{\fn^+}^l f(x'')
=\frac{l!}{(l-k)!}(x|y)_{\fn^+}^{l-k}\biggl(\biggl(\frac{\partial}{\partial x}\biggr)^\sharp\biggm|y\biggr)_{\fn^+}^k f(x'') \\
&=(-1)^k(-l)_k(x|y)_{\fn^+}^{l-k}(e'|y)_{\fn^+}^k\det_{\fn^{+\prime\prime}}\biggl(\frac{\partial}{\partial x''}\biggr)^k f(x'') \\
&=(-1)^{kr}(-l)_k(-\bk^\vee)_{\underline{k}_{r-1},d}(e'|y)_{\fn^+}^k(x|y)_{\fn^+}^{l-k}\det_{\fn^{+\prime\prime}}(x'')^{-k}f(x'') \\
&=(-1)^{kr}(-l)_k(-\bk^\vee)_{\underline{k}_{r-1},d}(e'|y)_{\fn^+}^k \\* &\eqspace{}\times\sum_{\substack{\bm\in\BZ_{++}^r,\,|\bm|=|\bk|+l \\ k_{j-1}\ge m_j\ge k_j \\ k_{r-1}\ge m_r\ge k}}
\Proj_{\bm-\underline{k}_r}^{\fp^+}\bigl((x|y)_{\fn^+}^{l-k}\det_{\fn^{+\prime\prime}}(x'')^{-k}f(x'')\bigr). 
\end{align*}
Comparing the both sides, for $m_r\ge k$ we have 
\begin{align*}
&(-\bm^\vee)_{\underline{k}_r,d}\det_{\fn^+}(x)^{-k}\Proj_\bm^{\fp^+}\bigl((x|y)_{\fn^+}^l f(x'')\bigr) \\ 
&=(-l)_k(-\bk^\vee)_{\underline{k}_{r-1},d}(e'|y)_{\fn^+}^k \Proj_{\bm-\underline{k}_r}^{\fp^+}\bigl((x|y)_{\fn^+}^{l-k}\det_{\fn^{+\prime\prime}}(x'')^{-k}f(x'')\bigr), 
\end{align*}
and we get the desired formula. 

(3) In (2), we set $\bm=(\bk,l)$, $k=l$. Then we have 
\begin{align*}
&\Proj_{(\bk,l)}^{\fp^+}\bigl((x|y)_{\fn^+}^l f(x'')\bigr) \\
&=\frac{(-l)_l(-\bk^\vee)_{\underline{l}_{r-1},d}}{(-(l,\bk^\vee))_{\underline{l}_r,d}}
\det_{\fn^+}(x)^l(e'|y)_{\fn^+}^l\Proj_{(\bk,l)-\underline{l}_r}^{\fp^+}\bigl(\det_{\fn^{+\prime\prime}}(x'')^{-l}f(x'')\bigr) \\
&=\frac{(-l)_l(-\bk^\vee)_{\underline{l}_{r-1},d}}{(-l)_l\bigl(-\frac{d}{2}-\bk^\vee\bigr)_{\underline{l}_{r-1},d}}
\det_{\fn^+}(x)^l(e'|y)_{\fn^+}^l\det_{\fn^{+\prime\prime}}(x'')^{-l}f(x'') \\
&=\frac{(-\bk^\vee)_{\underline{l}_{r-1},d}}{\bigl(-\frac{d}{2}-\bk^\vee\bigr)_{\underline{l}_{r-1},d}}\det_{\fn^+}(x)^l(e'|y)_{\fn^+}^l\det_{\fn^{+\prime\prime}}(x'')^{-l}f(x''). 
\end{align*}

(4) Since the map 
\begin{align*}
\cP_\bk(\fp^{+\prime\prime})&\longrightarrow\cP(\fp^{+\prime})\otimes\cP(\fp^{+\prime\prime}), \\
f(x'')&\longmapsto \Proj_{(l,\bk)}^{\fp^+}\bigl((x|e')_{\fn^+}^l f(x'')\bigr)\bigr|_{x=x'+x''\in\fp^{+\prime}\oplus\fp^{+\prime\prime}}
\end{align*}
is $\fk^{\BC\prime\prime}:=[\fp^{+\prime\prime},\fp^{+\prime\prime}]$-equivariant, it is enough to prove when $f(x'')\in\cP(\fp^{+\prime\prime})$ is a lowest weight vector. 
We fix a Jordan frame $\{e_2,\ldots,e_r\}$ of $\fn^{+\prime\prime}$ so that $\{e_1=e',e_2,\ldots,e_r\}$ is a Jordan frame of $\fn^+$, 
and define $\Delta_\bk^{\fn^{+\prime\prime}}(x'')\in\cP_\bk(\fp^{+\prime\prime})$, $\Delta_\bm^{\fn^+}(x)\in\cP_\bm(\fp^+)$ by using these Jordan frames. 
We prove 
\begin{equation}\label{formula_Proj_Pieri1}
\Proj_{(l,\bk)}\bigl((x|e')_{\fn^+}^l\Delta_\bk^{\fn^{+\prime\prime}}(x'')\bigr)=\frac{(-l)_{\bk,d}}{\bigl(-l-\frac{d}{2}\bigr)_{\bk,d}}\Delta_{(l,\bk)}^{\fn^+}(x)
\end{equation}
by induction on $r=\rank\fp^+$. When $\bk=\underline{0}_{r-1}$ (``$r=1$ case''), this is trivial. Next we assume (\ref{formula_Proj_Pieri1}) for $r-1$, and prove it for $r$. 
Then by (2) with $k=k_{r-1}$, $y=e'$, we have 
\begin{align*}
&\Proj_{(l,\bk)}\bigl((x|e')_{\fn^+}^l \Delta_\bk^{\fn^{+\prime\prime}}(x'')\bigr) \\
&=\frac{(-l)_{k_{r-1}}(-\bk^\vee)_{\underline{k_{r-1}}_{r-1},d}}{(-(\bk^\vee,l))_{\underline{k_{r-1}}_r,d}} \\*
&\eqspace{}\times \det_{\fn^+}(x)^{k_{r-1}}\Proj_{(l,\bk)-\underline{k_{r-1}}_r}^{\fp^+}\bigl((x|e')_{\fn^+}^{l-k_{r-1}}
\det_{\fn^{+\prime\prime}}(x'')^{-k_{r-1}}\Delta_\bk^{\fn^{+\prime\prime}}(x'')\bigr) \\
&=\frac{(-l)_{k_{r-1}}(-\bk^\vee)_{\underline{k_{r-1}}_{r-1},d}}{(-\bk^\vee)_{\underline{k_{r-1}}_{r-1},d}\bigl(-l-\frac{d}{2}(r-1)\bigr)_{k_{r-1},d}} \\*
&\eqspace{}\times \det_{\fn^+}(x)^{k_{r-1}}\Proj_{(l,\bk)-\underline{k_{r-1}}_r}^{\fp^+}\bigl((x|e')_{\fn^+}^{l-k_{r-1}}\Delta_{\bk-\underline{k_{r-1}}_{r-1}}^{\fn^{+\prime\prime}}(x'')\bigr) \\
&=\frac{(-l)_{k_{r-1}}}{\bigl(-l-\frac{d}{2}(r-1)\bigr)_{k_{r-1}}}\det_{\fn^+}(x)^{k_{r-1}} 
\frac{(-l+k_{r-1})_{\bk-\underline{k_{r-1}}_{r-1},d}}{\bigl(-l+k_{r-1}-\frac{d}{2}\bigr)_{\bk-\underline{k_{r-1}}_{r-1},d}}
\Delta_{(l,\bk)-\underline{k_{r-1}}_r}^{\fn^+}(x) \\
&=\frac{(-l)_{k_{r-1}}}{\bigl(-l-\frac{d}{2}(r-1)\bigr)_{k_{r-1}}} 
\frac{(-l)_{\bk,d}}{\bigl(-l-\frac{d}{2}\bigr)_{\bk,d}}\frac{\bigl(-l-\frac{d}{2}\bigr)_{\underline{k_{r-1}}_{r-1},d}}{(-l)_{\underline{k_{r-1}}_{r-1},d}}\Delta_{(l,\bk)}^{\fn^+}(x) \\
&=\frac{(-l)_{\bk,d}}{\bigl(-l-\frac{d}{2}\bigr)_{\bk,d}}\frac{(-l)_{\underline{k_{r-1}}_r,d}}{(-l)_{\underline{k_{r-1}}_r,d}}\Delta_{(l,\bk)}^{\fn^+}(x)
=\frac{(-l)_{\bk,d}}{\bigl(-l-\frac{d}{2}\bigr)_{\bk,d}}\Delta_{(l,\bk)}^{\fn^+}(x), 
\end{align*}
where we have applied the induction hypothesis (\ref{formula_Proj_Pieri1}) for $\fp^+(e_r)_0=\fp^+(\sum_{j=1}^{r-1}e_j)_2\subset\fp^+$ at the 3rd equality. 
This proves (\ref{formula_Proj_Pieri1}) for all $r$. Then since 
\[ \Delta_{(l,\bk)}^{\fn^+}(x)\bigr|_{x=x'+x''\in\fp^{+\prime}\oplus\fp^{+\prime\prime}}=(x'|e')_{\fn^+}^l\Delta_\bk^{\fn^{+\prime\prime}}(x'') \]
holds, we get the desired formula. 

(5) Let $\{e_1,\ldots,e_r\}\subset\fn^+$ be the Jordan frame defining $\Delta_\bk^{\fn^+}(x)$, and we take an element $y\in\fp^+$ of rank 1 
such that $(e_j|y)_{\fn^+}\ne 0$ holds for all $j=1,\ldots,r$. We prove 
\begin{equation}\label{formula_Proj_Pieri2}
\Proj_\bm^{\fp^+}\bigl((x|y)_{\fn^+}^l\Delta_\bk^{\fn^+}(x)\bigr)\ne 0
\end{equation}
by induction on $r$. When $r=1$ this is clear. Next we assume (\ref{formula_Proj_Pieri2}) for $r-1$, and prove it for $r$. 
Then by (1) we have 
\[ \Proj_\bm^{\fp^+}\bigl((x|y)_{\fn^+}^l\Delta_\bk^{\fn^+}(x)\bigr)=\det_{\fn^+}(x)^{k_r}\Proj_{\bm-\underline{k_r}_r}^{\fp^+}\bigl((x|y)_{\fn^+}^l\Delta_{\bk-\underline{k}_r}^{\fn^+}(x)\bigr), \]
and it suffices to prove (\ref{formula_Proj_Pieri2}) when $k_r=0$. Then, by applying (2) for $e'=e_r$, $k=m_r$ so that $\fp^{+\prime\prime}=\fp^+(e_r)_0=\fp^+(\sum_{j=1}^{r-1}e_j)_2$, we have 
\begin{align*}
&\Proj_\bm^{\fp^+}\bigl((x|y)_{\fn^+}^l\Delta_\bk^{\fn^+}(x)\bigr)
=\Proj_\bm^{\fp^+}\bigl((x|y)_{\fn^+}^l\Delta_\bk^{\fn^{+\prime\prime}}(x'')\bigr) \\
&=\frac{(-l)_{m_r}(-\bk^\vee)_{\underline{m_r}_{r-1},d}}{(-\bm^\vee)_{\underline{m_r}_r,d}} 
\det_{\fn^+}(x)^{m_r}(e_r|y)_{\fn^+}^{m_r}\Proj_{\bm-\underline{m_r}_r}^{\fp^+}\bigl((x|y)_{\fn^+}^{l-m_r}
\Delta_{\bk-\underline{m_r}_{r-1}}^{\fn^{+\prime\prime}}(x'')\bigr), 
\end{align*}
and it suffices to prove (\ref{formula_Proj_Pieri2}) when $k_r=m_r=0$. Then for $k_r=0$, by restricting 
\[ (x|y)_{\fn^+}^l \Delta_\bk^{\fn^{+\prime\prime}}(x'')
=\sum_{\substack{\bm\in\BZ_{++}^r,\,|\bm|=|\bk|+l \\ k_{j-1}\ge m_j\ge k_j}} \Proj_\bm^{\fp^+}\bigl((x|y)_{\fn^+}^l \Delta_\bk^{\fn^{+\prime\prime}}(x'')\bigr), \]
to the subalgebra $\fp^{+\prime\prime}=\fp^+(e_r)_0\subset\fp^+$, we get 
\begin{align*}
&\sum_{\substack{\bm\in\BZ_{++}^r,\,|\bm|=|\bk|+l \\ k_{j-1}\ge m_j\ge k_j \\ m_r=0}} \Proj_\bm^{\fp^+}\bigl((x|y)_{\fn^+}^l \Delta_\bk^{\fn^{+\prime\prime}}(x'')\bigr)\bigr|_{\fp^{+\prime\prime}}
=(x|y)_{\fn^+}^l \Delta_\bk^{\fn^{+\prime\prime}}(x'')\bigr|_{\fp^{+\prime\prime}} \\
&=(x''|y'')_{\fn^{+\prime\prime}}^l \Delta_\bk^{\fn^{+\prime\prime}}(x'')
=\sum_{\substack{\bm\in\BZ_{++}^{r-1},\,|\bm|=|\bk|+l \\ k_{j-1}\ge m_j\ge k_j}} \Proj_\bm^{\fp^{+\prime\prime}}\bigl((x''|y'')_{\fn^{+\prime\prime}}^l 
\Delta_\bk^{\fn^{+\prime\prime}}(x'')\bigr), 
\end{align*}
and hence for $k_r=m_r=0$, we have 
\begin{align*}
&\Proj_\bm^{\fp^+}\bigl((x|y)_{\fn^+}^l \Delta_\bk^{\fn^{+\prime\prime}}(x'')\bigr)\bigr|_{\fp^{+\prime\prime}}
=\Proj_\bm^{\fp^{+\prime\prime}}\bigl((x''|y'')_{\fn^{+\prime\prime}}^l \Delta_\bk^{\fn^{+\prime\prime}}(x'')\bigr). 
\end{align*}
By the induction hypothesis, this is non-zero. Hence (\ref{formula_Proj_Pieri2}) holds for all $r$. 
\end{proof}

Also, the following holds. 

\begin{proposition}\label{prop_Proj}
Suppose that $\fp^+$ is of tube type and $\rank\fp^+\ge 2$. 
Let $k,l,m\in\BZ_{\ge 0}$, $m\le k,l$, and let $y,z\in\fp^+$ be elements of rank 1. Then we have 
\begin{align*}
&\Proj_{(l-m,\underline{0}_{r-2},m-k)}^{\fp^+}\bigl((x|y)^l_{\fn^+}(x^\itinv|z)^k_{\fn^+}\bigr) \\
&=\frac{(-1)^m}{\bigl(-k-l+m-\frac{d}{2}(r-1)\bigr)_mm!}
\sum_{j=m}^{\min\{k,l\}}\frac{(-k)_j(-l)_j}{\bigl(-k-l+2m-\frac{d}{2}(r-1)+1\bigr)_{j-m}(j-m)!} \\*
&\hspace{255pt}\times(x|y)^{l-j}_{\fn^+}(x^\itinv|z)^{k-j}_{\fn^+}(y|z)^j_{\fn^+}, \\
&\Proj_{(k+l-m,m,\underline{0}_{r-2})}^{\fp^+}\bigl((x|y)^l_{\fn^+}(x|z)^k_{\fn^+}\bigr) \\
&=\frac{(-1)^m}{\bigl(-k-l+m-\frac{d}{2}\bigr)_mm!}
\sum_{j=m}^{\min\{k,l\}}\frac{(-k)_j(-l)_j}{\bigl(-k-l+2m-\frac{d}{2}+1\bigr)_{j-m}(j-m)!} \\*
&\hspace{150pt}\times(x|y)^{l-j}_{\fn^+}(x|z)^{k-j}_{\fn^+}\bigl((x|y)_{\fn^+}(x|z)_{\fn^+}-(P(x)y|z)_{\fn^+}\bigr)^j. 
\end{align*}
\end{proposition}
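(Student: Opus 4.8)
The plan is to prove both identities by reducing to a rank-2 computation and then identifying the resulting one-variable polynomial via its Pieri-type expansion. I would begin with the first identity. The left-hand side $(x|y)_{\fn^+}^l(x^\itinv|z)_{\fn^+}^k$ is, after multiplying by $\det_{\fn^+}(x)^k$, a polynomial in $x$ of the form $(x|y)_{\fn^+}^l(x^\sharp|z)_{\fn^+}^k$, which is a product of a degree-$l$ piece built from $x$ and a degree-$k(r-1)$ piece built from $x^\sharp$; by Theorem~\ref{thm_HKS} and the Pieri rule its $K^\BC$-isotypic components sit in a restricted range of partitions, and the component we want corresponds to the ``extreme'' partition $(l-m,\underline0_{r-2},m-k)+\underline{k}_r=(l-m+k,\underline{k}_{r-2},m)$. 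Since both $y$ and $z$ have rank $1$, the whole expression is supported, as a function on the fixed-rank-$1$ directions, on a Jordan subalgebra of rank $2$; I would use $K^\BC$-equivariance (moving $y,z$ by the structure group) together with Lemma~\ref{lem_Proj_Pieri}(1) to strip off the $\det_{\fn^+}(x)^{\min}$ factors and reduce to the genuinely rank-$2$ situation, where $\Delta_\bm^{\fn^+}$ for $\bm=(a,b)$ is just $\det^{b}(\text{minor})^{a-b}$ and everything is governed by two variables.

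For the rank-$2$ core computation I would expand $(x|y)^l(x^\itinv|z)^k$ (equivalently, after clearing denominators, a product of powers of the two relevant linear/quadratic invariants) in the basis of the $\Proj_\bm$ given by (\ref{formula_Proj_Pieri}), and then project onto the lowest allowed partition. Concretely, I would write the candidate right-hand side
\[
R(x):=\sum_{j=m}^{\min\{k,l\}}c_j\,(x|y)_{\fn^+}^{l-j}(x^\itinv|z)_{\fn^+}^{k-j}(y|z)_{\fn^+}^{j}
\]
and check that it lies in $\cP_{(l-m,\underline0_{r-2},m-k)}(\fp^+)$ by verifying that $\Proj_\bm^{\fp^+}R=0$ for every other $\bm$ in the Pieri range, and that $R$ has the correct ``leading term'' so that $(x|y)^l(x^\itinv|z)^k-R$ has no component in $(l-m,\underline0_{r-2},m-k)$. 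The coefficients $c_j$ are then forced by a finite recursion, which I would solve by recognizing the sum as a terminating ${}_2F_1$; the Pochhammer prefactors $\bigl(-k-l+m-\tfrac d2(r-1)\bigr)_m m!$ and $\bigl(-k-l+2m-\tfrac d2(r-1)+1\bigr)_{j-m}$ are exactly the normalizing constants coming from Lemma~\ref{lem_diff} and from the $(\lambda+\bs)_{\bm,d}$ ratios in Lemma~\ref{lem_Proj_Pieri}(2)--(4) when one applies $\det_{\fn^+}(\partial/\partial x)^m$ and uses the inversion formula $\frac{d}{dt}|_{t=0}(x+ty)^\itinv=-P(x)^{-1}y$ to handle the $x^\itinv$ arguments.

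The second identity I would treat in parallel, but now using the polynomial $(x|y)^l(x|z)^k$, which is holomorphic in $x$ and whose isotypic range is $(k+l-m,m,\underline0_{r-2})$ at the top. Here the rank-$1$-ness of $y,z$ forces the two invariants to be $(x|y)_{\fn^+}$, $(x|z)_{\fn^+}$ and the single ``second-order'' invariant, which is precisely $(x|y)_{\fn^+}(x|z)_{\fn^+}-(P(x)y|z)_{\fn^+}$ (the Pieri-complementary $\cP_{(2,0,\dots)}$-part of the product of two linear forms, cf. the defining relation of $P$ and (\ref{formula_Proj_Pieri})); so the expansion is in this invariant, and the projection onto the lowest partition is obtained by the same ${}_2F_1$ bookkeeping, with $d(r-1)$ replaced by $d$ because the relevant subalgebra geometry is now the ``first two slots'' rather than the ``first and last.'' A clean way to get the second from the first is to apply the Jordan-algebra inversion $x\mapsto x^\itinv$ together with the transform $\Delta_\bm^{\fn^+}(x^\itinv)=\Delta_{-\bm^\vee}^{\fn^+}(x)$ and $\det_{\fn^+}(x^\itinv)=\det_{\fn^+}(x)^{-1}$, which swaps the roles of the first and last coordinates of $\bm$ and turns $(x^\itinv|z)$ into $(x|z)$ up to a $\det$ factor; I would carry this out and check that the Pochhammer parameters transform correctly.

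The main obstacle I anticipate is not the rank-$2$ hypergeometric identity itself — that is a terminating ${}_2F_1$ evaluation of the kind appearing throughout \cite{N1,N2,N3} — but the reduction step: rigorously justifying that the projection of a product involving $x^\itinv$ (a rational, not polynomial, function) onto a $\cP_\bm(\fp^+)$ with $\bm$ having negative entries is governed by the same Pieri combinatorics as in Lemma~\ref{lem_Proj_Pieri}, and keeping track of the $\det_{\fn^+}$ powers and the $(e'|y)_{\fn^+}$ factors so that the final constants match on the nose. I would handle this by first establishing the identity for $x$ in the symmetric cone (where $x^\itinv$ is well-defined and Lemma~\ref{lem_Laplace} applies), clearing the $\det_{\fn^+}(x)^{\max}$ denominators to land in honest polynomials, applying the polynomial case of Lemma~\ref{lem_Proj_Pieri}, and then analytically continuing in $x$.
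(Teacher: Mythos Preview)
Your overall picture---expand in the basis $(x|y)^{l-j}_{\fn^+}(x^\itinv|z)^{k-j}_{\fn^+}(y|z)^j_{\fn^+}$, pin down the coefficients by a recursion, and then pass to the second identity---matches the paper's strategy. But two of your proposed mechanisms do not work as stated.

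\textbf{First identity: no reduction to rank 2.} You propose to move $y,z$ by $K^\BC$ and strip determinants to land in a rank-$2$ subalgebra. This fails: with $y=e_1$ and $z=e_r$, the factor $(x^\itinv|e_r)_{\fn^+}$ depends on \emph{all} of $x$, not on any rank-$2$ Peirce block, so the expression is not supported on a rank-$2$ subalgebra. The paper works in rank $r$ throughout. It first uses $\det_{\fn^+}(\partial/\partial x)^j\det_{\fn^+}(x)^k$ exactly as you suggest, but only to establish the span equality
\[
\bigoplus_j\BR\,(x|y)^{l-j}_{\fn^+}(x^\itinv|z)^{k-j}_{\fn^+}(y|z)^j_{\fn^+}
=\bigoplus_m\BR\,\Proj_{(l-m,\underline{0}_{r-2},m-k)}^{\fp^+}\bigl((x|y)^l_{\fn^+}(x^\itinv|z)^k_{\fn^+}\bigr),
\]
not to determine the coefficients. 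For $m=0$ the actual coefficient recursion comes from the second-order operator
\[
\cL=\sum_{\alpha\beta}(P(x)e_\alpha^\vee|e_\beta^\vee)_{\fn^+}\frac{\partial^2}{\partial x_\alpha\partial x_\beta},
\]
which by \cite[Proposition~VI.4.4]{FK} has \emph{distinct} eigenvalues on the isotypes $\cP_{(l-m,\underline{0}_{r-2},m-k)}(\fp^+)$. Applying $\cL$ to the unknown combination $\sum_j a_j(x|y)^{l-j}_{\fn^+}(x^\itinv|z)^{k-j}_{\fn^+}(y|z)^j_{\fn^+}$ produces a clean two-term recursion in $j$ (because $\cL$ acts on each monomial by an eigenvalue shift plus a single lowering term), which solves to the displayed ${}_2F_1$-type sum; the initial value $a_0=1$ is fixed by looking at non-invertible $x$. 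The determinant-power operators you mention give an invertible linear system but not a two-term recursion, and you have not named any operator that would. The case $m\ge 1$ is then reduced to $m=0$ with shifted $(k,l)$ via Lemma~\ref{lem_Proj_Pieri}\,(2), which is in line with what you expected.

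\textbf{Second identity: not via $x\mapsto x^\itinv$.} Substituting $x\mapsto x^\itinv$ sends $\cP_\bm$ to $\cP_{-\bm^\vee}$, so it would convert $(l-m,\underline{0}_{r-2},m-k)$ into $(k-m,\underline{0}_{r-2},m-l)$, not into $(k+l-m,m,\underline{0}_{r-2})$; the partition shapes do not match. What the paper does instead is first observe that for the second identity both sides depend only on the rank-$2$ subalgebra generated by the rank-$1$ elements $y,z$ (here the reduction to rank $2$ \emph{is} legitimate, because no $x^\itinv$ appears), and then in rank $2$ uses $(x|z)_{\fn^+}=\det_{\fn^+}(x)\,(x^\itinv|z^\sharp)_{\fn^+}$ with $z^\sharp$ again of rank $1$, so the first identity applies with $z$ replaced by $z^\sharp$. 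The remaining point is the rank-$2$ polarization identity
\[
(x|y)_{\fn^+}(x|z)_{\fn^+}-(P(x)y|z)_{\fn^+}=(y|z^\sharp)_{\fn^+}\det_{\fn^+}(x),
\]
which matches the ``second-order invariant'' you wrote down to the right-hand side of the first identity.
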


\begin{proof}
(1) First we prove 
\begin{align}
&\bigoplus_{j=0}^{\min\{k,l\}}\BR(x|y)^{l-j}_{\fn^+}(x^\itinv|z)^{k-j}_{\fn^+}(y|z)_{\fn^+}^j
=\bigoplus_{m=0}^{\min\{k,l\}}\BR\Proj_{(l-m,\underline{0}_{r-2},m-k)}^{\fp^+}\bigl((x|y)^l_{\fn^+}(x^\itinv|z)^k_{\fn^+}\bigr). \label{formula_proof_span}
\end{align}
Indeed, by the definition of $\Proj_\bm^{\fp^+}$ we have 
\[ (x|y)^l_{\fn^+}(x^\itinv|z)^k_{\fn^+}
=\sum_{m=0}^{\min\{k,l\}}\Proj_{(l-m,\underline{0}_{r-2},m-k)}^{\fp^+}\bigl((x|y)^l_{\fn^+}(x^\itinv|z)^k_{\fn^+}\bigr). \]
We fix a primitive idempotent $e'\in\fn^+$, and let $\fn^+(e')_0=:\fn^{+\prime\prime}$. Then, since $z$ is of rank 1, $z=ge'$ holds for some $g\in K^\BC$. 
Then, as in the proof of Lemma \ref{lem_Proj_Pieri}\,(2), for $j=0,1,\ldots,\min\{k,l\}$ we have 
\begin{align}
&\det_{\fn^+}(x)^{j-k}\det_{\fn^+}\biggl(\frac{\partial}{\partial x}\biggr)^j\det_{\fn^+}(x)^k(x|y)^l_{\fn^+}(x^\itinv|z)^k_{\fn^+} \notag \\
&=\det_{\fn^+}(g^{-1}x)^{j-k}\det_{\fn^+}\biggl({}^t\hspace{-1pt}g\frac{\partial}{\partial x}\biggr)^j\det_{\fn^+}(g^{-1}x)^k
(g^{-1}x|{}^t\hspace{-1pt}gy)^l_{\fn^+}((g^{-1}x)^\itinv|e')^k_{\fn^+} \notag \\
&=\det_{\fn^+}(w)^{j-k}\det_{\fn^+}\biggl(\frac{\partial}{\partial w}\biggr)^j
(w|{}^t\hspace{-1pt}gy)^l_{\fn^+}\det_{\fn^{+\prime\prime}}(w'')^k\biggr|_{w=g^{-1}x} \notag \\
&=(-1)^{jr}(-l)_j(-k)_{\underline{j}_{r-1},d} \det_{\fn^+}(w)^{j-k}(w|{}^t\hspace{-1pt}gy)^{l-j}_{\fn^+}
\det_{\fn^{+\prime\prime}}(w'')^{k-j}(e'|{}^t\hspace{-1pt}gy)_{\fn^+}^j\Bigr|_{w=g^{-1}x} \notag \\
&=(-1)^{jr}(-l)_j(-k)_{\underline{j}_{r-1},d}(g^{-1}x|{}^t\hspace{-1pt}gy)^{l-j}_{\fn^+}((g^{-1}x)^\itinv|e')_{\fn^+}^{k-j}({}^t\hspace{-1pt}gy|e')_{\fn^+}^j \notag \\
&=(-1)^{jr}(-l)_j(-k)_{\underline{j}_{r-1},d}(x|y)^{l-j}_{\fn^+}(x^\itinv|z)^{k-j}_{\fn^+}(y|z)_{\fn^+}^j. \label{formula_diffproj_left}
\end{align}
On the other hand, for $j,m=0,1,\ldots,\min\{k,l\}$ we have 
\begin{align}
&\det_{\fn^+}(x)^{j-k}\det_{\fn^+}\biggl(\frac{\partial}{\partial x}\biggr)^j\det_{\fn^+}(x)^k
\Proj_{(l-m,\underline{0}_{r-2},m-k)}^{\fp^+}\bigl((x|y)^l_{\fn^+}(x^\itinv|z)^k_{\fn^+}\bigr) \notag \\
&=(-1)^{jr}(-(m,\underline{k}_{r-2},k+l-m))_{\underline{j}_r,d}\Proj_{(l-m,\underline{0}_{r-2},m-k)}^{\fp^+}\bigl((x|y)^l_{\fn^+}(x^\itinv|z)^k_{\fn^+}\bigr), \label{formula_diffproj_right}
\end{align}
and hence 
\begin{align*}
&(x|y)^{l-j}_{\fn^+}(x^\itinv|z)^{k-j}_{\fn^+}(y|z)_{\fn^+}^j
\in \BR\det_{\fn^+}(x)^{j-k}\det_{\fn^+}\biggl(\frac{\partial}{\partial x}\biggr)^j\det_{\fn^+}(x)^k(x|y)^l_{\fn^+}(x^\itinv|z)^k_{\fn^+}\\
&=\BR\det_{\fn^+}(x)^{j-k}\det_{\fn^+}\biggl(\frac{\partial}{\partial x}\biggr)^j\det_{\fn^+}(x)^k
\sum_{m=0}^{\min\{k,l\}}\Proj_{(l-m,\underline{0}_{r-2},m-k)}^{\fp^+}\bigl((x|y)^l_{\fn^+}(x^\itinv|z)^k_{\fn^+}\bigr) \\
&\subset \bigoplus_{m=j}^{\min\{k,l\}}\BR\Proj_{(l-m,\underline{0}_{r-2},m-k)}^{\fp^+}\bigl((x|y)^l_{\fn^+}(x^\itinv|z)^k_{\fn^+}\bigr). 
\end{align*}
Therefore, the left hand side of (\ref{formula_proof_span}) is contained in the right hand side. 
Since both sides have the dimension $\min\{k,l\}+1$, we get (\ref{formula_proof_span}). 

Next we determine $\Proj_{(l,\underline{0}_{r-2},-k)}^{\fp^+}\bigl((x|y)^l_{\fn^+}(x^\itinv|z)^k_{\fn^+}\bigr)$. By (\ref{formula_proof_span}) this is of the form 
\[ \Proj_{(l,\underline{0}_{r-2},-k)}^{\fp^+}\bigl((x|y)^l_{\fn^+}(x^\itinv|z)^k_{\fn^+}\bigr)
=\sum_{j=0}^{\min\{k,l\}} a_j(x|y)^{l-j}_{\fn^+}(x^\itinv|z)^{k-j}_{\fn^+}(y|z)_{\fn^+}^j \]
for some $a_j\in\BC$. Then since 
\begin{align*}
\det_{\fn^+}(x)^k\Proj_{(l-m,\underline{0}_{r-2},m-k)}^{\fp^+}\bigl((x|y)^l_{\fn^+}(x^\itinv|z)^k_{\fn^+}\bigr)&\in\cP_{(k+l-m,\underline{k}_{r-2},m)}(\fp^+), \\
\det_{\fn^+}(x)^k(x^\itinv|z)^{k-j}_{\fn^+}&\in\cP_{(\underline{k}_{r-1},j)}(\fp^+)
\end{align*}
vanish for $m,j\ge 1$ if $x$ is not invertible, we have 
\begin{align*}
\det_{\fn^+}(x)^k\Proj_{(l,\underline{0}_{r-2},-k)}^{\fp^+}\bigl((x|y)^l_{\fn^+}(x^\itinv|z)^k_{\fn^+}\bigr)&=(x|y)^l_{\fn^+}(x^\sharp|z)^k_{\fn^+}, \\
\det_{\fn^+}(x)^k\sum_{j=0}^{\min\{k,l\}} a_j(x|y)^{l-j}_{\fn^+}(x^\itinv|z)^{k-j}_{\fn^+}(y|z)_{\fn^+}^j&=a_0(x|y)^l_{\fn^+}(x^\sharp|z)^k_{\fn^+}
\end{align*}
for non-invertible $x$, and hence we get $a_0=1$. To determine other $a_j$, we consider the differential operator 
\[ \cL:=\sum_{\alpha\beta}(P(x)e_\alpha^\vee|e_\beta^\vee)_{\fn^+}\frac{\partial^2}{\partial x_\alpha\partial x_\beta}, \]
where $\{e_\alpha\},\{e_\alpha^\vee\}\subset\fn^+$ are mutually dual bases of $\fn^+$ and $\frac{\partial}{\partial x_\alpha}$ is the directional derivative along $e_\alpha$. 
Then by \cite[Proposition VI.4.4]{FK}, $\Proj_{(l-m,\underline{0}_{r-2},m-k)}^{\fp^+}\allowbreak\bigl((x|y)^l_{\fn^+}(x^\itinv|z)^k_{\fn^+}\bigr)$ is the eigenfunction of $\cL$ with the eigenvalue 
\begin{align*}
&(l-m)^2-(l-m)+(m-k)^2-(m-k)(1+d(r-1)) \\
&=2m^2-m(2k+2l+d(r-1))+k^2+l^2+k(1+d(r-1))-l, 
\end{align*}
which are distinct for all $m=0,1,\ldots,\min\{k,l\}$. Also, we have 
\begin{align*}
&\cL (x|y)^{l-j}_{\fn^+}(x^\itinv|z)^{k-j}_{\fn^+}(y|z)_{\fn^+}^j \\
&=(\cL (x|y)^{l-j}_{\fn^+})(x^\itinv|z)^{k-j}_{\fn^+}(y|z)_{\fn^+}^j+(x|y)^{l-j}_{\fn^+}(\cL (x^\itinv|z)^{k-j}_{\fn^+})(y|z)_{\fn^+}^j \\*
&\eqspace{}+2\sum_{\alpha\beta}(P(x)e_\alpha^\vee|e_\beta^\vee)_{\fn^+}\biggl(\frac{\partial}{\partial x_\alpha}(x|y)^{l-j}_{\fn^+}\biggr)
\biggl(\frac{\partial}{\partial x_\beta}(x^\itinv|z)^{k-j}_{\fn^+}\biggr)(y|z)_{\fn^+}^j \\
&=\bigl((l-j)^2-(l-j)\bigr)(x|y)^{l-j}_{\fn^+}(x^\itinv|z)^{k-j}_{\fn^+}(y|z)_{\fn^+}^j \\*
&\eqspace{}+\bigl((j-k)^2-(j-k)(1+d(r-1))\bigr)(x|y)^{l-j}_{\fn^+}(x^\itinv|z)^{k-j}_{\fn^+}(y|z)_{\fn^+}^j \\*
&\eqspace{}-2(l-j)(k-j)(P(x)y|P(x)^{-1}z)_{\fn^+}(x|y)^{l-j-1}_{\fn^+}(x^\itinv|z)^{k-j-1}_{\fn^+}(y|z)_{\fn^+}^j \\*
&=\bigl(2j^2-j(2k+2l+d(r-1))+k^2+l^2+k(1+d(r-1))-l\bigr)(x|y)^{l-j}_{\fn^+}(x^\itinv|z)^{k-j}_{\fn^+}(y|z)_{\fn^+}^j \\*
&\eqspace{}-2(l-j)(k-j)(x|y)^{l-j-1}_{\fn^+}(x^\itinv|z)^{k-j-1}_{\fn^+}(y|z)_{\fn^+}^{j+1}. 
\end{align*}
Hence we have 
\begin{align*}
0&=\bigl(\cL-(k^2+l^2+k(1+d(r-1))-l)\bigr)\Proj_{(l,\underline{0}_{r-2},-k)}^{\fp^+}\bigl((x|y)^l_{\fn^+}(x^\itinv|z)^k_{\fn^+}\bigr) \\
&=\sum_{j=0}^{\min\{k,l\}}a_j\bigl(\bigl(2j^2-j(2k+2l+d(r-1))\bigr)(x|y)^{l-j}_{\fn^+}(x^\itinv|z)^{k-j}_{\fn^+}(y|z)_{\fn^+}^j \\*
&\eqspace{}-2(l-j)(k-j)(x|y)^{l-j-1}_{\fn^+}(x^\itinv|z)^{k-j-1}_{\fn^+}(y|z)_{\fn^+}^{j+1}\bigr) \\
&=2\sum_{j=0}^{\min\{k,l\}-1}\biggl(a_{j+1}\biggl(j-k-l-\frac{d}{2}(r-1)+1\biggr)(j+1)-a_j(j-k)(j-l)\biggr) \\*
&\eqspace{}\times (x|y)^{l-j-1}_{\fn^+}(x^\itinv|z)^{k-j-1}_{\fn^+}(y|z)_{\fn^+}^{j+1}, 
\end{align*}
and since $a_0=1$, we have 
\[ a_j=\frac{(-k)_j(-l)_j}{\bigl(-k-l-\frac{d}{2}(r-1)+1\bigr)_j j!}, \]
that is, we get 
\begin{align*}
&\Proj_{(l,\underline{0}_{r-2},-k)}^{\fp^+}\bigl((x|y)^l_{\fn^+}(x^\itinv|z)^k_{\fn^+}\bigr) \\
&=\sum_{j=0}^{\min\{k,l\}}\frac{(-k)_j(-l)_j}{\bigl(-k-l-\frac{d}{2}(r-1)+1\bigr)_j j!}(x|y)^{l-j}_{\fn^+}(x^\itinv|z)^{k-j}_{\fn^+}(y|z)^j_{\fn^+}. 
\end{align*}

Finally we determine $\Proj_{(l-m,\underline{0}_{r-2},m-k)}^{\fp^+}\bigl((x|y)^l_{\fn^+}(x^\itinv|z)^k_{\fn^+}\bigr)$ for $m\ge 1$. 
If $z=e'\in\fn^+$ is a primitive idempotent, then by Lemma \ref{lem_Proj_Pieri}\,(2), we have 
\begin{align*}
&\Proj_{(l-m,\underline{0}_{r-2},m-k)}^{\fp^+}\bigl((x|y)^l_{\fn^+}(x^\itinv|z)^k_{\fn^+}\bigr) \\
&=\det_{\fn^+}(x)^{-k}\Proj_{(k+l-m,\underline{k}_{r-2},m)}^{\fp^+}\bigl((x|y)^l_{\fn^+}\det_{\fn^{+\prime\prime}}(x'')^k\bigr) \\
&=\frac{(-l)_m(-k)_{\underline{m}_{r-1},d}}{(-(m,\underline{k}_{r-2},k+l-m))_{\underline{m}_r,d}} \\*
&\eqspace{}\times \det_{\fn^+}(x)^{m-k}\Proj_{(k+l-2m,\underline{k-m}_{r-2},0)}^{\fp^+}\bigl((x|y)^{l-m}_{\fn^+}\det_{\fn^{+\prime\prime}}(x'')^{k-m}\bigr)(e'|y)_{\fn^+}^m \\
&=\frac{(-l)_m(-k)_m\bigl(-k-\frac{d}{2}\bigr)_{\underline{m}_{r-2},d}}{(-m)_m\bigl(-k-\frac{d}{2}\bigr)_{\underline{m}_{r-2},d}\bigl(-k-l+m-\frac{d}{2}(r-1)\bigr)_m} \\*
&\eqspace{}\times \Proj_{(l-m,\underline{0}_{r-2},m-k)}^{\fp^+}\bigl((x|y)^{l-m}_{\fn^+}(x^\itinv|z)_{\fn^+}^{k-m}\bigr)(y|z)_{\fn^+}^m \\
&=\frac{(-1)^m(-l)_m(-k)_m}{\bigl(-k-l+m-\frac{d}{2}(r-1)\bigr)_m m!} \\*
&\eqspace{}\times \sum_{j=0}^{\min\{k-m,l-m\}}\frac{(-k+m)_j(-l+m)_j}{\bigl(-k-l+2m-\frac{d}{2}(r-1)+1\bigr)_j j!}(x|y)^{l-m-j}_{\fn^+}(x^\itinv|z)^{k-m-j}_{\fn^+}(y|z)^{j+m}_{\fn^+} \\
&=\frac{(-1)^m}{\bigl(-k-l+m-\frac{d}{2}(r-1)\bigr)_mm!}
\sum_{j=m}^{\min\{k,l\}}\frac{(-k)_j(-l)_j(x|y)^{l-j}_{\fn^+}(x^\itinv|z)^{k-j}_{\fn^+}(y|z)^j_{\fn^+}}{\bigl(-k-l+2m-\frac{d}{2}(r-1)+1\bigr)_{j-m}(j-m)!}. 
\end{align*}
For general $z\in\fp^+$ of rank 1, when $z=ge'$ with $g\in K^\BC$, by replacing $x,y$ with $g^{-1}x, {}^t\hspace{-1pt}gy$ and applying the above, we get the desired formula. 

(2) First we consider the case $\rank\fp^+=2$. Then for $x,z\in\fp^+$, we have 
\[ (x|z)_{\fn^+}=(x^\sharp|z^\sharp)_{\fn^+}=\det_{\fn^+}(x)(x^\itinv|z^\sharp)_{\fn^+}, \]
and $z^\sharp$ is of rank 1 if $z$ is of rank 1. Hence, by (1) we have 
\begin{align*}
&\Proj_{(k+l-m,m)}^{\fp^+}\bigl((x|y)^l_{\fn^+}(x|z)^k_{\fn^+}\bigr)=\det_{\fn^+}(x)^k\Proj_{(l-m,m-k)}^{\fp^+}\bigl((x|y)^l_{\fn^+}(x^\itinv|z^\sharp)_{\fn^+}^k\bigr) \\
&=\frac{(-1)^m}{\bigl(-k-l+m-\frac{d}{2}\bigr)_mm!}\det_{\fn^+}(x)^k
\sum_{j=m}^{\min\{k,l\}}\frac{(-k)_j(-l)_j(x|y)^{l-j}_{\fn^+}(x^\itinv|z^\sharp)_{\fn^+}^{k-j}(y|z^\sharp)^j_{\fn^+}}{\bigl(-k-l+2m-\frac{d}{2}+1\bigr)_{j-m}(j-m)!} \\
&=\frac{(-1)^m}{\bigl(-k-l+m-\frac{d}{2}\bigr)_mm!}
\sum_{j=m}^{\min\{k,l\}}\frac{(-k)_j(-l)_j(x|y)^{l-j}_{\fn^+}(x|z)^{k-j}_{\fn^+}(y|z^\sharp)^j_{\fn^+}\det_{\fn^+}(x)^j}{\bigl(-k-l+2m-\frac{d}{2}+1\bigr)_{j-m}(j-m)!}. 
\end{align*}
Hence it suffices to show 
\begin{equation}\label{formula_proof_rank2det}
(x|y)_{\fn^+}(x|z)_{\fn^+}-(P(x)y|z)_{\fn^+}=(y|z^\sharp)_{\fn^+}\det_{\fn^+}(x). 
\end{equation}
Indeed, when $y=z$ we have 
\begin{align*}
(x|y)_{\fn^+}^2-(P(x)y|y)_{\fn^+}&=\bigl(e\big|P(x^{\mathit{1/2}})y\bigr)_{\fn^+}^2-\bigl(P(x^{\mathit{1/2}})y\big|P(x^{\mathit{1/2}})y\bigr)_{\fn^+} \\
&=2\det_{\fn^+}\bigl(P(x^{\mathit{1/2}})y\bigr)=2\det_{\fn^+}(x)\det_{\fn^+}(y)=\det_{\fn^+}(x)(y|y^\sharp)_{\fn^+}, 
\end{align*}
and by polarization, we get (\ref{formula_proof_rank2det}). Thus we get the desired formula when $\fp^+$ is of rank 2. 
For $\fp^+$ of general rank, since the both sides of the desired formula depend only on the rank 2 subalgebra containing $y$ and $z$, 
this is reduced to the rank 2 case. 
\end{proof}

\begin{remark}
By (\ref{formula_diffproj_left}), (\ref{formula_diffproj_right}) and Proposition \ref{prop_Proj}, for $i=0,1,\ldots,\min\{k,l\}$ we have 
\begin{align*}
&(-l)_i(-k)_i(x|y)^{l-i}_{\fn^+}(x^\itinv|z)^{k-i}_{\fn^+}(y|z)_{\fn^+}^i \\
&=\sum_{m=i}^{\min\{k,l\}}(-m)_i\biggl(-k-l+m-\frac{d}{2}(r-1)\biggr)_i\Proj_{(l-m,\underline{0}_{r-2},m-k)}^{\fp^+}\bigl((x|y)^l_{\fn^+}(x^\itinv|z)^k_{\fn^+}\bigr) \\
&=\sum_{m=i}^{\min\{k,l\}}\frac{(-1)^{m-i}}{\bigl(-k-l+i+m-\frac{d}{2}(r-1)\bigr)_{m-i}(m-i)!} \\*
&\eqspace\times\sum_{j=m}^{\min\{k,l\}}\frac{(-k)_j(-l)_j}{\bigl(-k-l+2m-\frac{d}{2}(r-1)+1\bigr)_{j-m}(j-m)!}(x|y)^{l-j}_{\fn^+}(x^\itinv|z)^{k-j}_{\fn^+}(y|z)^j_{\fn^+}, 
\end{align*}
that is, for $j=i+1,i+2,\ldots,\min\{k,l\}$ we have 
\begin{align*}
&\sum_{m=i}^j \frac{(-1)^{m-i}}{\bigl(-k-l+i+m-\frac{d}{2}(r-1)\bigr)_{m-i}(m-i)!}\frac{(-k)_j(-l)_j}{\bigl(-k-l+2m-\frac{d}{2}(r-1)+1\bigr)_{j-m}(j-m)!} \\
&=\sum_{m=0}^{j-i} \frac{(-1)^m(-k)_j(-l)_j}
{\bigl(-k\hspace{-1pt}-\hspace{-1pt}l\hspace{-1pt}+\hspace{-1pt}2i\hspace{-1pt}+\hspace{-1pt}m\hspace{-1pt}-\hspace{-1pt}\frac{d}{2}(r\hspace{-1pt}-\hspace{-1pt}1)\bigr)_m m!
\bigl(-k\hspace{-1pt}-\hspace{-1pt}l\hspace{-1pt}+\hspace{-1pt}2i\hspace{-1pt}+\hspace{-1pt}2m\hspace{-1pt}-\hspace{-1pt}\frac{d}{2}(r\hspace{-1pt}-\hspace{-1pt}1)
\hspace{-1pt}+\hspace{-1pt}1\bigr)_{j-i-m}(j-i-m)!} \\
&=0. 
\end{align*}
In general, for $\mu\in\BC$, $j\in\BZ_{>0}$, we can directly verify 
\begin{align*}
&\sum_{m=0}^j \frac{(-1)^m}{(\mu\hspace{-1pt}+\hspace{-1pt}m)_mm!\bigl(\mu\hspace{-1pt}+\hspace{-1pt}2m\hspace{-1pt}+\hspace{-1pt}1\bigr)_{j-m}(j\hspace{-1pt}-\hspace{-1pt}m)!} 
=\sum_{m=0}^j\frac{(-1)^m(\mu)_m(\mu\hspace{-1pt}+\hspace{-1pt}2m)(\mu\hspace{-1pt}+\hspace{-1pt}j\hspace{-1pt}+\hspace{-1pt}m\hspace{-1pt}+\hspace{-1pt}1)_{j-m}}{(\mu)_{2j+1}m!(j-m)!} \\
&=\frac{(-1)^j}{(\mu)_{2j+1}}\sum_{m=0}^j\frac{(\mu)_m(\mu+2m)}{m!}\frac{(-\mu-2j)_{j-m}}{(j-m)!}=0, 
\end{align*}
by comparing the coefficients for $t^j$ of 
\begin{align*}
&\sum_{q=0}^\infty\sum_{m=0}^q\frac{(\mu)_m(\mu+2m)}{m!}\frac{(-\mu-2j)_{q-m}}{(q-m)!}t^q
=\sum_{m=0}^\infty\frac{(\mu)_m(\mu+2m)}{m!}t^m\sum_{n=0}^\infty\frac{(-\mu-2j)_n}{n!}t^n \\
&=\mu(1+t)(1-t)^{-\mu-1}(1-t)^{\mu+2j}=\mu(1+t)(1-t)^{2j-1}=-\frac{\mu}{j}\sum_{q=0}^\infty\frac{(-2j)_q(-j+q)}{q!}t^q. 
\end{align*}
\end{remark}

\subsection{Holomorphic discrete series representations}\label{subsection_HDS}

In this subsection, we review holomorphic discrete series representations. 
First we recall from, e.g., \cite[Section VII.9]{Kn}, \cite[Part III, Section III]{FKKLR}, 
the bounded symmetric domain realization (Harish-Chandra realization) of the Hermitian symmetric space $G/K$ via the Borel embedding, 
\[ \xymatrix{ G/K \ar[r] \ar@{-->}[d]^{\mbox{\rotatebox{90}{$\sim$}}} & G^\BC/K^\BC P^- \\
 D \ar@{^{(}->}[r] & \fp^+ \ar[u]_{\exp} } \]
where 
\begin{align*}
D&=(\text{connected component of }\{x\in\fp^+\mid B(x)\text{ is positive definite}\} \text{ which contains }0) \\
&=(\text{connected component of }\{x\in\fp^+\mid h(x)>0\}\text{ which contains }0). 
\end{align*}
Here we write $B(x):=B(x,\overline{x})$, $h(x):=h(x,\overline{x})$. 
Then its Bergman--Shilov boundary $\Sigma\subset\partial D$ consists of all maximal tripotents in $\fp^+$. 
For $g\in G^\BC$, $x\in D$, if $g\exp(x)\in P^+K^\BC P^-$ holds, then we write 
\[ g\exp(x)=\exp(\pi^+(g,x))\kappa(g,x)\exp(\pi^-(g,x)), \]
where $\pi^\pm(g,x)\in \fp^\pm$ and $\kappa(g,x)\in K^\BC$. Then the map $\pi^+\colon G\times D\to D$ gives an action of $G$ on $D$, and we abbreviate $\pi^+(g,x)=:gx$. 

Next let $(\tau,V)$ be an irreducible holomorphic representation of the universal covering group $\widetilde{K}^\BC$ of $K^\BC$, 
with the $\widetilde{K}$-invariant inner product $(\cdot,\cdot)_\tau$. 
Then the universal covering group $\widetilde{G}$ of $G$ acts on the space $\cO(D,V)=\cO_\tau(D,V)$ of $V$-valued holomorphic functions on $D$ by 
\[ (\hat{\tau}(g)f)(x):=\tau(\kappa(g^{-1},x))^{-1}f(g^{-1}x) \qquad (g\in\widetilde{G},\; x\in D,\; f\in\cO(D,V)), \]
where we lift the map $\kappa\colon G\times D\to K^\BC$ to the universal covering spaces, and represent by the same symbol $\kappa\colon \widetilde{G}\times D\to \widetilde{K}^\BC$. 
Its differential action is given by 
\begin{equation}\label{HDS_diff_action}
(d\hat{\tau}(z,k,w)f)(x)=d\tau(k-D(x,w))f(x)+\frac{d}{dt}\biggr|_{t=0}f(x-t(z+kx-Q(x)w))
\end{equation}
for $z\in\fp^+$, $k\in\fk^\BC$, $w\in\fp^-$. This becomes a highest weight representation with the minimal $\widetilde{K}$-type $(\tau,V)$. 
If this contains a unitary subrepresentation $\cH_\tau(D,V)\subset\cO_\tau(D,V)$, 
then its reproducing kernel is proportional to $\tau(B(x,\overline{y}))$, and such unitary subrepresentation is unique. 
Here we lift the map $B\colon D\times \overline{D}\to \operatorname{Str}(\fp^+)_0\subset\End_\BC(\fp^+)$ to the universal covering space, 
and represent by the same symbol $B\colon D\times \overline{D}\to\widetilde{K}^\BC$. Especially, if the $\widetilde{G}$-invariant inner product is given by the converging integral 
\[ \langle f,g\rangle_{\hat{\tau}}:=C_\tau\int_D (\tau(B(x)^{-1})f(x),g(x))_\tau \Det_{\fp^+}(B(x))^{-1}\,dx \]
(a \textit{weighted Bergman inner product}), then $(\hat{\tau},\cH_\tau(D,V))$ is called a \textit{holomorphic discrete series representation}. 
Here we normalize the Lebesgue measure $dx$ on $D\subset\fp^+$ with respect to the inner product $(\cdot|\overline{\cdot})_{\fp^+}$, 
and determine the constant $C_\tau$ such that $\Vert v\Vert_{\hat{\tau}}=|v|_\tau$ holds for all constant functions $v\in V$. 

Next, let $\chi\colon \widetilde{K}^\BC\to\BC^\times$ be the character of $\widetilde{K}^\BC$ normalized such that 
\begin{equation}\label{chi_normalize}
d\chi([x,y])=(x|y)_{\fp^+} \qquad (x\in\fp^+,\; y\in\fp^-), 
\end{equation}
so that $h(x,y)=\chi(B(x,y))$ holds, and we fix a representation $(\tau_0,V)$ of $K^\BC$. 
When $(\tau,V)$ is of the form $(\tau,V)=(\chi^{-\lambda}\otimes\tau_0,V)$, we write $\cH_\tau(D,V)=\cH_\lambda(D,V)\subset\cO_\tau(D,V)=\cO_\lambda(D,V)$. 
In addition, if $(\tau,V)=(\chi^{-\lambda},\BC)$, then we write $\cH_\tau(D,V)=\cH_\lambda(D)\subset\cO_\tau(D,V)=\cO_\lambda(D)$ and $\hat{\tau}=\tau_\lambda$. 

In the rest of this subsection, we assume $\fp^+$ is simple and $(\tau,V)=(\chi^{-\lambda},\BC)$. 
Then $\cH_\lambda(D)$ is a holomorphic discrete series representation if $\lambda>p-1$, and then the inner product is given by 
\begin{gather}
\langle f,g\rangle_\lambda=\langle f,g\rangle_{\lambda,\fp^+}:=C_\lambda\int_D f(x)\overline{g(x)}h(x)^{\lambda-p}\,dx, \label{Bergman_inner_prod}\\
C_\lambda:=\frac{1}{\pi^n}\frac{\Gamma_r^d(\lambda)}{\Gamma_r^d\bigl(\lambda-\frac{n}{r}\bigr)}, \notag
\end{gather}
where $\Gamma_r^d(\lambda)$ is as in (\ref{Gamma}). 
We consider another inner product on $\cP(\fp^+)$, called the \textit{Fischer inner product} (see e.g. \cite[Section XI.1]{FK}), defined by 
\begin{equation}\label{Fischer_inner_prod}
\langle f,g\rangle_{F}=\langle f,g\rangle_{F,\fp^+}:=\frac{1}{\pi^n}\int_{\fp^+}f(x)\overline{g(x)}e^{-(x|\overline{x})_{\fp^+}}\,dx
=\overline{g\biggl(\overline{\frac{\partial}{\partial x}}\biggr)}f(x)\biggr|_{x=0}. 
\end{equation}
Here $\overline{g(\overline{\cdot})}$ is a holomorphic polynomial on $\fp^-$, and we normalize $\frac{\partial}{\partial x}$ with respect to the bilinear form 
$(\cdot|\cdot)_{\fp^+}\colon\fp^+\times\fp^-\to\BC$. Then the following holds. 

\begin{theorem}[{Faraut--Kor\'anyi, \cite{FK0}, \cite[Part III, Corollary V.3.9]{FKKLR}}]\label{thm_FK}
For $\lambda>p-1$, $\bm\in\BZ_{++}^r$, $f\in\cP_\bm(\fp^+)$, $g\in\cP(\fp^+)$, we have 
\[ \langle f,g\rangle_\lambda=\frac{1}{(\lambda)_{\bm,d}}\langle f,g\rangle_F, \]
where $(\lambda)_{\bm,d}$ is as in (\ref{Pochhammer}). 
\end{theorem}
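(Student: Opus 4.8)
The plan is to reduce the identity, via Schur's lemma, to the determination of a single positive scalar on each $K^\BC$-irreducible summand $\cP_\bm(\fp^+)$, and then to evaluate that scalar by transporting the weighted Bergman integral to the symmetric cone, where Gindikin's formula (Lemma~\ref{lem_Laplace}) supplies the Gamma-factor $\Gamma_r^d(\lambda+\bm)$.

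First I would observe that both $\langle\cdot,\cdot\rangle_F$ and $\langle\cdot,\cdot\rangle_\lambda$ are $\widetilde{K}$-invariant positive definite Hermitian forms on $\cP(\fp^+)$: for the Fischer form this is immediate from its defining integral, and for $\langle\cdot,\cdot\rangle_\lambda$ it holds because, when $\lambda>p-1$, $\cP(\fp^+)$ sits inside $\cH_\lambda(D)$ as its space of $\widetilde{K}$-finite vectors, so $\langle\cdot,\cdot\rangle_\lambda$ is the restriction of the $\widetilde{G}$-invariant, hence $\widetilde{K}$-invariant, inner product. By Theorem~\ref{thm_HKS} the decomposition $\cP(\fp^+)=\bigoplus_{\bm\in\BZ_{++}^r}\cP_\bm(\fp^+)$ is into pairwise inequivalent irreducible $\widetilde{K}$-modules, so Schur's lemma forces $\cP_\bm(\fp^+)\perp\cP_{\bm'}(\fp^+)$ for $\bm\neq\bm'$ with respect to both forms and the existence of constants $c_\bm(\lambda)>0$ with $\langle f,g\rangle_\lambda=c_\bm(\lambda)^{-1}\langle f,g\rangle_F$ for all $f,g\in\cP_\bm(\fp^+)$. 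This reduces the theorem to the single identity $c_\bm(\lambda)=(\lambda)_{\bm,d}$; since $c_\bm(\lambda)$ is a scalar it may be computed on any one nonzero vector, and I would test on the conical polynomial $\Delta_\bm^{\fn^+}$, which moreover depends only on the Peirce-two component of $x$ relative to a fixed maximal tripotent $e$.

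Next I would compute $\langle\Delta_\bm^{\fn^+},\Delta_\bm^{\fn^+}\rangle_\lambda$ and $\langle\Delta_\bm^{\fn^+},\Delta_\bm^{\fn^+}\rangle_F$ separately; of the Fischer norm only the $\lambda$-independence is needed. For the Bergman norm, when $\fp^+$ is of tube type the Cayley transform identifies $D$ with the tube $\Omega+\sqrt{-1}\fn^+$ over the symmetric cone and turns $h(x,\overline x)^{\lambda-p}\,dx$ into a constant multiple of $\det_{\fn^+}(z)^{\lambda-n/r}$ on $\Omega$ --- here one uses $\Det_{\fp^+}B(x)=h(x,\overline x)^p$ from (\ref{formula_hB}) and $p=2+d(r-1)+b$ to read off the Jacobian --- after which Lemma~\ref{lem_Laplace} evaluates the resulting cone integral: a polynomial from $\cP_\bm(\fp^+)$ integrated against $\det_{\fn^+}(z)^{\lambda-n/r}$ carries the Gamma-factor $\Gamma_r^d(\lambda+\bm)$, so collecting constants gives $\langle\Delta_\bm^{\fn^+},\Delta_\bm^{\fn^+}\rangle_\lambda=\bigl(\Gamma_r^d(\lambda)/\Gamma_r^d(\lambda+\bm)\bigr)\langle\Delta_\bm^{\fn^+},\Delta_\bm^{\fn^+}\rangle_F$, i.e. $c_\bm(\lambda)=\Gamma_r^d(\lambda+\bm)/\Gamma_r^d(\lambda)$. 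When $\fp^+$ is not of tube type one first integrates out the Peirce-one and Peirce-zero directions over the subdomain $D\cap\fp^+(e)_2$ --- contributing only $\bm$-independent Beta-type constants, since $\Delta_\bm^{\fn^+}$ does not involve those directions, which is where the integer $b$ enters --- and then argues on the tube-type subalgebra $\fp^+(e)_2$ as above. Finally, the definitions (\ref{Gamma}) and (\ref{Pochhammer}) give $\Gamma_r^d(\lambda+\bm)=(\lambda)_{\bm,d}\,\Gamma_r^d(\lambda)$, so $c_\bm(\lambda)=(\lambda)_{\bm,d}$, as claimed (the overall normalization being consistent with the trivial case $\bm=\underline{0}_r$, where $\langle 1,1\rangle_\lambda=\langle 1,1\rangle_F=1$).

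The Schur-lemma step and the manipulation of $\Gamma_r^d$ are routine; the main obstacle is the change of variables, i.e. identifying $C_\lambda\int_D|\Delta_\bm^{\fn^+}(x)|^2h(x,\overline x)^{\lambda-p}\,dx$ with the correct multiple of $\Gamma_r^d(\lambda+\bm)^{-1}$ --- the delicate points being the precise Jacobian of the Cayley transform through $\Det_{\fp^+}B$ and, in the non-tube case, the factorization of the Lebesgue measure along the simultaneous Peirce decomposition so that the Peirce-one/zero integrations produce exactly the expected Beta constants. An equivalent route runs through reproducing kernels: $c_\bm(\lambda)$ is the coefficient of the Fischer reproducing kernel $\mathcal{K}_\bm(x,\overline y)$ of $(\cP_\bm(\fp^+),\langle\cdot,\cdot\rangle_F)$ in the expansion $h(x,\overline y)^{-\lambda}=\sum_{\bm}c_\bm(\lambda)\,\mathcal{K}_\bm(x,\overline y)$ of the reproducing kernel of $\cH_\lambda(D)$ (using $e^{(x|\overline y)_{\fp^+}}=\sum_\bm\mathcal{K}_\bm(x,\overline y)$), and the same Laplace-transform computation determines those coefficients; this is the form the argument takes in \cite{FK0, FKKLR}.
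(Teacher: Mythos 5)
The paper does not prove this statement at all: Theorem \ref{thm_FK} is quoted as a known result of Faraut--Kor\'anyi, with the proof delegated to \cite{FK0} and \cite[Part III, Corollary V.3.9]{FKKLR}. So the only comparison to make is with the argument in those references, and your sketch is essentially that argument: the Schur-lemma reduction to a single constant $c_\bm(\lambda)$ on each $K$-type (using Theorem \ref{thm_HKS} and the pairwise inequivalence of the $\cP_\bm(\fp^+)$) is exactly right, and your ``equivalent route'' via the expansion $h(x,\overline y)^{-\lambda}=\sum_\bm(\lambda)_{\bm,d}\,\mathcal{K}_\bm(x,\overline y)$ of the reproducing kernel, established by the Laplace-transform computation on $\Omega$ (Lemma \ref{lem_Laplace}), is precisely how Faraut--Kor\'anyi determine the constants. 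Two points in your primary route are stated more casually than they can be carried out. First, the Cayley transform does not carry $\Delta_\bm^{\fn^+}$ to an element of $\cP_\bm$ on the tube domain: the isometry onto the weighted Bergman space over $\Omega+\sqrt{-1}\fn^+$ involves an automorphy factor of the form $\det_{\fn^+}(w+e)^{-\lambda}$, so the transported vector is a rational function and Lemma \ref{lem_Laplace} does not apply verbatim to ``a polynomial from $\cP_\bm(\fp^+)$''; one either evaluates that rational integrand using both parts of Lemma \ref{lem_Laplace}, or computes the norm directly in the bounded realization by the polar integration formula, or simply uses the kernel-expansion route you already named. Second, in the non-tube case the integration over the Peirce $1$- and $0$-components does not produce a bare constant but an $x_2$-dependent power of the generic norm of $\fp^+(e)_2$, i.e.\ it shifts the exponent of the weight; this is harmless (the shift is $\bm$-independent, the subalgebra $\fp^+(e)_2$ has the same $r$ and $d$, and the normalization $\langle 1,1\rangle_\lambda=1$ removes the $\bm$-independent factors), but it should be phrased as a change of weight rather than a Beta constant. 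With these two steps made precise, your plan is a correct reconstruction of the cited proof.
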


Since the reproducing kernel on $\cP(\fp^+)$ with respect to $\langle\cdot,\cdot\rangle_F$ is given by $e^{(x|\overline{y})_{\fp^+}}$, the following holds. 

\begin{corollary}\label{cor_FK}
For $\lambda>p-1$, $\bm\in\BZ_{++}^r$, $f(x)\in\cP_\bm(\fp^+)$, we have 
\[ \left\langle f(x),e^{(x|\overline{y})_{\fp^+}}\right\rangle_{\lambda,x}=\frac{1}{(\lambda)_{\bm,d}}f(y). \]
\end{corollary}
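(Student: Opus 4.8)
The plan is to deduce the Corollary from Theorem~\ref{thm_FK} by extending the identity $\langle f,g\rangle_\lambda=(\lambda)_{\bm,d}^{-1}\langle f,g\rangle_F$ from polynomial arguments $g$ to the entire function $g(x)=e^{(x|\overline{y})_{\fp^+}}$, and then invoking the reproducing property of the Fischer kernel.

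First I would fix $y\in\fp^+$ and expand
\[ e^{(x|\overline{y})_{\fp^+}}=\sum_{N=0}^\infty g_N(x),\qquad g_N(x):=\tfrac1{N!}(x|\overline{y})_{\fp^+}^N\in\cP(\fp^+). \]
Since $\overline{D}$ is compact and $(x|\overline{y})_{\fp^+}$ is bounded on it, this series converges uniformly on $\overline{D}$. For the fixed polynomial $f\in\cP_\bm(\fp^+)$ and $\lambda>p-1$, the functional $g\mapsto\langle f,g\rangle_\lambda=C_\lambda\int_D f(x)\overline{g(x)}h(x)^{\lambda-p}\,dx$ is continuous under uniform convergence on $\overline{D}$ (the measure $h(x)^{\lambda-p}\,dx$ is finite on $D$ precisely when $\lambda>p-1$), while $g\mapsto\langle f,g\rangle_F$ depends only on the Taylor coefficients of $g$ at $0$ of degree at most $|\bm|$ and is therefore continuous as well.

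Next I would apply Theorem~\ref{thm_FK} to each polynomial $g_N$: since $f\in\cP_\bm(\fp^+)$, one has $\langle f,g_N\rangle_\lambda=(\lambda)_{\bm,d}^{-1}\langle f,g_N\rangle_F$ with the \emph{same} constant $(\lambda)_{\bm,d}$ for every $N$. Summing over $N$ and using the two continuity statements yields
\[ \bigl\langle f(x),e^{(x|\overline{y})_{\fp^+}}\bigr\rangle_{\lambda,x}=\frac1{(\lambda)_{\bm,d}}\bigl\langle f(x),e^{(x|\overline{y})_{\fp^+}}\bigr\rangle_{F,x}, \]
and since $e^{(x|\overline{y})_{\fp^+}}$ is the reproducing kernel of $(\cP(\fp^+),\langle\cdot,\cdot\rangle_F)$, the right-hand side equals $(\lambda)_{\bm,d}^{-1}f(y)$, which is the assertion. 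One can also avoid the limiting step: both $\langle f,-\rangle_\lambda$ and $\langle f,-\rangle_F$ annihilate every $\cP_\bn(\fp^+)$ with $\bn\neq\bm$ (the isotypic summands in Theorem~\ref{thm_HKS} are mutually orthogonal under any $K$-invariant inner product), so only the $\cP_\bm$-component of $e^{(x|\overline{y})_{\fp^+}}$ contributes, which is homogeneous of degree $|\bm|$, reducing the identity to the single polynomial $\tfrac1{|\bm|!}(x|\overline{y})_{\fp^+}^{|\bm|}$, to which Theorem~\ref{thm_FK} applies verbatim.

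The only point requiring genuine care is the interchange of the summation over $N$ with the integral defining $\langle\cdot,\cdot\rangle_\lambda$ (equivalently, passing to the limit inside these continuous functionals); everything else is a direct substitution into Theorem~\ref{thm_FK} together with the defining property of a reproducing kernel.
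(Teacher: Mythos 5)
Your argument is correct and is essentially the paper's own (implicit) proof: the paper deduces the corollary directly from Theorem \ref{thm_FK} together with the fact that $e^{(x|\overline{y})_{\fp^+}}$ is the reproducing kernel for $\langle\cdot,\cdot\rangle_F$, and you have merely made explicit the standard justification (uniform convergence on $\overline{D}$ plus finiteness of $h(x)^{\lambda-p}dx$ for $\lambda>p-1$, or equivalently $K$-isotypic orthogonality) for applying the theorem to the non-polynomial argument $e^{(x|\overline{y})_{\fp^+}}$.
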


Here the subscript $x$ stands for the variable of integration. 
By this theorem, $\langle\cdot,\cdot\rangle_{\lambda}$ is meromorphically continued for all $\lambda\in\BC$, 
and the $\widetilde{K}$-finite part $\cO_\lambda(D)_{\widetilde{K}}=\chi^{-\lambda}\otimes\cP(\fp^+)$ is reducible as a $(\fg,\widetilde{K})$-module 
if and only if $\lambda$ is a pole of $\langle\cdot,\cdot\rangle_{\lambda}$. 
Especially, for $j=1,2,\ldots,r$, $\lambda\in\frac{d}{2}(j-1)-\BZ_{\ge 0}$, 
\begin{equation}\label{submodule}
M_j(\lambda)=M_j^\fg(\lambda):=\bigoplus_{\substack{\bm\in\BZ_{++}^r \\ m_j\le \frac{d}{2}(j-1)-\lambda}}\chi^{-\lambda}\otimes\cP_\bm(\fp^+)
\subset\chi^{-\lambda}\otimes\cP(\fp^+)=\cO_\lambda(D)_{\widetilde{K}}
\end{equation}
gives a $(\fg,\widetilde{K})$-submodule. Moreover, $\langle\cdot,\cdot\rangle_\lambda$ is positive definite on $\cP(\fp^+)$ for $\lambda>\frac{d}{2}(r-1)$, 
and on $M_j^\fg\bigl(\frac{d}{2}(j-1)\bigr)$ for $\lambda=\frac{d}{2}(j-1)$, $j=1,\ldots,r$, that is, $\cO_\lambda(D)_{\widetilde{K}}$ contains a unitary submodule $\cH_\lambda(D)$ 
if $\lambda$ sits in the \textit{Wallach set} 
\begin{equation}\label{Wallach_set}
\lambda\in\biggl\{0,\frac{d}{2},d,\ldots,\frac{d}{2}(r-1)\biggr\}\cup\biggl(\frac{d}{2}(r-1),\infty\biggr), 
\end{equation}
and its $\widetilde{K}$-finite part is given by 
\[ \cH_\lambda(D)_{\widetilde{K}}=\begin{cases} \chi^{-\lambda}\otimes\cP(\fp^+) & (\lambda>\frac{d}{2}(r-1)), \\
M_j^\fg\bigl(\frac{d}{2}(j-1)\bigr) & (\lambda=\frac{d}{2}(j-1),\; j=1,2,\ldots,r). \end{cases} \]
In addition, the quotient module $\cO_\lambda(D)_{\widetilde{K}}/M_r(\lambda)$ $(\lambda\in\frac{d}{2}(r-1)-\BZ_{\ge 0})$ also gives an infinitesimally unitary module (see \cite{FK0}). 
Also by the corollary, for $\bk\in\BZ_{++}^r$, $f(x)\in\cP(\fp^+)$, $(\lambda)_{\bk,d}\bigl\langle f(x),e^{(x|\overline{y})_{\fp^+}}\bigr\rangle_{\lambda,x}$ is 
holomorphically continued for all $\lambda\in\BC$ if and only if 
\[ f(x)\in\bigoplus_{\substack{\bm\in\BZ_{++}^r \\ m_j\le k_j \; (j=1,\ldots,r)}}\cP_\bm(\fp^+) \]
holds, and if this is satisfied, then for $j=1,\ldots,r$, 
\[ f(x)\in M_j(\lambda)\quad \text{holds if}\quad \lambda\in\frac{d}{2}(j-1)-k_j-\BZ_{\ge 0}. \]

\subsection{Classification}\label{subsection_classification}

A simple Hermitian positive Jordan triple system $\fp^\pm$ is isomorphic to one of the following (see, e.g., \cite[\S 7]{L0}). 
\begin{align*}
\fp^\pm&=\BC^n \quad (n\ge 3), &&\Sym(r,\BC), &&M(q,s;\BC), \\ &\eqspace \Alt(s,\BC), && \Herm(3,\BO)^\BC, && M(1,2;\BO)^\BC. 
\end{align*}
Here $\Sym(r,\BC)$ and $\Alt(s,\BC)$ denote the spaces of symmetric and alternating matrices over $\BC$ respectively, 
and $\Herm(3,\BO)^\BC$ denotes the complexification of the space of $3\times 3$ Hermitian matrices over the octonions $\BO$. 
Then the corresponding Lie groups $G$ and their maximal compact subgroups $K$ are given by 
\[ (G,K)=\begin{cases} (SO_0(2,n),SO(2)\times SO(n)) & (\fp^\pm=\BC^n), \\ (Sp(r,\BR),U(r)) & (\fp^\pm=\Sym(r,\BC)), \\
(SU(q,s),S(U(q)\times U(s))) & (\fp^\pm=M(q,s;\BC)), \\ (SO^*(2s),U(s)) & (\fp^\pm=\Alt(s,\BC)), \\
(E_{7(-25)},U(1)\times E_6) & (\fp^\pm=\Herm(3,\BO)^\BC), \\ (E_{6(-14)},U(1)\times Spin(10)) & (\fp^\pm=M(1,2;\BO)^\BC) \end{cases} \]
(up to covering), and the numbers $(n,r,d,b,p)$ (see (\ref{str_const})) are given by 
\[ (n,r,d,b,p)=\begin{cases} (n,2,n-2,0,n) & (\fp^\pm=\BC^n,\; n\ge 3), \\
\left(\frac{1}{2}r(r+1), r, 1, 0, r+1\right) & (\fp^\pm=\Sym(r,\BC)), \\
(qs, \min\{q,s\}, 2, |q-s|, q+s) & (\fp^\pm=M(q,s;\BC)), \\
\left(\frac{1}{2}s(s-1), \frac{s}{2}, 4, 0, 2(s-1)\right) & (\fp^\pm=\Alt(s,\BC),\; s\colon\text{even}), \\
\left(\frac{1}{2}s(s-1), \left\lfloor\frac{s}{2}\right\rfloor, 4, 2, 2(s-1)\right) & (\fp^\pm=\Alt(s,\BC),\; s\colon\text{odd}), \\
(27,3,8,0,18) & (\fp^\pm=\Herm(3,\BO)^\BC), \\ (16,2,6,4,12) & (\fp^\pm=M(1,2;\BO)^\BC). \end{cases} \]
Here, if $r=1$, then $d$ is not determined uniquely, and any number is allowed. 
When $b=0$, $\fp^\pm$ is of tube type, and has a Jordan algebra structure. That is, $\fp^\pm=\BC^n$, $\Sym(r,\BC)$, $M(r,\BC)$, $\Alt(2r,\BC)$ and $\Herm(3,\BO)^\BC$ are of tube type. 
Their corresponding Euclidean real forms $\fn^+\subset\fp^+$ are given by 
\[ \fn^+=\begin{cases} \BR^{1,n-1} \\ \Sym(r,\BR) \\ \Herm(r,\BC) \\ \Herm(r,\BH) \\ \Herm(3,\BO) \end{cases}
\subset \fp^+=\begin{cases} \BC^n \\ \Sym(r,\BC) \\ M(r,\BC) \\ \Alt(2r,\BC) \\ \Herm(3,\BO)^\BC, \end{cases} \]
where $\BR^{1,n-1}\subset\BC^n$ is a real form on which the quadratic form $\det_{\fn^+}(x)$ has the signature $(1,n-1)$. 
For these cases, $p=\frac{2n}{r}=d(r-1)+2$ holds. 

Next we fix the parametrization of finite-dimensional irreducible representations of $\widetilde{GL}(s,\BC)$ and $Spin(n,\BC)$ $(n\ge 3)$. 
We take a basis $\{\epsilon_j\}_{j=1}^s\subset\fh^{\BC\vee}$ of the dual space of a Cartan subalgebra $\fh^\BC\subset\mathfrak{gl}(s,\BC)$ 
such that the positive root system is given by $\{\epsilon_i-\epsilon_j\mid 1\le i<j\le s\}$. 
For $\bm\in\BC^s$ with $m_j-m_{j+1}\in\BZ_{\ge 0}$, let $V_\bm^{(s)}$ be the irreducible representation of $\widetilde{GL}(s,\BC)$ with the highest weight $\sum_j m_j\epsilon_j$, 
and let $V_\bm^{(s)\vee}$ be that with the lowest weight $-\sum_j m_j\epsilon_j$. 
If $\bm\in\BZ^r$, then $V_\bm^{(s)}$, $V_\bm^{(s)\vee}$ define genuine representations of $GL(s,\BC)$. 
Similarly, we take a basis $\{\epsilon_j\}_{j=1}^{\lfloor n/2\rfloor}\subset\fh^{\BC\vee}$ of the dual space of a Cartan subalgebra $\fh^\BC\subset\mathfrak{so}(n,\BC)$ 
such that the positive root system is given by 
\begin{align*}
&\{\epsilon_i\pm\epsilon_j\mid 1\le i<j\le n/2\} && (n\colon\text{even}), \\
&\{\epsilon_i\pm\epsilon_j\mid 1\le i<j\le \lfloor n/2\rfloor\}\cup\{\epsilon_j\mid 1\le j\le \lfloor n/2\rfloor\} && (n\colon\text{odd}). 
\end{align*}
For $\bm\in\BZ^{\lfloor n/2\rfloor}\cup\left(\BZ+\frac{1}{2}\right)^{\lfloor n/2\rfloor}$ with $m_1\ge\cdots\ge m_{n/2-1}\ge |m_{n/2}|$ for even $n$, 
$m_1\ge\cdots\ge m_{\lfloor n/2\rfloor}\ge 0$ for odd $n$, let $V_\bm^{[n]}$ be the irreducible representation of $Spin(n,\BC)$ with the highest weight $\sum_j m_j\epsilon_j$, 
and let $V_\bm^{[n]\vee}$ be that with the lowest weight $-\sum_j m_j\epsilon_j$. 
If $\bm\in\BZ^{\lfloor n/2\rfloor}$, then $V_\bm^{[n]}$, $V_\bm^{[n]\vee}$ define genuine representations of $SO(n,\BC)$. 
Under this notation, the $\widetilde{K}$-type decompositions of the holomorphic discrete series representations of scalar type 
\[ \cO_\lambda(D)_{\widetilde{K}}=\chi^{-\lambda}\otimes\cP(\fp^+)=\chi^{-\lambda}\otimes\bigoplus_{\bm\in\BZ_{++}^r}\cP_\bm(\fp^+) \]
are given as 
\begin{align}
\chi^{-\lambda}&\simeq\begin{cases} 
\BC_{-\lambda}\boxtimes V_{(0,\ldots,0)}^{[n]\vee} & (\fp^+=\BC^n,\; n\ge 3), \\
V_{(\lambda,\ldots,\lambda)}^{(r)\vee} & (\fp^+=\Sym(r,\BC)), \\
V_{\left(\frac{s\lambda}{q+s},\ldots,\frac{s\lambda}{q+s}\right)}^{(q)\vee}\boxtimes V_{\left(\frac{q\lambda}{q+s},\ldots,\frac{q\lambda}{q+s}\right)}^{(s)} & (\fp^+=M(q,s;\BC)), \\
V_{\left(\frac{\lambda}{2},\ldots,\frac{\lambda}{2}\right)}^{(s)\vee} & (\fp^+=\Alt(s,\BC)), \\
\BC_{-\lambda}\boxtimes V_{(0,\ldots,0)}^{[10]\vee} & (\fp^+=M(1,2;\BO)^\BO), \end{cases} \notag \\ 
\cP_\bm(\fp^+)&\simeq\begin{cases}
\BC_{-m_1-m_2}\boxtimes V_{(m_1-m_2,0,\ldots,0)}^{[n]\vee} & (\fp^+=\BC^n,\; n\ge 3), \\
V_{(2m_1,\ldots,2m_r)}^{(r)\vee}=:V_{2\bm}^{(r)\vee} & (\fp^+=\Sym(r,\BC)), \\
V_{\bm}^{(q)\vee}\boxtimes V_{(m_1,\ldots,m_q,0,\ldots,0)}^{(s)}=:V_\bm^{(q)\vee}\boxtimes V_\bm^{(s)} & (\fp^+=M(q,s;\BC),\; q\le s), \\
V_{(m_1,\ldots,m_s,0,\ldots,0)}^{(q)\vee}\boxtimes V_{\bm}^{(s)}=:V_\bm^{(q)\vee}\boxtimes V_\bm^{(s)} & (\fp^+=M(q,s;\BC),\; q\ge s), \\
V_{(m_1,m_1,m_2,m_2,\ldots,m_{\lfloor s/2\rfloor},m_{\lfloor s/2\rfloor}(,0))}^{(s)\vee}=:V_{\bm^2}^{(s)\vee} & (\fp^+=\Alt(s,\BC)), \\
\BC_{-\frac{3}{4}|\bm|}\boxtimes V_{\left(\frac{m_1+m_2}{2},\frac{m_1-m_2}{2},\frac{m_1-m_2}{2},\frac{m_1-m_2}{2},\frac{m_1-m_2}{2}\right)}^{[10]\vee} & (\fp^+=M(1,2;\BO)^\BC) 
\end{cases}\label{explicit_Ktype}
\end{align}
when $K^\BC$ is classical, if we normalize the representations $\BC_{-\lambda}$ of $\widetilde{SO}(2)\simeq \widetilde{U}(1)$ for the first and the last cases suitably.

\subsection{Restriction to symmetric subgroups}\label{subsection_sym_subalg}

In this subsection, we consider a $\BC$-linear involution $\sigma$ on a Hermitian positive Jordan triple system $\fp^\pm$, i.e., 
a Jordan triple system automorphism $\sigma\colon\fp^\pm\to\fp^\pm$ of order 2 which commutes with the $\BC$-antilinear map $\overline{\cdot}\colon \fp^\pm\to\fp^\mp$, 
and extend to the involution of the Lie algebra $\fg^\BC=\fp^+\oplus\fk^\BC\oplus\fp^-$ by letting $\sigma$ act on $\fk^\BC=\mathfrak{str}(\fp^+)\subset\End_\BC(\fp^+)$ by 
$\sigma(l):=\sigma l\sigma$. Also let $\vartheta:=-I_{\fp^+}+I_{\fk^\BC}-I_{\fp^-}$. Using these, we set 
\begin{align*}
\fp^\pm_1&:=(\fp^\pm)^\sigma=\{x\in\fp^\pm\mid \sigma(x)=x\}, \\
\fp^\pm_2&:=(\fp^\pm)^{-\sigma}=\{x\in\fp^\pm\mid \sigma(x)=-x\}, \\
\fk^\BC_1&:=(\fk^\BC)^\sigma=\{l\in\fk^\BC\mid \sigma(l)=l\}, \\
\fk_1&:=\fk^\sigma=\fk^\BC_1\cap\fk, \\
\fg_1^\BC&:=(\fg^\BC)^\sigma=\fp^+_1\oplus\fk^\BC_1\oplus\fp^-_1, \\
\fg_2^\BC&:=(\fg^\BC)^{\sigma\vartheta}=\fp^+_2\oplus\fk^\BC_1\oplus\fp^-_2, \\
\fg_1&:=\fg^\sigma=\fg^\BC_1\cap\fg, \\
\fg_2&:=\fg^{\sigma\vartheta}=\fg^\BC_2\cap\fg, 
\end{align*}
and let $G_1,G_1^\BC,G_2,G_2^\BC,K_1,K_1^\BC\subset G^\BC$ be the connected closed subgroups corresponding to $\fg_1,\fg_1^\BC,\fg_2,\fg_2^\BC,\fk_1,\fk_1^\BC$ respectively. 
Such $(G,G_1)$ is called a symmetric pair of holomorphic type (see \cite[Section 3.4]{Kmf1}). 
Also let $\widetilde{G}_1\subset\widetilde{G}$ and $\widetilde{K}_1^\BC\subset\widetilde{K}^\BC$ be the connected closed subgroups of the universal covering groups of 
$G$ and $K^\BC$ corresponding to $\fg_1$ and $\fk_1^\BC$ respectively, and let $D\subset\fp^+$, $D_1\subset\fp^+_1$ be the corresponding bounded symmetric domains, 
so that $D\simeq G/K$, $D_1\simeq G_1/K_1$ hold. 
Let $\chi$ be the character of $K^\BC$ given in (\ref{chi_normalize}). Similarly, we normalize the bilinear form 
$(\cdot|\cdot)_{\fp^+_j}\colon \fp^+_j\times\fp^-_j\to\BC$ $(j=1,2)$ such that $(e|\overline{e})_{\fp^+_j}=1$ holds for any primitive tripotent $e\in\fp^+_j$, 
and define the characters $\chi_j$ $(j=1,2)$ of $K_1^\BC$ by 
\begin{align*}
d\chi_j([x,y])&=(x|y)_{\fp^+_j} && (x\in\fp^+_j,\; y\in\fp^-_j), \\
d\chi_j(l)&=0 && (l\in [\fp^+_j,\fp^-_j]^\bot\subset \fk^\BC_1). 
\end{align*}
As in Section \ref{subsection_HDS}, for $\lambda\in\BR$ and for an irreducible representation $V$ of $K_1^\BC$, 
let $\cH_\lambda(D)\subset\cO_\lambda(D)$ and $\cH_{\lambda}(D_1,V)\subset\cO_\lambda(D_1,V)$ be the unitary representations of $\widetilde{G}$ and $\widetilde{G}_1$ 
with the minimal $\widetilde{K}$-type $\chi^{-\lambda}$ and with the minimal $\widetilde{K}_1$-type $\chi_1^{-\lambda}\otimes V$ respectively, if they exist. 

In the following, we assume $G$ is simple. Then $\fp^+_2$ is a direct sum of at most two simple Jordan triple systems. 
Let $\rank\fp^+=:r$, $\rank\fp^+_2=:r_2$, and define $\varepsilon_1,\varepsilon_2\in\{1,2\}$ by $d\chi|_{\fk_1^\BC}=\varepsilon_j d\chi_j$, or equivalently, by 
\begin{equation}\label{epsilon_j}
(x|y)_{\fp^+}=\varepsilon_j(x|y)_{\fp^+_j} \qquad (j=1,2,\; x\in\fp^+_j,\; y\in\fp^-_j). 
\end{equation}
When $\fp^+_2$ is not simple, we write $\fp^+_2=:\fp^+_{11}\oplus\fp^+_{22}$, $\fp^+_1=:\fp_{12}^+$, 
and let $\rank\fp^+_{11}=:r'$, $\rank\fp^+_{22}=:r''$. Now we consider the restriction of the representation $\cH_\lambda(D)$ of $\widetilde{G}$ to the subgroup $\widetilde{G}_1$. 
If $\lambda$ satisfies (\ref{Wallach_set}), then the unitary representation $\cH_\lambda(D)$ exists, $\cH_\lambda(D)|_{\widetilde{G}_1}$ is discretely decomposable, 
and every $\widetilde{G}_1$-submodule in $\cH_\lambda(D)|_{\widetilde{G}_1}$ contains a $\fp^+_1$-null vector, 
that is, has an intersection with $(\cH_\lambda(D)_{\widetilde{K}})^{\fp^+_1}=(\chi_1^{-\varepsilon_1\lambda}\otimes\cP(\fp^+_2))\cap\cH_\lambda(D)_{\widetilde{K}}$. 
Especially, $\cH_\lambda(D)_{\widetilde{K}}=\chi^{-\lambda}\otimes\cP(\fp^+)$ holds if $\lambda>\frac{d}{2}(r-1)$, 
and $\cH_\lambda(D)$ is a holomorphic discrete series representation if $\lambda>p-1$. For such $\lambda$, according to the decomposition 
\begin{align*}
\cP(\fp^+_2)&=\bigoplus_{\bk\in\BZ_{++}^{r_2}}\cP_\bk(\fp^+_2) && (\fp^+_2\colon\text{simple}), \\
\cP(\fp^+_2)&=\bigoplus_{\bk\in\BZ_{++}^{r'}}\bigoplus_{\bl\in\BZ_{++}^{r''}}\cP_\bk(\fp^+_{11})\boxtimes\cP_\bl(\fp^+_{22}) && (\fp^+_2\colon\text{non-simple})
\end{align*}
(Theorem \ref{thm_HKS}), the following holds. We note that a similar formula holds also for holomorphic discrete series representations of non-scalar type. 
See \cite[Lemma 8.8]{Kmf1}, \cite[Section 8]{Kmf0-1}. 

\begin{theorem}[{Kobayashi, \cite[Theorems 8.3]{Kmf1}}]\label{thm_HKKS}
For $\lambda>\frac{d}{2}(r-1)$, the restriction of $\cH_\lambda(D)$ to the subgroup $\widetilde{G}_1$ is 
decomposed into the Hilbert direct sum of irreducible representations as 
\begin{align*}
\cH_\lambda(D)|_{\widetilde{G}_1}&\simeq\hsum_{\bk\in\BZ_{++}^{r_2}}\cH_{\varepsilon_1\lambda}(D_1,\cP_\bk(\fp^+_2)) && (\fp^+_2\colon\text{simple}), \\
\cH_\lambda(D)|_{\widetilde{G}_1}&\simeq\hsum_{\bk\in\BZ_{++}^{r'}}\,\hsum_{\bl\in\BZ_{++}^{r''}}\cH_{\varepsilon_1\lambda}(D_1,\cP_\bk(\fp^+_{11})\boxtimes\cP_\bl(\fp^+_{22})) 
&& (\fp^+_2\colon\text{non-simple}). 
\end{align*}
\end{theorem}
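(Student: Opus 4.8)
The plan is to reduce the branching of $\cH_\lambda(D)|_{\widetilde{G}_1}$ to a computation of $\fp^+_1$-null vectors, to identify that space explicitly from the differential action \eqref{HDS_diff_action}, and then to decompose it under $\widetilde{K}_1$ by applying the Hua--Kostant--Schmid decomposition (Theorem \ref{thm_HKS}) to $\fp^+_2$.

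First I would invoke Kobayashi's discrete decomposability theorem for symmetric pairs of holomorphic type: since $\cH_\lambda(D)$ is a unitary lowest weight module of $\widetilde{G}$ and $(G,G_1)$ is of holomorphic type, $\cH_\lambda(D)|_{\widetilde{G}_1}$ is a Hilbert direct sum of irreducible unitary lowest weight modules of $\widetilde{G}_1$. Each irreducible summand is generated by its lowest $\widetilde{K}_1$-type, and the latter consists of $\fp^+_1$-null vectors, so the lowest $\widetilde{K}_1$-types occurring, counted with multiplicity, are precisely a $\widetilde{K}_1$-decomposition of the null space $N:=(\cH_\lambda(D)_{\widetilde{K}})^{\fp^+_1}$. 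Since an irreducible unitary lowest weight module of $\widetilde{G}_1$ is determined by its lowest $\widetilde{K}_1$-type together with its (unique, by Section \ref{subsection_HDS}) unitary structure, it suffices to decompose $N$ as a $\widetilde{K}_1$-module and to match each irreducible piece with the minimal $\widetilde{K}_1$-type of a holomorphic discrete series representation of $\widetilde{G}_1$.

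Next I would compute $N$. For $\lambda>\frac{d}{2}(r-1)$ one has $\cH_\lambda(D)_{\widetilde{K}}=\chi^{-\lambda}\otimes\cP(\fp^+)$, and by \eqref{HDS_diff_action} the action of $z\in\fp^+$ on $f\in\cP(\fp^+)$ is the constant-coefficient directional derivative $f(x)\mapsto\frac{d}{dt}\bigr|_{t=0}f(x-tz)$. Hence, under $\fp^+=\fp^+_1\oplus\fp^+_2$, a polynomial is $\fp^+_1$-null if and only if it is constant in the $\fp^+_1$-variable, so $N=\chi^{-\lambda}\otimes\cP(\fp^+_2)$. Now $\widetilde{K}_1^\BC$ acts on the Hermitian positive Jordan triple system $\fp^+_2$, and by the structure theory of holomorphic-type symmetric pairs with $G$ simple, $\fp^+_2$ is either simple of rank $r_2$ or a sum $\fp^+_{11}\oplus\fp^+_{22}$ of two simple factors of ranks $r',r''$, with the image of $\widetilde{K}_1^\BC$ in $GL_\BC(\fp^+_2)$ large enough to contain $\operatorname{Str}(\fp^+_2)_0$ (respectively the product of the structure groups of the two factors). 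Applying Theorem \ref{thm_HKS} to $\fp^+_2$ (respectively to each factor) then yields the $\widetilde{K}_1$-irreducible decomposition
\[ N\simeq\bigoplus_{\bk\in\BZ_{++}^{r_2}}\cP_\bk(\fp^+_2)\quad\text{resp.}\quad N\simeq\bigoplus_{\bk\in\BZ_{++}^{r'}}\bigoplus_{\bl\in\BZ_{++}^{r''}}\cP_\bk(\fp^+_{11})\boxtimes\cP_\bl(\fp^+_{22}), \]
with the $\chi^{-\lambda}$-twist understood. By \eqref{epsilon_j} the central character of $\widetilde{K}_1$ on each piece is $-\varepsilon_1\lambda$ in the normalization of $\chi_1$, so each piece is the minimal $\widetilde{K}_1$-type of $\cH_{\varepsilon_1\lambda}(D_1,\cP_\bk(\fp^+_2))$, respectively of $\cH_{\varepsilon_1\lambda}(D_1,\cP_\bk(\fp^+_{11})\boxtimes\cP_\bl(\fp^+_{22}))$. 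Since the pieces are mutually inequivalent, the branching is multiplicity-free; and since the algebraic span of these minimal $\widetilde{K}_1$-types is dense in $N$ while $N$ generates a dense $\widetilde{G}_1$-invariant subspace of $\cH_\lambda(D)$, passing to Hilbert completions gives the asserted direct-sum decompositions.

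The main obstacle is the structural input used in the third step: one must know that the image of $\widetilde{K}_1^\BC$ in $GL_\BC(\fp^+_2)$ is large enough for the $\operatorname{Str}(\fp^+_2)$-irreducible spaces $\cP_\bk(\fp^+_2)$ to remain $\widetilde{K}_1$-irreducible --- equivalently, that $\fk_1$ surjects onto $\mathfrak{str}(\fp^+_2)$ modulo the ideal acting trivially on $\fp^+_2$ --- and, in the non-simple case, that the finer two-factor decomposition holds; this is a statement about the classification of holomorphic-type symmetric pairs (the list of triples $(G,G_1,G_2)$ in the introduction), not a formal consequence of Jordan theory. The discrete decomposability quoted in the first step can instead be verified directly, by using the triangular decomposition $\fg_1^\BC=\fp^-_1\oplus\fk_1^\BC\oplus\fp^+_1$ and the explicit $\fp^-_1$-action from \eqref{HDS_diff_action} to show that $\cP(\fp^+)$ is the direct sum of the $U(\fg_1)$-submodules generated by the null spaces $\cP_\bk(\fp^+_2)$.
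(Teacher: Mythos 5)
Your proposal is correct and follows essentially the same route as the paper, which quotes Kobayashi's result and sketches exactly these ingredients: discrete decomposability together with the fact that every $\widetilde{G}_1$-submodule meets the $\fp^+_1$-null space, the identification of that null space with $\chi^{-\lambda}\otimes\cP(\fp^+_2)$ from the differential action, and the Hua--Kostant--Schmid decomposition (Theorem \ref{thm_HKS}) applied to $\fp^+_2$, with the twist $\chi^{-\lambda}|_{\widetilde{K}_1}=\chi_1^{-\varepsilon_1\lambda}$. The structural point you flag as the main obstacle is in fact formal: $\fk_1^\BC\supset[\fp^+_2,\fp^-_2]$, which restricts to all of $\mathfrak{str}(\fp^+_2)$ (factorwise in the non-simple case), so the spaces $\cP_\bk(\fp^+_2)$ remain $\widetilde{K}_1$-irreducible without appeal to the classification.
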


In the following, when $\fp^+_2$ is not simple, we write $\cP_{(\bk,\bl)}(\fp^+_2):=\cP_\bk(\fp^+_{11})\boxtimes\cP_\bl(\fp^+_{22})$. 
Then by the above decomposition, for each $\lambda>\frac{d}{2}(r-1)$ and $\bk\in\BZ_{++}^{r_2}$ or $\bk\in\BZ_{++}^{r'}\times\BZ_{++}^{r''}$, 
there exist $\widetilde{G}_1$-intertwining operators 
\begin{align*}
\cF_{\lambda,\bk}^\downarrow\colon \cH_\lambda(D)|_{\widetilde{G}_1}\longrightarrow\cH_{\varepsilon_1\lambda}(D_1,\cP_\bk(\fp^+_2)), \\
\cF_{\lambda,\bk}^\uparrow\colon \cH_{\varepsilon_1\lambda}(D_1,\cP_\bk(\fp^+_2))\longrightarrow\cH_\lambda(D)|_{\widetilde{G}_1}, 
\end{align*}
which are unique up to constant multiple. $\cF_{\lambda,\bk}^\downarrow$ is called a \textit{symmetry breaking operator}, and $\cF_{\lambda,\bk}^\uparrow$ is called a \textit{holographic operator}, 
according to the terminology introduced in \cite{KP1, KP2} and \cite{KP3} respectively. 
Then the construction of $\cF_{\lambda,\bk}^\downarrow$ is reduced to the computation of some inner product. 
In the following, let $V_\bk$ be an abstract $K_1$-module isomorphic to $\cP_\bk(\fp^+_2)$, 
let $\Vert\cdot\Vert_{\varepsilon_1\lambda,\bk,\fp^+_1}$ be the $\widetilde{G}_1$-invariant norm on $\cH_{\varepsilon_1\lambda}(D_1,V_\bk)$ 
normalized such that $\Vert v\Vert_{\varepsilon_1\lambda,\bk,\fp^+_1}=|v|_{V_\bk}$ holds for all constant functions $v\in V_\bk$, 
and let $\Vert\cdot\Vert_{F,\fp^+}$ be the Fischer norm on $\fp^+$ given in (\ref{Fischer_inner_prod}). 

\begin{theorem}[{\cite[Theorem 3.10\,(1)]{N1}, \cite[Proposition 2.9]{N3}}]
For $\lambda>p-1$ and for $\bk\in\BZ_{++}^{r_2}$ (when $\fp^+_2$ is simple) or $\bk\in\BZ_{++}^{r'}\times\BZ_{++}^{r''}$ (when $\fp^+_2$ is not simple), 
let $C_{\fp^+,\fp^+_2}(\lambda,\bk)\in\BC$ be the constant satisfying 
\begin{equation}\label{SBO0}
\Bigl\langle f(x_2),e^{(x|\overline{z})_{\fp^+}}\Bigr\rangle_{\lambda,x}\Bigr|_{z_1=0}=C_{\fp^+,\fp^+_2}(\lambda,\bk)f(z_2) \qquad (f(x_2)\in\cP_\bk(\fp^+_2)). 
\end{equation}
We take a vector-valued polynomial $\rK_\bk(x_2)\in\cP(\fp^+_2,\overline{V_\bk})^{K_1}$ normalized such that 
\[ \bigl| \langle f(x_2),\rK_\bk(x_2)\rangle_{F,\fp^+} \bigr|_{V_\bk}^2=\Vert f\Vert_{F,\fp^+}^2 \qquad (f(x_2)\in\cP_\bk(\fp^+_2)), \]
and define the vector-valued polynomial $F_{\lambda,\bk}^\downarrow(z)\in \cP(\fp^-,V_\bk)$ by 
\begin{equation}\label{SBO1}
F_{\lambda,\bk}^\downarrow(z):=\frac{1}{C_{\fp^+,\fp^+_2}(\lambda,\bk)}\Bigl\langle e^{(x|z)_{\fp^+}},\rK_\bk(x_2)\Bigr\rangle_{\lambda,x}. 
\end{equation}
Then the differential operator 
\begin{equation}\label{SBO2}
\cF_{\lambda,\bk}^\downarrow\colon \cH_\lambda(D)|_{\widetilde{G}_1}\longrightarrow\cH_{\varepsilon_1\lambda}(D_1,V_\bk), \qquad
(\cF_{\lambda,\bk}^\downarrow f)(x_1):=F_{\lambda,\bk}^\downarrow \biggl(\frac{\partial}{\partial x}\biggr)f(x)\biggr|_{x_2=0}
\end{equation}
becomes a symmetry breaking operator satisfying 
\[ \bigl\Vert \cF_{\lambda,\bk}^\downarrow f\bigr\Vert_{\varepsilon_1\lambda,\bk,\fp^+_1}^2=\Vert f\Vert_{F,\fp^+}^2 \qquad (f(x_2)\in\cP_\bk(\fp^+_2)). \]
Its operator norm is given by 
\[ \bigl\Vert \cF_{\lambda,\bk}^\downarrow \bigr\Vert_{\mathrm{op}}^2=C_{\fp^+,\fp^+_2}(\lambda,\bk)^{-1}. \]
\end{theorem}

We note that \cite[Theorem 3.10\,(1)]{N1} holds also for holomorphic discrete series representations of non-scalar type, namely, 
we can construct symmetry breaking operators for $\cH_\lambda(D,V)|_{\widetilde{G}_1}$ of non-scalar type by using a polynomial analogous to (\ref{SBO1}), 
although we cannot determine the normalizing constant $C_{\fp^+,\fp^+_2}(\lambda,\bk)$ by (\ref{SBO0}) unless $\cP(\fp^+_2)\otimes V|_{K_1}$ is multiplicity-free under $K_1$, 
or equivalently, $\cH_\lambda(D,V)|_{\widetilde{G}_1}$ is multiplicity-free under $\widetilde{G}_1$. 

The notion of symmetry breaking operator is extended to the map 
\[ \cF_{\lambda,\bk}^\downarrow\colon \cO_\lambda(D)|_{\widetilde{G}_1}\longrightarrow\cO_{\varepsilon_1\lambda}(D_1,\cP_\bk(\fp^+_2)) \]
for all $\lambda\in\BC$. Especially, the family of operators constructed in the above theorem is meromorphically continued for all $\lambda\in\BC$, 
and gives symmetry breaking operators on $\cO_\lambda(D)$ for all $\lambda\in\BC$ except at its poles. 
Moreover, symmetry breaking operators between $\cO_\lambda(D)|_{\widetilde{G}_1}$ and $\cO_{\varepsilon_1\lambda}(D_1,\cP_\bk(\fp^+_2))$ are always given by differential operators 
(localness theorem, \cite[Theorem 5.3]{KP1}, which holds more generally for vector-valued case). 
However, the uniqueness of $\cF_{\lambda,\bk}^\downarrow$ is not trivial if $\cO_\lambda(D)$ does not contain a holomorphic discrete series representation. 
Indeed, the uniqueness of symmetry breaking operators for tensor product case sometimes fails. See \cite[Section 9]{KP2}, \cite[Section 8]{N3}. 
The symbols of differential symmetry breaking operators are characterized as polynomial solutions of some differential equations as follows. 
For $\lambda\in\BC$, let $\cB_\lambda^{\fp^\pm}$ be the vector-valued differential operator 
\begin{gather}
\cB_\lambda^{\fp^\pm}\colon \cP(\fp^\pm)\longrightarrow \cP(\fp^\pm)\otimes\fp^\mp, \notag \\
\cB_\lambda^{\fp^\pm}f(z):=\sum_{\alpha,\beta}\frac{1}{2}Q(e^\mp_\alpha,e^\mp_\beta)z\frac{\partial^2f}{\partial z_\alpha^\pm\partial z_\beta^\pm}(z)
+\lambda\sum_{\alpha}e_\alpha^\mp \frac{\partial f}{\partial z_\alpha^\pm}(z), \label{formula_Bessel_diff}
\end{gather}
where $\{e_\alpha^\mp\}\subset\fp^\mp$ is a basis of $\fp^\mp$, and $\{z_\alpha^\pm\}$ is the coordinate of $\fp^\pm$ dual to $\{e_\alpha^\mp\}$ 
(\textit{Bessel operator} \cite{D}, \cite[Section XV.2]{FK}), let 
\[ (\cB_\lambda^{\fp^\pm})_1\colon \cP(\fp^\pm)\longrightarrow\cP(\fp^\pm)\otimes\fp^\mp_1 \]
be the orthogonal projection of $\cB_\lambda^{\fp^\pm}$ onto $\fp^\mp_1$, and let 
\[ \operatorname{Sol}_{\cP(\fp^\pm)}\bigl((\cB_\lambda^{\fp^\pm})_1\bigr):=\bigl\{ f(z)\in\cP(\fp^\pm) \bigm| (\cB_\lambda^{\fp^\pm})_1 f=0 \bigr\}. \]
\begin{theorem}[{F-method, \cite[Theorem 3.1]{KP2}}]
Let $V$ be a $K_1^\BC$-module, and $V^\vee$ be its contragredient. Then we have 
\begin{align*}
\Hom_{\widetilde{G}_1}(\cO_\lambda(D),\cO_{\varepsilon_1\lambda}(D_1,V))
&\simeq \Hom_{\fk_1^\BC\oplus\fp_1^-}\Bigl(\chi_1^{\varepsilon_1\lambda}\otimes V^\vee,\operatorname{ind}_{\fk^\BC\oplus\fp^-}^{\fg^\BC}(\chi^{\lambda})\Bigr) \\
&\simeq\Bigl(\operatorname{Sol}_{\cP(\fp^-)}\bigl((\cB_\lambda^{\fp^-})_1\bigr)\otimes V\Bigr)^{K_1}. 
\end{align*}
\end{theorem}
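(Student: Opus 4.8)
The statement is Kobayashi--Pevzner's F-method, recorded here from \cite[Theorem 3.1]{KP2}, and the plan is to prove it in two moves: first translate symmetry breaking operators into homomorphisms of generalized Verma modules via duality and Frobenius reciprocity, and then realize the relevant Verma module on $\cP(\fp^-)$ by an algebraic Fourier transform, under which the action of $\fp^-$ becomes the Bessel operator $\cB_\lambda^{\fp^-}$ of (\ref{formula_Bessel_diff}).

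\textbf{Step 1: passage to Verma modules.} Recall that, as a $(\fg^\BC,\widetilde K^\BC)$-module, $\cO_\lambda(D)_{\widetilde K}=\chi^{-\lambda}\otimes\cP(\fp^+)$ is the $\widetilde K$-finite dual of the generalized Verma module $N(\lambda):=\operatorname{ind}_{\fk^\BC\oplus\fp^-}^{\fg^\BC}(\chi^{\lambda})$: the differential action (\ref{HDS_diff_action}) is precisely the transpose of the standard action on $N(\lambda)$. Likewise $\cO_{\varepsilon_1\lambda}(D_1,V)_{\widetilde K_1}$ is dual to $N_1:=\operatorname{ind}_{\fk_1^\BC\oplus\fp_1^-}^{\fg_1^\BC}(\chi_1^{\varepsilon_1\lambda}\otimes V^\vee)$. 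A continuous $\widetilde G_1$-map $\cO_\lambda(D)\to\cO_{\varepsilon_1\lambda}(D_1,V)$ carries $\widetilde K$-finite vectors (which lie in finite-dimensional $\widetilde K_1$-invariant subspaces, hence are $\widetilde K_1$-finite) to $\widetilde K_1$-finite vectors, so it restricts to a $(\fg_1,\widetilde K_1)$-homomorphism of the underlying modules, and by density of $\widetilde K$-finite vectors this restriction determines it; conversely every $(\fg_1,\widetilde K_1)$-homomorphism between these modules is given by a differential operator and hence extends continuously and $\widetilde G_1$-equivariantly (localness theorem, \cite[Theorem 5.3]{KP1}). Therefore
\[ \Hom_{\widetilde G_1}(\cO_\lambda(D),\cO_{\varepsilon_1\lambda}(D_1,V))\cong\Hom_{(\fg_1,\widetilde K_1)}(\cO_\lambda(D)_{\widetilde K},\cO_{\varepsilon_1\lambda}(D_1,V)_{\widetilde K_1}). \]
All modules here are $\widetilde K_1$-locally finite, so taking $\widetilde K_1$-finite duals is a perfect duality and turns the right-hand side into $\Hom_{(\fg_1,\widetilde K_1)}(N_1,N(\lambda)|_{\fg_1^\BC})$; finally Frobenius reciprocity for $N_1=U(\fg_1^\BC)\otimes_{U(\fk_1^\BC\oplus\fp_1^-)}(\chi_1^{\varepsilon_1\lambda}\otimes V^\vee)$ identifies this with $\Hom_{\fk_1^\BC\oplus\fp_1^-}(\chi_1^{\varepsilon_1\lambda}\otimes V^\vee,N(\lambda)|_{\fk_1^\BC\oplus\fp_1^-})$, which is the first displayed isomorphism (passing from $\fk_1^\BC$ to $\widetilde K_1$ is automatic since $\widetilde K_1$ is connected and the modules are $\widetilde K_1$-locally finite).

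\textbf{Step 2: algebraic Fourier transform.} By PBW for $\fg^\BC=\fp^+\oplus(\fk^\BC\oplus\fp^-)$, $N(\lambda)\cong U(\fp^+)\cdot 1_\lambda\cong S(\fp^+)$ as a vector space, and the nondegenerate pairing $(\cdot|\cdot)_{\fp^+}$ identifies $S(\fp^+)\cong\cP(\fp^-)$. One checks that under this identification $\fp^+$ acts on $\cP(\fp^-)$ by multiplication operators, $\fk^\BC$ acts by its natural linear action shifted by the scalar $\lambda\,d\chi$, and $\fp^-$ acts by the Bessel operator $\cB_\lambda^{\fp^-}$ of (\ref{formula_Bessel_diff}) (the operator $\cB_\lambda^{\fp^-}\colon\cP(\fp^-)\to\cP(\fp^-)\otimes\fp^+$ encoding, for each $w\in\fp^-$, the differential operator by which $w$ acts). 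Granting this, a $\fk_1^\BC\oplus\fp_1^-$-homomorphism $\varphi\colon\chi_1^{\varepsilon_1\lambda}\otimes V^\vee\to N(\lambda)\cong\cP(\fp^-)$, regarded as an element of $\cP(\fp^-)\otimes V$, has image annihilated by $\fp_1^-$, i.e.\ by the $\fp_1^+$-component $(\cB_\lambda^{\fp^-})_1$ of $\cB_\lambda^{\fp^-}$, so $\varphi\in\operatorname{Sol}_{\cP(\fp^-)}((\cB_\lambda^{\fp^-})_1)\otimes V$; and the $\fk_1^\BC$-equivariance of $\varphi$ reduces to ordinary $\fk_1^\BC$-equivariance — equivalently $K_1$-invariance — because the scalar twist $d\chi_1^{\varepsilon_1\lambda}$ on the source cancels against $\lambda\,d\chi|_{\fk_1^\BC}=\varepsilon_1\lambda\,d\chi_1$ on $N(\lambda)$ by the defining relation (\ref{epsilon_j}). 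Hence $\Hom_{\fk_1^\BC\oplus\fp_1^-}(\chi_1^{\varepsilon_1\lambda}\otimes V^\vee,N(\lambda))\cong(\operatorname{Sol}_{\cP(\fp^-)}((\cB_\lambda^{\fp^-})_1)\otimes V)^{K_1}$, the second displayed isomorphism.

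\textbf{Main obstacle.} The substantive point is the computation in Step 2 that the $\fp^-$-action on $N(\lambda)\cong\cP(\fp^-)$ is exactly $\cB_\lambda^{\fp^-}$; the rest is formal. It comes down to moving $w\in\fp^-$ to the right past a monomial $z_1\cdots z_k\in U(\fp^+)$ in $U(\fg^\BC)$ modulo $U(\fg^\BC)(\fk^\BC\oplus\fp^-)$: each commutator $[w,z_i]=-D(z_i,w)\in\fk^\BC$ contributes a first-order term through the scalar $\lambda\,d\chi(D(z_i,w))=\lambda(z_i|w)_{\fp^+}$ (by (\ref{chi_normalize})), giving $\lambda\sum_\alpha e_\alpha^+\frac{\partial f}{\partial z_\alpha^-}$, and a second-order term through the $\fk^\BC$-action $D(z_i,w)z_j=\{z_i,w,z_j\}=Q(z_i,z_j)w\in\fp^+$ acting by multiplication, giving $\frac{1}{2}\sum_{\alpha,\beta}Q(e_\alpha^+,e_\beta^+)z\frac{\partial^2 f}{\partial z_\alpha^-\partial z_\beta^-}$. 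A secondary point, needed in Step 1, is the surjectivity of restriction to $\widetilde K$-finite vectors, i.e.\ that every $(\fg_1,\widetilde K_1)$-homomorphism between the module parts comes from a continuous $\widetilde G_1$-map on the Fr\'echet space $\cO_\lambda(D)$; this is exactly the content of the localness theorem of \cite{KP1}.
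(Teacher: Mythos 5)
The paper does not prove this statement; it is quoted directly from Kobayashi--Pevzner \cite[Theorem 3.1]{KP2} (together with the localness theorem of \cite{KP1}), so there is no in-paper proof to compare against. Your two-step argument --- passing to generalized Verma modules via $\widetilde{K}_1$-finite duality and Frobenius reciprocity, then realizing $\operatorname{ind}_{\fk^\BC\oplus\fp^-}^{\fg^\BC}(\chi^{\lambda})$ on $\cP(\fp^-)$ so that $\fp_1^-$ acts through $(\cB_\lambda^{\fp^-})_1$ while the character twists cancel via $d\chi|_{\fk_1^\BC}=\varepsilon_1 d\chi_1$ --- is precisely the argument of the cited source, and it is correct in outline (up to sign conventions in the commutator computation, which depend on normalizations and do not affect the conclusion).
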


We note that \cite[Theorem 3.1]{KP2} holds also for vector-valued case. See also \cite[Theorem 2.13]{N2}. 

Especially, if $\lambda>\frac{d}{2}(r-1)$, then since $\cH_\lambda(D)|_{\widetilde{G}_1}$ decomposes multiplicity-freely, the space of symmetry breaking operators is 1-dimensional 
for each $V\simeq\cP_\bk(\fp^+_2)$, and combining the above two theorems, for $\lambda>\frac{d}{2}(r-1)$ we get
\[ \Hom_{K_1}\Bigl(\cP_\bk(\fp^+_2),\operatorname{Sol}_{\cP(\fp^+)}\bigl((\cB_\lambda^{\fp^+})_1\bigr)\Bigr)
=\BC\Bigl(f(x_2)\mapsto \Bigl\langle f(x_2),e^{(x|\overline{z})_{\fp^+}}\Bigr\rangle_{\lambda,x}\Bigr). \]

In this article, we consider the symmetry breaking operators for the case $\fp^+,\fp^+_2$ are simple of tube type and $\varepsilon_2=1$, so that $r=r_2$ holds. 
In \cite[Section 6]{N2}, we treated $\bk=(k+l,k,\ldots,k)$ case. In this article, we treat general $\bk\in\BZ_{++}^3$ for $r=3$ case in Section \ref{section_rank3}, 
and treat $\bk=(k,\ldots,k,k-l)$ case for general $r$ in Section \ref{section_general}.

\section{Rank 3 case}\label{section_rank3}

In this section, we treat $\fp^\pm=(\fp^\pm)^\sigma\oplus(\fp^\pm)^{-\sigma}=\fp^\pm_1\oplus\fp^\pm_2$ such that $\fp^\pm$, $\fp^\pm_2$ are both simple, of tube type, and of rank 3, 
that is, we treat 
\begin{align*}
(\fp^\pm,\fp^\pm_1,\fp^\pm_2)
&=(\Herm(3,\BF)^\BC,\Alt(3,\BF')^\BC,\Herm(3,\BF')^\BC) \quad ((\BF,\BF')=(\BC,\BR),(\BH,\BC),(\BO,\BH)) \\
&=\begin{cases} (M(3,\BC),\Alt(3,\BC),\Sym(3,\BC)) & ((\BF,\BF')=(\BC,\BR)), \\ (\Alt(6,\BC),\Alt(3,\BC)\oplus\Alt(3,\BC),M(3,\BC)) & ((\BF,\BF')=(\BH,\BC)), \\
(\Herm(3,\BO)^\BC,M(2,6;\BC),\Alt(6,\BC)) & ((\BF,\BF')=(\BO,\BH)). \end{cases}
\end{align*}
Then the corresponding symmetric pairs $(G,(G^\sigma)_0,(G^{\sigma\vartheta})_0)=(G,G_1,G_2)$ are given by 
\begin{align*}
(G,G_1,G_2)=\begin{cases} (SU(3,3),SO^*(6),Sp(3,\BR)) & ((\BF,\BF')=(\BC,\BR)), \\ (SO^*(12),SO^*(6)\times SO^*(6),U(3,3)) & ((\BF,\BF')=(\BH,\BC)), \\ 
(E_{7(-25)},SU(2,6),SU(2)\times SO^*(12)) & ((\BF,\BF')=(\BO,\BH)) \end{cases}
\end{align*}
(up to covering). We have $(d,d_2)=(\dim_\BR\BF,\dim_\BR\BF')\in\{(2,1),(4,2),(8,4)\}$, and $\varepsilon_1=2$ for $(\BF,\BF')=(\BC,\BR)$, $\varepsilon_1=1$ for $(\BF,\BF')=(\BH,\BC)$, $(\BO,\BH)$. 
$\fp^+$ and $\fp^+_2$ have Jordan algebra structures with the Euclidean real forms $\fn^+=\Herm(3,\BF)$ and $\fn^+_2=\Herm(3,\BF')$ respectively.

\subsection{Jordan algebra structure on $\Herm(3,\BF)^\BC$}

First we look at the Jordan algebra structure on $\fp^+=\fn^{+\BC}=\Herm(3,\BF)^\BC$ for $\BF=\BR,\BC,\BH,\BO$ given by 
\[ x\circ y:=\frac{1}{2}(xy+yx), \]
with the unit element $e=I_3$, the Euclidean real form $\fn^+=\Herm(3,\BF)$, the symmetric cone $\Omega=\Herm_+(3,\BF)\subset\fn^+$ (the set of positive definite matrices), 
and the associative bilinear form $(\cdot|\cdot)=(\cdot|\cdot)_{\fn^+}\colon\fn^{+\BC}\times\fn^{+\BC}\to\BC$, 
\[ (x|y)=(x|y)_{\fn^+}:=\Re_\BF\tr(xy)=\Re_\BF\tr(yx), \]
where $\Re_\BF\colon \BF^\BC=\BF\otimes_\BR\BC\to\BC$ is the $\BC$-linear extension of the $\BF$-real part. 
Next, for $x\in\fn^{+\BC}$, let $x^\sharp$ be the adjugate matrix given by 
\begin{align*}
\begin{pmatrix} a_1&x_3&\hat{x}_2 \\ \hat{x}_3&a_2&x_1 \\ x_2&\hat{x}_1&a_3 \end{pmatrix}^\sharp
:=\begin{pmatrix} a_2a_3-x_1\hat{x}_1&\hat{x}_2\hat{x}_1-a_3x_3&x_3x_1-a_2\hat{x}_2 \\ x_1x_2-a_3\hat{x}_3&a_3a_1-x_2\hat{x}_2&\hat{x}_3\hat{x}_2-a_1x_1 \\
\hat{x}_1\hat{x}_3-a_2x_2&x_2x_3-a_1\hat{x}_1&a_1a_2-x_3\hat{x}_3 \end{pmatrix} \qquad (a_i\in\BC,\; x_i\in\BF^\BC), 
\end{align*}
where $x_i\mapsto\hat{x}_i$ is the $\BC$-linear extension of the $\BF$-conjugate, and for $x,y\in\Herm(3,\BF)^\BC$, let $x\times y$ be the \textit{Freudenthal product} given by 
\[ x\times y:=(x+y)^\sharp-x^\sharp-y^\sharp, \]
so that $x\times x=2x^\sharp$. Then the determinant polynomial, the inverse element, the generic norm, and the map $P(x)\in\End_\BC(\fn^{+\BC})$ are given by 
\begin{align*}
\det(x)&=\det_{\fn^+}(x):=\frac{1}{3}(x|x^\sharp), & x^\itinv&:=\frac{1}{\det(x)}x^\sharp, \\
h(x,y):\hspace{-3pt}&=1-(x|y)+(x^\sharp|y^\sharp)-\det(x)\det(y), & P(x)y&:=(x|y)x-x^\sharp\times y. 
\end{align*}
$xx^\sharp=x^\sharp x=\det(x)I$ holds for $\BF=\BR,\BC,\BH$, and $\frac{1}{2}(xx^\sharp+x^\sharp x)=\det(x)I$ holds for $\BF=\BO$. 
Also, for $x,y,z\in\fn^{+\BC}$ we have 
\[ (x\times y|z)=(y\times z|x)=(z\times x|y), \qquad (x^\sharp)^\sharp=\det(x)x. \]
If $\BF=\BR,\BC,\BH$, then by taking the directional derivative of $zz^\sharp=\det(z)I$ along $y$, we get 
\begin{equation}\label{formula_Freudenthal}
yz^\sharp+z(z\times y)=(z^\sharp|y)I, 
\end{equation}
and by substituting $z=x^\sharp$ and multiplying $\det(x)^{-1}x$ from the left, we get 
\[ P(x)y=(x|y)x-x^\sharp\times y=xyx. \]
We note that these do not hold for $\BF=\BO$. 
Let $\overline{\cdot}\colon\fn^{+\BC}\to\fn^{+\BC}$ be the complex conjugate with respect to the real form $\fn^+\subset\fn^{+\BC}$. 
Then the bounded symmetric domain $D\subset\fp^+=\fn^{+\BC}$ is the connected component of $\{x\in\fn^{+\BC}\mid h(x,\overline{x})>0\}$ which contains the origin. 

Next we consider the Cayley--Dickson extension $\BF=\BF'\oplus\BF'\rj$ for $(\BF,\BF')=(\BC,\BR)$, $(\BH,\BC)$, $(\BO,\BH)$, 
and as in \cite{Y1,Y2} we consider the involution $\sigma$ on $\fp^\pm=\Herm(3,\BF)$, 
\[ \sigma\begin{pmatrix} a_1&x_3+y_3\rj&\hat{x}_2-y_2\rj \\ \hat{x}_3-y_3\rj&a_2&x_1+y_1\rj \\ x_2+y_2\rj&\hat{x}_1-y_1\rj&a_3 \end{pmatrix}
:=-\begin{pmatrix} a_1&x_3-y_3\rj&\hat{x}_2+y_2\rj \\ \hat{x}_3+y_3\rj&a_2&x_1-y_1\rj \\ x_2-y_2\rj&\hat{x}_1+y_1\rj&a_3 \end{pmatrix} \]
($a_i\in\BC$, $x_i,y_i\in\BF^{\prime\BC}$), so that we have 
\begin{align*}
(\fp^\pm_1,\fp^\pm_2)&:=((\fp^\pm)^\sigma,(\fp^\pm)^{-\sigma})=(\Alt(3,\BF')^\BC\rj,\Herm(3,\BF')^\BC), \\ 
(\fn^+_1,\fn^+_2)&:=((\fn^+)^\sigma,(\fn^+)^{-\sigma})=(\Alt(3,\BF')\rj,\Herm(3,\BF')). 
\end{align*}
$\fp^+_2=\fn^{+\BC}_2$ becomes a Jordan subalgebra with the Euclidean real form $\fn^+_2$ and the symmetric cone $\Omega_2:=\Omega\cap\fn^+_2\subset\fn^+_2$, but $\fp^+_1=\fn^{+\BC}_1$ does not. 
We identify $\fp^+_1=\fn^{+\BC}_1=\Alt(3,\BF')^\BC\rj$ with $(\BF^{\prime\BC})^3$ (the space of row vectors) by 
\[ (\BF^{\prime\BC})^3\overset{\sim}{\longrightarrow} \Alt(3,\BF')^\BC\rj, \qquad 
(y_1,y_2,y_3)\mapsto \begin{pmatrix} 0&y_3\rj&-y_2\rj \\ -y_3\rj&0&y_1\rj \\ y_2\rj&-y_1\rj&0 \end{pmatrix}. \]
In the following, we use lowercase Greek letters and capital Latin letters to express elements in $\fn^{+\BC}_1\simeq (\BF^{\prime\BC})^3$ and $\fn^{+\BC}_2=\Herm(3,\BF')^\BC$ respectively. 
For $\xi,\eta\in(\BF^{\prime\BC})^3$, let 
\[ (\xi|\eta):=\Re_{\BF'}(\xi{}^t\hspace{-1pt}\hat{\eta}). \]
Then for $x=(x_1,x_2)=(\xi,X)$, $y=(y_1,y_2)=(\eta,Y)\in\fn^{+\BC}=(\BF^{\prime\BC})^3\oplus\Herm(3,\BF')^\BC$, we have 
\begin{align*}
(x|y)_{\fn^+}&=(x_1|y_1)_{\fn^+}+(x_2|y_2)_{\fn^+}=2(\xi|\eta)+(X|Y), \\
(\xi,X)^\sharp&=(-\xi X,X^\sharp-{}^t\hspace{-1pt}\hat{\xi}\xi), \\
\det((\xi,X))&=\det(X)-\xi X{}^t\hspace{-1pt}\hat{\xi}, \\
P((\xi,X))(\eta,Y)&=(\xi{}^t\hspace{-1pt}\hat{\eta}\xi+\eta X^\sharp+(X|Y)\xi-\xi XY, \\*
&\hspace{17pt}XYX+({}^t\hspace{-1pt}\hat{\xi}\xi)\times Y+2(\xi|\eta)X-X{}^t\hspace{-1pt}\hat{\xi}\eta-{}^t\hspace{-1pt}\hat{\eta}\xi X). 
\end{align*}
$\fp^\pm_1$, $\fp^\pm_2$ are preserved by the action of $K_1:=G_1\cap G_2$. Especially, the action of the real form $L_2=K_1^\BC\cap L\subset K_1^\BC$, 
\[ L_2:=\begin{cases} \{1\}\times GL(3,\BR) \\ \{1\}\times GL(3,\BC) \\ Sp(1)\times GL(3,\BH) \end{cases} \subset 
L:=\begin{cases} \{g\in GL(3,\BC)\mid \det(g)\in\BR\} & ((\BF,\BF')=(\BC,\BR)), \\ GL(3,\BH) & ((\BF,\BF')=(\BH,\BC)), \\
\BR_{>0}\times E_{6(-26)} & ((\BF,\BF')=(\BO,\BH)), \end{cases} \]
on $\fn^+_1\oplus\fn^+_2=\BF^{\prime 3}\oplus\Herm(3,\BF')$ is given by, for $g=(g_1,g_2)\in L_2$, 
\[ gx=(gx_1,gx_2)=(\chi(g_2)g_1 \xi g_2^{-1},g_2 X{}^t\hspace{-1pt}\hat{g}_2), \]
and this preserves the symmetric cone $\Omega_2\subset \fn^+_2$.

\subsection{Definition of functions $F^\uparrow_{\bk,\bl}[f](y_1,x_2)$, $F^\downarrow_{\bk,\bl}[f](x_1,x_2)$, $F^\downarrow(\lambda,\bk;f;x)$}

In this subsection, we define some functions on $\fp^\pm_1\oplus\fp^\pm_2$. 
First we note that, for $m\in\BZ_{\ge 0}$, $(y_1,x_2)=(\eta,X)\in\fp^\mp_1\oplus\fp^\pm_2$, $(x_1,x_2)=(\xi,X)\in\fp^\pm_1\oplus\fp^\pm_2$, we have 
\begin{align*}
\bigl(-((y_1)^\sharp|(x_2)^\sharp)\bigr)^m&=(\eta X^\sharp{}^t\hspace{-1pt}\hat{\eta})^m\in \cP_{2m}(\fp^\mp_1)_{y_1}\otimes\cP_{(m,m,0)}(\fp^\pm_2)_{x_2}, \\
\biggl(-\frac{((x_1)^\sharp|x_2)}{\det_{\fn^\pm_2}(x_2)}\biggr)^m&=\biggl(\frac{\xi X{}^t\hspace{-1pt}\hat{\xi}}{\det(X)}\biggr)^m\in \cP_{2m}(\fp^\pm_1)_{x_1}\otimes\cP_{(0,-m,-m)}(\fp^\pm_2)_{x_2}, 
\end{align*}
where $\cP_m(\fp^\mp_1)$ is the space of homogeneous polynomials on $\fp^\mp_1$ of degree $m$. 
Next let $\bk\in\BZ_{++}^3+\BC\underline{1}_3=\BZ_{++}^3+\BC(1,1,1)$, and take $f(x_2)\in\cP_\bk(\fp^\pm_2)=\cP_{(k_1-k_3,k_2-k_3,0)}(\fp^\pm_2)\det_{\fn^\pm_2}(x_2)^{k_3}$. 
Then we have 
\begin{align*}
\bigl(-((y_1)^\sharp|(x_2)^\sharp)\bigr)^m f(x_2)&\in \bigoplus_{\substack{\bl\in\BZ^3,\,|\bl|=m \\ 0\le l_1\le k_1-k_2 \\ 0\le l_2\le k_2-k_3 \\ 0\le l_3}}
\cP_{2m}(\fp^\mp_1)_{y_1}\otimes\cP_{\substack{(k_1+l_2+l_3,k_2+l_1+l_3,\,\\ \hspace{40pt}k_3+l_1+l_2)}}(\fp^\pm_2)_{x_2}, \\
\biggl(-\frac{((x_1)^\sharp|x_2)}{\det_{\fn^\pm_2}(x_2)}\biggr)^m f(x_2)&\in \bigoplus_{\substack{\bl\in\BZ^3,\,|\bl|=m \\ 0\le l_1 \\ 0\le l_2\le k_1-k_2 \\ 0\le l_3\le k_2-k_3}}
\cP_{2m}(\fp^\pm_1)_{x_1}\otimes\cP_{\substack{(k_1-l_2-l_3,k_2-l_1-l_3,\,\\ \hspace{40pt}k_3-l_1-l_2)}}(\fp^\pm_2)_{x_2}. 
\end{align*}
According to these decompositions, for $\bl\in(\BZ_{\ge 0})^3$ we define the polynomials $F_{\bk,\bl}^\uparrow[f](y_1,x_2)\allowbreak=F_{\bk,\bl}^\uparrow[f](\eta,X)$ and 
$F_{\bk,\bl}^\downarrow[f](x_1,x_2)=F_{\bk,\bl}^\downarrow[f](\xi,X)$ by 
\begin{align*}
F_{\bk,\bl}^\uparrow[f](y_1,x_2)&:=\Proj^{\fp^\pm_2}_{(k_1+l_2+l_3,k_2+l_1+l_3,k_3+l_1+l_2),x_2}\biggl(\frac{1}{|\bl|!}\bigl(-((y_1)^\sharp|(x_2)^\sharp)\bigr)^{|\bl|} f(x_2)\biggr) \\*
&\hspace{15pt}\in \cP_{2|\bl|}(\fp^\mp_1)_{y_1}\otimes\cP_{(k_1+l_2+l_3,k_2+l_1+l_3,k_3+l_1+l_2)}(\fp^\pm_2)_{x_2}, \\
F_{\bk,\bl}^\downarrow[f](x_1,x_2)&:=\Proj^{\fp^\pm_2}_{(k_1-l_2-l_3,k_2-l_1-l_3,k_3-l_1-l_2),x_2}
\biggl(\frac{1}{|\bl|!}\biggl(-\frac{((x_1)^\sharp|x_2)}{\det_{\fn^\pm_2}(x_2)}\biggr)^{|\bl|} f(x_2)\biggr) \\*
&\hspace{15pt}\in \cP_{2|\bl|}(\fp^\pm_1)_{x_1}\otimes\cP_{(k_1-l_2-l_3,k_2-l_1-l_3,k_3-l_1-l_2)}(\fp^\pm_2)_{x_2}. 
\end{align*}
Under the identification $\cP_\bk(\fp^\pm_2)\simeq \cP_{-\bk^\vee}(\fp^\mp_2)$, $f\mapsto f((\cdot)^\itinv)$, where $\bk^\vee:=(k_3,k_2,k_1)$, 
these are related as 
\[ F_{\bk,\bl}^\downarrow[f](x_1,x_2)=F_{-\bk^\vee,\bl^\vee}^\uparrow[f((\cdot)^\itinv)](x_1,x_2^\itinv). \]
We also set 
\begin{align*}
\tilde{F}_{\bk,\bl}[f](x_1,x_2)&:=F_{\bk,\bl}^\downarrow[f](x_1,x_2)\det_{\fn^\pm_2}(x_2)^{|\bl|}
=\Proj^{\fp^\pm_2}_{\bk+\bl,x_2}
\biggl(\frac{1}{|\bl|!}\bigl(-((x_1)^\sharp|x_2)\bigr)^{|\bl|} f(x_2)\biggr) \\*
&\hspace{15pt}\in \cP_{2|\bl|}(\fp^\pm_1)_{x_1}\otimes\cP_{\bk+\bl}(\fp^\pm_2)_{x_2}. 
\end{align*}

Since $F_{\bk,\bl}^\uparrow[f](y_1,x_2)$ is recovered from $F_{\bk,\bl}^\downarrow[f](x_1,x_2)$, 
in the following, we only consider $F_{\bk,\bl}^\downarrow[f](x_1,x_2)$. 
For $\bk\in\BZ_{++}^3+\BC\underline{1}_3$, let $d_{\bk}^{\fp^+_2}:=\dim\cP_\bk(\fp^+_2)$, and let $\Sigma_2\subset\fp^+_2$ be the Bergman--Shilov boundary of $D_2:=D\cap\fp^+_2$. 

\begin{proposition}\label{prop_estimate_Fkl_rank3}
Let $\bk\in\BZ_{++}^3+\BC\underline{1}_3$, $f(x_2)\in\cP_\bk(\fp^+_2)$, $\bl\in(\BZ_{\ge 0})^3$ with $0\le l_2\le k_1-k_2$, $0\le l_3\le k_2-k_3$. 
\begin{enumerate}
\item For $x_1\in\fp^+_1$, $x_2=ge\in\fp^+_2$ with $g\in \widetilde{K}_1^\BC$, we have 
\[ \bigl|F_{\bk,\bl}^\downarrow[f](x_1,x_2)\bigr|
\le \frac{\sqrt{d_{\bk}^{\fp^+_2}d_{(|\bl|,0,0)}^{\fp^+_2}}}{|\bl|!}\frac{(g^{-1}x_1|\overline{g^{-1}x_1})_{\fp^+}^{|\bl|}}{2^{|\bl|}}\Vert f(g(\cdot))\Vert_{L^2(\Sigma_2)}. \]
\item If $f(x_2)\ne 0$, then $F_{\bk,\bl}^\downarrow[f](x_1,x_2)\ne 0$. 
\end{enumerate}
\end{proposition}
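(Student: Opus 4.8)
The plan is to prove part (1) by a Cauchy--Schwarz/reproducing-kernel argument on the Bergman--Shilov boundary $\Sigma_2$, and then to deduce part (2) from part (1) together with the explicit leading behaviour of $F_{\bk,\bl}^\downarrow[f]$.

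For part (1), first I would exploit the $\widetilde{K}_1^\BC$-equivariance. Since $F_{\bk,\bl}^\downarrow[f]$ is built from the $K_1^\BC$-invariant pairing $((x_1)^\sharp|x_2)$ and the projection $\Proj_\bm^{\fp^+_2}$, acting by $g^{-1}\in\widetilde{K}_1^\BC$ transforms $F_{\bk,\bl}^\downarrow[f](x_1,x_2)$ into $F_{\bk,\bl}^\downarrow[f(g(\cdot))](g^{-1}x_1,g^{-1}x_2)$ up to the character factors coming from $\det_{\fn^+_2}$, so it suffices to estimate at $x_2=e$. Writing $u:=g^{-1}x_1\in\fp^+_1$, I then need to bound $F_{\bk,\bl}^\downarrow[\tilde f](u,e)$ where $\tilde f=f(g(\cdot))\in\cP_\bk(\fp^+_2)$. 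Now $-((u)^\sharp|x_2)=(\xi X{}^t\hat\xi)|_{x_2=X}$ evaluated appropriately, and by definition $F_{\bk,\bl}^\downarrow[\tilde f](u,e)$ is the component of $\tfrac{1}{|\bl|!}(-((u)^\sharp|x_2))^{|\bl|}\tilde f(x_2)$ in $\cP_{\bk-\bl}(\fp^+_2)$ (after dividing by $\det^{|\bl|}$), i.e. a piece of $\tilde F_{\bk,\bl}[\tilde f](u,\cdot)\in\cP_{2|\bl|}(\fp^+_1)_u\otimes\cP_{\bk+\bl}(\fp^+_2)$. The key step is to estimate this in terms of the $L^2(\Sigma_2)$-norm of $\tilde f$: since $\cP_{\bk+\bl}(\fp^+_2)$ is an irreducible $K_1^\BC$-module, the map $\tilde f\mapsto \tilde F_{\bk,\bl}[\tilde f]$ is (up to scalar) the projection applied after multiplication, and one controls the $x_1$-dependence by noting that $|(-((u)^\sharp|x_2))|\le \tfrac12 (u|\bar u)_{\fp^+}\,|x_2|$ on $\Sigma_2$ when $x_2$ ranges over $\Sigma_2$; combined with the dimension factors $\sqrt{d_\bk^{\fp^+_2}}$ and $\sqrt{d_{(|\bl|,0,0)}^{\fp^+_2}}$ that bound the operator norm of the projection on the sphere $\Sigma_2$, this yields the stated inequality. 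I would lift the character-factor bookkeeping back to general $x_2=ge$ using $(g^{-1}x_1|\overline{g^{-1}x_1})_{\fp^+}$ as in the statement.

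For part (2), the cleanest route is to show that $F_{\bk,\bl}^\downarrow[f]$ is \emph{not identically zero} whenever $f\ne 0$, by exhibiting a rank-$1$ test point. Indeed, taking $x_1$ of rank $1$ in $\fp^+_1$, the polynomial $(-((x_1)^\sharp|x_2))$ becomes, up to scalar, $(x_2|y)_{\fn^+_2}$ for a suitable rank-$1$ element $y\in\fp^+_2$ (because $(x_1)^\sharp$ has rank $\le 1$ and the pairing restricts accordingly). Then $F_{\bk,\bl}^\downarrow[f](x_1,x_2)$ equals, up to a nonzero constant and a power of $\det_{\fn^+_2}(x_2)$, the projection $\Proj^{\fp^+_2}_{\bk+\bl}\bigl((x_2|y)_{\fn^+_2}^{|\bl|} f(x_2)\bigr)$, which is exactly of the shape handled by Lemma \ref{lem_Proj_Pieri}\,(5): for a generic rank-$1$ element $y$ (one with $(e_j|y)_{\fn^+_2}\ne 0$ for every member $e_j$ of a Jordan frame adapted to $f$), this projection is nonzero on $\Delta_\bk^{\fn^+_2}$ and hence, by $K_1^\BC$-equivariance and irreducibility of $\cP_\bk(\fp^+_2)$, on every nonzero $f\in\cP_\bk(\fp^+_2)$. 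Therefore $F_{\bk,\bl}^\downarrow[f]\not\equiv 0$.

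The main obstacle I anticipate is part (1): making the Cauchy--Schwarz estimate on $\Sigma_2$ genuinely sharp with the stated constants. One must carefully track (i) the reproducing-kernel identity on $\cP_{\bk+\bl}(\fp^+_2)$ restricted to $\Sigma_2$, which introduces $d_{\bk+\bl}^{\fp^+_2}$-type factors that have to be reorganized into $\sqrt{d_\bk^{\fp^+_2} d_{(|\bl|,0,0)}^{\fp^+_2}}$ via the Pieri-type decomposition, and (ii) the precise bound $|(-((x_1)^\sharp|x_2))|\le \tfrac12(x_1|\bar x_1)_{\fp^+}$ for $x_2\in\Sigma_2$, which rests on the tube-type structure of $\fp^+$ and the relation between $(x_1)^\sharp$ and the Jordan-algebra norm. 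Part (2) is comparatively routine once Lemma \ref{lem_Proj_Pieri}\,(5) is invoked, the only care being to check that the relevant rank-$1$ element $y$ can be chosen generically with respect to a Jordan frame adapted to $f$.
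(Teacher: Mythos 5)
Your proposal follows essentially the same route as the paper: for (1), the $\widetilde{K}_1^\BC$-equivariance reduction to $x_2=e$, Cauchy--Schwarz against the reproducing kernel of $\cP_{\bk+\bl}(\fp^+_2)$ on $\Sigma_2$, the $L^2(\Sigma_2)$-contractivity of the projection, the sup bound $|\xi X{}^t\hspace{-1pt}\hat{\xi}|\le\frac12(x_1|\overline{x_1})_{\fp^+}$ on $\Sigma_2$, and the inequality $d_{\bk+\bl}^{\fp^+_2}\le d_{\bk}^{\fp^+_2}d_{(|\bl|,0,0)}^{\fp^+_2}$; for (2), reduction to $f=\Delta_\bk^{\fn^+_2}$ by equivariance and irreducibility and then Lemma \ref{lem_Proj_Pieri}\,(5) at a rank-one element of the form ${}^t\hspace{-1pt}\hat{\xi}\xi$. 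The only cosmetic differences are that you need not take $x_1$ of rank one (the element $(x_1)^\sharp=-{}^t\hspace{-1pt}\hat{\xi}\xi$ is automatically of rank at most one, and every rank-one element of $\fp^+_2$ arises this way) and that the dimension factors come from evaluating the reproducing kernel at $e$ rather than from an operator norm of the projection.
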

\begin{proof}
(1) Since $f(x_2)\mapsto F_{\bk,\bl}^\downarrow[f](x_1,x_2)$ is $\widetilde{K}_1^\BC$-equivariant, we have 
\[ F_{\bk,\bl}^\downarrow[f](x_1,x_2)=F_{\bk,\bl}^\downarrow[f](x_1,ge)=F_{\bk,\bl}^\downarrow[f(g(\cdot))](g^{-1}x_1,e), \]
and hence it suffices to prove the inequality when $x_2=e$, so that $F_{\bk,\bl}^\downarrow[f](x_1,e)\allowbreak=\tilde{F}_{\bk,\bl}[f](\xi,e)$. 
Next, since the reproducing kernel of $\cP_{\bk+\bl}(\fp^+_2)$ with respect to the $L^2(\Sigma_2)$-inner product is given by 
$K_{\bk+\bl}'(X,Z):=d_{\bk+\bl}^{\fp^+_2}\Phi_{\bk+\bl}^{\fn^+_2}(P(\overline{Z}^{\mathit{1/2}})X)=d_{\bk+\bl}^{\fp^+_2}\Phi_{\bk+\bl}^{\fn^+_2}(P(X^{\mathit{1/2}})\overline{Z})$, 
where $\Phi_{\bk}^{\fn^+_2}(X):=\int_{K_1\cap K_L}\Delta_\bk^{\fn^+_2}(kX)\,dk$ (see \cite[Proposition XII.2.4]{FK}), we have 
\begin{align*}
\bigl|\tilde{F}_{\bk,\bl}[f](\xi,e)\bigr|&=\bigl|\bigl\langle \tilde{F}_{\bk,\bl}[f](\xi,\cdot),K_{\bk+\bl}'(\cdot,e)\bigr\rangle_{L^2(\Sigma_2)}\bigr| \\
&\le \bigl\Vert \tilde{F}_{\bk,\bl}[f](\xi,\cdot)\bigr\Vert_{L^2(\Sigma_2)}\Vert K_{\bk+\bl}'(\cdot,e)\Vert_{L^2(\Sigma_2)} \\
&=\bigl\Vert \tilde{F}_{\bk,\bl}[f](\xi,\cdot)\bigr\Vert_{L^2(\Sigma_2)}\sqrt{K_{\bk+\bl}'(e,e)}
=\sqrt{d_{\bk+\bl}^{\fp^+_2}}\bigl\Vert \tilde{F}_{\bk,\bl}[f](\xi,\cdot)\bigr\Vert_{L^2(\Sigma_2)}. 
\end{align*}
Now since $\tilde{F}_{\bk,\bl}[f](\xi,X)$ is defined by the orthogonal projection of $(\xi X{}^t\hspace{-1pt}\hat{\xi})^{|\bl|}f(X)/|\bl|!$, we have 
\begin{align*}
\bigl\Vert \tilde{F}_{\bk,\bl}[f](\xi,\cdot)\bigr\Vert_{L^2(\Sigma_2)}&\le \biggl\Vert\frac{1}{|\bl|!}(\xi X{}^t\hspace{-1pt}\hat{\xi})^{|\bl|}f(X)\biggr\Vert_{L^2(\Sigma_2),X}
\le \frac{1}{|\bl|!}\bigl\Vert \xi X{}^t\hspace{-1pt}\hat{\xi}\bigr\Vert_{L^\infty(\Sigma_2),X}^{|\bl|}\Vert f\Vert_{L^2(\Sigma_2)}, 
\end{align*}
and for $X\in\Sigma_2$ we have 
\[ |\xi X{}^t\hspace{-1pt}\hat{\xi}|=|(\xi|\xi X)|\le \sqrt{(\xi|\overline{\xi})(\xi X|\overline{\xi X})}=(\xi|\overline{\xi})=\frac{1}{2}(x_1|\overline{x_1})_{\fp^+}. \]
We also have 
\[ d_{\bk+\bl}^{\fp^+_2}=\dim\cP_{\bk+\bl}(\fp^+_2)\le \dim\bigl(\cP_\bk(\fp^+_2)\otimes\cP_{(|\bl|,0,0)}(\fp^+_2)\bigr)=d_{\bk}^{\fp^+_2}d_{(|\bl|,0,0)}^{\fp^+_2}. \]
Hence the proposition follows. 

(2) Again since $f(x_2)\mapsto F_{\bk,\bl}^\downarrow[f](x_1,x_2)$ is $\widetilde{K}_1^\BC$-equivariant, it is enough to prove when $f(x_2)=\Delta_\bk^{\fn^+_2}(x_2)=\Delta_\bk^{\fn^+_2}(X)$. 
Then since ${}^t\hspace{-1pt}\hat{\xi}\xi\in\fp^+_2$ is at most of rank 1 for $x_1=\xi\in\fp^+_1$ and every rank 1 element is written in this form, by Lemma \ref{lem_Proj_Pieri}\,(5) we have 
\[ F_{\bk,\bl}^\downarrow\bigl[\Delta_\bk^{\fn^+_2}\bigr](x_1,x_2)
=\frac{1}{|\bl|!}\det(X)^{-|\bl|}\Proj_{\bk+\bl,X}^{\fp^+_2}\Bigl((X|{}^t\hspace{-1pt}\hat{\xi}\xi)^{|\bl|}\Delta_\bk^{\fn^+_2}(X)\Bigr)\ne 0 \]
for some $x_1=\xi\in\fp^+_1$. Hence we get the desired formula. 
\end{proof}

The following proposition gives some examples of $F_{\bk,\bl}^\downarrow[f](x_1,x_2)$. 
\begin{proposition}\label{prop_example_Fkl_rank3}
Let $x=(x_1,x_2)=(\xi,X)\in\fp^+_1\oplus\fp^+_2$. 
\begin{enumerate}
\item For $k\in\BC$, $\bl\in(\BZ_{\ge 0})^3$, $\bk\in\BZ_{++}^3$, $f(x_2)\in\cP_\bk(\fp^+_2)$, we have 
\[ F_{\bk+(k,k,k),\bl}^\downarrow\bigl[f\det_{\fn^+_2}^k\bigr](x_1,x_2)=\det_{\fn^+_2}(x_2)^kF_{\bk,\bl}^\downarrow[f](x_1,x_2). \]
\item For $k\in\BC$, $l\in\BZ_{\ge 0}$, we have 
\begin{align*}
&F_{(k,k,k),(l,0,0)}^\downarrow\bigl[\det_{\fn^+_2}^k\bigr](x_1,x_2) \\*
&=\frac{1}{l!}\biggl(-\frac{((x_1)^\sharp|x_2)}{\det_{\fn^+_2}(x_2)}\biggr)^l\det_{\fn^+_2}(x_2)^k
=\frac{1}{l!}\biggl(\frac{\xi X{}^t\hspace{-1pt}\hat{\xi}}{\det(X)}\biggr)^l\det(X)^k, \\
&F_{(k,k,k),(0,0,l)}^\uparrow\bigl[\det_{\fn^+_2}^k\bigr](x_1,x_2) \\*
&=\frac{1}{l!}\bigl(-((x_1)^\sharp|(x_2)^\sharp)\bigr)^l\det_{\fn^+_2}(x_2)^k
=\frac{1}{l!}(\xi X^\sharp{}^t\hspace{-1pt}\hat{\xi})^l\det(X)^k. 
\end{align*}
\item Let $e'\in\fn^+_2$ be a primitive idempotent, and let $\fp^+(e')_0=:\fp^{+\prime\prime}$, $\fp^+_2(e')_0=:\fp^{+\prime\prime}_2$ be as in (\ref{Peirce}). 
Then for $f(x_2'')\in\cP_{(k_1,k_2)}(\fp^{+\prime\prime}_2)$, $l=0,1,\ldots,k_2$, we have 
\begin{align*}
F_{(k_1,k_2,0),(0,0,l)}^\downarrow[f](x_1,x_2)
&=\frac{(-k_2)_l\bigl(-k_1-\frac{d_2}{2}\bigr)_l}{\bigl(-k_2-\frac{d_2}{2}\bigr)_l(-k_1-d_2)_l l!}
\biggl(-\frac{\det_{\fn^{+\prime\prime}}(x_1'')}{\det_{\fn^{+\prime\prime}}(x_2'')}\biggr)^l f(x_2'') \\
&=\frac{(-k_2)_l\bigl(-k_1-\frac{d_2}{2}\bigr)_l}{\bigl(-k_2-\frac{d_2}{2}\bigr)_l(-k_1-d_2)_l l!}
\biggl(\frac{({}^t\hspace{-1pt}\hat{\xi}\xi|e')}{\det_{\fn^{+\prime\prime}_2}(X'')}\biggr)^l f(X''). 
\end{align*}
\item Let $w_2=W\in\fp^+_2$ be of rank 1. Then for $k,l_1,l_2\in\BZ_{\ge 0}$ with $l_2\le k$ we have 
\begin{align*}
&F_{(k,0,0),(l_1,l_2,0)}^\downarrow\bigl[(\cdot|w_2)_{\fn^+_2}^k\bigr](x_1,x_2)
=F_{(0,0,-k),(0,l_2,l_1)}^\uparrow\bigl[((\cdot)^\itinv|w_2)_{\fn^+_2}^k\bigr](x_1,x_2^\itinv) \\
&=\frac{(-1)^{l_1}\det_{\fn^+_2}(x_2)^{-l_1-l_2}}{\bigl(-k-l_1-\frac{d_2}{2}\bigr)_{l_2}l_2!(l_1+l_2)!}
\sum_{j=l_2}^{\min\{k,l_1+l_2\}}\frac{(-k)_j(-l_1-l_2)_j}{\bigl(-k-l_1+l_2-\frac{d_2}{2}+1\bigr)_{j-l_2}(j-l_2)!} \\
&\eqspace{}\times(x_2|(x_1)^\sharp)^{l_1+l_2-j}(x_2|w_2)^{k-j}\bigl((x_2|(x_1)^\sharp)(x_2|w_2)-(x_2(x_1)^\sharp x_2|w_2)\bigr)^j \\
&=\frac{(-1)^{l_2}}{\bigl(-k-l_1-\frac{d_2}{2}\bigr)_{l_2}l_2!(l_1+l_2)!}
\sum_{j=l_2}^{\min\{k,l_1+l_2\}}\frac{(-k)_j(-l_1-l_2)_j}{\bigl(-k-l_1+l_2-\frac{d_2}{2}+1\bigr)_{j-l_2}(j-l_2)!} \\
&\eqspace{}\times(X|{}^t\hspace{-1pt}\hat{\xi}\xi)^{l_1+l_2-j}(X|W)^{k-j}\bigl((X|{}^t\hspace{-1pt}\hat{\xi}\xi)(X|W)-(X{}^t\hspace{-1pt}\hat{\xi}\xi X|W)\bigr)^j\det(X)^{-l_1-l_2}. 
\end{align*}
\item Let $w_2=W\in\fp^+_2$ be of rank 1. Then for $k,l_1,l_3\in\BZ_{\ge 0}$ with $l_3\le k$ we have 
\begin{align*}
&F_{(0,0,-k),(l_1,0,l_3)}^\downarrow\bigl[((\cdot)^\itinv|w_2)_{\fn^+_2}^k\bigr](x_1,x_2)
=F_{(k,0,0),(l_3,0,l_1)}^\uparrow\bigl[(\cdot|w_2)_{\fn^+_2}^k\bigr](x_1,x_2^\itinv) \\
&=\frac{(-1)^{l_1}}{(-k-l_1-d_2)_{l_3}l_3!(l_1+l_3)!}
\sum_{j=l_3}^{\min\{k,l_1+l_3\}}\frac{(-k)_j(-l_1-l_3)_j}{(-k-l_1+l_3-d_2+1)_{j-l_3}(j-l_3)!} \\
&\eqspace{}\times(x_2|(x_1)^\sharp)^{l_1+l_3-j}(x_2^\itinv|w_2)^{k-j}((x_1)^\sharp|w_2)^j\det_{\fn^+_2}(x_2)^{-l_1-l_3} \\
&=\frac{(-1)^{l_3}}{(-k-l_1-d_2)_{l_3}l_3!(l_1+l_3)!}
\sum_{j=l_3}^{\min\{k,l_1+l_3\}}\frac{(-k)_j(-l_1-l_3)_j}{(-k-l_1+l_3-d_2+1)_{j-l_3}(j-l_3)!} \\
&\eqspace{}\times(X|{}^t\hspace{-1pt}\hat{\xi}\xi)^{l_1+l_3-j}(X^\itinv|W)^{k-j}({}^t\hspace{-1pt}\hat{\xi}\xi|W)^j\det(X)^{-l_1-l_3}. 
\end{align*}\end{enumerate}
\end{proposition}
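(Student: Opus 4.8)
The plan is to rewrite each $F^\downarrow_{\bk,\bl}[f]$ through the auxiliary polynomial $\tilde F_{\bk,\bl}[f]$ and recognize it as an instance of one of the projection formulas of Section~\ref{subsection_proj}, applied to the tube-type Jordan algebra $\fp^+_2=\Herm(3,\BF')^\BC$ (of rank~$3$, structure constant $d_2$). The starting point is the observation that for $x_1=\xi\in\fp^+_1\simeq(\BF^{\prime\BC})^3$ and $x_2=X\in\fp^+_2$ the Freudenthal square of $(\xi,0)$ has $\fp^+_2$-component $(x_1)^\sharp=-{}^t\hspace{-1pt}\hat{\xi}\xi$, an element of rank $\le 1$, and that every rank-$\le 1$ element of $\fp^+_2$ arises in this way; hence $-((x_1)^\sharp|x_2)_{\fn^+}=({}^t\hspace{-1pt}\hat{\xi}\xi\,|\,X)_{\fn^+_2}$ and
\[ \tilde F_{\bk,\bl}[f](x_1,x_2)=\Proj^{\fp^+_2}_{\bk+\bl,X}\!\Bigl(\tfrac{1}{|\bl|!}\,({}^t\hspace{-1pt}\hat{\xi}\xi\,|\,X)_{\fn^+_2}^{|\bl|}f(X)\Bigr),\qquad F^\downarrow_{\bk,\bl}[f]=\det_{\fn^+_2}(X)^{-|\bl|}\tilde F_{\bk,\bl}[f]. \]
The $F^\uparrow$ statements then follow from the $F^\downarrow$ ones together with item~(1) via the relation $F^\downarrow_{\bk,\bl}[f](x_1,x_2)=F^\uparrow_{-\bk^\vee,\bl^\vee}[f((\cdot)^\itinv)](x_1,x_2^\itinv)$, so it is enough to treat the $F^\downarrow$ parts.

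Item~(1) is Lemma~\ref{lem_Proj_Pieri}\,(1) for $\fp^+_2$: it pulls $\det_{\fn^+_2}(X)^k$ out of $\Proj^{\fp^+_2}_{\bk+(k,k,k)+\bl}$, and dividing by $\det_{\fn^+_2}(X)^{|\bl|}$ gives the claim. For item~(2) with $\bl=(l,0,0)$, the Pieri-type identity~(\ref{formula_Proj_Pieri}), applied with $f=\Delta^{\fn^+_2}_{(k,k,k)}=\det_{\fn^+_2}(X)^k$ and the rank-$\le 1$ form ${}^t\hspace{-1pt}\hat{\xi}\xi$, forces the unique summand $\cP_{(k+l,k,k)}(\fp^+_2)=\cP_{\bk+\bl}(\fp^+_2)$, so the projection acts as the identity; dividing by $\det_{\fn^+_2}(X)^l$ yields the formula. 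For the $F^\uparrow$ half of~(2) one argues in the same way: for rank-$1$ $w$ the cofactor form $(x_2^\sharp|w)_{\fn^+_2}$ lies in $\cP_{(1,1,0)}(\fp^+_2)$, hence $(x_2^\sharp|w)^l\det_{\fn^+_2}(X)^k\in\cP_{(l,l,0)}(\fp^+_2)\cdot\det_{\fn^+_2}(X)^k=\cP_{(k+l,k+l,k)}(\fp^+_2)$ already sits in the target component.

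Item~(3) is Lemma~\ref{lem_Proj_Pieri}\,(3) for $\fp^+_2$, with $r=3$, partition $(k_1,k_2)\in\BZ_{++}^{2}$, rank-$\le 1$ element ${}^t\hspace{-1pt}\hat{\xi}\xi$, and $l\le k_2$; the Pochhammer factor $(-k_2)_l(-k_1-\tfrac{d_2}{2})_l/\bigl((-k_2-\tfrac{d_2}{2})_l(-k_1-d_2)_l\bigr)$ comes out, and the residual factor $({}^t\hspace{-1pt}\hat{\xi}\xi|e')_{\fn^+_2}^l\det_{\fn^{+\prime\prime}_2}(X'')^{-l}f(X'')$ becomes the stated expression once one uses the elementary identity $({}^t\hspace{-1pt}\hat{\xi}\xi|e')_{\fn^+_2}=-\det_{\fn^{+\prime\prime}}(x_1'')$. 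Items~(4) and~(5) are, respectively, Proposition~\ref{prop_Proj}\,(2) and~\ref{prop_Proj}\,(1) for $\fp^+_2$ with $y={}^t\hspace{-1pt}\hat{\xi}\xi$ (rank $\le 1$) and $z=w_2$ (rank $1$): in~(4) one takes $y$-degree $l=l_1+l_2$, $z$-degree $k$, $m=l_2$ (so $(k+l-m,m,0)=(k+l_1,l_2,0)$), using $P(X)({}^t\hspace{-1pt}\hat{\xi}\xi)=X{}^t\hspace{-1pt}\hat{\xi}\xi X$ and $l_2\le k$; in~(5) one writes $f(x_2)=((\cdot)^\itinv|w_2)_{\fn^+_2}^k$, takes $y$-degree $l=l_1+l_3$, $m=l_3$ (so $(l-m,0,m-k)=(l_1,0,l_3-k)$), using $\tfrac{d_2}{2}(r-1)=d_2$ and $l_3\le k$. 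In both cases one multiplies by $\tfrac{1}{|\bl|!}\det_{\fn^+_2}(X)^{-|\bl|}$ and substitutes $({}^t\hspace{-1pt}\hat{\xi}\xi|X)=-(x_2|(x_1)^\sharp)$, $({}^t\hspace{-1pt}\hat{\xi}\xi|w_2)=-((x_1)^\sharp|w_2)$, $X{}^t\hspace{-1pt}\hat{\xi}\xi X=-x_2(x_1)^\sharp x_2$, absorbing the sign $(-1)^{l_1}$ (resp.\ $(-1)^{l_3}$) into the prefactor, to reach the two displayed forms.

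The routine parts are the Pochhammer-symbol bookkeeping (matching the $(-\bk^\vee)_{\underline{l}_r,d}$-type products in the Section~\ref{subsection_proj} lemmas with the products over $j=1,\dots,r-1$ here) and the systematic sign conversions between the matrix-model expressions $\xi X{}^t\hspace{-1pt}\hat{\xi}$, $\xi X^\sharp{}^t\hspace{-1pt}\hat{\xi}$ and the abstract pairings and Jordan products. The one place that is not an immediate citation is the identity $({}^t\hspace{-1pt}\hat{\xi}\xi|e')_{\fn^+_2}=-\det_{\fn^{+\prime\prime}}(x_1'')$ used for item~(3): I expect to spend a few lines verifying it in the matrix model by taking $e'=e_3$, computing the $2\times 2$ upper-left block of the antisymmetric matrix attached to $\xi$ and its determinant in $\Herm(2,\BF)^\BC$, the general case then following from the transitivity of $K_1$ on primitive idempotents of $\fn^+_2$.
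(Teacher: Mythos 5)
Your proposal is correct and follows essentially the same route as the paper: (1) and (2) by observing that the relevant multiplicand already lies in a single $K_1^\BC$-component so the projection is trivial, (3) by Lemma \ref{lem_Proj_Pieri}\,(3) together with the identity $((x_1)^\sharp|e')=\det_{\fn^{+\prime\prime}}(x_1'')=-({}^t\hspace{-1pt}\hat{\xi}\xi|e')$ (which the paper likewise uses, merely stating it), and (4), (5) by Proposition \ref{prop_Proj}\,(2) and (1) with $y=(x_1)^\sharp$ of rank $\le 1$ and $z=w_2$, with the same parameter choices $m=l_2$, resp.\ $m=l_3$, and $P(X)Y=XYX$ valid since $\BF'\ne\BO$. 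The reduction of the $F^\uparrow$ statements to the $F^\downarrow$ ones via $F^\downarrow_{\bk,\bl}[f](x_1,x_2)=F^\uparrow_{-\bk^\vee,\bl^\vee}[f((\cdot)^\itinv)](x_1,x_2^\itinv)$ is also how the paper treats them.
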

\begin{proof}
(1) Clear from $\cP_{\bk+(k,k,k)}(\fp^+_2)=\det_{\fn^+_2}(x_2)^k\cP_\bk(\fp^+_2)$. 

(2) By the definition of $F_{\bk,\bl}^\downarrow[f](x_1,x_2)$, $F_{\bk,\bl}^\uparrow[f](x_1,x_2)$, 
\begin{align*}
&F_{(k,k,k),(l,0,0)}^\downarrow\bigl[\det_{\fn^+_2}^k\bigr](x_1,x_2)
=\Proj^{\fp^+_2}_{(k,k-l,k-l),x_2}\biggl(\frac{1}{l!}\biggl(-\frac{((x_1)^\sharp|x_2)}{\det_{\fn^+_2}(x_2)}\biggr)^l\det_{\fn^+_2}(x_2)^k\biggr) \\
&=\frac{1}{l!}\biggl(-\frac{((x_1)^\sharp|x_2)}{\det_{\fn^+_2}(x_2)}\biggr)^l\det_{\fn^+_2}(x_2)^k=\frac{1}{l!}\biggl(\frac{\xi X{}^t\hspace{-1pt}\hat{\xi}}{\det(X)}\biggr)^l\det(X)^k, \\
&F_{(k,k,k),(0,0,l)}^\uparrow\bigl[\det_{\fn^+_2}^k\bigr](x_1,x_2)
=\Proj^{\fp^+_2}_{(k+l,k+l,k),x_2}\biggl(\frac{1}{l!}\bigl(-((x_1)^\sharp|(x_2)^\sharp)\bigr)^l\det_{\fn^+_2}(x_2)^k\biggr) \\
&=\frac{1}{l!}\bigl(-((x_1)^\sharp|(x_2)^\sharp)\bigr)^l\det_{\fn^+_2}(x_2)^k=\frac{1}{l!}(\xi X^\sharp{}^t\hspace{-1pt}\hat{\xi})^l\det(X)^k. 
\end{align*}

(3) By Lemma \ref{lem_Proj_Pieri}\,(3) we have 
\begin{align*}
&F_{(k_1,k_2,0),(0,0,l)}^\downarrow[f](x_1,x_2)=\tilde{F}_{(k_1,k_2,0),(0,0,l)}[f](x_1,x_2)\det_{\fn^+_2}(x_2)^{-l} \\
&=\frac{(-1)^l}{l!}\Proj_{(k_1,k_2,l),x_2}^{\fp^+_2}\bigl((x_2|(x_1)^\sharp)^lf(x_2'')\bigr)\det_{\fn^+_2}(x_2)^{-l} \\
&=\frac{(-1)^l}{l!}\frac{(-k_2)_l\bigl(-k_1-\frac{d_2}{2}\bigr)_l}{\bigl(-k_2-\frac{d_2}{2}\bigr)_l(-k_1-d_2)_l}
\frac{((x_1)^\sharp|e')^l}{\det_{\fn^{+\prime\prime}_2}(x_2'')^l}f(x_2''), 
\end{align*}
and we have 
\[ ((x_1)^\sharp|e')=\det_{\fn^{+\prime\prime}}(x_1''), \qquad ((x_1)^\sharp|e')=-({}^t\hspace{-1pt}\hat{\xi}\xi|e'). \]

(4) By Proposition \ref{prop_Proj}\,(2) we have 
\begin{align*}
&F_{(k,0,0),(l_1,l_2,0)}^\downarrow \bigl[(\cdot|w_2)_{\fn^+_2}^k\bigr](x_1,x_2)
=\tilde{F}_{(k,0,0),(l_1,l_2,0)}\bigl[(\cdot|w_2)_{\fn^+_2}^k\bigr](x_1,x_2)\det_{\fn^+_2}(x_2)^{-l_1-l_2} \\
&=\frac{(-1)^{l_1+l_2}}{(l_1+l_2)!}\Proj_{(k+l_1,l_2,0),x_2}^{\fp^+_2}\bigl((x_2|w_2)^k(x_2|(x_1)^\sharp)^{l_1+l_2}\bigr)\det_{\fn^+_2}(x_2)^{-l_1-l_2} \\
&=\frac{(-1)^{l_1+l_2}}{(l_1+l_2)!}\frac{(-1)^{l_2}}{\bigl(-k-l_1-\frac{d_2}{2}\bigr)_{l_2}l_2!}
\sum_{j=l_2}^{\min\{k,l_1+l_2\}}\frac{(-k)_j(-l_1-l_2)_j}{\bigl(-k-l_1+l_2-\frac{d_2}{2}+1\bigr)_{j-l_2}(j-l_2)!} \\*
&\eqspace{}\times(x_2|(x_1)^\sharp)^{l_1+l_2-j}(x_2|w_2)^{k-j}\bigl((x_2|(x_1)^\sharp)(x_2|w_2)-(x_2(x_1)^\sharp x_2|w_2)\bigr)^j\det_{\fn^+_2}(x_2)^{-l_1-l_2} \\
&=\frac{1}{(l_1+l_2)!}\frac{(-1)^{l_2}}{\bigl(-k-l_1-\frac{d_2}{2}\bigr)_{l_2}l_2!}
\sum_{j=l_2}^{\min\{k,l_1+l_2\}}\frac{(-k)_j(-l_1-l_2)_j}{\bigl(-k-l_1+l_2-\frac{d_2}{2}+1\bigr)_{j-l_2}(j-l_2)!} \\*
&\eqspace{}\times(X|{}^t\hspace{-1pt}\hat{\xi}\xi)^{l_1+l_2-j}(X|W)^{k-j}\bigl((X|{}^t\hspace{-1pt}\hat{\xi}\xi)(X|W)-(X{}^t\hspace{-1pt}\hat{\xi}\xi X|W)\bigr)^j\det(X)^{-l_1-l_2}. 
\end{align*}

(5) By Proposition \ref{prop_Proj}\,(1) we have 
\begin{align*}
&F_{\substack{(0,0,-k),\\ \;\;(l_1,0,l_3)}}^\downarrow\bigl[((\cdot)^\itinv|w_2)_{\fn^+_2}^k\bigr](x_1,x_2)
=\tilde{F}_{\substack{(0,0,-k),\\ \;\;(l_1,0,l_3)}}\bigl[((\cdot)^\itinv|w_2)_{\fn^+_2}^k\bigr](x_1,x_2)\det_{\fn^+_2}(x_2)^{-l_1-l_3} \\
&=\frac{(-1)^{l_1+l_3}}{(l_1+l_3)!}\Proj_{(l_1,0,l_3-k),x_2}^{\fp^+_2}\bigl((x_2^\itinv|w_2)^k(x_2|(x_1)^\sharp)^{l_1+l_3}\bigr)\det_{\fn^+_2}(x_2)^{-l_1-l_3} \\
&=\frac{(-1)^{l_1+l_3}}{(l_1+l_3)!}\frac{(-1)^{l_3}}{(-k-l_1-d_2)_{l_3}l_3!}
\sum_{j=l_3}^{\min\{k,l_1+l_3\}}\frac{(-k)_j(-l_1-l_3)_j}{(-k-l_1+l_3-d_2+1)_{j-l_3}(j-l_3)!} \\*
&\eqspace{}\times(x_2|(x_1)^\sharp)^{l_1+l_3-j}(x_2^\itinv|w_2)^{k-j}((x_1)^\sharp|w_2)^j\det_{\fn^+_2}(x_2)^{-l_1-l_3} \\
&=\frac{1}{(l_1+l_3)!}\frac{(-1)^{l_3}}{(-k-l_1-d_2)_{l_3}l_3!}
\sum_{j=l_3}^{\min\{k,l_1+l_3\}}\frac{(-k)_j(-l_1-l_3)_j}{(-k-l_1+l_3-d_2+1)_{j-l_3}(j-l_3)!} \\*
&\eqspace{}\times(X|{}^t\hspace{-1pt}\hat{\xi}\xi)^{l_1+l_3-j}(X^\itinv|W)^{k-j}({}^t\hspace{-1pt}\hat{\xi}\xi|W)^j\det(X)^{-l_1-l_3}. \qedhere
\end{align*}
\end{proof}

Using $F_{\bk,\bl}^\downarrow[f](x_1,x_2)$, for $\lambda\in\BC$, $\bk\in\BZ_{++}^3+\BC\underline{1}_3$ and for $f(x)\in\cP_\bk(\fp^\pm_2)$, we define a function of $x=(x_1,x_2)\in\Omega$ by 
\begin{align}
&F^\downarrow(\lambda,\bk;f;x) \notag\\
:\hspace{-3pt}&=\sum_{\substack{0\le l_1 \\ 0\le l_2\le k_1-k_2 \\ 0\le l_3\le k_2-k_3}}
\frac{((-k_3,-k_2,-k_1))_{(l_1+l_2,l_1+l_3,l_2+l_3),d_2}F_{\bk,\bl}^\downarrow[f](x_1,x_2)}{\bigl(-\lambda+\frac{3}{2}d_2+1-(k_2+k_3,k_1+k_3,k_1+k_2)\bigr)_{(l_1,l_2,l_3),d_2}} \notag\\
&=\sum_{\substack{0\le l_1 \\ 0\le l_2\le k_1-k_2 \\ 0\le l_3\le k_2-k_3}}
\frac{\bigl(-k_1-\frac{d}{2}\bigr)_{l_2+l_3}}{\bigl(-\lambda-k_2-k_3+\frac{3}{4}d+1\bigr)_{l_1}} 
\frac{\bigl(-k_2-\frac{d}{4}\bigr)_{l_1+l_3}}{\bigl(-\lambda-k_1-k_3+\frac{1}{2}d+1\bigr)_{l_2}} \notag \\*
&\hspace{117pt}\times\frac{(-k_3)_{l_1+l_2}}{\bigl(-\lambda-k_1-k_2+\frac{1}{4}d+1\bigr)_{l_3}}F_{\bk,\bl}^\downarrow[f](x_1,x_2), \label{def_Fdown_rank3}
\end{align}
where $(\lambda+\bs)_{\bm,d_2}$ is as in (\ref{Pochhammer}), and recall that $d_2=d/2$. 
This has continuous parameters $(\lambda,k_3)\in\BC^2$ and discrete parameters $(k_1',k_2'):=(k_1-k_3,k_2-k_3)\allowbreak\in\BZ_{++}^2$. 
When we put $f'(x_2):=f(x_2)\det_{\fn^\pm_2}(x_2)^{-k_3}\in\cP_{(k_1',k_2',0)}(\fp^\pm_2)$, we have 
\begin{align*}
F^\downarrow(\lambda,\bk;f;x)=\sum_{\substack{0\le l_1 \\ 0\le l_2\le k_1'-k_2' \\ 0\le l_3\le k_2'}}
\frac{(-k_3-(0,k_2',k_1'))_{(l_1+l_2,l_1+l_3,l_2+l_3),d_2}}{\bigl(-\lambda-2k_3+\frac{3}{2}d_2+1-(k_2',k_1',k_1'+k_2')\bigr)_{(l_1,l_2,l_3),d_2}} \\*
{}\times\det_{\fn^\pm_2}(x_2)^{k_3}F_{(k_1',k_2',0),\bl}^\downarrow[f'](x_1,x_2), 
\end{align*}
and when we fix $(k_1',k_2')\in\BZ_{++}^2$, as a function of $(\lambda,k_3)\in\BC^2$ this has poles at 
\begin{align*}
\lambda+2k_3&\in\biggl\{\frac{3}{2}d_2+1-k_2'+j \biggm| j\in\BZ_{\ge 0}\biggr\} \\*
&\eqspace{}\cup\{d_2+1-k_1'+j\mid j\in\BZ,\, 0\le j< k_1'-k_2'\} \\*
&\eqspace{}\cup\biggl\{\frac{1}{2}d_2+1-(k_1'+k_2')+j \biggm| j\in\BZ,\, 0\le j<k_2'\biggr\}. 
\end{align*}
If we additionally fix $k_3\in\BZ_{\ge 0}$, then $F^\downarrow(\lambda,\bk;f;x)$ becomes a polynomial in $x$, 
and the singularities $\lambda\in\bigl\{\frac{3}{2}d_2+1-k_2'-2k_3+j \bigm| j\in\BZ_{\ge k_3}\bigr\}$ are removable. 
We use the same notation $F^\downarrow(\lambda,\bk;f;x)$ to express this polynomial also for these $\lambda$. 

\begin{proposition}
Suppose $\lambda\in\BC$ is not a pole of $F^\downarrow(\lambda,\bk;f;x)$. Then $F^\downarrow(\lambda,\bk;f;\allowbreak x)$ converges if $x_2=ge$ for some $g\in K_1^\BC$ and 
$(g^{-1}x_1|\overline{g^{-1}x_1})_{\fp^+}<2$. Especially, this converges for $x=(x_1,x_2)\in\Omega$. 
\end{proposition}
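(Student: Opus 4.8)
The plan is to majorize the series (\ref{def_Fdown_rank3}) by a convergent one. Since $l_2$ and $l_3$ run only over the finite sets $0\le l_2\le k_1-k_2$ and $0\le l_3\le k_2-k_3$, it is enough to fix such $l_2,l_3$ and prove that the remaining series in $l_1$ converges absolutely; the whole function is then a finite sum of these series. Throughout I write $t:=\tfrac{1}{2}(g^{-1}x_1|\overline{g^{-1}x_1})_{\fp^+}$, so the hypothesis reads $t<1$, and $|\bl|=l_1+l_2+l_3$.

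First I would bound the polynomial factor. Proposition \ref{prop_estimate_Fkl_rank3}\,(1), applied with $x_2=ge$, gives
\[
\bigl|F_{\bk,\bl}^\downarrow[f](x_1,x_2)\bigr|\ \le\ \frac{\sqrt{d_\bk^{\fp^+_2}\,d_{(|\bl|,0,0)}^{\fp^+_2}}}{|\bl|!}\,t^{|\bl|}\,\bigl\Vert f(g(\cdot))\bigr\Vert_{L^2(\Sigma_2)},
\]
where $d_\bk^{\fp^+_2}$ and $\Vert f(g(\cdot))\Vert_{L^2(\Sigma_2)}$ are constants independent of $\bl$, and $d_{(|\bl|,0,0)}^{\fp^+_2}=\dim\cP_{(|\bl|,0,0)}(\fp^+_2)$ is bounded by the dimension of the space of degree-$|\bl|$ homogeneous polynomials on $\fp^+_2$, hence grows only polynomially in $|\bl|$. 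Next I would use that, by the convention (\ref{Pochhammer}) and $(a)_m=\Gamma(a+m)/\Gamma(a)$, the coefficient of $F_{\bk,\bl}^\downarrow[f]$ in (\ref{def_Fdown_rank3}) equals, up to nonzero factors depending only on $l_2,l_3$,
\[
\frac{\Gamma\!\bigl(-k_2-\tfrac{d_2}{2}+l_1+l_3\bigr)\,\Gamma(-k_3+l_1+l_2)}{\Gamma\!\bigl(-\lambda-k_2-k_3+\tfrac{3}{2}d_2+1+l_1\bigr)}.
\]
Dividing this by $|\bl|!=\Gamma(l_1+l_2+l_3+1)$, the numerator and denominator now carry the \emph{same} number (namely two) of $\Gamma$-factors whose argument tends to infinity with $l_1$; hence, by the elementary asymptotic $\Gamma(l_1+\alpha)/\Gamma(l_1+\beta)\sim l_1^{\alpha-\beta}$, the quotient $(\text{coefficient})/|\bl|!$ is, for fixed $l_2,l_3$, asymptotic to a fixed power of $l_1$ --- no factorial survives. (If one of the numerator $\Gamma$'s is ill-defined, equivalently a numerator Pochhammer $(-k_3)_{l_1+l_2}$ or $(-k_2-\tfrac{d_2}{2})_{l_1+l_3}$ vanishes for large $l_1$ --- e.g.\ $k_3\in\BZ_{\ge0}$ --- then the $l_1$-series is finite and there is nothing to prove; otherwise the assumption that $\lambda$ is not a pole keeps the denominator nonzero.) Multiplying the two displays, the general term of (\ref{def_Fdown_rank3}) is $O(l_1^{N}t^{l_1})$ for some $N$; since $t<1$ the sum over $l_1$ converges, and summing over the finitely many $(l_2,l_3)$ gives the first statement.

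For the assertion about $x\in\Omega$ I would check directly that such $x=(x_1,x_2)$ meets the hypothesis. Positivity is inherited by the orthogonal projection $\fn^+\to\fn^+_2$ (evaluating the quadratic form of a positive element of $\Herm(3,\BF)$ on vectors from $\fn^+_2$ annihilates the $\rj$-part), so $x_2\in\Omega\cap\fn^+_2=\Omega_2$; since $L_2$ acts transitively on $\Omega_2$, one may write $x_2=ge$ with $g\in L_2\subset K_1^\BC$. As $g\in L_2\subset L$ is real and preserves $\Omega$, we get $g^{-1}x\in\Omega$ and $g^{-1}x=\eta+e$ with $\eta:=g^{-1}x_1\in\fn^+_1$. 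Writing $\eta=(y_1,y_2,y_3)$ in the coordinates of Section \ref{section_rank3}, the determinant formula there gives $\det_{\fn^+}(\eta+e)=1-\eta\,{}^t\hat\eta=1-\sum_i|y_i|^2$, so $\eta+e\in\Omega$ forces $\sum_i|y_i|^2<1$; since $(\eta|\overline\eta)_{\fp^+}=(\eta|\overline\eta)_{\fn^+}=2\sum_i|y_i|^2$ under the normalizations of Section \ref{subsection_sym_subalg}, this yields $(g^{-1}x_1|\overline{g^{-1}x_1})_{\fp^+}<2$, exactly the hypothesis.

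The hard part will be the factorial bookkeeping in the middle step: one must confirm that the $|\bl|!$ in the denominator of Proposition \ref{prop_estimate_Fkl_rank3}\,(1), together with the single factor $(\,\cdot\,)_{l_1}$ in the denominator of the coefficient, is balanced \emph{exactly} by the two factors $(\,\cdot\,)_{l_1+l_3}$ and $(\,\cdot\,)_{l_1+l_2}$ in the numerator of the coefficient, so that only a power of $l_1$ --- not a growing factorial --- multiplies $t^{l_1}$. Once this matching is in place, the remainder (the polynomial dimension bound, the terminating cases for integral parameters, and the reduction of $x\in\Omega$ to the hypothesis via transitivity of $L_2$ and positivity of the determinant) is routine.
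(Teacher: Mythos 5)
Your proposal is correct and follows essentially the same route as the paper: it dominates the series by Proposition \ref{prop_estimate_Fkl_rank3}\,(1), observes that $l_2,l_3$ range over finite sets so that Stirling-type asymptotics in $l_1$ (two Pochhammer factors of factorial growth in the numerator against $(\cdot)_{l_1}$ and $|\bl|!$ in the denominator) leave only a fixed power of $l_1$ times $t^{l_1}$ with $t=\tfrac12(g^{-1}x_1|\overline{g^{-1}x_1})_{\fp^+}<1$, and then verifies the hypothesis for $x\in\Omega$ via the factorization $\det_{\fn^+}(x)=\det_{\fn^+_2}(x_2)\bigl(1-\tfrac12(g^{-1}x_1|\overline{g^{-1}x_1})_{\fp^+}\bigr)$, exactly as in the paper (your explicit check that $x_2\in\Omega_2$ is a point the paper leaves implicit).
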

\begin{proof}
By Proposition \ref{prop_estimate_Fkl_rank3}, for $x_1\in\fp^+_1$, $x_2\in ge\in\fp^+_2$ with $g\in \widetilde{K}_1^\BC$, we have 
\begin{align*}
&|F^\downarrow(\lambda,\bk;f;x)| \\
&\le \sum_{\substack{0\le l_1 \\ 0\le l_2\le k_1-k_2 \\ 0\le l_3\le k_2-k_3}}
\frac{\bigl|((-k_3,-k_2,-k_1))_{(l_1+l_2,l_1+l_3,l_2+l_3),d_2}\bigr|\sqrt{d_{\bk}^{\fp^+_2}d_{(|\bl|,0,0)}^{\fp^+_2}}}
{\bigl|\bigl(-\lambda+\frac{3}{2}d_2+1-(k_2+k_3,k_1+k_3,k_1+k_2)\bigr)_{(l_1,l_2,l_3),d_2}\bigr||\bl|!} \\*
&\hspace{185pt}\times\frac{(g^{-1}x_1|\overline{g^{-1}x_1})_{\fp^+}^{|\bl|}}{2^{|\bl|}}\Vert f(g(\cdot))\Vert_{L^2(\Sigma_2)}. 
\end{align*}
Then 
\[ d_{(|\bl|,0,0)}^{\fp^+_2}=\dim\cP_{(|\bl|,0,0)}(\fp^+_2)\le \dim\cP_{|\bl|}(\fp^+_2)=\binom{|\bl|+n_2-1}{n_2-1} \]
is of polynomial growth, where $n_2:=\dim\fp^+_2$. Also, since $l_2,l_3$ run over the finite sets, by Stirling's formula, 
there exist constants $C_{\lambda,\bk},C_{\lambda,\bk}'\in\BR_{>0}$ such that 
\begin{align*}
&\frac{\bigl|((-k_3,-k_2,-k_1))_{(l_1+l_2,l_1+l_3,l_2+l_3),d_2}\bigr|}
{\bigl|\bigl(-\lambda+\frac{3}{2}d_2+1-(k_2+k_3,k_1+k_3,k_1+k_2)\bigr)_{(l_1,l_2,l_3),d_2}\bigr||\bl|!} \\
&\le C_{\lambda,\bk}\Biggl|\frac{(l_1+l_2)^{-k_3+l_1+l_2-\frac{1}{2}}e^{-l_1-l_2}(l_1+l_3)^{-k_2-\frac{1}{2}d_2+l_1+l_3-\frac{1}{2}}e^{-l_1-l_3}}
{l_1^{-\lambda+\frac{3}{2}d_2-k_2-k_3+l_1+\frac{1}{2}}e^{-l_1}(l_1+l_2+l_3)^{l_1+l_2+l_3+\frac{1}{2}}e^{-l_1-l_2-l_3}}\Biggr| \\
&\le C_{\lambda,\bk}'|\bl|^{\Re\lambda-2d_2-2}
\end{align*}
holds. Hence the above converges if $(g^{-1}x_1|\overline{g^{-1}x_1})_{\fp^+}<2$. 
Especially, if $x=(x_1,x_2)=(x_1,ge)\in\Omega$ with $g\in L_2=K_1^\BC\cap L$, then by 
\begin{align*}
0&<\det_{\fn^+}(x)=\det_{\fn^+}(x_2)\det_{\fn^+}(e+g^{-1}x_1) \\
&=\det_{\fn^+_2}(x_2)\biggl(1-\frac{(g^{-1}x_1|g^{-1}x_1)_{\fn^+}}{2}\biggr)=\det_{\fn^+_2}(x_2)\biggl(1-\frac{(g^{-1}x_1|\overline{g^{-1}x_1})_{\fp^+}}{2}\biggr), 
\end{align*}
we have $(g^{-1}x_1|\overline{g^{-1}x_1})_{\fp^+}<2$. Hence the above converges if $x=(x_1,x_2)\in\Omega$. 
\end{proof}

This $F^\downarrow(\lambda,\bk;f;x)$ has the following property. Recall that $\underline{k}_3:=(k,k,k)$. 
\begin{proposition}\label{prop_shift_rank3}
For $\lambda\in\BC$, $\bk\in\BZ_{++}^3+\BC\underline{1}_3$, $f(x_2)\in\cP_\bk(\fp^+_2)$, $k\in\BZ_{\ge 0}$, we have 
\begin{align*}
&\det_{\fn^+_2}\biggl(\frac{\partial}{\partial x_2}\biggr)^kF^\downarrow(\lambda,\bk;f;x) \\
&=(-1)^{k}((-k_3,-k_2,-k_1))_{\underline{k}_3,d_2}F^\downarrow\Bigl(\lambda+2k,\bk-\underline{k}_3;f\det_{\fn^+_2}^{-k};x\Bigr). 
\end{align*}
\end{proposition}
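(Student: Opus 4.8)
The plan is to differentiate the series (\ref{def_Fdown_rank3}) defining $F^\downarrow(\lambda,\bk;f;x)$ term by term in the $x_2$-variable and then to match coefficients, the upshot being an elementary identity of Pochhammer symbols. First, fix $x_1$ and regard $F_{\bk,\bl}^\downarrow[f](x_1,x_2)$ as a function of $x_2$ alone. By construction it lies in $\cP_{(k_1-l_2-l_3,\,k_2-l_1-l_3,\,k_3-l_1-l_2)}(\fp^+_2)$, and over the index set $0\le l_1$, $0\le l_2\le k_1-k_2$, $0\le l_3\le k_2-k_3$ appearing in (\ref{def_Fdown_rank3}) this weight belongs to $\BZ_{++}^3+\BC\underline{1}_3$ (the differences of consecutive entries are $(k_1-k_2-l_2)+l_1$ and $(k_2-k_3-l_3)+l_2$, both in $\BZ_{\ge 0}$). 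Hence Lemma \ref{lem_diff} applies and yields
\[
\det_{\fn^+_2}\Bigl(\tfrac{\partial}{\partial x_2}\Bigr)^{\!k}F_{\bk,\bl}^\downarrow[f](x_1,x_2)
=(-1)^{k}\bigl(-\bm_\bl^\vee\bigr)_{\underline{k}_3,d_2}\,\det_{\fn^+_2}(x_2)^{-k}\,F_{\bk,\bl}^\downarrow[f](x_1,x_2),
\]
where $\bm_\bl^\vee=(k_3-l_1-l_2,\,k_2-l_1-l_3,\,k_1-l_2-l_3)$. Applying Proposition \ref{prop_example_Fkl_rank3}\,(1) with $\bk$ and $f$ replaced by $\bk-\underline{k}_3$ and $f\det_{\fn^+_2}^{-k}$ gives $\det_{\fn^+_2}(x_2)^{-k}F_{\bk,\bl}^\downarrow[f]=F_{\bk-\underline{k}_3,\bl}^\downarrow[f\det_{\fn^+_2}^{-k}]$, so each differentiated term already has the shape of a term of $F^\downarrow(\lambda+2k,\bk-\underline{k}_3;f\det_{\fn^+_2}^{-k};x)$.

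Writing $c_{\bk,\bl}(\lambda)$ for the coefficient of $F_{\bk,\bl}^\downarrow[f]$ in (\ref{def_Fdown_rank3}), it then remains to check, for each admissible $\bl$, that
\[
c_{\bk,\bl}(\lambda)\,\bigl(-\bm_\bl^\vee\bigr)_{\underline{k}_3,d_2}=\bigl((-k_3,-k_2,-k_1)\bigr)_{\underline{k}_3,d_2}\;c_{\bk-\underline{k}_3,\bl}(\lambda+2k),
\]
the $(-1)^k$ from Lemma \ref{lem_diff} matching the $(-1)^k$ in the assertion and the index set of $\bl$ being visibly unchanged under $\bk\mapsto\bk-\underline{k}_3$. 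Here one first notes that the denominator of $c_{\bk-\underline{k}_3,\bl}(\lambda+2k)$ equals that of $c_{\bk,\bl}(\lambda)$, because in each factor $\bigl(-\lambda+\tfrac32 d_2+1-(k_i+k_j)\bigr)_{\cdot,d_2}$ the shifts $\lambda\mapsto\lambda+2k$ and $k_i\mapsto k_i-k$ cancel. The remaining identity of numerators factors into three instances of the elementary relation $(a)_m(a+m)_k=(a)_k(a+k)_m=(a)_{m+k}$, with $(a,m)$ running over $(-k_3,\,l_1+l_2)$, $(-k_2-\tfrac{d_2}{2},\,l_1+l_3)$ and $(-k_1-d_2,\,l_2+l_3)$; summing over $\bl$ then gives the proposition.

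The only step requiring genuine care is the justification of term-by-term differentiation when $k_3\notin\BZ_{\ge 0}$, in which case (\ref{def_Fdown_rank3}) is an infinite series. This follows from the estimate of Proposition \ref{prop_estimate_Fkl_rank3}: on the open set on which $x_2$ is invertible and, writing $x_2=ge$ with $g\in\widetilde{K}_1^\BC$, $(g^{-1}x_1|\overline{g^{-1}x_1})_{\fp^+}<2$, both the series and the one obtained by applying $\det_{\fn^+_2}(\partial/\partial x_2)^k$ termwise converge locally uniformly (the extra Pochhammer factor produced by Lemma \ref{lem_diff} is of polynomial growth in $|\bl|$ and does not affect the convergence argument of the preceding proposition), so differentiation commutes with the sum there. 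Since $\Omega$ (with $x_2=ge$, $g\in L_2$) lies in this open set, the stated identity of functions follows; for $k_3\in\BZ_{\ge 0}$ the sum is finite and no justification is needed. Apart from this bookkeeping the argument is mechanical, and the main place one is likely to slip is keeping the ordering conventions in $\bm^\vee$ and in the indices of the symbols $(\cdot)_{\cdot,d_2}$ consistent with those in (\ref{Pochhammer}) and (\ref{def_Fdown_rank3}).
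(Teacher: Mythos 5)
Your proof is correct and follows essentially the same route as the paper: apply Lemma \ref{lem_diff} termwise to $F_{\bk,\bl}^\downarrow[f](x_1,\cdot)\in\cP_{(k_1-l_2-l_3,k_2-l_1-l_3,k_3-l_1-l_2)}(\fp^+_2)$, identify $\det_{\fn^+_2}(x_2)^{-k}F_{\bk,\bl}^\downarrow[f]$ with $F_{\bk-\underline{k}_3,\bl}^\downarrow[f\det_{\fn^+_2}^{-k}]$, and match coefficients via the invariance of the denominators under $(\lambda,\bk)\mapsto(\lambda+2k,\bk-\underline{k}_3)$ together with $(a)_m(a+m)_k=(a)_k(a+k)_m$. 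Your explicit justification of termwise differentiation of the infinite series (via Proposition \ref{prop_estimate_Fkl_rank3}) is a small extra care the paper leaves implicit, and it is sound.
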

\begin{proof}
Since $F_{\bk,\bl}^\downarrow[f](x_1,x_2)\in\cP_{2|\bk|}(\fp^+_1)\otimes\cP_{(k_1-l_2-l_3,k_2-l_1-l_3,k_3-l_1-l_2)}(\fp^+_2)$ holds, by Lemma \ref{lem_diff} we have 
\begin{align*}
&\det_{\fn^+_2}\biggl(\frac{\partial}{\partial x_2}\biggr)^k((-k_3,-k_2,-k_1))_{(l_1+l_2,l_1+l_3,l_2+l_3),d_2}F_{\bk,\bl}^\downarrow[f](x_1,x_2) \\
&=(-1)^{3k}((-k_3+l_1+l_2,-k_2+l_1+l_3,-k_1+l_2+l_3))_{(k,k,k),d_2} \\*
&\eqspace{}\times ((-k_3,-k_2,-k_1))_{(l_1+l_2,l_1+l_3,l_2+l_3),d_2}F_{\bk,\bl}^\downarrow[f](x_1,x_2)\det_{\fn^+_2}(x_2)^{-k} \\
&=(-1)^{k}((-k_3,-k_2,-k_1))_{(k+l_1+l_2,k+l_1+l_3,k+l_2+l_3),d_2}F_{\bk-\underline{k}_3,\bl}^\downarrow\Bigl[f\det_{\fn^+_2}^{-k}\Bigr](x_1,x_2) \\
&=(-1)^{k}((-k_3,-k_2,-k_1))_{(k,k,k),d_2} \\*
&\eqspace{}\times ((-k_3+k,-k_2+k,-k_1+k))_{(l_1+l_2,l_1+l_3,l_2+l_3),d_2}F_{\bk-\underline{k}_3,\bl}^\downarrow\Bigl[f\det_{\fn^+_2}^{-k}\Bigr](x_1,x_2), 
\end{align*}
and easily we have 
\begin{align*}
&\biggl(-\lambda+\frac{3}{2}d_2+1-(k_2+k_3,k_1+k_3,k_1+k_2)\biggr)_{\bl,d_2} \\
&=\biggl(-(\lambda+2k)+\frac{3}{2}d_2+1 \\*
&\hspace{22pt}-((k_2-k)+(k_3-k),(k_1-k)+(k_3-k),(k_1-k)+(k_2-k))\biggr)_{\bl,d_2}. 
\end{align*}
Hence the desired formula follows from the definition of $F^\downarrow(\lambda,\bk;f;x)$. 
\end{proof}

\subsection{Results on weighted Bergman inner products}

For $\lambda\in\BC$, $\bk\in\BZ_{++}^3$, let 
\begin{align}
&C^{d,d_2}(\lambda,\bk):=\frac{\prod_{1\le i< j\le 3}\left(\lambda-\frac{d}{4}(i+j-2)\right)_{k_i+k_j}}{\prod_{1\le i< j\le 4}\left(\lambda-\frac{d}{4}(i+j-3)\right)_{k_i+k_j}} \notag \\
&=\frac{\left(\lambda+k_1+k_3-\frac{d}{4}\right)_{k_2-k_3}\left(\lambda+\max\{k_1,k_2+k_3\}-\frac{d}{2}\right)_{\min\{k_3,k_1-k_2\}}\left(\lambda+k_2-\frac{3}{4}d\right)_{k_3}}
{(\lambda)_{k_1+k_2}\bigl(\lambda-\frac{d}{2}\bigr)_{\min\{k_1,k_2+k_3\}}(\lambda-d)_{k_3}} \notag \\
&=\frac{\bigl(\lambda-\frac{d_2}{2}+(k_1+k_3,\max\{k_1,k_2+k_3\},k_2)\bigr)_{(k_2-k_3,\min\{k_3,k_1-k_2\},k_3),d_2}}{(\lambda)_{(k_1+k_2,\min\{k_1,k_2+k_3\},k_3),d}}, \label{const_rank3}
\end{align}
where $k_4:=0$, $d_2=d/2$. Then the following holds. 

\begin{theorem}\label{thm_rank3}
Suppose $\bk\in\BZ_{++}^3+\BC\underline{1}_3=\BZ_{++}^3+\BC(1,1,1)$, $f(x_2)\in\cP_\bk(\fp^+_2)$. 
\begin{enumerate}
\item When $\bk\in\BZ_{++}^3$, for $\Re\lambda>2d+1$ we have 
\[ \Bigl\langle f(x_2),e^{(x|\overline{z})_{\fp^+}}\Bigr\rangle_{\lambda,x}=C^{d,d_2}(\lambda,\bk)F^\downarrow(\lambda,\bk;f;z). \]
\item When $\bk\in\BZ_{++}^3$, as a function of $\lambda$, 
\[ (\lambda)_{(k_1+k_2,\min\{k_1,k_2+k_3\},k_3),d}\Bigl\langle f(x_2),e^{(x|\overline{z})_{\fp^+}}\Bigr\rangle_{\lambda,x} \]
is holomorphically continued to the entire $\BC$, and give non-zero polynomials in $z$ for all $\lambda\in\BC$ if $f(x_2)\ne 0$. 
\item For $\bk\in\BZ_{++}^3+\BC\underline{1}_3$, $\lambda\in\BC$, we have 
\begin{align*}
&F^\downarrow(\lambda,\bk;f;z) \\
&=\det_{\fn^+}(z)^{-\lambda+d+1}F^\downarrow\Bigl(-\lambda+2(d+1),\bk+\underline{\lambda-d-1}_3;f\det_{\fn^+_2}^{\lambda-d-1};z\Bigr). 
\end{align*}
Especially, if $\bk\in\BZ_{++}^3$ and $\lambda=d+1-a$ with $a=1,2,\ldots,k_3$, then $F^\downarrow(\lambda,\bk;f;z)$ is factorized as 
\[ F^\downarrow(d+1-a,\bk;f;z)=\det_{\fn^+}(z)^{a}F^\downarrow\Bigl(d+1+a,\bk-\underline{a}_3;f\det_{\fn^+_2}^{-a};z\Bigr). \]
\end{enumerate}
\end{theorem}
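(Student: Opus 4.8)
\textbf{Overall strategy.} The three parts are logically intertwined, and I would prove them in the order (1), (3), (2), since (1) gives the closed formula from which the rest is extracted, (3) is a functional-equation identity that can be verified either from (1) or directly from the definition \eqref{def_Fdown_rank3}, and (2) follows by combining (1) and (3) with the explicit shape of the constant $C^{d,d_2}(\lambda,\bk)$ and the non-vanishing in Proposition \ref{prop_estimate_Fkl_rank3}\,(2).

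\textbf{Part (1).} The plan is to reduce the general $f(x_2)\in\cP_\bk(\fp^+_2)$ to the case $f=\Delta_\bk^{\fn^+_2}$ by $\widetilde{K}_1^\BC$-equivariance of both sides (the inner product $\langle\cdot,e^{(x|\overline z)_{\fp^+}}\rangle_{\lambda,x}$ and $F^\downarrow(\lambda,\bk;\cdot;z)$ are $\widetilde K_1^{\BC}$-intertwining maps out of $\cP_\bk(\fp^+_2)$, which is irreducible). Then I would expand $e^{(x|\overline z)_{\fp^+}}$ using $(x|\overline z)_{\fp^+}=(x_1|\overline{z_1})_{\fp^+_1}+(x_2|\overline{z_2})_{\fp^+_2}$ and the identification of $\fp^+_1$ with $(\BF^{\prime\BC})^3$; the $x_1$-integration is Gaussian-type and produces, via Faraut--Kor\'anyi (Theorem \ref{thm_FK}, Corollary \ref{cor_FK}) applied on $D_1\subset\fp^+_1$, the factors $(\cdot|(x_1)^\sharp)$ and the Pochhammer denominators that appear in \eqref{def_Fdown_rank3}. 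The remaining $x_2$-integration over $D_2$ is again handled by Corollary \ref{cor_FK} on the simple tube-type factor $\fp^+_2$, giving the Pochhammer constants $(\lambda)_{\bm,d}$ for the various $K_1^\BC$-types occurring in $F_{\bk,\bl}^\downarrow[f]$. Collecting these $(\lambda)_{\bm,d}$-factors across all $\bm$ in the support and simplifying against the $d_2$-Pochhammers from the $x_1$-integration is exactly what yields the quotient $C^{d,d_2}(\lambda,\bk)$ in the form \eqref{const_rank3}; the three equivalent expressions for $C^{d,d_2}(\lambda,\bk)$ are routine Pochhammer manipulations (splitting $k_4=0$). I expect the main obstacle here to be the bookkeeping: matching the Gindikin $\Gamma$-factors on $\fp^+$ against those on $\fp^+_1$ and $\fp^+_2$ and verifying that all the spurious factors cancel so that only $C^{d,d_2}(\lambda,\bk)$ survives. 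The range $\Re\lambda>2d+1=p-1$ (with $p=2d+2$ for $\Herm(3,\BF)^\BC$) is exactly what makes all these integrals converge and the appeal to Corollary \ref{cor_FK} legitimate; the computations in Proposition \ref{prop_example_Fkl_rank3} furnish the needed explicit projections $\Proj^{\fp^+_2}_\bm(\cdot)$.

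\textbf{Part (3).} The identity is the statement that $F^\downarrow$ satisfies a functional equation under $\lambda\mapsto -\lambda+2(d+1)$, $f\mapsto f\det_{\fn^+_2}^{\lambda-d-1}$, $\bk\mapsto\bk+\underline{\lambda-d-1}_3$, with the factor $\det_{\fn^+}(z)^{-\lambda+d+1}$. The cleanest route is to verify it directly from the series definition \eqref{def_Fdown_rank3}: write $f=f'\det_{\fn^+_2}^{k_3}$ with $f'\in\cP_{(k_1',k_2',0)}(\fp^+_2)$, use Proposition \ref{prop_example_Fkl_rank3}\,(1) to pull out powers of $\det_{\fn^+_2}(x_2)$, note $\det_{\fn^+}(z)=\det_{\fn^+_2}(z_2)\det_{\fn^+(e)}(\cdot)$-type factorization on $\Omega$ as in the convergence proof, and then observe that under the substitution the summation index $\bl=(l_1,l_2,l_3)$ in \eqref{def_Fdown_rank3} reverses the roles of the ``up'' and ``down'' constructions—this is precisely the relation $F_{\bk,\bl}^\downarrow[f]=F_{-\bk^\vee,\bl^\vee}^\uparrow[f((\cdot)^\itinv)](\cdot,x_2^\itinv)$ recorded before Proposition \ref{prop_estimate_Fkl_rank3}. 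The Pochhammer symbols $((-k_3,-k_2,-k_1))_{\ldots,d_2}$ and the denominators $(-\lambda+\tfrac32 d_2+1-\ldots)_{\ldots,d_2}$ transform into each other under $l_j\leftrightarrow (\text{the appropriate partner})$ and $\lambda\mapsto -\lambda+2(d+1)$ by the reflection formula $(\alpha)_m=(-1)^m(1-\alpha-m)_m$ applied termwise; checking this is a finite and purely algebraic verification. The ``especially'' clause is the specialization $\lambda=d+1-a$, $a\in\{1,\dots,k_3\}$, where $\lambda-d-1=-a$, so $f\det_{\fn^+_2}^{\lambda-d-1}=f\det_{\fn^+_2}^{-a}\in\cP_{\bk-\underline a_3}(\fp^+_2)$ is still a genuine polynomial and $\det_{\fn^+}(z)^{-\lambda+d+1}=\det_{\fn^+}(z)^a$; one must check that no pole of $F^\downarrow$ is hit at these $\lambda$ (using the pole description after \eqref{def_Fdown_rank3}, the relevant singularities at $\lambda\in\{\tfrac32 d_2+1-k_2'-2k_3+j\}$ are removable for integer $k_3$), so the factorization makes sense.

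\textbf{Part (2).} Holomorphic continuation: by (1), for $\Re\lambda>2d+1$ the inner product equals $C^{d,d_2}(\lambda,\bk)F^\downarrow(\lambda,\bk;f;z)$; multiplying by $(\lambda)_{(k_1+k_2,\min\{k_1,k_2+k_3\},k_3),d}$ clears exactly the denominator of $C^{d,d_2}(\lambda,\bk)$ in its third form in \eqref{const_rank3}, leaving $\bigl(\lambda-\tfrac{d_2}{2}+(\ldots)\bigr)_{(\ldots),d_2}F^\downarrow(\lambda,\bk;f;z)$ times the numerator of $C^{d,d_2}$, all of which is holomorphic in $\lambda$ on the half-plane and, by the pole analysis of $F^\downarrow$ following \eqref{def_Fdown_rank3} together with cancellation of $F^\downarrow$'s poles against the numerator factors of $C^{d,d_2}$, extends to an entire function of $\lambda$ (a polynomial in $z$); since $\langle f(x_2),e^{(x|\overline z)_{\fp^+}}\rangle_{\lambda,x}$ is itself meromorphic in $\lambda$ by Theorem \ref{thm_FK}, the two entire/meromorphic functions agreeing on a half-plane agree everywhere. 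Non-vanishing: for $f\neq 0$ I would argue that the ``leading term'' $\bl=(0,0,0)$ of $F^\downarrow$, namely $F_{\bk,(0,0,0)}^\downarrow[f](x_1,x_2)=f(x_2)$, cannot be cancelled—more precisely, specialize $z_1=0$ (equivalently $x_1$-degree zero) where $F^\downarrow(\lambda,\bk;f;(0,z_2))=f(z_2)\neq 0$, and combine with Proposition \ref{prop_estimate_Fkl_rank3}\,(2) (which shows $F_{\bk,\bl}^\downarrow[f]\neq0$ for the relevant $\bl$) plus the fact that the surviving scalar prefactor after clearing denominators is a product of Pochhammer symbols with no common zero forced. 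The main subtlety I anticipate in (2) is ruling out accidental vanishing at special $\lambda$: one must check that at each $\lambda\in\BC$ at least one monomial in the $z$-polynomial has nonzero coefficient, which I would do by isolating the $\bl=(0,\dots,0)$ contribution (whose coefficient is $1$, independent of $\lambda$) and noting it is not cancelled by other $\bl$ because those carry strictly higher $x_1$-degree.
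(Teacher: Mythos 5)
Your overall architecture (prove (1), use it for (3), combine for (2)) matches the paper's, but the key steps you propose for (1) and (3) would not go through as described, and (2) contains an error. For (1), the weighted Bergman inner product on $D$ does not split into an $x_1$-integral over $D_1$ and an $x_2$-integral over $D_2$: the weight $h(x)^{\lambda-p}$ on $\fp^+=\fp^+_1\oplus\fp^+_2$ does not factorize, so there is no ``Gaussian-type $x_1$-integration'' to which Corollary \ref{cor_FK} on $D_1$ could be applied. The only legitimate use of Faraut--Kor\'anyi is via the $K^\BC$-type decomposition of $\cP(\fp^+)$ itself, and that requires the explicit projections $\Proj_\bm^{\fp^+}$ of $f(x_2)\in\cP_\bk(\fp^+_2)$ inside $\cP(\fp^+)$ for general $\bk\in\BZ_{++}^3$ --- precisely the combinatorial data that is not available and that the paper avoids: it quotes the value at $z_1=0$ (hence the constant $C^{d,d_2}(\lambda,\bk)$) from \cite[Theorem 6.1]{N3}, and then pins down the $z$-dependence by the F-method, i.e.\ by verifying $(\cB_\lambda)_1F^\downarrow(\lambda,\bk;f;z)=0$ through an integral (double Laplace transform) representation of $F^\downarrow$ and invoking multiplicity-freeness. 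Similarly for (3): the factor $\det_{\fn^+}(z)^{-\lambda+d+1}=\det_{\fn^+_2}(z_2)^{-\lambda+d+1}\bigl(1+((z_1)^\sharp|z_2)/\det_{\fn^+_2}(z_2)\bigr)^{-\lambda+d+1}$ is an infinite binomial series that mixes $K_1$-types, so the identity is \emph{not} a termwise Pochhammer/reflection identity between the coefficients of the same functions $F^\downarrow_{\bk,\bl}[f]$; one must expand products $F^\downarrow_{(k_1',k_2',0),\bl'}[f']\cdot\bigl(-((z_1)^\sharp|z_2)/\det_{\fn^+_2}(z_2)\bigr)^m$ into isotypic pieces (the paper's $F_{(k_1',k_2',0),\bl',\bl}$), whose projection constants are unknown. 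The paper circumvents this by first proving the factorization for integer $a=1,\ldots,k_3$ (using the differential recursion (\ref{formula_diff_expr_rank3}) and the Bessel-operator intertwining $\cB_{\frac{n}{r}-a}\det_{\fn^+}(z)^a=\det_{\fn^+}(z)^a\cB_{\frac{n}{r}+a}$ together with F-method uniqueness), and then extending to all $(a,k_3)\in\BC^2$ by comparing coefficients and a Zariski-density argument; your sketch does not contain a substitute for either ingredient.

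For (2), your non-vanishing argument is based on the claim that the $\bl=(0,0,0)$ term has coefficient $1$ independent of $\lambda$; after multiplying by $(\lambda)_{(k_1+k_2,\min\{k_1,k_2+k_3\},k_3),d}$ that coefficient is the numerator $\bigl(\lambda-\frac{d_2}{2}+(k_1+k_3,\max\{k_1,k_2+k_3\},k_2)\bigr)_{(k_2-k_3,\min\{k_3,k_1-k_2\},k_3),d_2}$ of $C^{d,d_2}(\lambda,\bk)$, which does vanish at finitely many $\lambda$ --- and those are exactly the delicate values (they govern the submodule statements in Section 3.4). Equivalently, evaluating at $z_1=0$ only proves non-vanishing away from these zeros. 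The paper instead uses that all $F^\downarrow_{\bk,\bl}[f]$ are non-zero (Proposition \ref{prop_estimate_Fkl_rank3}\,(2)) and linearly independent, and exhibits two specific terms, $\bl=(k_3,0,k_2-k_3)$ and $\bl=(\max\{0,k_3-k_1+k_2\},\min\{k_3,k_1-k_2\},k_2-k_3)$, whose coefficients $(\lambda-d_2+\max\{k_1,k_2+k_3\})_{\min\{k_3,k_1-k_2\}}$ and $(\lambda-\frac{3}{2}d_2+k_2)_{\min\{k_3,k_1-k_2\}}$ have no common zero; some argument of this kind, covering the zeros of the leading coefficient, is missing from your proposal. (Your holomorphy argument for (2) is essentially the paper's and is fine.)
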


Theorem \ref{thm_rank3}\,(1), (3) solves \cite[Conjecture 7.2]{N3}, and coincides with \cite[Theorem 6.3\,(1)]{N2} for $k_2=k_3$ case. 
Also, in (2) the holomorphy is already proved in \cite[Theorem 6.2]{N3}, and the non-vanishing is new except for $k_1=k_2=k_3$ or $k_3=0$ (\cite[Corollary 6.6\,(2)]{N2}). 

\begin{remark}
By \cite[Theorem 3.3\,(1)]{N3}, for $\bk\in\BZ_{++}^3$, $f(x_2)\in\cP_\bk(\fp^+_2)$, $\Re\lambda>2d+1$, we have 
\begin{align}
\Bigl\langle f(x_2),e^{(x|\overline{z})_{\fp^+}}\Bigr\rangle_{\lambda,x}
=\frac{1}{(\lambda)_{\underline{2k_3}_3,d}}\det_{\fn^+}(z)^{-\lambda+d+1}\det_{\fn^+_2}\biggl(\frac{\partial}{\partial z_2}\biggr)^{k_3}\det_{\fn^+}(z)^{\lambda+2k_3-d-1} \notag \\*
{}\times\Bigl\langle \det_{\fn^+_2}(x_2)^{-k_3}f(x_2),e^{(x|\overline{z})_{\fp^+}}\Bigr\rangle_{\lambda+2k_3,x}. \label{formula_diff_expr_rank3}
\end{align}
Combining this with Proposition \ref{prop_shift_rank3} and Theorem \ref{thm_rank3}\,(1), (3), we have 
\begin{align*}
&\Bigl\langle f(x_2),e^{(x|\overline{z})_{\fp^+}}\Bigr\rangle_{\lambda,x} \\
&=\frac{C^{d,d_2}(\lambda+2k_3,\bk-\underline{k_3}_3)}{(\lambda)_{\underline{2k_3}_3,d}}
\det_{\fn^+}(z)^{-\lambda+d+1}\det_{\fn^+_2}\biggl(\frac{\partial}{\partial z_2}\biggr)^{k_3}\det_{\fn^+}(z)^{\lambda+2k_3-d-1} \\*
&\eqspace{}\times F^\downarrow\Bigl(\lambda+2k_3,\bk-\underline{k_3}_3; f\det_{\fn^+_2}^{-k_3};z\Bigr) \\
&=\frac{\bigl(\lambda+k_1+k_3-\frac{d_2}{2}\bigr)_{k_2-k_3}}{(\lambda)_{(k_1+k_2,k_2+k_3,2k_3),d}}
\det_{\fn^+}(z)^{-\lambda+d+1}\det_{\fn^+_2}\biggl(\frac{\partial}{\partial z_2}\biggr)^{k_3} \\*
&\eqspace{}\times F^\downarrow\Bigl(-\lambda-2k_3+2(d+1),\bk+\underline{\lambda+k_3-d-1}_3; f\det_{\fn^+_2}^{\lambda+k_3-d-1};z\Bigr) \\
&=C^{d,d_2}(\lambda,\bk)\det_{\fn^+}(z)^{-\lambda+d+1}F^\downarrow\Bigl(-\lambda+2(d+1),\bk+\underline{\lambda-d-1}_3;f\det_{\fn^+_2}^{\lambda-d-1};z\Bigr) \\
&=C^{d,d_2}(\lambda,\bk)F^\downarrow(\lambda,\bk;f;z). 
\end{align*}
\end{remark}

First we prove (1). By \cite[Theorem 6.1]{N3}, we have 
\begin{align*}
&\frac{\bigl\langle f(x_2),e^{(x|\overline{z})_{\fp^+}}\bigr\rangle_{\lambda,x}\bigr|_{z_1=0}}{f(z_2)}
=\frac{\prod_{1\le i< j\le 3}\left(\lambda-\frac{d}{4}(i+j-2)\right)_{k_i+k_j}}{\prod_{1\le i< j\le 4}\left(\lambda-\frac{d}{4}(i+j-3)\right)_{k_i+k_j}}=C^{d,d_2}(\lambda,\bk). 
\end{align*}
Therefore to prove (1), it is enough to prove that $\bigl\langle f(x_2),e^{(x|\overline{z})_{\fp^+}}\bigr\rangle_{\lambda,x}$ is proportional to $F^\downarrow(\lambda,\bk;f;z)$, 
and by the F-method, enough to prove 
\begin{equation}\label{formula_Fmethod_rank3}
(f(x_2)\mapsto F^\downarrow(\lambda,\bk;f;z))\in\Hom_{K_1}\bigl(\cP_\bk(\fp^+_2),\Sol_{\cP(\fp^+)}((\cB_\lambda)_1)\bigr), 
\end{equation}
where $(\cB_\lambda)_1$ is the orthogonal projection of $\cB_\lambda=\cB_\lambda^{\fp^+}\colon\cP(\fp^+)\to\cP(\fp^+)\otimes\fp^+$ given in (\ref{formula_Bessel_diff}) 
(with the identification $\fp^-\simeq\fp^+=\fn^{+\BC}$) onto $\fp^+_1$. 
This follows from the next proposition. 
Recall that we write $z=(z_1,z_2)=(\zeta,Z)\in\fp^+_1\oplus\fp^+_2\simeq(\BF^{\prime\BC})^3\oplus\Herm(3,\BF')^\BC$, 
and use lowercase Greek letters and capital Latin letters to express elements in $(\BF^{\prime\BC})^3$ and in $\Herm(3,\BF')^\BC$ respectively. 
Also we abbreviate $\det_{\fn^+_2}(X)=:\det(X)$, and let $(X|Y):=\Re_{\BF'}\tr(XY)$, $(\xi|\eta):=\Re_{\BF'}(\xi{}^t\hspace{-1pt}\hat{\eta})$, 
so that $((\xi,X)|(\eta,Y))_{\fn^+}=2(\xi|\eta)+(X|Y)$ holds. 
Especially we have $-((z_1)^\sharp|z_2)_{\fn^+_2}=\zeta Z{}^t\hspace{-1pt}\hat{\zeta}$. Let $\Gamma_3^{d_2}(\lambda+\bs)$ be as in $(\ref{Gamma})$. 
\begin{proposition}
Let $\lambda\in\BC$, $\bk\in\BZ_{++}^3+\BC\underline{1}_3$, $f(x_2)\in\cP_\bk(\fp^+_2)$. 
\begin{enumerate}
\item If $\Re k_1<-d_2$, $\Re\bigl(\lambda+k_1+k_2\bigr)<-\frac{1}{2}d_2$, then for $z=(\zeta,Z)\in\Omega$, $A\in\Omega_2$, we have 
\begin{align*}
&F^\downarrow(\lambda,\bk;f;\zeta,Z) \\
&=\frac{\Gamma_3^{d_2}\bigl(-\lambda+\frac{3}{2}d_2+1-(k_2+k_3,k_1+k_3,k_1+k_2)\bigr)}{\Gamma_3^{d_2}((-k_3,-k_2,-k_1))}\int_{\Omega_2}e^{-(Y|Z)}\det(Y)^{2\lambda+|\bk|-2d_2-1} \\*
&\eqspace{}\times\biggl(\frac{1}{(2\pi\sqrt{-1})^{n_2}}\int_{A+\sqrt{-1}\fn^+_2}e^{(X|Y^\sharp)}e^{\zeta X^\itinv{}^t\hspace{-1pt}\hat{\zeta}}
\det(X)^{\lambda+|\bk|-\frac{3}{2}d_2-1}f(X^\itinv)\,dX\biggr)dY. 
\end{align*}
\item Suppose $\lambda\in\BC$ is not a pole of $F^\downarrow(\lambda,\bk;f;\zeta,Z)$. Then for $g\in \widetilde{K}_1^\BC$, we have 
\[ F^\downarrow(\lambda,\bk;f(g(\cdot));\zeta,Z)=F^\downarrow(\lambda,\bk;f;g(\zeta,Z)). \]
\item $F^\downarrow(\lambda,\bk;f;\zeta,Z)$ satisfies the differential equation 
\[ (\cB_\lambda)_1 F^\downarrow(\lambda,\bk;f;\zeta,Z)=0. \]
\end{enumerate}
\end{proposition}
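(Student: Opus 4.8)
The three parts of the proposition are logically ordered: (1) is an integral representation for $F^\downarrow$, (2) is $\widetilde{K}_1^\BC$-equivariance, and (3) is the Bessel differential equation. I would prove them in this order, since (2) follows quickly from (1) and the $K_1^\BC$-equivariance already recorded for the building blocks $F_{\bk,\bl}^\downarrow[f]$, and (3) is most naturally deduced from (1) by differentiating under the integral sign and using the behaviour of the Bessel operator on the Laplace/Gindikin-type kernels.

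\textbf{Part (1): the integral representation.} First I would reduce the double integral to the defining sum \eqref{def_Fdown_rank3}. The key is that both $\cP_\bk(\fp^+_2)$-kernels appearing here --- $\det(X)^{\lambda+|\bk|-\frac32 d_2-1}f(X^\itinv)$ integrated against $e^{(X|Y^\sharp)}$ over a cycle in $\fn^{+\BC}_2$, and then $\det(Y)^{2\lambda+|\bk|-2d_2-1}$ integrated against $e^{-(Y|Z)}$ over $\Omega_2$ --- are exactly the inverse Laplace and Laplace transforms handled by Lemma \ref{lem_Laplace}\,(1),(2), applied term-by-term to the expansion of $e^{\zeta X^\itinv{}^t\hspace{-1pt}\hat\zeta}=\sum_m \frac{1}{m!}(\zeta X^\itinv{}^t\hspace{-1pt}\hat\zeta)^m$. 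The factor $(\zeta X^\itinv{}^t\hspace{-1pt}\hat\zeta)^m=\bigl(-((z_1)^\sharp\mid (x_2^\itinv)^{\sharp\text{-ish}})\bigr)$-type monomial, multiplied by $\det(X)^{\,\cdot\,}f(X^\itinv)$, decomposes under $\widetilde{K}_1^\BC$ into the pieces labelled by $\bl$ exactly as in the definition of $F_{\bk,\bl}^\uparrow[f]$; applying Lemma \ref{lem_Laplace} on each $\cP_{\bk+\bl}(\fp^+_2)$ (or its $\det$-shift) introduces precisely the ratio of $\Gamma_3^{d_2}$'s / Pochhammer symbols, via \eqref{Gamma}, \eqref{Pochhammer}. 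Matching the resulting Pochhammer quotients with the coefficients $\dfrac{((-k_3,-k_2,-k_1))_{(l_1+l_2,l_1+l_3,l_2+l_3),d_2}}{(-\lambda+\frac32 d_2+1-(\dots))_{(l_1,l_2,l_3),d_2}}$ is the bookkeeping heart of the argument. The hypotheses $\Re k_1<-d_2$ and $\Re(\lambda+k_1+k_2)<-\frac12 d_2$ are exactly what makes both Gindikin integrals converge (they force $\Re$ of the relevant exponents above the thresholds $\frac{n_2}{r_2}-1$ and $\frac{2n_2}{r_2}-1$ in Lemma \ref{lem_Laplace} for every $\bl$); outside this range one invokes meromorphic continuation in $(\lambda,\bk)$, legitimate since by an earlier proposition $F^\downarrow$ converges on $\Omega$ whenever $\lambda$ is not a pole.

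\textbf{Parts (2) and (3).} Given (1), part (2) is immediate: the substitution $X\mapsto gX{}^t\hspace{-1pt}\hat g$ (and the dual action $\zeta\mapsto\chi(g)\zeta g^{-1}$ of $L_2$, extended to $\widetilde{K}_1^\BC$), under which $\det$, the pairings and the measures transform by the known characters, carries the integrand for $F^\downarrow(\lambda,\bk;f(g(\cdot));\zeta,Z)$ to that for $F^\downarrow(\lambda,\bk;f;g(\zeta,Z))$; the Jacobians cancel against the powers of $\det$. Alternatively, and more robustly for all non-pole $\lambda$, (2) follows directly from the $\widetilde{K}_1^\BC$-equivariance of each $F_{\bk,\bl}^\downarrow[f]$ (used already in the proof of Proposition \ref{prop_estimate_Fkl_rank3}) together with the fact that the scalar coefficients in \eqref{def_Fdown_rank3} do not depend on $g$. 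For part (3): I would apply $(\cB_\lambda^{\fp^+})_1$ to the integral representation of (1), differentiating under the integral and splitting $x=(x_1,x_2)=(\zeta,X)$; since the $Z$-dependence enters only through $e^{-(Y|Z)}$, the operator $(\cB_\lambda)_1$ --- which is second order in $z$ with symbol built from $Q(e_\alpha,e_\beta)z$ plus the first-order $\lambda$-term --- acts on $e^{-(Y|Z)}$ producing (after the standard Bessel-operator identity for exponential kernels, cf. the Bessel-operator calculus in \cite[Section XV.2]{FK}) a multiplication operator that, combined with integration by parts in $Y$ against $\det(Y)^{2\lambda+|\bk|-2d_2-1}$, is seen to vanish upon projection to $\fp^-_1$. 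The cleanest route is: $F^\downarrow(\lambda,\bk;f;z)$ is, by construction, a $K_1$-linear image of $\cP_\bk(\fp^+_2)$, so once one knows it equals (a constant times) $\langle f(x_2),e^{(x|\overline z)_{\fp^+}}\rangle_{\lambda,x}$ for $\Re\lambda$ large --- which is what (1) plus the boundary identity $\langle f,e^{(x|\bar z)}\rangle_\lambda|_{z_1=0}=C^{d,d_2}(\lambda,\bk)f(z_2)$ from \cite[Theorem 6.1]{N3} give --- the F-method (the equivalence $\Hom_{\widetilde G_1}(\cO_\lambda(D),\cO_{\varepsilon_1\lambda}(D_1,V))\simeq(\Sol_{\cP(\fp^-)}((\cB_\lambda^{\fp^-})_1)\otimes V)^{K_1}$) forces $(\cB_\lambda)_1 F^\downarrow=0$ for those $\lambda$, and then for all $\lambda$ by analytic continuation of a polynomial identity in $\lambda$.

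\textbf{Main obstacle.} The real work is Part (1): getting the two nested Gindikin integrals to collapse onto the exact sum \eqref{def_Fdown_rank3} requires carefully tracking how the expansion of $e^{\zeta X^\itinv{}^t\hspace{-1pt}\hat\zeta}$ decomposes under $\widetilde{K}_1^\BC$ into the $F^\uparrow_{\bk,\bl}$-pieces, applying Lemma \ref{lem_Laplace}\,(1),(2) with the right shifted parameters on each piece, and then reconciling the product of $\Gamma_3^{d_2}$-ratios with the $d_2$-Pochhammer coefficients in the definition --- this is where the precise combinatorial identities $(\ref{Gamma})$--$(\ref{Pochhammer})$ and the $\bk^\vee$-duality $F^\downarrow_{\bk,\bl}[f](x_1,x_2)=F^\uparrow_{-\bk^\vee,\bl^\vee}[f((\cdot)^\itinv)](x_1,x_2^\itinv)$ must be used with care. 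The convergence ranges are a minor secondary point, handled by meromorphic continuation in $(\lambda,\bk)$.
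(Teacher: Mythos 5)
Your treatment of parts (1) and (2) is essentially the paper's own: expand $e^{\zeta X^\itinv{}^t\hspace{-1pt}\hat{\zeta}}$ against $f(X^\itinv)$, project onto the pieces $\tilde{F}_{\bk,\bl}[f]$, apply Lemma \ref{lem_Laplace} (inverse Laplace in $X$, then Laplace in $Y$) term by term, and collect the $\Gamma_3^{d_2}$-ratios into the Pochhammer coefficients of (\ref{def_Fdown_rank3}); part (2) is then read off from the $\widetilde{K}_1^\BC$-equivariance of $f\mapsto F_{\bk,\bl}^\downarrow[f]$, exactly as in the paper. The convergence hypotheses play the role you assign them.

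The genuine gap is in part (3). Your ``cleanest route'' is circular within the paper's logic: the identification of $F^\downarrow(\lambda,\bk;f;z)$ with a constant multiple of $\bigl\langle f(x_2),e^{(x|\overline{z})_{\fp^+}}\bigr\rangle_{\lambda,x}$ is Theorem \ref{thm_rank3}\,(1), and that theorem is \emph{deduced from} part (3) of this proposition via (\ref{formula_Fmethod_rank3}), the F-method and multiplicity-freeness. Part (1) together with the boundary identity from \cite[Theorem 6.1]{N3} does not yield the identification: it only shows the two functions agree at $z_1=0$, and to upgrade this to proportionality you need precisely the one-dimensionality of $\Hom_{K_1}\bigl(\cP_\bk(\fp^+_2),\Sol_{\cP(\fp^+)}((\cB_\lambda)_1)\bigr)$ applied to $F^\downarrow$, i.e.\ you must already know $(\cB_\lambda)_1F^\downarrow=0$. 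So only your first route is viable, and there your sketch mislocates the mechanism: the vanishing does not come from letting $(\cB_\lambda)_1$ act on $e^{-(Y|Z)}$ alone and ``projecting to $\fp^-_1$.'' The cross term $\sum_{\alpha\beta}P(\epsilon_\alpha^\vee,E_\beta^\vee)Z\,\partial^2/\partial\zeta_\alpha\partial Z_\beta$ couples the two exponentials, and in the paper one must compute $(\cB_\lambda)_1\bigl(e^{-(Y|Z)}e^{\zeta X^\itinv{}^t\hspace{-1pt}\hat{\zeta}}\bigr)$ explicitly, integrate by parts in $Y$ using Jordan-algebra identities including the Freudenthal identity (\ref{formula_Freudenthal}) and sums like $\sum_\alpha P(\zeta X^\itinv,E_\alpha)E_\alpha^\vee=2(1+d_2)\zeta X^\itinv$, then integrate by parts a second time in $X$; the residual terms cancel only because $F(X)=\det(X)^{\lambda+|\bk|-\frac{3}{2}d_2-1}f(X^\itinv)$ is homogeneous of degree $3\lambda+2|\bk|-\frac{9}{2}d_2-3$ (an Euler-operator cancellation your outline never invokes). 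Without carrying out this computation, part (3) — and hence the route to Theorem \ref{thm_rank3}\,(1) — remains unproven; the extension from the parameter range of (1) to all non-pole $\lambda$ by continuation is, as you say, the easy final step.
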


\begin{proof}
(1) By the definition of $\tilde{F}_{\bk,\bl}[f]$ and Lemma \ref{lem_Laplace}, we have 
\begin{align*}
&\frac{\Gamma_3^{d_2}\bigl(-\lambda+\frac{3}{2}d_2+1-(k_2+k_3,k_1+k_3,k_1+k_2)\bigr)}{\Gamma_3^{d_2}((-k_3,-k_2,-k_1))}\int_{\Omega_2}e^{-(Y|Z)}\det(Y)^{2\lambda+|\bk|-2d_2-1} \\*
&\eqspace{}\times\biggl(\frac{1}{(2\pi\sqrt{-1})^{n_2}}\int_{A+\sqrt{-1}\fn^+_2}e^{(X|Y^\sharp)}e^{\zeta X^\itinv{}^t\hspace{-1pt}\hat{\zeta}}
\det(X)^{\lambda+|\bk|-\frac{3}{2}d_2-1}f(X^\itinv)\,dX\biggr)dY \\
&=\frac{\Gamma_3^{d_2}\bigl(-\lambda+\frac{3}{2}d_2+1-(k_2+k_3,k_1+k_3,k_1+k_2)\bigr)}{\Gamma_3^{d_2}((-k_3,-k_2,-k_1))}\int_{\Omega_2}e^{-(Y|Z)}\det(Y)^{2\lambda+|\bk|-2d_2-1} \\*
&\eqspace{}\times\biggl(\frac{1}{(2\pi\sqrt{-1})^{n_2}}\int_{A+\sqrt{-1}\fn^+_2}e^{(X|Y^\sharp)}\det(X)^{\lambda+|\bk|-\frac{3}{2}d_2-1}
\!\sum_{\substack{0\le l_1 \\ 0\le l_2\le k_1-k_2 \\ 0\le l_3\le k_2-k_3}}\!\tilde{F}_{\bk,\bl}[f](\zeta,X^\itinv)\,dX\biggr)dY \\
&=\frac{1}{\Gamma_3^{d_2}((-k_3,-k_2,-k_1))}\int_{\Omega_2}e^{-(Y|Z)} \det(Y)^{2\lambda+|\bk|-2d_2-1}\det(Y^\sharp)^{-\lambda-|\bk|+\frac{1}{2}d_2} \\*
&\eqspace{}\times \sum_\bl\frac{\Gamma_3^{d_2}\bigl(-\lambda+\frac{3}{2}d_2+1-(k_2+k_3,k_1+k_3,k_1+k_2)\bigr)}{\Gamma_3^{d_2}\bigl(-\lambda-|\bk|+\frac{3}{2}d_2+1+(k_1+l_1,k_2+l_2,k_3+l_3)\bigr)}
\tilde{F}_{\bk,\bl}[f](\zeta,Y^\sharp)\,dY \\
&=\frac{1}{\Gamma_3^{d_2}((-k_3,-k_2,-k_1))}\int_{\Omega_2} 
\sum_\bl\frac{e^{-(Y|Z)} \det(Y)^{-|\bk|-d_2-1}\tilde{F}_{\bk,\bl}[f]\bigl(\zeta,\det(Y)Y^\itinv\bigr)}{\bigl(-\lambda+\frac{3}{2}d_2+1-(k_2+k_3,k_1+k_3,k_1+k_2)\bigr)_{(l_1,l_2,l_3),d_2}}\,dY \\
&=\frac{1}{\Gamma_3^{d_2}((-k_3,-k_2,-k_1))}\int_{\Omega_2}
\sum_\bl\frac{e^{-(Y|Z)}\det(Y)^{|\bl|-d_2-1}\tilde{F}_{\bk,\bl}[f](\zeta,Y^\itinv)}{\bigl(-\lambda+\frac{3}{2}d_2+1-(k_2+k_3,k_1+k_3,k_1+k_2)\bigr)_{(l_1,l_2,l_3),d_2}}\,dY \\
&=\sum_\bl\frac{\Gamma_3^{d_2}(|\bl|+(-k_3-l_3,-k_2-l_2,-k_1-l_1))\det(Z)^{-|\bl|}\tilde{F}_{\bk,\bl}[f](\zeta,Z)}
{\Gamma_3^{d_2}((-k_3,-k_2,-k_1))\bigl(-\lambda+\frac{3}{2}d_2+1-(k_2+k_3,k_1+k_3,k_1+k_2)\bigr)_{(l_1,l_2,l_3),d_2}} \\
&=\sum_\bl\frac{((-k_3,-k_2,-k_1))_{(l_1+l_2,l_1+l_3,l_2+l_3),d_2}F_{\bk,\bl}^\downarrow[f](\zeta,Z)}{\bigl(-\lambda+\frac{3}{2}d_2+1-(k_2+k_3,k_1+k_3,k_1+k_2)\bigr)_{(l_1,l_2,l_3),d_2}} 
=F^\downarrow(\lambda,\bk;f;\zeta,Z). 
\end{align*}
Hence we get the desired formula. 

(2) Clear from the $\widetilde{K}_1^\BC$-equivariance of $f(X)\mapsto F_{\bk,\bl}^\downarrow[f](\zeta,Z)$. 

(3) We take bases $\{E_\alpha\}\subset\fn^+_2$, $\{\epsilon_\alpha\}\subset\fn^+_1$, 
let $\{E_\alpha^\vee\}\subset\fn^+_2$ be the dual basis of $\{E_\alpha\}$ with respect to $(\cdot|\cdot)_{\fn^+_2}$, 
and let $\{\epsilon_\alpha^\vee\}\subset\fn^+_1$ be the dual basis of $\{\epsilon_\alpha\}$ with respect to the bilinear form $(\xi|\eta)=\Re_{\BF'}(\xi{}^t\hspace{-1pt}\hat{\eta})$, 
so that $\{\epsilon_\alpha,E_\alpha\}$ and $\bigl\{\frac{1}{2}\epsilon_\alpha^\vee,E_\alpha^\vee\bigr\}$ are mutually dual with respect to $(\cdot|\cdot)_{\fn^+}$. 
Also let $\partial/\partial Z_\alpha$ and $\partial/\partial \zeta_\alpha$ denote the directional derivatives along $E_\alpha$ and $\epsilon_\alpha$ respectively. 
Then $(\cB_\lambda)_1$ is given by 
\begin{align*}
(\cB_\lambda)_1&=\frac{1}{8}\sum_{\alpha\beta}P(\epsilon_\alpha^\vee,\epsilon_\beta^\vee)\zeta\frac{\partial^2}{\partial \zeta_\alpha\partial \zeta_\beta}
+\frac{1}{2}\sum_{\alpha\beta}P(E_\alpha^\vee,E_\beta^\vee)\zeta\frac{\partial^2}{\partial Z_\alpha\partial Z_\beta} \\*
&\eqspace{}+\frac{1}{2}\sum_{\alpha\beta}P(\epsilon_\alpha^\vee,E_\beta^\vee)Z\frac{\partial^2}{\partial \zeta_\alpha \partial Z_\beta}
+\frac{\lambda}{2}\sum_\alpha \epsilon_\alpha^\vee\frac{\partial}{\partial \zeta_\alpha}. 
\end{align*}
We put $\det(X)^{\lambda+|\bk|-\frac{3}{2}d_2-1}f(X^\itinv)=:F(X)$. By (1) it is enough to prove, for $\Re k_1<-d_2$, $\Re\bigl(\lambda+k_1+k_2\bigr)<-\frac{1}{2}d_2$, 
\[ (\cB_\lambda)_1\int_{\Omega_2}e^{-(Y|Z)}\det(Y)^{2\lambda+|\bk|-2d_2-1}\biggl(\int_{A+\sqrt{-1}\fn^+_2}e^{(X|Y^\sharp)}e^{\zeta X^\itinv{}^t\hspace{-1pt}\hat{\zeta}}F(X)\,dX\biggr)dY=0. \]
First we consider $(\cB_\lambda)_1e^{-(Y|Z)}e^{\zeta X^\itinv{}^t\hspace{-1pt}\hat{\zeta}}$. 
In the following, we use the notation $\bigl(\bigl[\frac{\partial}{\partial w}f\bigr]g\bigr)h:=\frac{\partial f}{\partial w}gh+f\frac{\partial g}{\partial w}h$. Then we have 
\begin{align*}
&(\cB_\lambda)_1e^{-(Y|Z)}e^{\zeta X^\itinv{}^t\hspace{-1pt}\hat{\zeta}} \\
&=\biggl[\frac{1}{8}\sum_{\alpha\beta}P(\epsilon_\alpha^\vee,\epsilon_\beta^\vee)\zeta\frac{\partial^2}{\partial \zeta_\alpha\partial \zeta_\beta}
+\frac{1}{2}\sum_{\alpha\beta}P(E_\alpha^\vee,E_\beta^\vee)\zeta\frac{\partial^2}{\partial Z_\alpha\partial Z_\beta} \\*
&\eqspace{}+\frac{1}{2}\sum_{\alpha\beta}P(\epsilon_\alpha^\vee,E_\beta^\vee)Z\frac{\partial^2}{\partial \zeta_\alpha \partial Z_\beta}
+\frac{\lambda}{2}\sum_\alpha \epsilon_\alpha^\vee\frac{\partial}{\partial \zeta_\alpha}\biggr]e^{-(Y|Z)}e^{\zeta X^\itinv{}^t\hspace{-1pt}\hat{\zeta}} \\
&=\biggl(\frac{1}{8}\sum_{\alpha\beta}P(\epsilon_\alpha^\vee,\epsilon_\beta^\vee)\zeta(4(\zeta X^\itinv|\epsilon_\alpha)(\zeta X^\itinv|\epsilon_\beta)+2(\epsilon_\alpha X^\itinv|\epsilon_\beta))
+\frac{1}{2}P(Y,Y)\zeta \\*
&\eqspace{}-\sum_{\alpha}P(\epsilon_\alpha^\vee,Y)Z(\zeta X^\itinv|\epsilon_\alpha)
+\lambda\sum_\alpha \epsilon_\alpha^\vee(\zeta X^\itinv|\epsilon_\alpha)\biggr)e^{-(Y|Z)}e^{\zeta X^\itinv{}^t\hspace{-1pt}\hat{\zeta}} \\
&=\biggl(\hspace{-1pt}P(\zeta X^\itinv)\zeta\hspace{-1pt}+\hspace{-1pt}\frac{1}{4}\sum_{\alpha}P(\epsilon_\alpha^\vee,\epsilon_\alpha X^\itinv)\zeta\hspace{-1pt}+\hspace{-1pt}P(Y)\zeta
\hspace{-1pt}-\hspace{-1pt}P(\zeta X^\itinv,Y)Z\hspace{-1pt}+\hspace{-1pt}\lambda\zeta X^\itinv\hspace{-1pt}\biggr)e^{-(Y|Z)}e^{\zeta X^\itinv{}^t\hspace{-1pt}\hat{\zeta}} \\
&=\biggl[P(\zeta X^\itinv)\zeta+\zeta Y^\sharp+\biggl(\lambda-\frac{d_2}{2}+1\biggr)\zeta X^\itinv
+\sum_\alpha P(\zeta X^\itinv,Y)E_\alpha^\vee\frac{\partial}{\partial Y_\alpha}\biggr]e^{-(Y|Z)}e^{\zeta X^\itinv{}^t\hspace{-1pt}\hat{\zeta}}, 
\end{align*}
where at the last equality we have used 
\[ \sum_{\alpha}P(\epsilon_\alpha^\vee,\epsilon_\alpha X^\itinv)\zeta
=\sum_{\alpha}(\epsilon_\alpha^\vee{}^t\hspace{-1pt}\hat{\zeta}\epsilon_\alpha X^\itinv+\epsilon_\alpha X^\itinv{}^t\hspace{-1pt}\hat{\zeta}\epsilon_\alpha^\vee)=2(2-d_2)\zeta X^\itinv, \]
which follows from $\sum_\alpha\epsilon_\alpha^\vee {}^t\hspace{-1pt}\hat{\xi}\epsilon_\alpha=\sum_\alpha\epsilon_\alpha {}^t\hspace{-1pt}\hat{\xi}\epsilon_\alpha^\vee=(2-d_2)\xi$. 
Thus we get 
\begin{align*}
&(\cB_\lambda)_1\int_{\Omega_2}e^{-(Y|Z)}\det(Y)^{2\lambda+|\bk|-2d_2-1}\biggl(\int_{A+\sqrt{-1}\fn^+_2}e^{(X|Y^\sharp)}e^{\zeta X^\itinv{}^t\hspace{-1pt}\hat{\zeta}}F(X)\,dX\biggr)dY \\
&=\int_{\Omega_2}\biggl(\int_{A+\sqrt{-1}\fn^+_2}\det(Y)^{2\lambda+|\bk|-2d_2-1}e^{(X|Y^\sharp)}e^{\zeta X^\itinv{}^t\hspace{-1pt}\hat{\zeta}}F(X)\biggl(\biggl[P(\zeta X^\itinv)\zeta+\zeta Y^\sharp \\*
&\eqspace{}+\biggl(\lambda-\frac{d_2}{2}+1\biggr)\zeta X^\itinv+\sum_\alpha P(\zeta X^\itinv,Y)E_\alpha^\vee\frac{\partial}{\partial Y_\alpha}\biggr]e^{-(Y|Z)}\biggr)dX\biggr)dY \\
&=\int_{\Omega_2}\biggl(\int_{A+\sqrt{-1}\fn^+_2}e^{-(Y|Z)}e^{\zeta X^\itinv{}^t\hspace{-1pt}\hat{\zeta}}F(X)
\biggl(\biggl[P(\zeta X^\itinv)\zeta+\biggl(\lambda-\frac{d_2}{2}+1\biggr)\zeta X^\itinv \\*
&\eqspace{}+\zeta Y^\sharp-\sum_\alpha \frac{\partial}{\partial Y_\alpha}P(\zeta X^\itinv,Y)E_\alpha^\vee\biggr]\det(Y)^{2\lambda+|\bk|-2d_2-1}e^{(X|Y^\sharp)}\biggr)dX\biggr)dY. 
\end{align*}
Next we consider 
\begin{align*}
&\sum_\alpha \frac{\partial}{\partial Y_\alpha}P(\zeta X^\itinv,Y)E_\alpha^\vee\det(Y)^{2\lambda+|\bk|-2d_2-1}e^{(X|Y^\sharp)} \\
&=\biggl(\sum_\alpha P(\zeta X^\itinv,E_\alpha)E_\alpha^\vee
+(2\lambda+|\bk|-2d_2-1)\sum_\alpha P(\zeta X^\itinv,Y)E_\alpha^\vee(Y^\itinv|E_\alpha) \\*
&\eqspace{}+\sum_\alpha P(\zeta X^\itinv,Y)E_\alpha^\vee(X|Y\times E_\alpha)\biggr)\det(Y)^{2\lambda+|\bk|-2d_2-1}e^{(X|Y^\sharp)}. 
\end{align*}
Then, for the 1st term, we have 
\begin{align*}
&\sum_\alpha P(\zeta X^\itinv,E_\alpha)E_\alpha^\vee=\sum_\alpha((E_\alpha|E_\alpha^\vee)\zeta X^\itinv-\zeta X^\itinv E_\alpha E_\alpha^\vee) \\
&=3(1+d_2)\zeta X^\itinv-(1+d_2)\zeta X^\itinv=2(1+d_2)\zeta X^\itinv, 
\end{align*}
for the 2nd term, we have 
\begin{align*}
&\sum_\alpha P(\zeta X^\itinv,Y)E_\alpha^\vee(Y^\itinv|E_\alpha)=P(\zeta X^\itinv,Y)Y^\itinv \\
&=(Y|Y^\itinv)\zeta X^\itinv-\zeta X^\itinv YY^\itinv=3\zeta X^\itinv-\zeta X^\itinv=2\zeta X^\itinv, 
\end{align*}
and for the 3rd term, we have 
\begin{align*}
&\sum_\alpha P(\zeta X^\itinv,Y)E_\alpha^\vee(X|Y\times E_\alpha)=\sum_\alpha P(\zeta X^\itinv,Y)E_\alpha^\vee(X\times Y|E_\alpha) \\
&=P(\zeta X^\itinv,Y)(X\times Y)=(Y|X\times Y)\zeta X^\itinv-\zeta X^\itinv Y(X\times Y) \\
&=\zeta X^{-1}(2(X|Y^\sharp)I-Y(X\times Y))=\zeta X^{-1}((X|Y^\sharp)I+XY^\sharp) \\
&=(X|Y^\sharp)\zeta X^\itinv+\zeta Y^\sharp, 
\end{align*}
where we have used (\ref{formula_Freudenthal}) at the 5th equality. 
Hence we have 
\begin{align*}
&\biggl[P(\zeta X^\itinv)\zeta+\zeta Y^\sharp+\biggl(\lambda-\frac{d_2}{2}+1\biggr)\zeta X^\itinv-\sum_\alpha \frac{\partial}{\partial Y_\alpha}P(\zeta X^\itinv,Y)E_\alpha^\vee\biggr] \\*
&\hspace{237pt}\times\det(Y)^{2\lambda+|\bk|-2d_2-1}e^{(X|Y^\sharp)} \\
&=\biggl(P(\zeta X^\itinv)\zeta+\zeta Y^\sharp+\biggl(\lambda-\frac{d_2}{2}+1\biggr)\zeta X^\itinv-(X|Y^\sharp)\zeta X^\itinv-\zeta Y^\sharp \\*
&\eqspace{}-2(1+d_2)\zeta X^\itinv-2(2\lambda+|\bk|-2d_2-1)\zeta X^\itinv\biggr)
\det(Y)^{2\lambda+|\bk|-2d_2-1}e^{(X|Y^\sharp)} \\
&=\biggl[P(\zeta X^\itinv)\zeta+\biggl(-3\lambda-2|\bk|+\frac{3}{2}d_2+1\biggr)\zeta X^\itinv-\zeta X^\itinv\sum_\alpha(X|E_\alpha^\vee)\frac{\partial}{\partial X_\alpha}\biggr] \\*
&\hspace{237pt}\times\det(Y)^{2\lambda+|\bk|-2d_2-1}e^{(X|Y^\sharp)}, 
\end{align*}
and we get 
\begin{align*}
&(\cB_\lambda)_1\int_{\Omega_2}e^{-(Y|Z)}\det(Y)^{2\lambda+|\bk|-2d_2-1}\biggl(\int_{A+\sqrt{-1}\fn^+_2}e^{(X|Y^\sharp)}e^{\zeta X^\itinv{}^t\hspace{-1pt}\hat{\zeta}}F(X)\,dX\biggr)dY \\
&=\int_{\Omega_2}e^{-(Y|Z)}\biggl(\int_{A+\sqrt{-1}\fn^+_2}e^{\zeta X^\itinv{}^t\hspace{-1pt}\hat{\zeta}}F(X)
\biggl(\biggl[P(\zeta X^\itinv)\zeta+\biggl(\lambda-\frac{d_2}{2}+1\biggr)\zeta X^\itinv \\*
&\eqspace{}+\zeta Y^\sharp-\sum_\alpha \frac{\partial}{\partial Y_\alpha}P(\zeta X^\itinv,Y)E_\alpha^\vee\biggr]\det(Y)^{2\lambda+|\bk|-2d_2-1}e^{(X|Y^\sharp)}\biggr)dX\biggr)dY \\
&=\int_{\Omega_2}e^{-(Y|Z)}\biggl(\int_{A+\sqrt{-1}\fn^+_2}e^{\zeta X^\itinv{}^t\hspace{-1pt}\hat{\zeta}}F(X)
\biggl(\biggl[\biggl(-3\lambda-2|\bk|+\frac{3}{2}d_2+1\biggr)\zeta X^\itinv \\*
&\eqspace{}+P(\zeta X^\itinv)\zeta-\zeta X^\itinv\sum_\alpha(X|E_\alpha^\vee)\frac{\partial}{\partial X_\alpha}\biggr]
\det(Y)^{2\lambda+|\bk|-2d_2-1}e^{(X|Y^\sharp)}\biggr)dX\biggr)dY \\
&=\int_{\Omega_2}e^{-(Y|Z)}\det(Y)^{2\lambda+|\bk|-2d_2-1}\biggl(\int_{A+\sqrt{-1}\fn^+_2}
\biggl(\biggl[\biggl(-3\lambda-2|\bk|+\frac{3}{2}d_2+1\biggr)\zeta X^\itinv \\*
&\eqspace{}+P(\zeta X^\itinv)\zeta+\sum_\alpha\frac{\partial}{\partial X_\alpha}\zeta X^\itinv(X|E_\alpha^\vee)\biggr]
e^{\zeta X^\itinv{}^t\hspace{-1pt}\hat{\zeta}}F(X)\biggr)e^{(X|Y^\sharp)}\,dX\biggr)dY. 
\end{align*}
Then since we have 
\begin{align*}
&\sum_\alpha\frac{\partial}{\partial X_\alpha}\zeta X^\itinv(X|E_\alpha^\vee)e^{\zeta X^\itinv{}^t\hspace{-1pt}\hat{\zeta}}F(X) \\
&=-\sum_\alpha \zeta X^\itinv E_\alpha X^\itinv(X|E_\alpha^\vee)e^{\zeta X^\itinv{}^t\hspace{-1pt}\hat{\zeta}}F(X)
+\sum_\alpha\zeta X^\itinv(E_\alpha|E_\alpha^\vee)e^{\zeta X^\itinv{}^t\hspace{-1pt}\hat{\zeta}}F(X) \\*
&\eqspace{}-\sum_\alpha\zeta X^\itinv(X|E_\alpha^\vee)(\zeta X^\itinv E_\alpha X^\itinv {}^t\hspace{-1pt}\hat{\zeta})e^{\zeta X^\itinv{}^t\hspace{-1pt}\hat{\zeta}}F(X)
+\sum_\alpha\zeta X^\itinv(X|E_\alpha^\vee)e^{\zeta X^\itinv{}^t\hspace{-1pt}\hat{\zeta}}\frac{\partial F}{\partial X_\alpha}(X) \\
&=-\zeta X^\itinv XX^\itinv e^{\zeta X^\itinv{}^t\hspace{-1pt}\hat{\zeta}}F(X)+3(1+d_2)\zeta X^\itinv e^{\zeta X^\itinv{}^t\hspace{-1pt}\hat{\zeta}}F(X) \\*
&\eqspace{}-(\zeta X^\itinv XX^\itinv {}^t\hspace{-1pt}\hat{\zeta})\zeta X^\itinv e^{\zeta X^\itinv{}^t\hspace{-1pt}\hat{\zeta}}F(X)
+\zeta X^\itinv e^{\zeta X^\itinv{}^t\hspace{-1pt}\hat{\zeta}}\sum_\alpha(X|E_\alpha^\vee)\frac{\partial F}{\partial X_\alpha}(X) \\
&=e^{\zeta X^\itinv{}^t\hspace{-1pt}\hat{\zeta}}\biggl[(2+3d_2)\zeta X^\itinv-P(\zeta X^\itinv)\zeta 
+\zeta X^\itinv\sum_\alpha(X|E_\alpha^\vee)\frac{\partial}{\partial X_\alpha}\biggr]F(X), 
\end{align*}
we get 
\begin{align*}
&(\cB_\lambda)_1\int_{\Omega_2}e^{-(Y|Z)}\det(Y)^{2\lambda+|\bk|-2d_2-1}\biggl(\int_{A+\sqrt{-1}\fn^+_2}e^{(X|Y^\sharp)}e^{\zeta X^\itinv{}^t\hspace{-1pt}\hat{\zeta}}F(X)\,dX\biggr)dY \\
&=\int_{\Omega_2}e^{-(Y|Z)}\det(Y)^{2\lambda+|\bk|-2d_2-1}\biggl(\int_{A+\sqrt{-1}\fn^+_2}
\biggl(\biggl[\biggl(-3\lambda-2|\bk|+\frac{3}{2}d_2+1\biggr)\zeta X^\itinv \\*
&\eqspace{}+P(\zeta X^\itinv)\zeta+\sum_\alpha\frac{\partial}{\partial X_\alpha}\zeta X^\itinv(X|E_\alpha^\vee)\biggr]
e^{\zeta X^\itinv{}^t\hspace{-1pt}\hat{\zeta}}F(X)\biggr)e^{(X|Y^\sharp)}\,dX\biggr)dY \\
&=\int_{\Omega_2}e^{-(Y|Z)}\det(Y)^{2\lambda+|\bk|-2d_2-1}\biggl(\int_{A+\sqrt{-1}\fn^+_2}e^{(X|Y^\sharp)}e^{\zeta X^\itinv{}^t\hspace{-1pt}\hat{\zeta}} \\*
&\eqspace{}\times\biggl[\biggl(-3\lambda-2|\bk|+\frac{9}{2}d_2+3\biggr)\zeta X^\itinv +\zeta X^\itinv\sum_\alpha(X|E_\alpha^\vee)\frac{\partial}{\partial X_\alpha}\biggr]F(X)\,dX\biggr)=0, 
\end{align*}
where the last equality holds since $F(X)=\det(X)^{\lambda+|\bk|-\frac{3}{2}d_2-1}f(X^\itinv)$ is homogeneous of degree $3\lambda+2|\bk|-\frac{9}{2}d_2-3$. 
Hence we get the desired formula. 
\end{proof}

Especially, when $\bk\in\BZ_{++}^3$ we have (\ref{formula_Fmethod_rank3}), and this proves Theorem \ref{thm_rank3}\,(1). 
\begin{proof}[Proof of Theorem \ref{thm_rank3}\,(2)]
By Theorem \ref{thm_rank3}\,(1) we have 
\begin{align*}
&(\lambda)_{(k_1+k_2,\min\{k_1,k_2+k_3\},k_3),d}\Bigl\langle f(x_2),e^{(x|\overline{z})_{\fp^+}}\Bigr\rangle_{\lambda,x} \\
&=\biggl(\lambda-\frac{d_2}{2}+(k_1+k_3,\max\{k_1,k_2+k_3\},k_2)\biggr)_{(k_2-k_3,\min\{k_3,k_1-k_2\},k_3),d_2}F^\downarrow(\lambda,\bk;f;z) \\
&=\sum_{\substack{0\le l_1 \\ 0\le l_2\le k_1-k_2 \\ 0\le l_3\le k_2-k_3 \\ l_1+l_2\le k_3}}
\biggl(\lambda-\frac{d_2}{2}+(k_1+k_3,\max\{k_1,k_2+k_3\},k_2)\biggr)_{(k_2-k_3-l_3,\min\{k_3,k_1-k_2\}-l_2,k_3-l_1),d_2} \\*
&\hspace{145pt}\times (-1)^{|\bl|}((-k_3,-k_2,-k_1))_{(l_1+l_2,l_1+l_3,l_2+l_3),d_2} F_{\bk,\bl}^\downarrow[f](z_1,z_2), 
\end{align*}
and this is analytically continued for all $\lambda\in\BC$. 
Moreover, all $F_{\bk,\bl}^\downarrow[f](z_1,z_2)$ are non-zero by Proposition \ref{prop_estimate_Fkl_rank3}\,(2) and linearly independent, 
and the coefficients of $F_{\bk,(k_3,0,k_2-k_3)}^\downarrow[f](z_1,\allowbreak z_2)$ and $F_{\bk,(\max\{0,k_3-k_1+k_2\},\min\{k_3,k_1-k_2\},k_2-k_3)}^\downarrow[f]\allowbreak(z_1,z_2)$, 
\begin{align*}
&\biggl(\lambda-\frac{d_2}{2}+(k_1+k_3,\max\{k_1,k_2+k_3\},k_2)\biggr)_{(0,\min\{k_3,k_1-k_2\},0),d_2} \\
&=(\lambda-d_2+\max\{k_1,k_2+k_3\})_{\min\{k_3,k_1-k_2\}}, \\
&\biggl(\lambda-\frac{d_2}{2}+(k_1+k_3,\max\{k_1,k_2+k_3\},k_2)\biggr)_{(0,0,\min\{k_3,k_1-k_2\}),d_2} \\
&=\biggl(\lambda-\frac{3}{2}d_2+k_2\biggr)_{\min\{k_3,k_1-k_2\}}
\end{align*}
do not vanish simultaneously. Hence $(\lambda)_{(k_1+k_2,\min\{k_1,k_2+k_3\},k_3),d}\bigl\langle f(x_2),e^{(x|\overline{z})_{\fp^+}}\bigr\rangle_{\lambda,x}$ is non-zero. 
\end{proof}

\begin{proof}[Proof of Theorem \ref{thm_rank3}\,(3)]
By combining Theorem \ref{thm_rank3}\,(1) and (\ref{formula_diff_expr_rank3}), for $\Re\lambda\allowbreak>2d+1$, $\bk\in\BZ_{++}^3$, we have 
\begin{align*}
C^{d,d_2}(\lambda,\bk)F^\downarrow(\lambda,\bk;f;z)
&=\frac{C^{d,d_2}(\lambda+2k_3,\bk-\underline{k_3}_3)}{(\lambda)_{\underline{2k_3}_3,d}}
\det_{\fn^+}(z)^{-\lambda+d+1}\det_{\fn^+_2}\biggl(\frac{\partial}{\partial z_2}\biggr)^{k_3} \\*
&\eqspace{}\times \det_{\fn^+}(z)^{\lambda+2k_3-d-1}F^\downarrow\Bigl(\lambda+2k_3,\bk-\underline{k_3}_3; f\det_{\fn^+_2}^{-k_3};z\Bigr), 
\end{align*}
that is, 
\begin{align*}
F^\downarrow(\lambda,\bk;f;z)&=\frac{1}{(\lambda-d_2+\bk)_{\underline{k_3}_3,d_2}}
\det_{\fn^+}(z)^{-\lambda+d+1}\det_{\fn^+_2}\biggl(\frac{\partial}{\partial z_2}\biggr)^{k_3} \\*
&\eqspace{}\times \det_{\fn^+}(z)^{\lambda+2k_3-d-1}F^\downarrow\Bigl(\lambda+2k_3,\bk-\underline{k_3}_3; f\det_{\fn^+_2}^{-k_3};z\Bigr). 
\end{align*}
Then this is meromorphically continued for all $\lambda\in\BC$, and if $\lambda=d+1-a$, $a=1,2,\ldots,k_3$, then the right hand side of 
\begin{align*}
\det_{\fn^+}(z)^{-a}F^\downarrow(d+1-a,\bk;f;z)&=\frac{1}{(d_2+1-a+\bk)_{\underline{k_3}_3,d_2}}
\det_{\fn^+_2}\biggl(\frac{\partial}{\partial z_2}\biggr)^{k_3}\det_{\fn^+}(z)^{2k_3-a} \\*
&\eqspace{}\times F^\downarrow\Bigl(d+1-a+2k_3,\bk-\underline{k_3}_3; f\det_{\fn^+_2}^{-k_3};z\Bigr) 
\end{align*}
becomes a polynomial. Hence $F^\downarrow(d+1-a,\bk;f;z)$ is divisible by $\det_{\fn^+}(z)^a$. Also, by $\cB_{\frac{n}{r}-a}\det_{\fn^+}(z)^a=\det_{\fn^+}(z)^a\cB_{\frac{n}{r}+a}$ 
(see the proof of \cite[Proposition XV.2.4]{FK}), we have 
\begin{align*}
&(\cB_{d+1+a})_1\det_{\fn^+}(z)^{-a}F^\downarrow(d+1-a,\bk;f;z) \\
&=\det_{\fn^+}(z)^{-a}(\cB_{d+1-a})_1F^\downarrow(d+1-a,\bk;f;z)=0, 
\end{align*}
and hence 
\begin{align*}
&\Bigl( f\det_{\fn^+_2}^{-a}\mapsto \det_{\fn^+}(z)^{-a}F^\downarrow(d+1-a,\bk;f;z)\Bigr) \\
&\in\Hom_{K_1^\BC}\bigl(\cP_{\bk-\underline{a}_3}(\fp^+_2),\Sol_{\cP(\fp^+)}((\cB_{d+1+a})_1)\bigr) \\
&\eqspace{}\simeq \BC\Bigl( f\det_{\fn^+_2}^{-a}\mapsto F^\downarrow\Bigl(d+1+a,\bk-\underline{a}_3;f\det_{\fn^+_2}^{-a};z\Bigr)\Bigr)
\end{align*}
holds. Then comparing the values at $z_1=0$, for $a=1,2,\ldots,k_3$ we get 
\begin{align*}
F^\downarrow(d+1-a,\bk;f;z)
=\det_{\fn^+}(z)^a F^\downarrow\Bigl(d+1+a,\bk-\underline{a}_3;f\det_{\fn^+_2}^{-a};z\Bigr) 
\end{align*}
(see \cite[Theorem 6.6]{N3} for more general case). 
We put $(k_1',k_2'):=(k_1-k_3,\allowbreak k_2-k_3)\in\BZ_{++}^2$, let $f'(x_2):=\det_{\fn^+_2}(x_2)^{-k_3}f(x_2)\in\cP_{(k_1',k_2',0)}(\fp^+_2)$, and rewrite the above as 
\begin{align}
&F^\downarrow\Bigl(d+1-a,(k_1',k_2',0)+\underline{k_3}_3;f'\det_{\fn^+_2}^{k_3};z\Bigr) \notag \\
&=\det_{\fn^+}(z)^a F^\downarrow\Bigl(d+1+a,(k_1',k_2',0)+\underline{k_3-a}_3;f'\det_{\fn^+_2}^{k_3-a};z\Bigr) \label{formula_factorize_rank3} \\
&\hspace{180pt} ((a,k_3)\in(\BZ_{\ge 0})^2,\; a\le k_3). \notag
\end{align}
Now we want to prove this for general $(a,k_3)\in\BC^2$. 
We recall that the left hand side of (\ref{formula_factorize_rank3}) is given by 
\begin{align}
&F^\downarrow\Bigl(d+1-a,(k_1',k_2',0)+\underline{k_3}_3;f'\det_{\fn^+_2}^{k_3};z\Bigr) \notag \\
&=\sum_{\substack{0\le l_1 \\ 0\le l_2\le k_1'-k_2' \\ 0\le l_3\le k_2'}}
\frac{(-k_3-(0,k_2',k_1'))_{(l_1+l_2,l_1+l_3,l_2+l_3),d_2}}{\bigl(a-2k_3-\frac{d_2}{2}-(k_2',k_1',k_1'+k_2')\bigr)_{(l_1,l_2,l_3),d_2}}
\det_{\fn^+_2}(z_2)^{k_3}F_{(k_1',k_2',0),\bl}^\downarrow[f'](z_1,z_2). \label{formula_factorize_rank3_left}
\end{align}
Similarly, the right hand side is computed as 
\begin{align*}
&\det_{\fn^+}(z)^a F^\downarrow\Bigl(d+1+a,(k_1',k_2',0)+\underline{k_3-a}_3;f'\det_{\fn^+_2}^{k_3-a};z\Bigr) \\
&=\sum_{\substack{0\le l_1' \\ 0\le l_2'\le k_1'-k_2' \\ 0\le l_3'\le k_2'}}
\frac{(a-k_3-(0,k_2',k_1'))_{(l_1'+l_2',l_1'+l_3',l_2'+l_3'),d_2}}{\bigl(a-2k_3-\frac{d_2}{2}-(k_2',k_1',k_1'+k_2')\bigr)_{(l_1',l_2',l_3'),d_2}} \\*
&\eqspace{}\times \det_{\fn^+_2}(z_2)^{k_3-a}F_{(k_1',k_2',0),\bl'}^\downarrow[f'](z_1,z_2)\det_{\fn^+_2}(z_2)^a\biggl(1+\frac{((z_1)^\sharp|z_2)}{\det_{\fn^+_2}(z_2)}\biggr)^a \\
&=\sum_{\substack{0\le l_1' \\ 0\le l_2'\le k_1'-k_2' \\ 0\le l_3'\le k_2'}}
\frac{(a-k_3-(0,k_2',k_1'))_{(l_1'+l_2',l_1'+l_3',l_2'+l_3'),d_2}}{\bigl(a-2k_3-\frac{d_2}{2}-(k_2',k_1',k_1'+k_2')\bigr)_{(l_1',l_2',l_3'),d_2}} \\*
&\eqspace{}\times \det_{\fn^+_2}(z_2)^{k_3}F_{(k_1',k_2',0),\bl'}^\downarrow[f'](z_1,z_2)\sum_{m=0}^\infty \frac{(-a)_m}{m!}\biggl(-\frac{((z_1)^\sharp|z_2)}{\det_{\fn^+_2}(z_2)}\biggr)^m. 
\end{align*}
Then we have 
\begin{align*}
&\biggl( f'\mapsto \frac{1}{m!}F_{(k_1',k_2',0),\bl'}^\downarrow[f'](z_1,z_2)\biggl(-\frac{((z_1)^\sharp|z_2)}{\det_{\fn^+_2}(z_2)}\biggr)^m \biggr) \\
&\in\Hom_{K_1^\BC}\bigl(\cP_{(k_1',k_2',0)}(\fp^+_2),\cP_{2|\bl'|+2m}(\fp^+_1)\otimes\cP_{(k_1'-l_2'-l_3',k_2'-l_1'-l_3',-l_2'-l_3')}(\fp^+_2)\otimes\cP_{(0,-m,-m)}(\fp^+_2)\bigr) \\
&\simeq\bigoplus_{\substack{\bl''\in(\BZ_{\ge 0})^3,\\ |\bl''|=m}} 
\Hom_{K_1^\BC}\bigl(\cP_{(k_1',k_2',0)}(\fp^+_2),\cP_{2|\bl'|+2m}(\fp^+_1)
\otimes\cP_{\substack{(k_1'-l_2'-l_3'-l_2''-l_3'',k_2'-l_1'-l_3'-l_1''-l_3'',\\ \hspace{77pt}-l_2'-l_3'-l_2''-l_3'')}}(\fp^+_2)\bigr) \\
&=\bigoplus_{\substack{\bl\in(\BZ_{\ge 0})^3,\,|\bl|=|\bl'|+m \\ l_j\ge l_j'}} 
\Hom_{K_1^\BC}\bigl(\cP_{(k_1',k_2',0)}(\fp^+_2),\cP_{2|\bl|}(\fp^+_1)\otimes\cP_{(k_1'-l_2-l_3,k_2'-l_1-l_3,-l_2-l_3)}(\fp^+_2)\bigr), 
\end{align*}
and by the Pieri rule, 
\begin{align*}
&\Hom_{K_1^\BC}\bigl(\cP_{(k_1',k_2',0)}(\fp^+_2),\cP_{2|\bl|}(\fp^+_1)\otimes\cP_{(k_1'-l_2-l_3,k_2'-l_1-l_3,-l_2-l_3)}(\fp^+_2)\bigr) \\
&\simeq \Hom_{K_1^\BC}\bigl(\cP_{(k_1',k_2',0)}(\fp^+_2)\otimes\cP_{2|\bl|}(\fp^+_1)^\vee,\cP_{(k_1'-l_2-l_3,k_2'-l_1-l_3,-l_2-l_3)}(\fp^+_2)\bigr)\ne\{0\}
\end{align*}
holds only if $0\le l_2\le k_1'-k_2'$, $0\le l_3\le k_2'$. Therefore, for $|\bl|=|\bl'|+m$ we set 
\begin{align*}
&F_{(k_1',k_2',0),\bl',\bl}[f'](z_1,z_2) \\ &:=\Proj_{(k_1'-l_2-l_3,k_2'-l_1-l_3,-l_1-l_2),z_2}^{\fp^+_2}
\biggl(\frac{1}{m!}F_{(k_1',k_2',0),\bl'}^\downarrow[f'](z_1,z_2)\biggl(-\frac{((z_1)^\sharp|z_2)}{\det_{\fn^+_2}(z_2)}\biggr)^m\biggm) \\
&\hspace{5pt}\in\cP_{2|\bl|}(\fp^+_1)\otimes\cP_{(k_1'-l_2-l_3,k_2'-l_1-l_3,-l_1-l_2)}(\fp^+_2), 
\end{align*}
so that 
\begin{gather*}
\frac{1}{m!}F_{(k_1',k_2',0),\bl'}^\downarrow[f'](z_1,z_2)\biggl(-\frac{((z_1)^\sharp|z_2)}{\det_{\fn^+_2}(z_2)}\biggr)^m
=\sum_{\substack{|\bl|=|\bl'|+m \\ l_1'\le l_1 \\ l_2'\le l_2\le k_1'-k_2' \\ l_3'\le l_3\le k_2'}}F_{(k_1',k_2',0),\bl',\bl}[f'](z_1,z_2). 
\end{gather*}
Then the right hand side of (\ref{formula_factorize_rank3}) becomes 
\begin{align}
&\det_{\fn^+}(z)^a F^\downarrow\Bigl(d+1+a,(k_1',k_2',0)+\underline{k_3-a}_3;f'\det_{\fn^+_2}^{k_3-a};z\Bigr) \notag \\
&=\sum_{\substack{0\le l_1 \\ 0\le l_2\le k_1'-k_2' \\ 0\le l_3\le k_2'}}\sum_{\substack{0\le l_1'\le l_1 \\ 0\le l_2'\le l_2 \\ 0\le l_3'\le l_3}}
\frac{(a-k_3-(0,k_2',k_1'))_{(l_1'+l_2',l_1'+l_3',l_2'+l_3'),d_2}(-a)_{|\bl|-|\bl'|}}{\bigl(a-2k_3-\frac{d_2}{2}-(k_2',k_1',k_1'+k_2')\bigr)_{(l_1',l_2',l_3'),d_2}} \notag \\*
&\eqspace{}\times \det_{\fn^+_2}(z_2)^{k_3}F_{(k_1',k_2',0),\bl',\bl}[f'](z_1,z_2). \label{formula_factorize_rank3_right}
\end{align}
If $a,k_3\in\BZ_{\ge 0}$, $a\le k_3$, then in (\ref{formula_factorize_rank3_left}) and (\ref{formula_factorize_rank3_right}), only $l_1+l_2\le k_3$ terms remain, 
and all denominators in the remaining terms are non-zero. Hence for $l_1+l_2\le k_3$, by comparing both formulas, we get 
\begin{align*}
&\frac{(-k_3-(0,k_2',k_1'))_{(l_1+l_2,l_1+l_3,l_2+l_3),d_2}}{\bigl(a-2k_3-\frac{d_2}{2}-(k_2',k_1',k_1'+k_2')\bigr)_{(l_1,l_2,l_3),d_2}}F_{(k_1',k_2',0),\bl}^\downarrow[f'](z_1,z_2) \\
&=\sum_{0\le l_j'\le l_j}\frac{(a-k_3-(0,k_2',k_1'))_{(l_1'+l_2',l_1'+l_3',l_2'+l_3'),d_2}(-a)_{|\bl|-|\bl'|}}{\bigl(a-2k_3-\frac{d_2}{2}-(k_2',k_1',k_1'+k_2')\bigr)_{(l_1',l_2',l_3'),d_2}}
F_{(k_1',k_2',0),\bl',\bl}[f'](z_1,z_2). 
\end{align*}
Then, since $\{(a,k_3)\in(\BZ_{\ge 0})^2\mid k_3\ge l_1+l_2,\,k_3\ge a\}\subset\BC^2$ is Zariski dense, this holds for all $(a,k_3)\in\BC^2$ without the assumption $l_1+l_2\le k_3$, 
and hence (\ref{formula_factorize_rank3}) holds for all $(a,k_3)\in\BC^2$. This completes the proof of Theorem \ref{thm_rank3}\,(3). 
\end{proof}

\begin{example}
Let $\bk=(k,k,k)$ with $k\in\BC$ and $f(x_2)=\det_{\fn^+_2}(x_2)^k$. Then by Proposition \ref{prop_example_Fkl_rank3}\,(2), 
for $x=(x_1,x_2)=(\xi,X)\in\fp^+$ we have 
\begin{align*}
F^\downarrow\Bigl(\lambda,\underline{k}_3;\det_{\fn^+_2}^k;x\Bigr)
&=\sum_{l=0}^\infty \frac{(-k)_{(l,l,0),d_2}}{\bigl(-\lambda-2k+\frac{3}{2}d_2+1\bigr)_{(l,0,0),d_2}} F_{(k,k,k),(l,0,0)}^\downarrow\bigl[\det_{\fn^+_2}^k\bigr](x_1,x_2) \\
&=\sum_{l=0}^\infty \frac{(-k)_l\bigl(-k-\frac{d_2}{2}\bigr)_l}{\bigl(-\lambda-2k+\frac{3}{2}d_2+1\bigr)_l l!} 
\biggl(-\frac{((x_1)^\sharp|x_2)}{\det_{\fn^+_2}(x_2)}\biggr)^l\det_{\fn^+_2}(x_2)^k \\
&={}_2F_1\biggl( \begin{matrix} -k,\,-k-\frac{d_2}{2} \\ -\lambda-2k+\frac{3}{2}d_2+1 \end{matrix};\frac{\xi X{}^t\hspace{-1pt}\hat{\xi}}{\det(X)}\biggr)\det(X)^k. 
\end{align*}
Also, for $k\in\BZ_{\ge 0}$ we have 
\[ C^{d,d_2}(\lambda,\underline{k}_3)=\frac{\bigl(\lambda-\frac{d_2}{2}+(2k,2k,k)\bigr)_{(0,0,k),d_2}}{(\lambda)_{(2k,k,k),d}}
=\frac{\bigl(\lambda+k-\frac{3}{2}d_2\bigr)_k}{(\lambda)_{(2k,k,k),d}}. \]
Combining these, we get 
\begin{align*}
&\Bigl\langle \det_{\fn^+_2}(x_2)^k,e^{(x|\overline{z})_{\fp^+}}\Bigr\rangle_{\lambda,x} \\
&=\frac{\bigl(\lambda+k-\frac{3}{2}d_2\bigr)_k}{(\lambda)_{(2k,k,k),d}}
{}_2F_1\biggl( \begin{matrix} -k,\,-k-\frac{d_2}{2} \\ -\lambda-2k+\frac{3}{2}d_2+1 \end{matrix};-\frac{((z_1)^\sharp|z_2)}{\det_{\fn^+_2}(z_2)}\biggr)\det_{\fn^+_2}(z_2)^k. 
\end{align*}
In addition, Theorem \ref{thm_rank3}\,(3) is equivalent to 
\begin{align*}
&{}_2F_1\biggl( \begin{matrix} -k,\,-k-\frac{1}{4}d \\ -\lambda-2k+\frac{3}{4}d+1 \end{matrix};-\frac{((x_1)^\sharp|x_2)}{\det_{\fn^+_2}(x_2)}\biggr) \\
&=\biggl(1+\frac{((x_1)^\sharp|x_2)}{\det_{\fn^+_2}(x_2)}\biggr)^{-\lambda+d+1}
{}_2F_1\biggl( \begin{matrix} -\lambda-k+d+1,\,-\lambda-k+\frac{3}{4}d+1 \\ -\lambda-2k+\frac{3}{4}d+1 \end{matrix};-\frac{((x_1)^\sharp|x_2)}{\det_{\fn^+_2}(x_2)}\biggr). 
\end{align*}
\end{example}

\begin{example}
Let $\bk=(k_1,k_2,0)\in\BZ_{++}^3$, let $e'\in\fn^+_2$ be a primitive idempotent, and let $\fp^+(e')_0=:\fp^{+\prime\prime}$, $\fp^+_2(e')_0=:\fp^{+\prime\prime}_2$ be as in (\ref{Peirce}). 
We take $f(x_2)=f(x_2'')\in\cP_{(k_1,k_2)}(\fp^{+\prime\prime}_2)$. Then by Proposition \ref{prop_example_Fkl_rank3}\,(3), for $\xi=(\xi,X)\in\fp^+$ we have 
\begin{align*}
&F^\downarrow(\lambda,(k_1,k_2,0);f;x) \\
&=\sum_{l=0}^{k_2}\frac{((0,-k_2,-k_1))_{(0,l,l),d_2}}{\bigl(-\lambda+\frac{3}{2}d_2+1-(k_2,k_1,k_1+k_2)\bigr)_{(0,0,l),d_2}} F_{(k_1,k_2,0),(0,0,l)}^\downarrow[f](x_1,x_2) \\
&=\sum_{l=0}^{k_2}\frac{\bigl(-k_2-\frac{d_2}{2})_l(-k_1-d_2)_l}{\bigl(-\lambda-k_1-k_2+\frac{d_2}{2}+1\bigr)_l}
\frac{(-k_2)_l\bigl(-k_1-\frac{d_2}{2}\bigr)_l}{\bigl(-k_2-\frac{d_2}{2})_l(-k_1-d_2)_l l!}\biggl(-\frac{\det_{\fn^{+\prime\prime}}(x_1'')}{\det_{\fn^{+\prime\prime}}(x_2'')}\biggr)^lf(x_2'') \\
&=\sum_{l=0}^{k_2}\frac{(-k_2)_l\bigl(-k_1-\frac{d_2}{2}\bigr)_l}{\bigl(-\lambda-k_1-k_2+\frac{d_2}{2}+1\bigr)_l l!}
\biggl(-\frac{\det_{\fn^{+\prime\prime}}(x_1'')}{\det_{\fn^{+\prime\prime}}(x_2'')}\biggr)^lf(x_2'') \\
&={}_2F_1\biggl( \begin{matrix} -k_2,\,-k_1-\frac{d_2}{2} \\ -\lambda-k_1-k_2+\frac{d_2}{2}+1 \end{matrix};\frac{({}^t\hspace{-1pt}\hat{\xi}\xi|e')}{\det_{\fn^{+\prime\prime}_2}(X'')}\biggr)^lf(X''). 
\end{align*}
We also have 
\[ C^{d,d_2}(\lambda,(k_1,k_2,0))=\frac{\bigl(\lambda-\frac{d_2}{2}+(k_1,k_1,k_2)\bigr)_{(k_2,0,0),d_2}}{(\lambda)_{(k_1+k_2,k_2,0),d}}
=\frac{\bigl(\lambda+k_1-\frac{d_2}{2}\bigr)_{k_2}}{(\lambda)_{(k_1+k_2,k_2),d}}. \]
Combining these, we get 
\begin{align*}
&\Bigl\langle f(x_2''),e^{(x|\overline{z})_{\fp^+}}\Bigr\rangle_{\lambda,x} \\
&=\frac{\bigl(\lambda+k_1-\frac{d_2}{2}\bigr)_{k_2}}{(\lambda)_{(k_1+k_2,k_2),d}}
{}_2F_1\biggl( \begin{matrix} -k_2,\,-k_1-\frac{d_2}{2} \\ -\lambda-k_1-k_2+\frac{d_2}{2}+1 \end{matrix};-\frac{\det_{\fn^{+\prime\prime}}(z_1'')}{\det_{\fn^{+\prime\prime}}(z_2'')}\biggr)^lf(z_2''). 
\end{align*}
\end{example}

\begin{example}
Let $\bk=(k_1,k_2,k_2)$ with $k_1-k_2\in\BZ_{\ge 0}$, $k_2\in\BC$, let $w_2\in\fp^+_2$ be of rank 1, and let $f(x_2):=(x_2|w_2)_{\fn^+_2}^{k_1-k_2}\det_{\fn^+_2}(x_2)^{k_2}$. 
Then we have 
\begin{align*}
&F^\downarrow(\lambda,(k_1,k_2,k_2);f;x) \\
&=\sum_{\substack{0\le l_1 \\ 0\le l_2\le k_1-k_2}}
\frac{((-k_2,-k_2,-k_1))_{(l_1+l_2,l_1,l_2),d_2}F_{(k_1,k_2,k_2),(l_1,l_2,0)}^\downarrow[f](x_1,x_2)}{\bigl(-\lambda+\frac{3}{2}d_2+1-(2k_2,k_1+k_2,k_1+k_2)\bigr)_{(l_1,l_2,0),d_2}} \\
&=\sum_{\substack{0\le l_1 \\ 0\le l_2\le k_1-k_2}}
\frac{(-k_2)_{l_1+l_2}\bigl(-k_2-\frac{d_2}{2}\bigr)_{l_1}(-k_1-d_2)_{l_2}}{\bigl(-\lambda-2k_2+\frac{3}{2}d_2+1\bigr)_{l_1}(-\lambda-k_1-k_2+d_2+1)_{l_2}} \\*
&\hspace{80pt}\times F_{(k_1-k_2,0,0),(l_1,l_2,0)}^\downarrow\bigl[(\cdot|w_2)_{\fn^+_2}^{k_1-k_2}\bigr](x_1,x_2)\det_{\fn^+_2}(x_2)^{k_2},
\end{align*}
and $F_{(k_1-k_2,0,0),(l_1,l_2,0)}^\downarrow\bigl[(\cdot|w_2)_{\fn^+_2}^{k_1-k_2}\bigr](x_1,x_2)$ is given in Proposition \ref{prop_example_Fkl_rank3}\,(4). 
Also, when $k_2\in\BZ_{\ge 0}$ we have 
\begin{align*}
C^{d,d_2}(\lambda,(k_1,k_2,k_2))
&=\frac{\bigl(\lambda-\frac{d_2}{2}+(k_1+k_2,\max\{k_1,2k_2\},k_2)\bigr)_{(0,\min\{k_2,k_1-k_2\},k_2),d_2}}{(\lambda)_{(k_1+k_2,\min\{k_1,2k_2\},k_2),d}} \\
&=\frac{(\lambda+\max\{k_1,2k_2\}-d_2)_{\min\{k_2,k_1-k_2\}}\bigl(\lambda+k_2-\frac{3}{2}d_2\bigr)_{k_2}}{(\lambda)_{(k_1+k_2,\min\{k_1,2k_2\},k_2),d}}, 
\end{align*}
and $\bigl\langle (x_2|w_2)_{\fn^+_2}^{k_1-k_2}\det_{\fn^+_2}(x_2)^{k_2},e^{(x|\overline{z})_{\fp^+}}\bigr\rangle_{\lambda,x}$ is given by the product of the above two formulas. 
\end{example}

\begin{example}
Let $\bk=(k_2,k_2,k_3)$ with $k_2-k_3\in\BZ_{\ge 0}$, $k_2\in\BC$, let $w_2\in\fp^+_2$ be of rank 1, and let $f(x_2):=(x_2^\itinv|w_2)_{\fn^+_2}^{k_2-k_3}\det_{\fn^+_2}(x_2)^{k_2}$. 
Then we have 
\begin{align*}
&F^\downarrow(\lambda,(k_2,k_2,k_3);f;x) \\
&=\sum_{\substack{0\le l_1 \\ 0\le l_3\le k_2-k_3}}
\frac{((-k_3,-k_2,-k_2))_{(l_1,l_1+l_3,l_3),d_2}F_{(k_2,k_2,k_3),(l_1,0,l_3)}^\downarrow[f](x_1,x_2)}{\bigl(-\lambda+\frac{3}{2}d_2+1-(k_2+k_3,k_2+k_3,2k_2)\bigr)_{(l_1,0,l_3),d_2}} \\
&=\sum_{\substack{0\le l_1 \\ 0\le l_3\le k_2-k_3}}
\frac{(-k_3)_{l_1}\bigl(-k_2-\frac{d_2}{2}\bigr)_{l_1+l_3}(-k_2-d_2)_{l_3}}{\bigl(-\lambda-k_2-k_3+\frac{3}{2}d_2+1\bigr)_{l_1}\bigl(-\lambda-2k_2+\frac{d_2}{2}+1\bigr)_{l_3}} \\*
&\hspace{70pt}\times F_{(0,0,k_3-k_2),(l_1,0,l_3)}^\downarrow\bigl[((\cdot)^\itinv|w_2)_{\fn^+_2}^{k_2-k_3}\bigr](x_1,x_2)\det_{\fn^+_2}(x_2)^{k_2}, 
\end{align*}
and $F_{(0,0,k_3-k_2),(l_1,0,l_3)}^\downarrow\bigl[((\cdot)^\itinv|w_2)_{\fn^+_2}^{k_2-k_3}\bigr](x_1,x_2)$ is given in Proposition \ref{prop_example_Fkl_rank3}\,(5). 
Also, when $k_3\in\BZ_{\ge 0}$ we have 
\begin{align*}
C^{d,d_2}(\lambda,(k_2,k_2,k_3))&=\frac{\bigl(\lambda-\frac{d_2}{2}+(k_2+k_3,k_2+k_3,k_2)\bigr)_{(k_2-k_3,0,k_3),d_2}}{(\lambda)_{(2k_2,k_2,k_3),d}} \\
&=\frac{\bigl(\lambda+k_2+k_3-\frac{d_2}{2}\bigr)_{k_2-k_3}\bigl(\lambda+k_2-\frac{3}{2}d_2\bigr)_{k_3}}{(\lambda)_{(2k_2,k_2,k_3),d}}, 
\end{align*}
and $\bigl\langle (x_2^\itinv|w_2)_{\fn^+_2}^{k_2-k_3}\det_{\fn^+_2}(x_2)^{k_2},e^{(x|\overline{z})_{\fp^+}}\bigr\rangle_{\lambda,x}$ is given by the product of the above two formulas. 
\end{example}

\subsection{Results on restriction of $\cH_\lambda(D)$ to subgroups}

In this subsection, we consider the decomposition of $\cH_\lambda(D)$ under $\widetilde{G}_1$. Let $V_\bk$ be an irreducible $K_1$-module isomorphic to $\cP_\bk(\fp^+_2)$. 
Then $\cH_\lambda(D)$ is decomposed under $\widetilde{G}_1$ as 
\[ \cH_\lambda(D)|_{\widetilde{G}_1}\simeq\hsum_{\bk\in\BZ_{++}^3}\cH_{\varepsilon_1\lambda}(D_1,V_\bk). \]
For each case, $\cH_\lambda(D)$ has a minimal $\widetilde{K}$-type 
\[ \chi^{-\lambda}\simeq \begin{cases} 
V_{\underline{\lambda/2}_3}^{(3)\vee} \boxtimes V_{\underline{\lambda/2}_3}^{(3)}\simeq V_{\underline{\lambda}_3}^{(3)\vee} \boxtimes V_{\underline{0}_3}^{(3)}
\simeq V_{\underline{0}_3}^{(3)\vee} \boxtimes V_{\underline{\lambda}_3}^{(3)} & (G=SU(3,3)), \\
V_{\underline{\lambda/2}_6}^{(6)\vee} & (G=SO^*(12)), \\
\chi^{-\lambda} & (G=E_{7(-25)}), \end{cases} \]
and $\cH_{\varepsilon_1\lambda}(D_1,V_\bk)$ has a minimal $\widetilde{K}_1$-type 
\[ \chi_1^{-\varepsilon_1\lambda}\otimes V_\bk\simeq \begin{cases} V_{\underline{\lambda}_3+(2k_1,2k_2,2k_3)}^{(3)\vee} & (G_1=SO^*(6)), \\
V_{\underline{\lambda/2}_3+(k_1,k_2,k_3)}^{(3)\vee} \boxtimes V_{\underline{\lambda/2}_3+(k_1,k_2,k_3)}^{(3)\vee} & (G_1=SO^*(6)\times SO^*(6)), \\
V_{\underline{\frac{3}{4}\lambda+\frac{1}{2}|\bk|}_2}^{(2)\vee} \boxtimes V_{\underline{\frac{1}{4}\lambda+\frac{1}{2}|\bk|}_6-(k_3,k_3,k_2,k_2,k_1,k_1)}^{(6)} \\
\simeq V_{\underline{0}_2}^{(2)\vee} \boxtimes V_{\substack{\underline{\lambda}_6+(k_1+k_2,k_1+k_2,k_1+k_3,\\ \hspace{20pt} k_1+k_3,k_2+k_3,k_2+k_3)}}^{(6)} & (G_1=SU(2,6)). \end{cases} \]
To describe the intertwining operators, for $\lambda\in\BC$, $\bk\in\BZ_{++}^3+\BC\underline{1}_3$, $f(x_2)\in\cP_\bk(\fp^+_2)$, 
we define a function $F^\uparrow(\lambda,\bk;f;y_1,x_2)$ on $\fp^-_1\oplus\fp^+_2$ by 
\begin{align}
&F^\uparrow(\lambda,\bk;f;y_1,x_2) \notag\\
:\hspace{-3pt}&=\sum_{\substack{\bl\in\BZ^3 \\ 0\le l_1\le k_1-k_2 \\ 0\le l_2\le k_2-k_3 \\ 0\le l_3}}
\frac{1}{\bigl(\lambda-\frac{d_2-2}{2}+(k_1+k_2,k_1+k_3,k_2+k_3)\bigr)_{(l_3,l_2,l_1),d_2}}F_{\bk,\bl}^\uparrow[f](y_1,x_2) \notag\\
&=\sum_{\substack{\bl\in\BZ^3 \\ 0\le l_1\le k_1-k_2 \\ 0\le l_2\le k_2-k_3 \\ 0\le l_3}}
\frac{1}{\bigl(\lambda+k_2+k_3-\frac{3}{4}d+1\bigr)_{l_1}}\frac{1}{\bigl(\lambda+k_1+k_3-\frac{1}{2}d+1\bigr)_{l_2}} \notag\\*
&\hspace{110pt}\times\frac{1}{\bigl(\lambda+k_1+k_2-\frac{1}{4}d+1\bigr)_{l_3}}F_{\bk,\bl}^\uparrow[f](y_1,x_2). \label{def_Fup_rank3}
\end{align}

\begin{theorem}
Let $\Re\lambda>1+2d$, $\bk\in\BZ_{++}^3$. 
\begin{enumerate}
\item (\cite[Theorems 5.7\,(5), 5.12]{N1}). Let $\rK_\bk(x_2)\in(\cP_\bk(\fp^+_2)\otimes \Hom(V_\bk,\BC))^{K_1}$. Then the linear map 
\begin{gather*}
\cF_{\lambda,\bk}^\uparrow\colon \cH_{\varepsilon_1\lambda}(D_1,V_\bk)_{\widetilde{K}_1}\longrightarrow\cH_\lambda(D)_{\widetilde{K}}|_{(\fg_1,\widetilde{K}_1)}, \\
(\cF_{\lambda,\bk}^\uparrow f)(x):=F^\uparrow\biggl(\lambda,\bk;\rK_\bk;\frac{1}{\varepsilon_1}\frac{\partial}{\partial x_1},x_2\biggr)f(x_1)
\end{gather*}
intertwines the $(\fg_1,\widetilde{K}_1)$-action, where $F^\uparrow$ is as in (\ref{def_Fup_rank3}). 
\item Let $\rK_\bk^\vee(y_2)\in(\cP_\bk(\fp^-_2)\otimes V_\bk)^{K_1}$. Then the linear map 
\begin{gather*}
\cF_{\lambda,\bk}^\downarrow\colon \cH_\lambda(D)|_{\widetilde{G}_1}\longrightarrow\cH_{\varepsilon_1\lambda}(D_1,V_\bk), \\
(\cF_{\lambda,\bk}^\downarrow f)(x_1):=F^\downarrow\biggl(\lambda,\bk;\rK_\bk^\vee;\frac{\partial}{\partial x}\biggr)f(x)\biggr|_{x_2=0}
\end{gather*}
intertwines the $\widetilde{G}_1$-action, where $F^\downarrow$ is as in (\ref{def_Fdown_rank3}). 
\item (\cite[Corollary 6.7]{N3}). Suppose $c_\bk,c_\bk^\vee\in\BR_{>0}$ satisfy, for $v\in V_\bk$, 
\[ \Vert \rK_\bk(x_2)v\Vert_{F,x_2}^2=c_\bk|v|_{V_\bk}^2, \qquad \Vert (v,\rK_\bk^\vee(\overline{x_2}))\Vert_{F,x_2}^2=c_\bk^\vee|v|_{V_\bk}^2. \]
Then the operator norms of $\cF_{\lambda,\bk}^\uparrow$, $\cF_{\lambda,\bk}^\downarrow$ are given by 
\[ c_\bk^{-1}\Vert \cF_{\lambda,\bk}^\uparrow\Vert_{\mathrm{op}}^2=c_\bk^\vee\Vert \cF_{\lambda,\bk}^\downarrow\Vert_{\mathrm{op}}^{-2}=C^{d,d_2}(\lambda,\bk), \]
where $C^{d,d_2}(\lambda,\bk)$ is as in (\ref{const_rank3}). 
\end{enumerate}
\end{theorem}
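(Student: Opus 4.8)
The plan is to deduce part~(2) from Theorem~\ref{thm_rank3}\,(1) via the F-method, and to obtain parts~(1) and~(3) by matching the functions $F^\uparrow$, $F^\downarrow$ of (\ref{def_Fup_rank3}), (\ref{def_Fdown_rank3}) with the holographic and symmetry breaking operators already built in \cite{N1} and with the norm computation of \cite{N3}. Throughout one has $\Re\lambda>1+2d=p-1$, so $\cH_\lambda(D)$ is holomorphic discrete and, by Theorem~\ref{thm_HKKS}, $\cH_\lambda(D)|_{\widetilde G_1}$ decomposes discretely and multiplicity-freely; in particular the space of symmetry breaking operators $\cO_\lambda(D)|_{\widetilde G_1}\to\cO_{\varepsilon_1\lambda}(D_1,V_\bk)$ is at most one-dimensional.

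For (2) I would proceed as follows. By the localness theorem \cite[Theorem 5.3]{KP1} every such symmetry breaking operator is differential, and by the F-method \cite[Theorem 3.1]{KP2}, together with multiplicity one, the relevant space $\Hom_{K_1}(\cP_\bk(\fp^+_2),\Sol_{\cP(\fp^+)}((\cB_\lambda^{\fp^+})_1))$ is one-dimensional and, by \cite[Theorem 3.10\,(1)]{N1} and \cite[Proposition 2.9]{N3}, is spanned by $f(x_2)\mapsto\langle f(x_2),e^{(x|\overline z)_{\fp^+}}\rangle_{\lambda,x}$; the corresponding operator into $\cO_{\varepsilon_1\lambda}(D_1,V_\bk)$ is obtained by contracting this $\Hom$-element with the canonical $K_1$-invariant $\rK_\bk^\vee\in(\cP_\bk(\fp^-_2)\otimes V_\bk)^{K_1}$ (which exists and is unique up to scalar since $\cP_\bk(\fp^-_2)$ is the $K_1$-contragredient of $V_\bk\simeq\cP_\bk(\fp^+_2)$, so the invariants form a line by Schur's lemma) and forming $f(x)\mapsto(\cdots)(\frac{\partial}{\partial x})f(x)\big|_{x_2=0}$. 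Now Theorem~\ref{thm_rank3}\,(1) gives $\langle f(x_2),e^{(x|\overline z)_{\fp^+}}\rangle_{\lambda,x}=C^{d,d_2}(\lambda,\bk)F^\downarrow(\lambda,\bk;f;z)$, and $C^{d,d_2}(\lambda,\bk)\ne0$ for $\Re\lambda>1+2d$ because every Pochhammer factor in (\ref{const_rank3}) then has argument of positive real part; hence $f(x_2)\mapsto F^\downarrow(\lambda,\bk;f;z)$ spans the same line, and the differential operator obtained from it by contracting against $\rK_\bk^\vee$ is exactly $(\cF_{\lambda,\bk}^\downarrow f)(x_1)=F^\downarrow(\lambda,\bk;\rK_\bk^\vee;\frac{\partial}{\partial x})f(x)\big|_{x_2=0}$. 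Being a nonzero scalar multiple of the canonical symmetry breaking operator, $\cF_{\lambda,\bk}^\downarrow$ is $\widetilde G_1$-intertwining and bounded from $\cH_\lambda(D)$ into $\cH_{\varepsilon_1\lambda}(D_1,V_\bk)$; its non-vanishing on $\cP_\bk(\fp^+_2)$ also follows from Proposition~\ref{prop_estimate_Fkl_rank3}\,(2).

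For (1) and (3) I would invoke the earlier papers. The holographic operator for $\bk\in\BZ_{++}^3$ in the present (tube type, equal rank, rank-$3$, i.e.\ case (3-c)) situation was constructed in \cite[Theorems 5.7\,(5), 5.12]{N1}, and what remains is to check that the symbol used there coincides with $F^\uparrow(\lambda,\bk;f;y_1,x_2)$ of (\ref{def_Fup_rank3}); this I would do by matching the building blocks $F_{\bk,\bl}^\uparrow[f]$ of Section~\ref{section_rank3} (the isotypic projections of $\frac{1}{|\bl|!}(-((y_1)^\sharp|(x_2)^\sharp))^{|\bl|}f(x_2)$) and comparing the Pochhammer denominators, or, equivalently, by using the relation $F_{\bk,\bl}^\downarrow[f](x_1,x_2)=F_{-\bk^\vee,\bl^\vee}^\uparrow[f((\cdot)^\itinv)](x_1,x_2^\itinv)$ together with the reciprocity between $\cF^\uparrow$ and $\cF^\downarrow$. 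Part~(3) is then \cite[Corollary 6.7]{N3}, the abstract relation between the operator norms of a holographic operator and of its symmetry breaking counterpart and the constant $C_{\fp^+,\fp^+_2}(\lambda,\bk)$; here $C_{\fp^+,\fp^+_2}(\lambda,\bk)=C^{d,d_2}(\lambda,\bk)$, which is precisely the value $\langle f(x_2),e^{(x|\overline z)_{\fp^+}}\rangle_{\lambda,x}\big|_{z_1=0}=C^{d,d_2}(\lambda,\bk)f(z_2)$ obtained inside the proof of Theorem~\ref{thm_rank3}\,(1), while $c_\bk$, $c_\bk^\vee$ absorb the difference between $\rK_\bk$, $\rK_\bk^\vee$ and the unit-normalized kernels used in \cite{N3}.

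The substantive input is Theorem~\ref{thm_rank3}\,(1), which is already available; granting it, the present statement is largely bookkeeping, and I expect the main obstacle to be bookkeeping of a delicate kind: tracking the identifications $\fp^+\leftrightarrow\fp^-$ coming from the tube-type Jordan structure and the complex conjugations hidden in $\rK_\bk$, $\rK_\bk^\vee$, in $\langle\cdot,\cdot\rangle_\lambda$ and in $\langle\cdot,\cdot\rangle_F$, so that the differential-operator formulas in (1) and (2) are literally correct and the constant in (3) emerges as $C^{d,d_2}(\lambda,\bk)$ rather than its inverse or complex conjugate. A secondary point is confirming the agreement of the present $F^\uparrow$, $F^\downarrow$ with the (differently presented) symbols in \cite{N1, N3}; this can be made uniform using the shift identity of Proposition~\ref{prop_shift_rank3} and the factorization of Theorem~\ref{thm_rank3}\,(3), which pin down $F^\downarrow$, hence $F^\uparrow$, up to a scalar fixed by the value at $z_1=0$.
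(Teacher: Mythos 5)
Your proposal is correct and follows essentially the same route as the paper, which gives no separate proof of this theorem: parts (1) and (3) are quoted from \cite{N1} and \cite{N3}, and part (2) is exactly the assembly of the general symmetry-breaking construction of Section \ref{subsection_sym_subalg} (formulas (\ref{SBO1})--(\ref{SBO2}), resting on multiplicity-freeness and the F-method) with the explicit evaluation $\bigl\langle f(x_2),e^{(x|\overline{z})_{\fp^+}}\bigr\rangle_{\lambda,x}=C^{d,d_2}(\lambda,\bk)F^\downarrow(\lambda,\bk;f;z)$ of Theorem \ref{thm_rank3}\,(1), with $C^{d,d_2}(\lambda,\bk)\ne 0$ in the range $\Re\lambda>2d+1$. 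Your bookkeeping remarks (identification of $C_{\fp^+,\fp^+_2}(\lambda,\bk)$ with $C^{d,d_2}(\lambda,\bk)$ via the value at $z_1=0$, and the normalizations $c_\bk$, $c_\bk^\vee$) are consistent with how the paper intends these pieces to fit together.
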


\begin{remark}
In \cite[Theorem 5.12]{N1}, the subscripts $(m_3,m_2,m_1),2$ are wrong, and $(m_3,\allowbreak m_2,m_1),1$ (in (1)), $(m_3,m_2,m_1),4$ (in (2)) are correct. 
\end{remark}

The family of differential operators $\cF_{\lambda,\bk}^\downarrow$ is meromorphically continued for all $\lambda\in\BC$, and gives rise to $\widetilde{G}_1$-intertwining operators 
\[ \cF_{\lambda,\bk}^\downarrow\colon \cO_\lambda(D)|_{\widetilde{G}_1}\longrightarrow\cO_{\varepsilon_1\lambda}(D_1,V_\bk) \]
for all $\lambda\in\BC$ except at its poles. Then by Theorem \ref{thm_rank3}\,(3), for special $\lambda$, the following holds. 

\begin{theorem}[{\cite[Corollary 6.8\,(4)]{N3}}]
Let $\bk\in\BZ_{++}^3$. For $a=1,2,\ldots,k_3$, we fix a $[K_1,K_1]$-isomorphism $V_\bk\simeq V_{\bk-\underline{a}_3}$, and assume that 
\[ \rK_\bk^\vee(y_2)=\det_{\fn^-}(y_2)^a\rK_{\bk-\underline{a}_3}^\vee(y_2)\in (\cP_\bk(\fp^-_2)\otimes V_\bk)^{K_1} \]
holds. Then we have 
\begin{multline*}
\cF_{d+1-a,\bk}^\downarrow=\cF_{d+1+a,\bk-\underline{a}_3}^\downarrow\circ\det_{\fn^-}\biggl(\frac{\partial}{\partial x}\biggr)^a\colon \\
\cO_{d+1-a}(D)|_{\widetilde{G}_1}\longrightarrow \cO_{\varepsilon_1(d+1-a)}(D_1,V_\bk)\simeq\cO_{\varepsilon_1(d+1+a)}(D_1,V_{\bk-\underline{a}_3}). 
\end{multline*}
\end{theorem}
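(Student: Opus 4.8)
The plan is to deduce this from the factorisation of the symbol $F^\downarrow(\lambda,\bk;f;z)$ at $\lambda=d+1-a$ obtained in Theorem~\ref{thm_rank3}\,(3), together with the realisation of the (meromorphically continued) symmetry breaking operator as the differential operator
\[ (\cF_{\lambda,\bk}^\downarrow f)(x_1)=F^\downarrow\biggl(\lambda,\bk;\rK_\bk^\vee;\frac{\partial}{\partial x}\biggr)f(x)\biggr|_{x_2=0}, \]
where, since $k_3\in\BZ_{\ge 0}$, the function $F^\downarrow(\lambda,\bk;\rK_\bk^\vee;z)$ is an honest polynomial in $z$ for every $\lambda$, in particular at $\lambda=d+1-a$. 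Because a composition of differential operators corresponds to the (commutative) multiplication of their constant-coefficient symbols, it is enough to establish the identity of polynomials
\[ F^\downarrow(d+1-a,\bk;\rK_\bk^\vee;z)=\det_{\fn^-}(z)^{a}\,F^\downarrow\bigl(d+1+a,\bk-\underline a_3;\rK_{\bk-\underline a_3}^\vee;z\bigr). \]

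First I would note that $\fp^\pm_2\subset\fp^\pm$ is a unital Jordan subalgebra of the same rank $3$, so that $\det_{\fn^\pm}$ restricted to $\fp^\pm_2$ equals $\det_{\fn^\pm_2}$; consequently the hypothesis $\rK_\bk^\vee(y_2)=\det_{\fn^-}(y_2)^a\rK_{\bk-\underline a_3}^\vee(y_2)$ means precisely $\rK_\bk^\vee\det_{\fn^-_2}^{-a}=\rK_{\bk-\underline a_3}^\vee$ in $(\cP_{\bk-\underline a_3}(\fp^-_2)\otimes V_{\bk-\underline a_3})^{K_1}$ after the chosen $[K_1,K_1]$-isomorphism $V_\bk\simeq V_{\bk-\underline a_3}$ (all invariant spaces involved being one-dimensional, such a compatible normalisation exists). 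Then the ``Especially'' part of Theorem~\ref{thm_rank3}\,(3), applied with $\lambda=d+1-a$, $a\in\{1,\dots,k_3\}$ and $f=\rK_\bk^\vee$, gives exactly the displayed symbol identity (the normalisation of $F^\downarrow$ in (\ref{def_Fdown_rank3}), with coefficient $1$ on the $\bl=\underline 0_3$ term, is respected on both sides, so no spurious constants appear). Substituting $z=\frac{\partial}{\partial x}$, composing, applying to $f\in\cO_{d+1-a}(D)$ and restricting to $x_2=0$ then yields $\cF_{d+1-a,\bk}^\downarrow f=\cF_{d+1+a,\bk-\underline a_3}^\downarrow\bigl(\det_{\fn^-}(\frac{\partial}{\partial x})^a f\bigr)$.

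It remains to see that the right-hand side is correctly interpreted in the stated bundles. For this I would invoke that $\det_{\fn^-}(\frac{\partial}{\partial x})^{a}$ is the $\widetilde G$-equivariant differential operator $\cO_{d+1-a}(D)\to\cO_{d+1+a}(D)$: it sends the $\widetilde K$-type $\chi^{-\lambda}\otimes\cP_\bm(\fp^+)$ to $\chi^{-(\lambda+2a)}\otimes\cP_{\bm-\underline a_3}(\fp^+)$, using $\cP_\bm(\fp^+)=\det_{\fn^+}^{a}\cP_{\bm-\underline a_3}(\fp^+)$ and Lemma~\ref{lem_diff}, and its equivariance follows from the F-method once one knows $\det_{\fn^+}(z)^{a}\in\Sol_{\cP(\fp^+)}(\cB_{d+1-a}^{\fp^+})$, which is the specialisation at the constant function of the relation $\cB_{\frac{n}{r}-a}^{\fp^+}\det_{\fn^+}(z)^{a}=\det_{\fn^+}(z)^{a}\cB_{\frac{n}{r}+a}^{\fp^+}$ (with $\frac{n}{r}=d+1$) already used in the proof of Theorem~\ref{thm_rank3}\,(3); and the target identification $\cO_{\varepsilon_1(d+1-a)}(D_1,V_\bk)\simeq\cO_{\varepsilon_1(d+1+a)}(D_1,V_{\bk-\underline a_3})$ is the evident one coming from $\cP_\bk(\fp^+_2)\simeq\cP_{\bk-\underline a_3}(\fp^+_2)\det_{\fn^+_2}^{a}$ together with the definitions of $\chi_1,\chi_2,\varepsilon_1$. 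The genuinely nontrivial input is Theorem~\ref{thm_rank3}\,(3) itself (and behind it the extension of (\ref{formula_factorize_rank3}) to arbitrary complex parameters via the Zariski-density argument), which we take as given; the remaining work here is the bookkeeping above, and the point that needs care is the compatibility of the two kernel normalisations $\rK_\bk^\vee$ and $\rK_{\bk-\underline a_3}^\vee$, which is exactly where $\det_{\fn^-}|_{\fp^-_2}=\det_{\fn^-_2}$ and the $[K_1,K_1]$-isomorphism enter.
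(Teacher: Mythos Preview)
Your proposal is correct and follows essentially the same approach as the paper: the paper explicitly introduces this theorem with ``Then by Theorem~\ref{thm_rank3}\,(3), for special $\lambda$ the following holds'' and cites the detailed argument from \cite{N3}, so deriving the operator identity from the symbol factorisation in Theorem~\ref{thm_rank3}\,(3) is exactly the intended route. One small overstatement: your claim that $F^\downarrow(\lambda,\bk;\rK_\bk^\vee;z)$ is a polynomial in $z$ ``for every $\lambda$'' is stronger than what the paper establishes (some poles in $\lambda$ may remain even for $k_3\in\BZ_{\ge 0}$), but the relevant values $\lambda=d+1-a$ with $1\le a\le k_3$ are indeed regular, as implicit in the ``Especially'' clause of Theorem~\ref{thm_rank3}\,(3), so your argument goes through.
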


For $\lambda>d$ we have 
\[ \cH_{\varepsilon_1\lambda}(D_1,V_\bk)_{\widetilde{K}_1}\simeq d\tau_\lambda(\mathcal{U}(\fg_1))\cP_\bk(\fp^+_2)\subset \cH_\lambda(D)_{\widetilde{K}}, \]
but this does not hold for smaller $\lambda$ in general. For such $\lambda$, by Theorem \ref{thm_rank3}\,(2) the following holds. 
For $i=1,2,3$, $\lambda\in\frac{d}{2}(i-1)-\BZ_{\ge 0}$, let $M_i^\fg(\lambda)\subset\cO_\lambda(D)_{\widetilde{K}}$ be the $(\fg,\widetilde{K})$-submodule given in (\ref{submodule}). 

\begin{theorem}
Let $\bk\in\BZ_{++}^3$. For $i=1,2,3$, 
\[ d\tau_\lambda(\mathcal{U}(\fg_1))\cP_\bk(\fp^+_2)\subset M_i^\fg(\lambda) \]
holds if and only if 
\[ \lambda\in\begin{cases} -(k_1+k_2)-\BZ_{\ge 0} & (i=1), \\
\frac{d}{2}-\min\{k_1,k_2+k_3\}-\BZ_{\ge 0} & (i=2), \\
d-k_3-\BZ_{\ge 0} & (i=3). \end{cases} \]
\end{theorem}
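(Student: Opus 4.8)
The plan is to combine the explicit inner product formula of Theorem \ref{thm_rank3}\,(2) with the description of the submodules $M_i^\fg(\lambda)$ in terms of poles of $\langle\cdot,\cdot\rangle_\lambda$ given at the end of Section \ref{subsection_HDS}. First I would recall that, since every $\widetilde{G}_1$-submodule of $\cH_\lambda(D)|_{\widetilde{G}_1}$ contains a $\fp^+_1$-null vector, the $(\fg_1,\widetilde{K}_1)$-module $d\tau_\lambda(\mathcal{U}(\fg_1))\cP_\bk(\fp^+_2)$ is contained in $M_i^\fg(\lambda)$ if and only if $\cP_\bk(\fp^+_2)\subset M_i^\fg(\lambda)$ itself, because $M_i^\fg(\lambda)$ is a $(\fg,\widetilde{K})$-submodule, hence in particular $\fg_1$-stable, and $\cP_\bk(\fp^+_2)$ generates the left-hand side. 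So the whole statement reduces to determining, for each $i=1,2,3$, the set of $\lambda$ with $\cP_\bk(\fp^+_2)\subset M_i^\fg(\lambda)$.

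Next I would invoke the criterion from Section \ref{subsection_HDS}: for $f(x)\in\cP(\fp^+)$ and $j=1,\ldots,r$, one has $f\in M_j^\fg(\lambda)$ precisely when $\lambda\in\frac{d}{2}(j-1)-k_j-\BZ_{\ge 0}$, provided $(\lambda)_{\bk,d}\langle f(x),e^{(x|\overline z)_{\fp^+}}\rangle_{\lambda,x}$ is holomorphically continued and the relevant components of $f$ survive. The key point is that for $f\in\cP_\bk(\fp^+_2)\subset\cP(\fp^+)$, the polynomial $(\lambda)_{(k_1+k_2,\min\{k_1,k_2+k_3\},k_3),d}\langle f(x_2),e^{(x|\overline z)_{\fp^+}}\rangle_{\lambda,x}$ is, by Theorem \ref{thm_rank3}\,(2), holomorphic in $\lambda$ and nonzero for all $\lambda\in\BC$ when $f\ne 0$. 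This means that the set of poles of $\langle f(x_2),e^{(x|\overline z)_{\fp^+}}\rangle_{\lambda,x}$ is exactly the zero set of the normalizing factor $(\lambda)_{(k_1+k_2,\min\{k_1,k_2+k_3\},k_3),d}$, i.e., $\lambda\in -(k_1+k_2)-\BZ_{\ge 0}$ from the first Pochhammer factor, $\lambda\in\frac{d}{2}-\min\{k_1,k_2+k_3\}-\BZ_{\ge 0}$ from the second, and $\lambda\in d-k_3-\BZ_{\ge 0}$ from the third (recall $p-1$ corresponds here to $r=3$, $\frac{d}{2}(r-1)=d$). I would then match these three pole families against the membership condition $\lambda\in\frac{d}{2}(i-1)-k_i'-\BZ_{\ge 0}$ for $M_i^\fg(\lambda)$ — but here I must be careful, because the SBO target is $\cH_{\varepsilon_1\lambda}(D_1,\cP_\bk(\fp^+_2))$ and the relevant $\fg$-submodule indices $i$ for $D$ must be translated into the decomposition indices appearing in (\ref{submodule}). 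The clean way is: $f\in M_i^\fg(\lambda)$ iff $\langle f(x_2),e^{(x|\overline z)_{\fp^+}}\rangle_{\lambda,x}$ has a pole of the type detecting $m_i\le\frac{d}{2}(i-1)-\lambda$, and since $f\in\cP_\bk(\fp^+_2)$ has $\cP_\bm(\fp^+)$-components only for $\bm$ with $m_i$ bounded in terms of the $k_j$'s, the threshold is exactly governed by the $i$-th Pochhammer factor above.

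The main obstacle, I expect, will be the bookkeeping in this last matching step: one has to verify that the $K^\BC$-decomposition of $\cP_\bk(\fp^+_2)\hookrightarrow\cP(\fp^+)$ into $\cP_\bm(\fp^+)$'s has the property that its "$m_i$-width" is precisely what the $i$-th factor of $(\lambda)_{(k_1+k_2,\min\{k_1,k_2+k_3\},k_3),d}$ encodes, so that the smallest $\lambda$ at which the containment fails is shifted by exactly $k_1+k_2$ (for $i=1$), $\min\{k_1,k_2+k_3\}$ (for $i=2$), $k_3$ (for $i=3$), and that the "nonzeroness" half of Theorem \ref{thm_rank3}\,(2) guarantees no accidental early vanishing below those thresholds. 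Concretely I would argue: if $\lambda$ lies in the stated set, the holomorphically continued normalized form either vanishes there or lands inside $M_i^\fg(\lambda)$ by the general membership criterion; the nonzeroness statement in Theorem \ref{thm_rank3}\,(2) forbids the first alternative, so the SBO image is nonzero and, being $\fg_1$-generated inside a $\fg$-submodule, lies in $M_i^\fg(\lambda)$; conversely, if $\lambda$ is outside the set, then $\langle f(x_2),e^{(x|\overline z)_{\fp^+}}\rangle_{\lambda,x}$ is finite and nonzero at that $i$-th potential pole, so $f\notin M_i^\fg(\lambda)$ and hence $d\tau_\lambda(\mathcal{U}(\fg_1))\cP_\bk(\fp^+_2)\not\subset M_i^\fg(\lambda)$. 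Once the translation between the $(n,r,d,p)$ data for $\fp^+=\Herm(3,\BF)^\BC$ and the indices in (\ref{submodule}) is written out, the result follows.
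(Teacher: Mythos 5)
Your overall route is the same as the paper's: reduce to $\cP_\bk(\fp^+_2)\subset M_i^\fg(\lambda)$ (correct, since $M_i^\fg(\lambda)$ is a $(\fg,\widetilde{K})$-submodule, hence $\fg_1$-stable and containing whatever $\cP_\bk(\fp^+_2)$ generates), then feed Theorem \ref{thm_rank3}\,(2) into the criterion at the end of Section \ref{subsection_HDS}. Write $\kappa_1=k_1+k_2$, $\kappa_2=\min\{k_1,k_2+k_3\}$, $\kappa_3=k_3$. The ``if'' direction needs only the holomorphy half: holomorphy of $(\lambda)_{(\kappa_1,\kappa_2,\kappa_3),d}\bigl\langle f(x_2),e^{(x|\overline{z})_{\fp^+}}\bigr\rangle_{\lambda,x}$ forces every $K^\BC$-component $f_\bm$ of $f\in\cP_\bk(\fp^+_2)$ to have $m_i\le\kappa_i$, so $\lambda\in\frac{d}{2}(i-1)-\kappa_i-\BZ_{\ge 0}$ gives $f_\bm\in M_i^\fg(\lambda)$ for every $\bm$ by (\ref{submodule}). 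Your concrete sentence for this direction (the dichotomy ``vanishes or lands inside $M_i^\fg(\lambda)$'', the appeal to non-zeroness, and ``being $\fg_1$-generated inside a $\fg$-submodule'') is muddled and partly presupposes what is to be shown, but the correct mechanism appears earlier in your text, so this is only a matter of cleaning up.

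The genuine gap is in ``only if''. Your plan is to read the maximal $m_i$ of the components off the pole set (``poles $=$ zero set of the normalizer'') and then ``match the three pole families''. That matching cannot be done from pole locations alone: $d/2$ is an integer in all three rank-3 cases, so the progressions $\frac{d}{2}(i-1)-\BZ_{\ge 0}$ overlap, and different triples of maximal $m_i$'s can produce the same pole set (for $d=2$, maximal values $(5,3,1)$ and $(5,1,1)$ both give $\{2,1,0,-1,\ldots,-4\}$). Moreover your closing claim, that for $\lambda$ outside the stated set $\bigl\langle f(x_2),e^{(x|\overline{z})_{\fp^+}}\bigr\rangle_{\lambda,x}$ is ``finite and nonzero at that $i$-th potential pole'', is false as written: at such $\lambda$ the unnormalized pairing generally does have a pole. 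What the non-zeroness actually buys is this: by Corollary \ref{cor_FK} and the holomorphy half, the normalized pairing equals $\sum_\bm\bigl(\prod_j\prod_{a=m_j}^{\kappa_j-1}\bigl(\lambda-\frac{d}{2}(j-1)+a\bigr)\bigr)f_\bm(z)$, each coefficient being a polynomial in $\lambda$. If $\lambda\in\frac{d}{2}(i-1)-\BZ_{\ge 0}$ with $\frac{d}{2}(i-1)-\lambda\le\kappa_i-1$, then any term not vanishing at $\lambda$ must have its $i$-th factor nonzero there, which forces $m_i>\frac{d}{2}(i-1)-\lambda$ for that $\bm$; Theorem \ref{thm_rank3}\,(2) guarantees such a surviving term, hence $f\notin M_i^\fg(\lambda)$ and the containment fails. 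Inserting this argument (or the equivalent evaluation at the single point $\lambda=\frac{d}{2}(i-1)-\kappa_i+1$) closes your proof; it is exactly what the paper compresses into ``only if follows from the non-zeroness''.
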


Here, the ``if'' direction follows from the holomorphy in Theorem \ref{thm_rank3}\,(2) and this is already given in \cite[Corollary 6.8\,(1)]{N3}. 
On the other hand, the ``only if'' direction follows from the non-vanishing in Theorem \ref{thm_rank3}\,(2) and this is new, except for $k_1=k_2=k_3$ or $k_3=0$.

\section{General rank case}\label{section_general}

In this section, we treat $\fp^\pm=(\fp^\pm)^\sigma\oplus(\fp^\pm)^{-\sigma}=\fp^\pm_1\oplus\fp^\pm_2$ such that $\fp^\pm$, $\fp^\pm_2$ are both simple, classical, of tube type, 
and $\rank\fp^\pm=\rank\fp^\pm_2$, that is, we treat 
\begin{align*}
(\fp^\pm,\fp^\pm_1,\fp^\pm_2)&=\begin{cases} (\BC^n,\BC^{n'},\BC^{n''}) \quad (n=n'+n'',\; n''\ge 3) & (\text{Case }1), \\
(M(r,\BC),\Alt(r,\BC),\Sym(r,\BC)) & (\text{Case }2), \\
(\Alt(2r,\BC),\Alt(r,\BC)\oplus\Alt(r,\BC),M(r,\BC)) & (\text{Case }3). \end{cases}
\end{align*}
Then the corresponding symmetric pairs $(G,(G^\sigma)_0,(G^{\sigma\vartheta})_0)=(G,G_1,G_2)$ are given by 
\begin{align*}
(G,G_1,G_2)
&=\begin{cases} (SO_0(2,n),SO_0(2,n')\times SO(n''),SO_0(2,n'')\times SO(n')) & (\text{Case }1), \\ 
(SU(r,r),SO^*(2r),Sp(r,\BR)) & (\text{Case }2), \\ (SO^*(4r),SO^*(2r)\times SO^*(2r),U(r,r)) & (\text{Case }3) \end{cases}
\end{align*}
(up to covering). $(r,d,d_2,\varepsilon_1)$ are given as follows, and we define $\delta$ by the following. 
\begin{equation}\label{str_const_general}
(r,d,d_2,\varepsilon_1,\delta)=\begin{cases} (2,n-2,n''-2,1,n'-2) & (\text{Case }1,\; n'\ge 2), \\
(2,n-2,n''-2,2,-1) & (\text{Case }1,\; n'=1), \\
(r,2,1,2,-1) & (\text{Case }2), \\
(r,4,2,1,0) & (\text{Case }3). \end{cases} 
\end{equation}
Especially $d=2d_2$ holds if $r\ge 3$. 
We fix a maximal tripotent $e\in\fp^+_2\subset\fp^+$, regard $\fp^+_2\subset\fp^+$ as Jordan algebras with the unit element $e$, the Euclidean real forms $\fn^+_2\subset\fn^+$, 
the symmetric cones $\Omega_2\subset\Omega$, and identify $\fp^+\simeq\fp^-$, $\fp^+_2\simeq\fp^-_2$ via $Q(e),Q(\overline{e})$.

\subsection{Definition of functions $F^\pm_{\bm,\bl}[f](x_1,y_2)$, $F^\downarrow_{l,\pm}\bigl(\begin{smallmatrix}\nu\\\mu\end{smallmatrix};f;x\bigr)$}

In this subsection, we define some functions on $\fp^\pm_1\oplus\fp^\pm_2$. 
First, for $\bm\in\BZ_{++}^{\lfloor r/2\rfloor}$, let $\bm^2:=(m_1,m_1,m_2,m_2,\ldots,m_{\lfloor r/2\rfloor},m_{\lfloor r/2\rfloor}(,0))\in\BZ_{++}^r$, let 
\[ \cP_{\langle\bm\rangle}(\fp^\pm_1):=\begin{cases} \cP_{(m,m)}(\BC^{n'}) & (\text{Case }1,\; n'\ge 3), \\ \cP_m(\BC)\otimes\cP_m(\BC) & (\text{Case }1,\; n'=2), \\
\cP_{2m}(\BC) & (\text{Case }1,\; n'=1), \\ \cP_{2\bm}(\Alt(r,\BC)) & (\text{Case }2), \\ \cP_\bm(\Alt(r,\BC))\otimes\cP_\bm(\Alt(r,\BC)) & (\text{Case }3) \end{cases} \]
(where $\BC^2=\BC\oplus\BC$ corresponds to the local isomorphism $SO_0(2,2)\simeq SL(2,\BR)\times SL(2,\BR)$), 
and $\Phi_\bm^{\fp^\pm_1,\fp^\mp_2}(x_1,y_2)$, $\tilde{\Phi}_\bm^{\fp^\pm_1,\fp^\mp_2}(x_1,y_2)\in\cP_{\langle\bm\rangle}(\fp^\pm_1)_{x_1}\otimes\cP_{\bm^2}(\fp^\mp_2)_{y_2}$ 
be the polynomials satisfying 
\begin{align*}
e^{\frac{1}{2}(x_1|P(y_2)x_1)_{\fn^\pm}}&=\sum_{\bm\in\BZ_{++}^{\lfloor r/2\rfloor}}
\frac{d_\bm^{\tilde{\fp}}}{\bigl(\frac{d}{2}\bigl(\bigl\lfloor\frac{r}{2}\bigr\rfloor-1\bigr)+1\bigr)_{\bm,d}}\Phi_\bm^{\fp^\pm_1,\fp^\mp_2}(x_1,y_2)
=\sum_{\bm\in\BZ_{++}^{\lfloor r/2\rfloor}}\tilde{\Phi}_\bm^{\fp^\pm_1,\fp^\mp_2}(x_1,y_2), 
\end{align*}
where 
\begin{align*}
\tilde{\fp}&:=\begin{cases} \BC & (\text{Case }1), \\ M\bigl(\bigl\lfloor\frac{r}{2}\bigr\rfloor,\BC\bigr) & (\text{Case }2), \\
\Alt\bigl(2\bigl\lfloor\frac{r}{2}\bigr\rfloor,\BC\bigr) & (\text{Case }3), \end{cases} \quad
d_\bm^{\tilde{\fp}}:=\dim\cP_\bm(\tilde{\fp})=\begin{cases} 1 & (\text{Case }1), \\ 
\dim\bigl(V_\bm^{(\lfloor r/2\rfloor)\vee}\bigr)^{\otimes 2} & (\text{Case }2), \\ 
\dim V_{\bm^2}^{(2\lfloor r/2\rfloor)\vee} & (\text{Case }3). \end{cases} 
\end{align*}
Then for $\mu\in\BC$, we have 
{\setlength{\belowdisplayskip}{0pt}
\begin{align*}
h_{\fn^\pm}(x_1,P(y_2)x_1)^{-\mu/2}&=\sum_{\bm\in\BZ_{++}^{\lfloor r/2\rfloor}}
\frac{(\mu)_{\bm,d}d_\bm^{\tilde{\fp}}}{\bigl(\frac{d}{2}\bigl(\bigl\lfloor\frac{r}{2}\bigr\rfloor-1\bigr)+1\bigr)_{\bm,d}}\Phi_\bm^{\fp^\pm_1,\fp^\mp_2}(x_1,y_2) \\
&=\sum_{\bm\in\BZ_{++}^{\lfloor r/2\rfloor}}(\mu)_{\bm,d}\tilde{\Phi}_\bm^{\fp^\pm_1,\fp^\mp_2}(x_1,y_2), 
\end{align*}}
{\setlength{\abovedisplayskip}{0pt}
\begin{align}
\det_{\fn^\pm}(x_1+x_2)^{-\mu}&=\det_{\fn^\pm}(x_2)^{-\mu} h_{\fn^\pm}(x_1,P(x_2^\itinv)x_1)^{-\mu/2} \notag \\
&=\det_{\fn^\pm}(x_2)^{-\mu}\sum_{\bm\in\BZ_{++}^{\lfloor r/2\rfloor}}
\frac{(\mu)_{\bm,d}d_\bm^{\tilde{\fp}}}{\bigl(\frac{d}{2}\bigl(\bigl\lfloor\frac{r}{2}\bigr\rfloor-1\bigr)+1\bigr)_{\bm,d}}\Phi_\bm^{\fp^\pm_1,\fp^\mp_2}(x_1,x_2^\itinv) \notag \\
&=\det_{\fn^\pm}(x_2)^{-\mu}\sum_{\bm\in\BZ_{++}^{\lfloor r/2\rfloor}}(\mu)_{\bm,d}\tilde{\Phi}_\bm^{\fp^\pm_1,\fp^\mp_2}(x_1,x_2^\itinv) \label{binom_general}
\end{align}}
(for details, see \cite[Propositions 2.4, 6.2]{N2}). 

\begin{example}\label{example_Phim_general}
Let $\mu=-1$. Then for $x=x_1+x_2\in\fp^+$, we have 
\begin{align*}
\det_{\fn^+}(x_1+x_2)&=\det_{\fn^+_2}(x_2)h_{\fn^+}(x_1,P(x_2^\itinv)x_1)^{1/2} \\
&=\det_{\fn^+_2}(x_2)\sum_{j=0}^{\lfloor r/2\rfloor}(-1)^{j}\binom{\lfloor r/2\rfloor}{j}\Phi_{(\underline{1}_j,\underline{0}_{\lfloor r/2\rfloor-j})}^{\fp^+_1,\fp^-_2}(x_1,x_2^\itinv), 
\end{align*}
where $\underline{k}_j:=\smash{(\overbrace{k,\ldots,k}^j)}$, and comparing the homogeneous terms of degree $2\bigl\lfloor \frac{r}{2}\bigr\rfloor$ with respect to $x_1$, we have 
\[ (-1)^{\lfloor r/2\rfloor}\det_{\fn^+_2}(x_2)\Phi_{\underline{1}_{\lfloor r/2\rfloor}}^{\fp^+_1,\fp^-_2}(x_1,x_2^\itinv)=\begin{cases} \det_{\fn^+}(x_1) & (r\colon\text{even}), \\
((x_1)^\sharp|x_2)_{\fn^+} & (r\colon\text{odd}), \end{cases} \]
where $x\mapsto x^\sharp$ is the adjugate in $\fp^+=\fn^{+\BC}$. That is, for $x_1\in\fp^+_1$, $y_2\in\fp^-_2$, we have 
\[ \Phi_{\underline{1}_{\lfloor r/2\rfloor}}^{\fp^+_1,\fp^-_2}(x_1,y_2)=\begin{cases} (-1)^{r/2}\det_{\fn^+}(x_1)\det_{\fn^-_2}(y_2) & (r\colon\text{even}), \\
(-1)^{\lfloor r/2\rfloor}((x_1)^\sharp|(y_2)^\sharp)_{\fn^+} & (r\colon\text{odd}). \end{cases} \]
More generally, for $k\in\BZ_{\ge 0}$ we have 
\[ \Phi_{\underline{k}_{\lfloor r/2\rfloor}}^{\fp^+_1,\fp^-_2}(x_1,y_2)=\Phi_{\underline{1}_{\lfloor r/2\rfloor}}^{\fp^+_1,\fp^-_2}(x_1,y_2)^k
=\begin{cases} (-1)^{kr/2}\det_{\fn^+}(x_1)^k\det_{\fn^-_2}(y_2)^k & (r\colon\text{even}), \\
(-1)^{k\lfloor r/2\rfloor}((x_1)^\sharp|(y_2)^\sharp)_{\fn^+}^k & (r\colon\text{odd}). \end{cases} \]
\end{example}

Next, for $\bm\in\BZ_{++}^{\lfloor r/2\rfloor}$, $\bl\in(\BZ_{\ge 0})^{\lceil r/2\rceil}$, we define $\phi_\pm(\bm,\bl)\in\BZ^r$ by 
\begin{align*}
\phi_+(\bm,\bl) &:=
\begin{cases} (m_1+l_1,m_1,m_2+l_2,m_2,\ldots,m_{r/2}+l_{r/2},m_{r/2}) & (r\colon\text{even}), \\
(m_1+l_1,m_1,m_2+l_2,m_2,\ldots,m_{\lfloor r/2\rfloor}+l_{\lfloor r/2\rfloor},m_{\lfloor r/2\rfloor},l_{\lceil r/2\rceil}) & (r\colon\text{odd}), \end{cases} \\
\phi_-(\bm,\bl) &:=
\begin{cases} (m_1,m_1-l_1,m_2,m_2-l_2,\ldots,m_{r/2},m_{r/2}-l_{r/2}) & (r\colon\text{even}), \\
(m_1,m_1-l_1,m_2,m_2-l_2,\ldots,m_{\lfloor r/2\rfloor},m_{\lfloor r/2\rfloor}-l_{\lfloor r/2\rfloor},-l_{\lceil r/2\rceil}) & (r\colon\text{odd}). \end{cases}
\end{align*}
Then for $l\in\BZ_{\ge 0}$, $f(y_2)\in\cP_{(l,\underline{0}_{r-1})}(\fp^\mp_2)$, we have 
\begin{align*}
\tilde{\Phi}_\bm^{\fp^\pm_1,\fp^\mp_2}(x_1,y_2)f(y_2)
&\in\bigoplus_{\substack{\bl\in(\BZ_{\ge 0})^{\lceil r/2\rceil}\\ |\bl|=l}} \cP_{\langle\bm\rangle}(\fp^\pm_1)_{x_1}\otimes\cP_{\phi_+(\bm,\bl)}(\fp^\mp_2)_{y_2}, \\
\tilde{\Phi}_\bm^{\fp^\pm_1,\fp^\mp_2}(x_1,y_2)f(y_2^\itinv)
&\in\bigoplus_{\substack{\bl\in(\BZ_{\ge 0})^{\lceil r/2\rceil}\\ |\bl|=l}} \cP_{\langle\bm\rangle}(\fp^\pm_1)_{x_1}\otimes\cP_{\phi_-(\bm,\bl)}(\fp^\mp_2)_{y_2}, 
\end{align*}
where $\cP_{\phi_+(\bm,\bl)}(\fp^\mp_2)$, $\cP_{\phi_-(\bm,\bl)}(\fp^\mp_2)=\{0\}$ if $\phi_+(\bm,\bl)$, $\phi_-(\bm,\bl)\notin\BZ_{+}^r:=\{\bk\in\BZ^r\mid k_1\ge\cdots\ge k_r\}$. 
According to these decompositions, for $\bl\in(\BZ_{\ge 0})^{\lceil r/2\rceil}$ with $|\bl|=l$, we define the polynomials $F_{\bm,\bl}^+[f](x_1,y_2)$ and $F_{\bm,\bl}^-[f](x_1,y_2)$ by 
\begin{alignat*}{2}
F_{\bm,\bl}^+[f](x_1,y_2) &:=\Proj^{\fp^\mp_2}_{\phi_+(\bm,\bl),y_2}\Bigl(\tilde{\Phi}_\bm^{\fp^\pm_1,\fp^\mp_2}(x_1,y_2)f(y_2)\Bigr)
&&\in\cP_{\langle\bm\rangle}(\fp^\pm_1)_{x_1}\otimes\cP_{\phi_+(\bm,\bl)}(\fp^\mp_2)_{y_2}, \\
F_{\bm,\bl}^-[f](x_1,y_2) &:=\Proj^{\fp^\mp_2}_{\phi_-(\bm,\bl),y_2}\Bigl(\tilde{\Phi}_\bm^{\fp^\pm_1,\fp^\mp_2}(x_1,y_2)f(y_2^\itinv)\Bigr)
&&\in\cP_{\langle\bm\rangle}(\fp^\pm_1)_{x_1}\otimes\cP_{\phi_-(\bm,\bl)}(\fp^\mp_2)_{y_2}. 
\end{alignat*}
Then $F_{\bm,\bl}^+[f](x_1,y_2)\ne 0$ holds only if 
\begin{align}
&0\le l_1, &&\textstyle 0\le l_j\le m_{j-1}-m_j\quad \bigl(1<j\le \frac{r}{2}\bigr) && && (r\colon\text{even}), \notag \\
&0\le l_1, &&\textstyle 0\le l_j\le m_{j-1}-m_j\quad \bigl(1<j\le \bigl\lfloor\frac{r}{2}\bigl\rfloor\bigr), &&0\le l_{\lceil r/2\rceil}\le m_{\lfloor r/2\rfloor}, && (r\colon\text{odd}), 
\label{formula_cond_ml_general-}
\end{align}
and $F_{\bm,\bl}^-[f](x_1,y_2)\ne 0$ holds only if 
\begin{align*}
&\textstyle 0\le l_j\le m_j-m_{j+1}\quad \bigl(1\le j< \frac{r}{2}\bigr), &&0\le l_{r/2} && && (r\colon\text{even}), \\
&\textstyle 0\le l_j\le m_j-m_{j+1}\quad \bigl(1\le j< \bigl\lfloor\frac{r}{2}\bigl\rfloor\bigr), &&0\le l_{\lfloor r/2\rfloor}\le m_{\lfloor r/2\rfloor}, &&0\le l_{\lceil r/2\rceil}
&& (r\colon\text{odd}). 
\end{align*}
If $r=3$, then these polynomials and $F^\uparrow_{\bk,\bl}[f]$, $F^\downarrow_{\bk,\bl}[f]$ in the previous section are related as 
\begin{align*}
F_{m,(l_1,l_2)}^+[f](x_1,y_2)&=F^\uparrow_{\substack{(l_1+l_2,0,0),\;\; \\ \;\;(l_2,0,m-l_2)}}[f](x_1,y_2)
=F^\downarrow_{\substack{(0,0,-l_1-l_2),\;\; \\ \;\;(m-l_2,0,l_2)}}[f((\cdot)^\itinv)](x_1,y_2^\itinv) \\*
&\hspace{118pt}\in\cP_{\langle m\rangle}(\fp^\pm_1)_{x_1}\otimes\cP_{(m+l_1,m,l_2)}(\fp^\mp_2)_{y_2}, \\
F_{m,(l_1,l_2)}^-[f](x_1,y_2)&=F^\uparrow_{\substack{(0,0,-l_1-l_2),\;\; \\ \;\;(0,l_1,m-l_1)}}[f((\cdot)^\itinv)](x_1,y_2)
=F^\downarrow_{\substack{(l_1+l_2,0,0),\;\; \\ \;\;(m-l_1,l_1,0)}}[f](x_1,y_2^\itinv) \\*
&\hspace{118pt}\in\cP_{\langle m\rangle}(\fp^\pm_1)_{x_1}\otimes\cP_{(m,m-l_1,-l_2)}(\fp^\mp_2)_{y_2}. 
\end{align*}
For general $r$, $F_{\bm,\bl}^\pm[f](x_1,y_2)$ satisfies the following. For $\bk\in\BZ_+^r$, let $d_\bk^{\fp^+_2}:=\dim\cP_\bk(\fp^+_2)$, 
and let $\Sigma_2\subset\fp^+_2$ be the Bergman--Shilov boundary of $D_2:=D\cap\fp^+_2$. 
For $x\in\fp^+$, let $|x|_\infty:=\inf\{t>0\mid t^{-1}x\in D\}$ be the spectral norm of $x$. 

\begin{proposition}\label{prop_estimate_Fml_general}
For $\bm\in\BZ_{++}^{\lfloor r/2\rfloor}$, $\bl\in(\BZ_{\ge 0})^{\lceil r/2\rceil}$, $|\bl|=l$, 
$f(y_2)\in\cP_{(l,\underline{0}_{r-1})}(\fp^-_2)$ 
and for $x_1\in\fp^+_1$, $y_2=ge\in\fp^-_2$ with $g\in K_1^\BC$, we have 
\begin{align*}
|F_{\bm,\bl}^+[f](x_1,y_2)|&\le \frac{\sqrt{d_{\bm^2}^{\fp^+_2}d_{(l,\underline{0}_{r-1})}^{\fp^+_2}}d_\bm^{\tilde{\fp}}}{\bigl(\frac{d}{2}\bigl(\bigl\lfloor\frac{r}{2}\bigr\rfloor-1\bigr)+1\bigr)_{\bm,d}}
(|{}^t\hspace{-1pt}g x_1|_\infty)^{2|\bm|}\Vert f(g(\cdot))\Vert_{L^2(\Sigma_2)}, \\
|F_{\bm,\bl}^-[f](x_1,y_2)|&\le \frac{\sqrt{d_{\bm^2}^{\fp^+_2}d_{(l,\underline{0}_{r-1})}^{\fp^+_2}}d_\bm^{\tilde{\fp}}}{\bigl(\frac{d}{2}\bigl(\bigl\lfloor\frac{r}{2}\bigr\rfloor-1\bigr)+1\bigr)_{\bm,d}}
(|{}^t\hspace{-1pt}g x_1|_\infty)^{2|\bm|}\Vert f({}^t\hspace{-1pt}g^{-1}(\cdot))\Vert_{L^2(\Sigma_2)}. 
\end{align*}
\end{proposition}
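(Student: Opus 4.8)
The plan is to follow the same strategy as in the proof of Proposition \ref{prop_estimate_Fkl_rank3}\,(1), using the reproducing kernel of $\cP_{\phi_\pm(\bm,\bl)}(\fp^+_2)$ on the Bergman--Shilov boundary $\Sigma_2$ together with a sup-norm estimate for the "source" polynomial. First I would reduce to the case $y_2=e$. Since $f(y_2)\mapsto F_{\bm,\bl}^+[f](x_1,y_2)$ is $K_1^\BC$-equivariant in the appropriate sense (the polynomial $\tilde{\Phi}_\bm^{\fp^+_1,\fp^-_2}(x_1,y_2)$ being built from the $K_1^\BC$-invariant $h_{\fn^+}(x_1,P(y_2)x_1)^{-\mu/2}$, which transforms by $y_2\mapsto gy_2$, $x_1\mapsto {}^t\hspace{-1pt}g^{-1}x_1$ up to the character matching $\varepsilon_1,\varepsilon_2$), we get $F_{\bm,\bl}^+[f](x_1,ge)=F_{\bm,\bl}^+[f(g(\cdot))]({}^t\hspace{-1pt}g x_1,e)$, and similarly with ${}^t\hspace{-1pt}g^{-1}$ for $F^-$. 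Thus it suffices to bound $F_{\bm,\bl}^\pm[f](x_1,e)$ in terms of $|x_1|_\infty$ and $\Vert f\Vert_{L^2(\Sigma_2)}$.

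Next, pairing with the reproducing kernel $K_{\phi_+(\bm,\bl)}'(Y,Z)=d_{\phi_+(\bm,\bl)}^{\fp^+_2}\Phi_{\phi_+(\bm,\bl)}^{\fn^+_2}(P(\overline{Z}^{\mathit{1/2}})Y)$ on $L^2(\Sigma_2)$ (Faraut--Kor\'anyi, \cite[Proposition XII.2.4]{FK}, exactly as in the rank 3 proof), Cauchy--Schwarz gives
\[ |F_{\bm,\bl}^+[f](x_1,e)|\le \sqrt{d_{\phi_+(\bm,\bl)}^{\fp^+_2}}\,\bigl\Vert F_{\bm,\bl}^+[f](x_1,\cdot)\bigr\Vert_{L^2(\Sigma_2)}, \]
since $K_{\phi_+(\bm,\bl)}'(e,e)=d_{\phi_+(\bm,\bl)}^{\fp^+_2}$. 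Because $F_{\bm,\bl}^+[f](x_1,\cdot)$ is an orthogonal projection of $\tilde{\Phi}_\bm^{\fp^+_1,\fp^-_2}(x_1,Y)f(Y)$, its $L^2(\Sigma_2)$-norm is at most $\Vert \tilde{\Phi}_\bm^{\fp^+_1,\fp^-_2}(x_1,\cdot)f(\cdot)\Vert_{L^2(\Sigma_2)}\le \Vert \tilde{\Phi}_\bm^{\fp^+_1,\fp^-_2}(x_1,\cdot)\Vert_{L^\infty(\Sigma_2)}\Vert f\Vert_{L^2(\Sigma_2)}$. It remains to bound $\Vert \tilde{\Phi}_\bm^{\fp^+_1,\fp^-_2}(x_1,\cdot)\Vert_{L^\infty(\Sigma_2)}$, to compare $\sqrt{d_{\phi_+(\bm,\bl)}^{\fp^+_2}}$ with $\sqrt{d_{\bm^2}^{\fp^+_2}d_{(l,\underline{0}_{r-1})}^{\fp^+_2}}$ (via $\cP_{\phi_+(\bm,\bl)}(\fp^+_2)\hookrightarrow\cP_{\bm^2}(\fp^+_2)\otimes\cP_{(l,\underline{0}_{r-1})}(\fp^+_2)$ by the Pieri rule), and to account for the normalization factor $d_\bm^{\tilde{\fp}}/\bigl(\frac{d}{2}\bigl(\bigl\lfloor\frac{r}{2}\bigr\rfloor-1\bigr)+1\bigr)_{\bm,d}$ relating $\Phi_\bm$ and $\tilde{\Phi}_\bm$.

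The $L^\infty$-bound on $\tilde{\Phi}_\bm$ is the crux. The key point is the generating identity $e^{\frac{1}{2}(x_1|P(y_2)x_1)_{\fn^+}}=\sum_\bm \tilde{\Phi}_\bm^{\fp^+_1,\fp^-_2}(x_1,y_2)$: fixing $x_1$ and restricting $y_2$ to $\Sigma_2$, the $K_1$-invariance lets us diagonalize $P(y_2)$ so that $\tfrac12(x_1|P(y_2)x_1)_{\fn^+}$ has modulus controlled by the squared spectral norm of $x_1$; the summands being the isotypic (bihomogeneous-of-bidegree-$(\bm^2,\langle\bm\rangle)$) components, a standard majorization of the $\tilde{\Phi}_\bm$ by the coefficients of an exponential series, together with $\Phi_{\bm^2}^{\fn^+_2}(Y)\le 1$ for $Y\in\Sigma_2$, yields $\Vert \tilde{\Phi}_\bm^{\fp^+_1,\fp^-_2}(x_1,\cdot)\Vert_{L^\infty(\Sigma_2)}\le \frac{d_\bm^{\tilde{\fp}}}{\bigl(\frac{d}{2}\bigl(\bigl\lfloor\frac{r}{2}\bigr\rfloor-1\bigr)+1\bigr)_{\bm,d}}(|{}^t\hspace{-1pt}g x_1|_\infty)^{2|\bm|}$ once everything is transported back by $g$. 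I expect the bookkeeping of the precise constant --- matching $d_\bm^{\tilde{\fp}}$, the Pochhammer denominator, and the $\Phi_\bm$-versus-$\tilde{\Phi}_\bm$ normalization simultaneously in all three cases (and the even/odd $r$ distinction) --- to be the main obstacle; the $F^-$ case is handled identically after replacing $f(Y)$ by $f(Y^\itinv)$ and $g$ by ${}^t\hspace{-1pt}g^{-1}$, using that $Y\mapsto Y^\itinv$ preserves $\Sigma_2$ and is an $L^2(\Sigma_2)$-isometry up to the already-tracked constants.
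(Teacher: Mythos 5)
Your overall route is the paper's: reduction to $y_2=e$ via the $K_1^\BC$-equivariance $F_{\bm,\bl}^{+}[f](x_1,ge)=F_{\bm,\bl}^{+}[f(g(\cdot))]({}^t\hspace{-1pt}gx_1,e)$ (and with ${}^t\hspace{-1pt}g^{-1}$ for $F^-$), Cauchy--Schwarz against the reproducing kernel $K'_{\phi_\pm(\bm,\bl)}$ on $L^2(\Sigma_2)$ with $K'_{\phi_\pm(\bm,\bl)}(e,e)=d^{\fp^+_2}_{\phi_\pm(\bm,\bl)}$, the projection bound $\Vert F_{\bm,\bl}^\pm[f](x_1,\cdot)\Vert_{L^2(\Sigma_2)}\le\Vert\tilde{\Phi}_\bm^{\fp^+_1,\fp^-_2}(x_1,\cdot)\Vert_{L^\infty(\Sigma_2)}\Vert f\Vert_{L^2(\Sigma_2)}$ (with $y_2\mapsto y_2^\itinv$ preserving the $L^2(\Sigma_2)$-norm in the minus case), and $d^{\fp^+_2}_{\phi_\pm(\bm,\bl)}\le d^{\fp^+_2}_{\bm^2}d^{\fp^+_2}_{(l,\underline{0}_{r-1})}$. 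All of this matches the paper's proof.

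The gap is exactly at the step you call the crux, the $L^\infty$-estimate of $\tilde{\Phi}_\bm^{\fp^+_1,\fp^-_2}$ on $\Sigma_1\times\Sigma_2$. Your justification --- ``a standard majorization of the $\tilde{\Phi}_\bm$ by the coefficients of an exponential series'' from the identity $e^{\frac{1}{2}(x_1|P(y_2)x_1)_{\fn^+}}=\sum_\bm\tilde{\Phi}_\bm^{\fp^+_1,\fp^-_2}(x_1,y_2)$ --- does not work as stated: the $\tilde{\Phi}_\bm$ are the $K_1$-isotypic components of this expansion, not coefficients of a one-variable power series, and they carry no positivity, so an individual component cannot be bounded pointwise by the value of the sum; extracting it by averaging over the compact group and optimizing the homogeneity in $x_1$ only yields a bound with an exponential or factorial-type loss, not the sharp constant $d_\bm^{\tilde{\fp}}/\bigl(\frac{d}{2}\bigl(\bigl\lfloor\frac{r}{2}\bigr\rfloor-1\bigr)+1\bigr)_{\bm,d}$ that the statement requires. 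Moreover $\Phi^{\fn^+_2}_{\bm^2}(Y)\le 1$ on $\Sigma_2$ is not the relevant object: what is needed is $\bigl|\Phi_\bm^{\fp^+_1,\fp^-_2}(x_1',y_2)\bigr|\le 1$ for $(x_1',y_2)\in\Sigma_1\times\Sigma_2$, and the paper obtains this because $\Phi_\bm^{\fp^+_1,\fp^-_2}(x_1',y_2)$ depends only on the roots of $t^{\lfloor r/2\rfloor}h_{\fn^+}(t^{-1}x_1',P(y_2)x_1')^{1/2}$, which all have modulus $1$ there, so that \cite[Theorem XII.1.1]{FK} applies; combined with the homogeneity of degree $2|\bm|$ in $x_1$ and taking the supremum over $|x_1'|_\infty\le|x_1|_\infty$, this gives exactly the constant in the proposition. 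With that ingredient substituted for your exponential-series majorization, the rest of your argument goes through as written.
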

\begin{proof}
Since $f(x_2)\mapsto F_{\bm,\bl}^+[f](x_1,y_2)$, $f(x_2^\itinv)\mapsto F_{\bm,\bl}^-[f](x_1,y_2)$ are $K_1^\BC$-\hspace{0pt}equivariant, we have 
\begin{align*}
F_{\bm,\bl}^+[f](x_1,y_2)&=F_{\bm,\bl}^+[f](x_1,ge)=F_{\bm,\bl}^+[f(g(\cdot))]({}^t\hspace{-1pt}gx_1,e), \\
F_{\bm,\bl}^-[f](x_1,y_2)&=F_{\bm,\bl}^-[f](x_1,ge)=F_{\bm,\bl}^-[f({}^t\hspace{-1pt}g^{-1}(\cdot))]({}^t\hspace{-1pt}gx_1,e), 
\end{align*}
and hence it suffices to prove the inequalities when $y_2=e$. 
Next, since the reproducing kernel of $\cP_\bk(\fp^-_2)$ with respect to the $L^2(\Sigma_2)$-inner product is given by 
$K_\bk'(y_2,z_2):=d_\bk^{\fp^+_2}\Phi_\bk^{\fn^-_2}(P(\overline{z_2}^{\mathit{1/2}})y_2)=d_\bk^{\fp^+_2}\Phi_\bk^{\fn^-_2}(P(y_2^{\mathit{1/2}})\overline{z_2})$, 
where $\Phi_\bk^{\fn^-_2}(x_2):=\int_{K_1\cap K_L}\Delta_\bk^{\fn^-_2}(kx_2)\,dk$ (see \cite[Proposition XII.2.4]{FK}), we have 
\begin{align*}
&|F_{\bm,\bl}^\pm[f](x_1,e)|=\bigl|\langle F_{\bm,\bl}^\pm[f](x_1,\cdot),K_{\phi_\pm(\bm,\bl)}'(\cdot,e)\rangle_{L^2(\Sigma_2)}\bigr| \\
&\le \Vert F_{\bm,\bl}^\pm[f](x_1,\cdot)\Vert_{L^2(\Sigma_2)}\Vert K_{\phi_\pm(\bm,\bl)}'(\cdot,e)\Vert_{L^2(\Sigma_2)} \\
&=\Vert F_{\bm,\bl}^\pm[f](x_1,\cdot)\Vert_{L^2(\Sigma_2)}\sqrt{K_{\phi_\pm(\bm,\bl)}'(e,e)}
=\sqrt{d_{\phi_\pm(\bm,\bl)}^{\fp^+_2}}\Vert F_{\bm,\bl}^\pm[f](x_1,\cdot)\Vert_{L^2(\Sigma_2)}. 
\end{align*}
Now since $F_{\bm,\bl}^\pm[f](x_1,y_2)$ is defined by the orthogonal projection of $\tilde{\Phi}_\bm^{\fp^+_1,\fp^-_2}(x_1,y_2)f(y_2)$ 
or $\tilde{\Phi}_\bm^{\fp^+_1,\fp^-_2}(x_1,y_2)f(y_2^\itinv)$, we have 
\begin{align*}
\Vert F_{\bm,\bl}^+[f](x_1,\cdot)\bigr\Vert_{L^2(\Sigma_2)}
&\le \bigl\Vert\tilde{\Phi}_\bm^{\fp^+_1,\fp^-_2}(x_1,y_2)f(y_2)\bigr\Vert_{L^2(\Sigma_2),y_2} 
\le \bigl\Vert\tilde{\Phi}_\bm^{\fp^+_1,\fp^-_2}(x_1,\cdot)\bigr\Vert_{L^\infty(\Sigma_2)}\Vert f\Vert_{L^2(\Sigma_2)}, \\
\Vert F_{\bm,\bl}^-[f](x_1,\cdot)\bigr\Vert_{L^2(\Sigma_2)}
&\le \bigl\Vert\tilde{\Phi}_\bm^{\fp^+_1,\fp^-_2}\!(x_1,y_2)f(y_2^\itinv)\bigr\Vert_{L^2(\Sigma_2),x_2} 
\le \bigl\Vert\tilde{\Phi}_\bm^{\fp^+_1,\fp^-_2}\!(x_1,\cdot)\bigr\Vert_{L^\infty(\Sigma_2)}\Vert f\Vert_{L^2(\Sigma_2)}. 
\end{align*}
In addition, let $\Sigma_1\subset\fp^+_1$ be the Bergman--Shilov boundary of $D_1:=D\cap\fp^+_1$. Then we have  
\begin{align*}
\bigl\Vert\tilde{\Phi}_\bm^{\fp^+_1,\fp^-_2}(x_1,\cdot)\bigr\Vert_{L^\infty(\Sigma_2)}
&\le \sup_{\substack{|x_1'|_\infty\le |x_1|_\infty \\ y_2\in\Sigma_2}}\bigl|\tilde{\Phi}_\bm^{\fp^+_1,\fp^-_2}(x_1',y_2)\bigr|
=(|x_1|_\infty)^{2|\bm|}\sup_{\substack{x_1'\in\Sigma_1 \\ y_2\in\Sigma_2}}\bigl|\tilde{\Phi}_\bm^{\fp^+_1,\fp^-_2}(x_1',y_2)\bigr|. 
\end{align*}
Also, since $\tilde{\Phi}_\bm^{\fp^+_1,\fp^-_2}(x_1',y_2)$ is defined by using the roots of $t^{\lfloor r/2\rfloor}h_{\fn^+}(t^{-1}x_1',\allowbreak P(y_2)x_1')^{1/2}$, 
and these roots have absolute values 1 when $x_1'\in\Sigma_1$, $y_2\in\Sigma_2$, by \cite[Theorem XII.1.1]{FK} we have  
\begin{align*}
\bigl|\tilde{\Phi}_\bm^{\fp^+_1,\fp^-_2}(x_1',y_2)\bigr|
=\frac{d_\bm^{\tilde{\fp}}\bigl|\Phi_\bm^{\fp^+_1,\fp^-_2}(x_1',y_2)\bigr|}{\bigl(\frac{d}{2}\bigl(\bigl\lfloor\frac{r}{2}\bigr\rfloor-1\bigr)+1\bigr)_{\bm,d}}
\le \frac{d_\bm^{\tilde{\fp}}}{\bigl(\frac{d}{2}\bigl(\bigl\lfloor\frac{r}{2}\bigr\rfloor-1\bigr)+1\bigr)_{\bm,d}} \qquad (x_1'\in\Sigma_1,\, y_2\in\Sigma_2). 
\end{align*}
We also have 
\[ d_{\phi_\pm(\bm,\bl)}^{\fp^+_2}=\dim\cP_{\phi_\pm(\bm,\bl)}(\fp^+_2)\le \dim\bigl(\cP_{\bm^2}(\fp^+_2)\otimes\cP_{(l,\underline{0}_{r-1})}(\fp^\pm_2)\bigr)
=d_{\bm^2}^{\fp^+_2}d_{(l,\underline{0}_{r-1})}^{\fp^+_2}. \]
Combining these, we get the desired formulas. 
\end{proof}

The following proposition gives some examples of $F_{\bm,\bl}^\pm[f](x_1,y_2)$. 
\begin{proposition}\label{prop_example_Fml_general}
Let $\bm\in\BZ_{++}^{\lfloor r/2\rfloor}$, $\bl\in(\BZ_{\ge 0})^{\lceil r/2\rceil}$, $|\bl|=l$. 
\begin{enumerate}
\item Suppose $r$ is even. For $k\in\BZ_{\ge 0}$, $f(y_2)\in\cP_{(l,\underline{0}_{r-1})}(\fp^-_2)$, we have
\begin{align*}
F_{\bm+\underline{k}_{r/2},\bl}^\pm[f](x_1,y_2)
&=\frac{1}{(-k-\bm^\vee)_{\underline{k}_{r/2},d}}F_{\bm,\bl}^\pm[f](x_1,y_2)\det_{\fn^+}(x_1)^k\det_{\fn^-_2}(y_2)^k, 
\end{align*}
where $\bm^\vee=(m_{\lfloor r/2\rfloor},\ldots,m_1)$. 
\item Suppose $r$ is odd, and let $w_2\in\fp^+_2$ be of rank 1. Then for $k,l,m\in\BZ_{\ge 0}$ with $m\le k,l$, we have 
\begin{align*}
&F_{\underline{k}_{\lfloor r/2\rfloor},(l-m,\underline{0}_{\lfloor r/2\rfloor -1},m)}^+\bigl[(\cdot|w_2)_{\fn^+_2}^l\bigr](x_1,y_2) \\
&=\frac{(-1)^m\det_{\fn^+_2}(y_2)^k}{(-k)_{\underline{k}_{\lfloor r/2\rfloor},d}\bigl(-k-l+m-\frac{d_2}{2}(r-1)\bigr)_mm!} \\*
&\eqspace{}\times\sum_{j=m}^{\min\{k,l\}}\frac{(-k)_j(-l)_j(y_2^\itinv|(x_1)^\sharp)_{\fn^+_2}^{k-j}(y_2|w_2)_{\fn^+_2}^{l-j}((x_1)^\sharp|w_2)^j_{\fn^+_2}}
{\bigl(-k-l+2m-\frac{d_2}{2}(r-1)+1\bigr)_{j-m}(j-m)!}. 
\end{align*}
\item Suppose $r$ is odd, and let $w_2\in\fp^+_2$ be of rank 1. Then for $k,l,m\in\BZ_{\ge 0}$ with $m\le k,l$, we have 
\begin{align*}
&F_{\underline{k}_{\lfloor r/2\rfloor},(\underline{0}_{\lfloor r/2\rfloor -1},m,l-m)}^-\bigl[(\cdot|w_2)_{\fn^+_2}^l\bigr](x_1,y_2) \\
&=\frac{(-1)^m\det_{\fn^+_2}(y_2)^k}{(-k)_{\underline{k}_{\lfloor r/2\rfloor},d}\bigl(-k-l+m-\frac{d_2}{2}\bigr)_mm!}
\sum_{j=m}^{\min\{k,l\}}\frac{(-k)_j(-l)_j}{\bigl(-k-l+2m-\frac{d_2}{2}+1\bigr)_{j-m}(j-m)!} \\*
&\eqspace{}\times(y_2^\itinv|(x_1)^\sharp)_{\fn^+_2}^{k-j}(y_2^\itinv|w_2)_{\fn^+_2}^{l-j}
\bigl((y_2^\itinv|(x_1)^\sharp)_{\fn^+_2}(y_2^\itinv|w_2)_{\fn^+_2}-(P(y_2)^{-1}(x_1)^\sharp|w_2)_{\fn^+_2}\bigr)^j. 
\end{align*}
\item Suppose $r\ge 3$. Let $\bm\in\BZ_{++}^{\lceil r/2\rceil-1}$, $l\in\BZ_{\ge 0}$, $m_1\le l$, and put $m_{r/2}:=0$ for even $r$. 
We define $\bl(\bm)\in(\BZ_{\ge 0})^{\lceil r/2\rceil}$ by 
\[ \bl(\bm):=(l-m_1,m_1-m_2,m_2-m_3,\ldots,m_{\lceil r/2\rceil-2}-m_{\lceil r/2\rceil-1},m_{\lceil r/2\rceil-1}), \]
so that 
\begin{align*}
\phi_+(\bm,\bl(\bm))&=\begin{cases} (l,m_1,m_1,m_2,m_2,\ldots,m_{r/2-1},m_{r/2-1},0) & (r\colon\text{even}), \\
(l,m_1,m_1,m_2,m_2,\ldots,m_{\lceil r/2\rceil-1},m_{\lceil r/2\rceil-1}) & (r\colon\text{odd}) \end{cases} \\
&=(l,\bm^2)\in\BZ_{++}^r. 
\end{align*}
Let $e'\in\fn^+_2$ be a primitive idempotent, let $\fp^+(e')_0=:\fp^{+\prime\prime}$ be as in (\ref{Peirce}), let $\fp^+_1\cap\fp^{+\prime\prime}\allowbreak=:\fp^{+\prime\prime}_1$, 
$\fp^+_2\cap\fp^{+\prime\prime}\allowbreak=:\fp^{+\prime\prime}_2$, 
and for $x\in\fp^+$ let $x''\in\fp^{+\prime\prime}$ be the orthogonal projection. Then we have 
\begin{align*}
\det_{\fn^+_2}\hspace{-1pt}(x_2)^l F_{\bm,\bl(\bm)}^+\hspace{-1pt}\bigl[(\cdot|e')_{\fn^+_2}^l\bigr]\hspace{-1pt}(x_1,x_2^\itinv) 
&=\frac{(-l)_{\bm,d}}{\bigl(-l\hspace{-1pt}-\hspace{-1pt}\frac{d}{2}\bigr)_{\bm,d}}\det_{\fn^{+\prime\prime}_2}\hspace{-1pt}(x_2'')^l
\tilde{\Phi}_\bm^{\fp^{+\prime\prime}_1,\fp^{-\prime\prime}_2}\hspace{-4pt}(x_1'',(x_2'')^\itinv). 
\end{align*}
\end{enumerate}
\end{proposition}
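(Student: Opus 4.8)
All four parts compute $F_{\bm,\bl}^\pm[f]$ for a very special partition $\bm$ and polynomial $f$, and in each case the plan is the same: make $\tilde{\Phi}_\bm^{\fp^\pm_1,\fp^\mp_2}$ explicit (via Example~\ref{example_Phim_general} and the multiplicativity $\Phi_{\underline{k}_{\lfloor r/2\rfloor}}^{\fp^+_1,\fp^-_2}=(\Phi_{\underline{1}_{\lfloor r/2\rfloor}}^{\fp^+_1,\fp^-_2})^k$), substitute it into the definition of $F^\pm$, and reduce the resulting projection to Lemma~\ref{lem_Proj_Pieri} or Proposition~\ref{prop_Proj}; the constants fall out of Pochhammer bookkeeping that repeatedly uses $d=2d_2$ (automatic for $r\ge 3$, hence throughout). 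For (1) ($r$ even) I would first observe $\cP_{\langle\bm+\underline{k}_{r/2}\rangle}(\fp^+_1)=\det_{\fn^+}(x_1)^k\cP_{\langle\bm\rangle}(\fp^+_1)$ and $\cP_{(\bm+\underline{k}_{r/2})^2}(\fp^\mp_2)=\det_{\fn^\mp_2}(y_2)^k\cP_{\bm^2}(\fp^\mp_2)$, so that — each side being multiplicity one in the ambient tensor product for the relevant character of $K_1^\BC$ — necessarily $\tilde{\Phi}_{\bm+\underline{k}_{r/2}}^{\fp^\pm_1,\fp^\mp_2}=c_{\bm,k}\det_{\fn^+}(x_1)^k\det_{\fn^\mp_2}(y_2)^k\tilde{\Phi}_\bm^{\fp^\pm_1,\fp^\mp_2}$; the constant $c_{\bm,k}=(-k-\bm^\vee)_{\underline{k}_{r/2},d}^{-1}$ is pinned down by the case $\bm=0$ (Example~\ref{example_Phim_general} for $r$ even, $d^{\tilde{\fp}}_{\underline{1}}=1$, and the identity $(\tfrac d2(\tfrac r2-1)+1)_{\underline{k}_{r/2},d}=(-1)^{kr/2}(-k)_{\underline{k}_{r/2},d}$) together with the cocycle relation $c_{\bm,k+k'}=c_{\bm+\underline{k}_{r/2},k'}c_{\bm,k}$. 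Multiplying by $f$ (resp. $f((\cdot)^\itinv)$) and using Lemma~\ref{lem_Proj_Pieri}(1) to commute $\det_{\fn^\mp_2}(y_2)^k$ past $\Proj^{\fp^\mp_2}_{\phi_\pm(\bm+\underline{k}_{r/2},\bl)}=\Proj^{\fp^\mp_2}_{\phi_\pm(\bm,\bl)+\underline{k}_r}$ then gives the formula.

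For (2) and (3) ($r$ odd) the key input is that $(x_1)^\sharp$ lies in $\fp^+_2$ and has rank $\le 1$ for $x_1\in\fp^+_1$ (the maximal $\fp^+$-rank attained on $\fp^+_1$ is $r-1$ in Cases~2 and~3), so that Example~\ref{example_Phim_general} gives $\tilde{\Phi}_{\underline{k}_{\lfloor r/2\rfloor}}^{\fp^+_1,\fp^-_2}(x_1,y_2)=(-k)_{\underline{k}_{\lfloor r/2\rfloor},d}^{-1}\det_{\fn^-_2}(y_2)^k(y_2^\itinv|(x_1)^\sharp)_{\fn^+_2}^k$, the sign and the normalization $d^{\tilde{\fp}}_{\underline{k}}=1$ collapsing through $(\tfrac d2(\lfloor r/2\rfloor-1)+1)_{\underline{k}_{\lfloor r/2\rfloor},d}=(-1)^{k\lfloor r/2\rfloor}(-k)_{\underline{k}_{\lfloor r/2\rfloor},d}$. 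Substituting this into the definitions of $F^+$ resp. $F^-$, pulling $\det_{\fn^-_2}(y_2)^k$ out by Lemma~\ref{lem_Proj_Pieri}(1) (which lowers the target partition by $\underline{k}_r$), and — for (3) — passing to $\fp^+_2$ through the isomorphism $\cP_\bm(\fp^-_2)\simeq\cP_{-\bm^\vee}(\fp^+_2)$, $g\mapsto g((\cdot)^\itinv)$, reduces (2) to Proposition~\ref{prop_Proj}(1) applied to $\fp^-_2$ with $(x,y,z)=(y_2,w_2,(x_1)^\sharp)$, and (3) to Proposition~\ref{prop_Proj}(2) applied to $\fp^+_2$ with $x=y_2^\itinv$; the self-adjointness of $P(y_2)$ turns $(P(y_2^\itinv)w_2|(x_1)^\sharp)_{\fn^+_2}$ into $(P(y_2)^{-1}(x_1)^\sharp|w_2)_{\fn^+_2}$, matching the stated expressions.

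Part (4) is the substantive one, and I would prove it by Peirce calculus with respect to $e'\in\fn^+_2$. Since $e'$ is a tripotent, $\sigma$ preserves each Peirce space, so $\fp^{\pm\prime\prime}=\fp^{\pm\prime\prime}_1\oplus\fp^{\pm\prime\prime}_2$, and for $y_2''\in\fp^{-\prime\prime}_2$ one has $\tfrac12(x_1|P(y_2'')x_1)_{\fn^+}=\tfrac12(x_1''|P(y_2'')x_1'')_{\fn^{+\prime\prime}}$ (the off-diagonal Peirce parts of $x_1$ are annihilated by $P(y_2'')$ and are orthogonal to its image); expanding $e^{\frac12(\,\cdot\,|P(\cdot)\,\cdot\,)}$ on both sides and matching $K_1^\BC$-isotypes identifies the ``leading Peirce-$0$ component'' of $\tilde{\Phi}_\bm^{\fp^+_1,\fp^-_2}$ with $\tilde{\Phi}_\bm^{\fp^{+\prime\prime}_1,\fp^{-\prime\prime}_2}$. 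On the other side, in $\Proj^{\fp^-_2}_{\phi_+(\bm,\bl(\bm))}\bigl(\tilde{\Phi}_\bm^{\fp^+_1,\fp^-_2}(x_1,y_2)(y_2|e')_{\fn^-_2}^l\bigr)$ the target $\phi_+(\bm,\bl(\bm))=(l,\bm^2)$ is the top interlacing partition (because $m_1\le l$), and one shows — this being the extension of Lemma~\ref{lem_Proj_Pieri}(4) from polynomials supported on $\fp^{-\prime\prime}_2$ to arbitrary $\cP_{\bm^2}(\fp^-_2)$-valued functions — that only the leading Peirce-$0$ component of $\tilde{\Phi}_\bm$ survives, producing the scalar $\frac{(-l)_{\bm^2,d_2}}{(-l-d_2/2)_{\bm^2,d_2}}$ attached to the rank-$(r-1)$ algebra $\fp^{-\prime\prime}_2$ (here $\bm^2$ being the length-$(r-1)$ version). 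This simplifies to $\frac{(-l)_{\bm,d}}{(-l-d/2)_{\bm,d}}$ via $(-l)_{\bm^2,d_2}=(-l)_{\bm,d}(-l-d/4)_{\bm,d}$ and $(-l-d_2/2)_{\bm^2,d_2}=(-l-d/4)_{\bm,d}(-l-d/2)_{\bm,d}$ ($d=2d_2$), and the substitution $y_2=x_2^\itinv$ together with $(x_2^\sharp|e')_{\fn^+_2}=\det_{\fn^{+\prime\prime}_2}(x_2'')$ converts $(y_2|e')^l$ into $\det_{\fn^+_2}(x_2)^{-l}\det_{\fn^{+\prime\prime}_2}(x_2'')^l$, cancelling the prefactor and leaving the asserted identity; one may reduce to a fixed $e'$ in a Jordan frame by $K_1^\BC$-equivariance throughout. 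The main obstacle is exactly this projection step: inside $\Proj^{\fp^-_2}_{(l,\bm^2)}(\tilde{\Phi}_\bm\,(y_2|e')^l)$ one must pin down, by the Pieri rule, which components of $\tilde{\Phi}_\bm^{\fp^+_1,\fp^-_2}(x_1,y_2)$ feed into the top partition and check that the a priori possible dependence on the Peirce-$1$ parts of $x_1$ and $y_2$ drops out (so that the right-hand side, which sees only $x_1''$ and $x_2''$, is correct). I expect this to be the only genuinely delicate point — handled either by the Pieri bookkeeping sketched above or, more cleanly, by an induction on $r$ mirroring the inductive proof of Lemma~\ref{lem_Proj_Pieri}(4) — everything else being routine Pochhammer algebra and Peirce calculus.
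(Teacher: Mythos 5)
Parts (1)--(3) of your plan coincide with the paper's own proofs: Example \ref{example_Phim_general} plus the multiplicativity $\Phi_{\bm+\underline{k}_{r/2}}^{\fp^+_1,\fp^-_2}=\Phi_\bm^{\fp^+_1,\fp^-_2}\bigl(\Phi_{\underline{1}_{r/2}}^{\fp^+_1,\fp^-_2}\bigr)^k$ for (1), and for (2)--(3) the observation that $(x_1)^\sharp\in\fp^+_2$ has rank at most one for odd $r$, Lemma \ref{lem_Proj_Pieri}\,(1) to strip off $\det_{\fn^-_2}(y_2)^k$, and Proposition \ref{prop_Proj}\,(1) for (2), respectively Proposition \ref{prop_Proj}\,(2) in the variable $y_2^\itinv$ for (3). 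One small repair in (1): pinning the constant by the case $\bm=0$ together with the cocycle relation $c_{\bm,k+k'}=c_{\bm+\underline{k}_{r/2},k'}c_{\bm,k}$ only reaches constant vectors $\bm$, since the recursion started at $\bm=0$ never produces a non-constant first argument; the constant should instead be read off directly from the normalizations, i.e.\ from $d^{\tilde\fp}_{\bm+\underline{k}_{r/2}}=d^{\tilde\fp}_\bm$ and $\bigl(\tfrac d2(\tfrac r2-1)+1\bigr)_{\bm,d}\big/\bigl(\tfrac d2(\tfrac r2-1)+1\bigr)_{\bm+\underline{k}_{r/2},d}=(-1)^{kr/2}\big/(-k-\bm^\vee)_{\underline{k}_{r/2},d}$, which is how the paper argues.

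The genuine gap is in (4), and you flag it yourself: everything hinges on the claim that in $\Proj^{\fp^-_2}_{(l,\bm^2)}\bigl(\tilde\Phi_\bm^{\fp^+_1,\fp^-_2}(x_1,y_2)\,(y_2|e')_{\fn^+_2}^l\bigr)$ the dependence on the Peirce-$1$ parts of $x_1$ and $y_2$ drops out and only the Peirce-$0$ data survives, and for this you offer only ``Pieri bookkeeping'' or an unexecuted induction on $r$. That is precisely the statement that needs proof, not a routine check: a priori many isotypic components of $\tilde\Phi_\bm^{\fp^+_1,\fp^-_2}$ feed into the top partition $(l,\bm^2)$. The paper closes this by a representation-theoretic argument: the assignment $\det_{\fn^+_2}(x_2)^lf(x_2^\itinv)\mapsto\det_{\fn^+_2}(x_2)^lF^+_{\bm,\bl(\bm)}[f](x_1,x_2^\itinv)$ is $K_1$-equivariant into a space where the relevant $\Hom_{K_1}$ is one-dimensional by the Pieri rule, and -- decisively -- the particular source vector $\det_{\fn^+_2}(x_2)^l(x_2^\itinv|e')_{\fn^+_2}^l=\det_{\fn^{+\prime\prime}_2}(x_2'')^l$ is relatively invariant under the maximal parabolic $P_{K_1}\subset K_1^\BC$ stabilizing $\cP(\fp^{+\prime\prime}_1)$ and $\cP(\fp^{+\prime\prime}_2)$; hence its image is a $P_{K_1}$-relative invariant of the same weight, and such a function in $\cP_{\langle\bm\rangle}(\fp^+_1)\otimes\cP_{\underline{l}_r-(l,\bm^2)^\vee}(\fp^+_2)$ is unique up to scalar, forcing proportionality to $\det_{\fn^{+\prime\prime}_2}(x_2'')^l\tilde\Phi_\bm^{\fp^{+\prime\prime}_1,\fp^{-\prime\prime}_2}(x_1'',(x_2'')^\itinv)$. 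After that, your evaluation of the scalar (restrict to $x_1=x_1''$, use $\tilde\Phi_\bm^{\fp^+_1,\fp^-_2}(x_1'',y_2)=\tilde\Phi_\bm^{\fp^{+\prime\prime}_1,\fp^{-\prime\prime}_2}(x_1'',y_2'')$, apply Lemma \ref{lem_Proj_Pieri}\,(4), simplify with $d=2d_2$) is exactly the paper's. So you need either this parabolic-invariance argument or an actually completed induction in its place; without one of them, part (4) is not established.
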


\begin{proof}
(1) Follows from 
\begin{align*}
\tilde{\Phi}_{\bm+\underline{k}_{r/2}}^{\fp^+_1,\fp^-_2}(x_1,y_2)
&=\frac{d_{\bm+\underline{k}_{r/2}}^{\tilde{\fp}}}{\bigl(\frac{d}{2}\bigl(\frac{r}{2}-1\bigr)+1\bigr)_{\bm+\underline{k}_{r/2},d}} 
\Phi_{\bm+\underline{k}_{r/2}}^{\fp^+_1,\fp^-_2}(x_1,y_2) \\
&=\frac{d_{\bm}^{\tilde{\fp}}}{\bigl(\frac{d}{2}\bigl(\frac{r}{2}-1\bigr)+1\bigr)_{\bm+\underline{k}_{r/2},d}} 
\Phi_{\bm}^{\fp^+_1,\fp^-_2}(x_1,y_2)\Phi_{\underline{1}_{r/2}}^{\fp^+_1,\fp^-_2}(x_1,y_2)^k \\
&=\frac{\bigl(\frac{d}{2}\bigl(\frac{r}{2}-1\bigr)+1\bigr)_{\bm,d}}
{\bigl(\frac{d}{2}\bigl(\frac{r}{2}-1\bigr)+1\bigr)_{\bm+\underline{k}_{r/2},d}} 
\tilde{\Phi}_{\bm}^{\fp^+_1,\fp^-_2}(x_1,y_2)\Phi_{\underline{1}_{r/2}}^{\fp^+_1,\fp^-_2}(x_1,y_2)^k \\
&=\frac{(-1)^{kr/2}}{(-k-\bm^\vee)_{\underline{k}_{r/2},d}} \tilde{\Phi}_{\bm}^{\fp^+_1,\fp^-_2}(x_1,y_2)\Phi_{\underline{1}_{r/2}}^{\fp^+_1,\fp^-_2}(x_1,y_2)^k \\
&=\frac{1}{(-k-\bm^\vee)_{\underline{k}_{r/2},d}} \tilde{\Phi}_{\bm}^{\fp^+_1,\fp^-_2}(x_1,y_2) \det_{\fn^+}(x_1)^k\det_{\fn^-_2}(y_2)^k. 
\end{align*}

(2) Since $r$ is odd, $x_1\in\fp^+_1$ is not invertible, and $(x_1)^\sharp\in\fp^+_2$ is at most of rank 1. By Example \ref{example_Phim_general} and Proposition \ref{prop_Proj}\,(1), we have 
\begin{align*}
&F_{\underline{k}_{\lfloor r/2\rfloor},(l-m,\underline{0}_{\lfloor r/2\rfloor -1},m)}^+\bigl[(\cdot|w_2)_{\fn^+_2}^l\bigr](x_1,y_2)
=\Proj_{(k+l-m,\underline{k}_{r-2},m),y_2}^{\fp^-_2}\bigl(\tilde{\Phi}_{\underline{k}_{\lfloor r/2\rfloor}}^{\fp^+_1,\fp^-_2}(x_1,y_2)(y_2|w_2)_{\fn^+_2}^l\bigr) \\
&=\frac{(-1)^{k\lfloor r/2\rfloor}\det_{\fn^+_2}(y_2)^k}{\bigl(\frac{d}{2}\bigl(\bigl\lfloor\frac{r}{2}\bigr\rfloor-1\bigr)+1\bigr)_{\underline{k}_{\lfloor r/2\rfloor},d}}
\Proj_{(l-m,\underline{0}_{r-2},m-k),y_2}^{\fp^-_2}\bigl((y_2^\itinv|(x_1)^\sharp)_{\fn^+_2}^k(y_2|w_2)_{\fn^+_2}^l\bigr) \\
&=\frac{\det_{\fn^+_2}(y_2)^k}{(-k)_{\underline{k}_{\lfloor r/2\rfloor},d}}
\frac{(-1)^m}{\bigl(-k-l+m-\frac{d_2}{2}(r-1)\bigr)_mm!} \\*
&\eqspace{}\times\sum_{j=m}^{\min\{k,l\}}\frac{(-k)_j(-l)_j(y_2^\itinv|(x_1)^\sharp)_{\fn^+_2}^{k-j}(y_2|w_2)_{\fn^+_2}^{l-j}((x_1)^\sharp|w_2)^j_{\fn^+_2}}
{\bigl(-k-l+2m-\frac{d_2}{2}(r-1)+1\bigr)_{j-m}(j-m)!}. 
\end{align*}

(3) By Example \ref{example_Phim_general} and Proposition \ref{prop_Proj}\,(2), we have 
\begin{align*}
&F_{\underline{k}_{\lfloor r/2\rfloor},(\underline{0}_{\lfloor r/2\rfloor -1},m,l-m)}^-\bigl[(\cdot|w_2)_{\fn^+_2}^l\bigr](x_1,y_2)
=\Proj_{(\underline{k}_{r-2},k-m,m-l),y_2}^{\fp^-_2}\bigl(\tilde{\Phi}_{\underline{k}_{\lfloor r/2\rfloor}}^{\fp^+_1,\fp^-_2}(x_1,y_2)(y_2^\itinv|w_2)_{\fn^+_2}^l\bigr) \\
&=\frac{(-1)^{k\lfloor r/2\rfloor}\det_{\fn^+_2}(y_2)^k}{\bigl(\frac{d}{2}\bigl(\bigl\lfloor\frac{r}{2}\bigr\rfloor-1\bigr)+1\bigr)_{\underline{k}_{\lfloor r/2\rfloor},d}}
\Proj_{(\underline{0}_{r-2},-m,m-k-l),y_2}^{\fp^-_2}\bigl((y_2^\itinv|(x_1)^\sharp)_{\fn^+_2}^k(y_2^\itinv|w_2)_{\fn^+_2}^l\bigr) \\
&=\frac{\det_{\fn^+_2}(y_2)^k}{(-k)_{\underline{k}_{\lfloor r/2\rfloor},d}}\frac{(-1)^m}{\bigl(-k-l+m-\frac{d_2}{2}\bigr)_mm!}
\sum_{j=m}^{\min\{k,l\}}\frac{(-k)_j(-l)_j}{\bigl(-k-l+2m-\frac{d_2}{2}+1\bigr)_{j-m}(j-m)!} \\*
&\eqspace{}\times(y_2^\itinv|(x_1)^\sharp)_{\fn^+_2}^{k-j}(y_2^\itinv|w_2)_{\fn^+_2}^{l-j}
\bigl((y^\itinv_2|(x_1)^\sharp)_{\fn^+_2}(y_2^\itinv|w_2)_{\fp^+_2}-(P(y_2)^{-1}(x_1)^\sharp|w_2)_{\fn^+_2}\bigr)^j. 
\end{align*}

(4) Let $\fp_2^+(e')_2=\BC e'=:\fp^{+\prime}_2$, and let $P_{K_1}\subset K_1^\BC$ be a maximal parabolic subgroup stabilizing 
$\cP(\fp_1^{+\prime\prime})\subset\cP(\fp_1^+)$ and $\cP(\fp_2^{+\prime\prime})\subset\cP(\fp_2^+)$. 
First, we have 
\begin{align*}
&\bigl(\det_{\fn^+_2}(x_2)^lf(x_2^\itinv)\mapsto \det_{\fn^+_2}(x_2)^lF_{\bm,\bl(\bm)}^+[f](x_1,x_2^\itinv)\bigr) \\
&\in\Hom_{K_1}\bigl(\cP_{(\underline{l}_{r-1},0)}(\fp^+_2),\cP_{\langle\bm\rangle}(\fp^+_1)\otimes\cP_{\underline{l}_r-\phi_+(\bm,\bl(\bm))^\vee}(\fp^+_2)\bigr) \\
&\simeq \Hom_{K_1}\bigl(\cP_{(l,\underline{0}_{r-1})}(\fp^-_2)\otimes\cP_{\langle\bm\rangle}(\fp^-_1),\cP_{(l,\bm^2)}(\fp^-_2)\bigr), 
\end{align*}
and this space is 1-dimensional by the Pieri rule for Cases 2, 3. Especially, since \linebreak 
$\det_{\fn^+_2}(x_2)^l(x_2^\itinv|e')_{\fn^+_2}^l=\det_{\fn^{+\prime\prime}_2}(x_2'')^l\in\cP_{(\underline{l}_{r-1},0)}(\fp^+_2)$ is relatively invariant under the para\-bolic subgroup $P_{K_1}$, 
$\det_{\fn^+_2}(x_2)^lF_{\bm,\bl(\bm)}^+\bigl[(\cdot|e')_{\fn^+_2}^l\bigr](x_1,x_2^\itinv)\in\cP_{\langle\bm\rangle}(\fp^+_1)\otimes\cP_{\underline{l}_r-(l,\bm^2)^\vee}(\fp^+_2)$ 
is also relatively invariant under $P_{K_1}$, with the weight same with $\det_{\fn^+_2}(x_2)^l(x_2^\itinv|e')_{\fn^+_2}^l=\det_{\fn^{+\prime\prime}_2}(x_2'')^l$, 
and such function is unique up to constant multiple. Hence 
\[ \det_{\fn^+_2}(x_2)^lF_{\bm,\bl(\bm)}^+\bigl[(\cdot|e')_{\fn^+_2}^l\bigr](x_1,x_2^\itinv)
=c\det_{\fn^{+\prime\prime}_2}(x_2'')^l\tilde{\Phi}_\bm^{\fp^{+\prime\prime}_1,\fp^{-\prime\prime}_2}(x_1'',(x_2'')^\itinv) \]
holds for some $c\in\BC$. To determine $c$, it is enough to consider the case $x_1=x_1''\in\fp^{+\prime\prime}$. 
Then since $\tilde{\Phi}_\bm^{\fp^+_1,\fp^-_2}(x_1'',y_2)=\tilde{\Phi}_\bm^{\fp^{+\prime\prime}_1,\fp^{-\prime\prime}_2}(x_1'',y_2'')$ holds, by Lemma \ref{lem_Proj_Pieri}\,(4) we have 
\begin{align*}
&F_{\bm,\bl(\bm)}^+\bigl[(\cdot|e')_{\fn^+_2}^l\bigr](x_1'',x_2^\itinv)\bigr|_{x_2=x_2'+x_2''\in\fp^{+\prime}\oplus\fp^{+\prime\prime}} \\
&=\Proj_{(l,\bm^2)}^{\fp^-_2}\bigl((y_2|e')_{\fn^+_2}^l\tilde{\Phi}_\bm^{\fp^{+\prime\prime}_1,\fp^{-\prime\prime}_2}(x_1'',y_2'')\bigr)
\bigr|_{y_2^\itinv=x_2'+x_2''\in\fp^{+\prime}\oplus\fp^{+\prime\prime}} \\
&=\frac{(-l)_{\bm^2,d_2}}{\bigl(-l-\frac{d_2}{2}\bigr)_{\bm^2,d_2}}((x_2')^\itinv|e')_{\fn^+_2}^l\tilde{\Phi}_\bm^{\fp^{+\prime\prime}_1,\fp^{-\prime\prime}_2}(x_1'',(x_2'')^\itinv), 
\end{align*}
and hence we get 
\[ c=\frac{(-l)_{\bm^2,d_2}}{\bigl(-l-\frac{d_2}{2}\bigr)_{\bm^2,d_2}}
=\frac{(-l)_{\bm,d}\bigl(-l-\frac{d_2}{2}\bigr)_{\bm,d}}{\bigl(-l-\frac{d_2}{2}\bigr)_{\bm,d}\bigl(-l-\frac{d}{2}\bigr)_{\bm,d}}
=\frac{(-l)_{\bm,d}}{\bigl(-l-\frac{d}{2}\bigr)_{\bm,d}}. \qedhere \] 
\end{proof}

Using $F_{\bm,\bl}^\pm[f](x_1,y_2)$, for $l\in\BZ_{\ge 0}$, $\mu,\nu\in\BC$ and for $f(x_2)\in\cP_{(l,0,\ldots,0)}(\fp^\pm_2)$, 
we define a function $F_{l,+}^\downarrow\bigl(\begin{smallmatrix} \nu \\ \mu \end{smallmatrix};f;x\bigr)$ by 
\begin{align}
F_{l,+}^\downarrow\biggl(\begin{matrix} \nu \\ \mu \end{matrix};f;x\biggr):\hspace{-3pt}&=\det_{\fn^\pm_2}(x_2)^{-\nu}
\sum_{\bm\in\BZ_{++}^{\lfloor r/2\rfloor}}\sum_{\substack{\bl\in(\BZ_{\ge 0})^{\lceil r/2\rceil} \\ |\bl|=l}} F_{\bm,\bl}^-[f](x_1,x_2^\itinv) \notag\\*
&\eqspace{}\times \begin{cases}
\ds \frac{(\nu)_{\bm,d}\bigl(\nu-\frac{d_2}{2}-(\underline{0}_{r/2-1},l)\bigr)_{\bm-\bl+(\underline{0}_{r/2-1},l),d}}
{\bigl(\mu-\frac{d_2}{2}-(\underline{0}_{r/2-1},l)\bigr)_{\bm-\bl+(\underline{0}_{r/2-1},l),d}} & (r\colon\text{even}), \\
\ds \frac{(\nu)_{\bm,d}\bigl(\nu-\frac{d_2}{2}\bigr)_{\bm-\bl',d}\bigl(\nu-\frac{d_2}{2}(r-1)-l\bigr)_{l-l_{\lceil r/2\rceil}}}
{\bigl(\mu-\frac{d_2}{2}\bigr)_{\bm-\bl',d}\bigl(\mu-\frac{d_2}{2}(r-1)-l\bigr)_{l-l_{\lceil r/2\rceil}}} & (r\colon\text{odd}), \end{cases} \label{def_Fdown_general+}
\end{align}
where, when $r$ is odd, let $\bl':=(l_1,\ldots,l_{\lfloor r/2\rfloor})$ so that $\bl=(\bl',l_{\lceil r/2\rceil})$ holds, 
and we define a function $F_{l,-}^\downarrow\bigl(\begin{smallmatrix} \nu \\ \mu \end{smallmatrix};f;x\bigr)$ by 
\begin{align}
F_{l,-}^\downarrow\biggl(\begin{matrix} \nu \\ \mu \end{matrix};f;x\biggr)
&:=\det_{\fn^\pm_2}(x_2)^{-\nu}\sum_{\bm\in\BZ_{++}^{\lfloor r/2\rfloor}}\sum_{\substack{\bl\in(\BZ_{\ge 0})^{\lceil r/2\rceil} \\ |\bl|=l}} F_{\bm,\bl}^+[f](x_1,x_2^\itinv) \notag\\*
&\eqspace{}\times \frac{(\nu+(l,\underline{0}_{\lceil r/2\rceil-1}))_{\bm+\bl-(l,\underline{0}_{\lceil r/2\rceil-1}),d}\bigl(\nu-\frac{d_2}{2}\bigr)_{\bm,d}}
{\bigl(\mu-\frac{d_2}{2}+(l,\underline{0}_{\lceil r/2\rceil-1})\bigr)_{\bm+\bl-(l,\underline{0}_{\lceil r/2\rceil-1}),d}}, \label{def_Fdown_general-}
\end{align}
where we identify $\bm\in\BZ_{++}^{\lfloor r/2\rfloor}$ and $(\bm,0)\in\BZ_{++}^{\lceil r/2\rceil}$ when $r$ is odd. 
Then as a function of $\mu$, $F_{l,+}^\downarrow\bigl(\begin{smallmatrix} \nu \\ \mu \end{smallmatrix};f;x\bigr)$ has poles at 
\begin{align*}
\mu&\in \bigcup_{i=1}^{r/2-1}\biggl\{ \frac{d_2}{2}+\frac{d}{2}(i-1)-j \biggm| \begin{matrix} j\in\BZ, \\ j\ge 0 \end{matrix}\biggr\}
\cup \biggl\{ \frac{d_2}{2}+\frac{d}{2}\biggl(\frac{r}{2}-1\biggr)-j \biggm| \begin{matrix} j\in\BZ, \\ j\ge -l \end{matrix}\biggr\} && (r\colon\text{even}), \\
\mu&\in \bigcup_{i=1}^{\lfloor r/2\rfloor}\biggl\{ \frac{d_2}{2}+\frac{d}{2}(i-1)-j \biggm| \begin{matrix} j\in\BZ, \\ j\ge 0 \end{matrix}\biggr\}
\cup \biggl\{ \frac{d_2}{2}+\frac{d}{2}\biggl(\frac{r}{2}-1\biggr)-j \biggm| \begin{matrix} j\in\BZ, \\ -l\le j<0 \end{matrix} \biggr\} 
 &&(r\colon\text{odd}) 
\end{align*}
(we note that $d=2d_2$ holds if $r$ is odd), and $F_{l,-}^\downarrow\bigl(\begin{smallmatrix} \nu \\ \mu \end{smallmatrix};f;x\bigr)$ has poles at 
\begin{align*}
\mu&\in \biggl\{ \frac{d_2}{2}-j \biggm| \begin{matrix} j\in\BZ, \\ j\ge l \end{matrix}\biggr\}
\cup{} \bigcup_{i=2}^{r/2}{}\biggl\{ \frac{d_2}{2}+\frac{d}{2}(i-1)-j \biggm| \begin{matrix} j\in\BZ, \\ j\ge 0 \end{matrix}\biggr\} && (r\colon\text{even}), \\
\mu&\in \biggl\{ \frac{d_2}{2}-j \biggm| \begin{matrix} j\in\BZ, \\ j\ge l \end{matrix}\biggr\}
\cup{} \bigcup_{i=2}^{\lfloor r/2\rfloor}\biggl\{ \frac{d_2}{2}+\frac{d}{2}(i-1)-j \biggm| \begin{matrix} j\in\BZ, \\ j\ge 0 \end{matrix}\biggr\} \\*
&\eqspace{}\cup \biggl\{ \frac{d_2}{2}+\frac{d}{2}\biggl(\biggl\lceil\frac{r}{2}\biggr\rceil-1\biggr)-j 
\biggm| \begin{matrix} j\in\BZ, \\ 0\le j<l \end{matrix} \biggr\} && (r\colon\text{odd}) 
\end{align*}
for generic $\nu\in\BC$. On the other hand, if $\nu=-k\in-\BZ_{\ge 0}$, then $F_{l,+}^\downarrow\bigl(\begin{smallmatrix} -k \\ \mu \end{smallmatrix};f;x\bigr)$ 
becomes a finite sum and is a polynomial in $x$, 
and if $\nu=-k\in-\BZ_{\ge l}$, then $F_{l,-}^\downarrow\bigl(\begin{smallmatrix} -k \\ \mu \end{smallmatrix};f;x\bigr)$ also becomes a polynomial. 
For these cases, all poles satisfying $\mu\le \frac{d_2}{2}-k$ become removable, 
and we use the same notations $F_{l,+}^\downarrow\bigl(\begin{smallmatrix} -k \\ \mu \end{smallmatrix};f;x\bigr)$, 
$F_{l,-}^\downarrow\bigl(\begin{smallmatrix} -k \\ \mu \end{smallmatrix};f;x\bigr)$ to express these polynomials also for $\mu$ at such removable singularities. 
If $r=3$, then since $d=2d_2$ holds, $F_{l,\pm}^\downarrow\bigl(\begin{smallmatrix} -k \\ \mu \end{smallmatrix};f;x\bigr)$ 
and $F^\downarrow(\lambda,\bk;f;x)$ in the previous section are related as 
\begin{align*}
F^\downarrow\Bigl(\lambda,(k_1,k_2,k_2);f\det_{\fn^+_2}^{k_2};x\Bigr)
&=F_{k_1-k_2,+}^\downarrow\biggl(\begin{matrix} -k_2 \\ -\lambda-2k_2+d+1 \end{matrix};f;x\biggr), \\
F^\downarrow\Bigl(\lambda,(k_2,k_2,k_3);f((\cdot)^\itinv)\det_{\fn^+_2}^{k_2};x\Bigr)
&=F_{k_2-k_3,-}^\downarrow\biggl(\begin{matrix} -k_2 \\ -\lambda-2k_2+d+1 \end{matrix};f;x\biggr). 
\end{align*}

\begin{proposition}
Suppose $\mu\in\BC$ is not a pole of $F_{l,\pm}^\downarrow\bigl(\begin{smallmatrix} \nu \\ \mu \end{smallmatrix};f;x\bigr)$. 
Then $F_{l,\pm}^\downarrow\bigl(\begin{smallmatrix} \nu \\ \mu \end{smallmatrix};f;x\bigr)$ converges if $x_2=ge$ for some $g\in K_1^\BC$ and $|g^{-1}x_1|_\infty<1$ holds. 
Especially, this converges for $x=(x_1,x_2)\in\Omega$. 
\end{proposition}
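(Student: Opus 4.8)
The plan is to prove this exactly as in the rank $3$ case (the Proposition immediately following Proposition~\ref{prop_example_Fkl_rank3}), namely by estimating the defining series term by term, but invoking Proposition~\ref{prop_estimate_Fml_general} in place of Proposition~\ref{prop_estimate_Fkl_rank3}. First I would observe that if $x_2=ge$ with $g\in K_1^\BC$, then $x_2^\itinv$ again lies in the $K_1^\BC$-orbit of $e$, and that a short computation with the inversion formula $(lx)^\itinv={}^tl^{-1}x^\itinv$ (legitimate since all three cases here are classical) shows that the spectral-norm factor appearing when Proposition~\ref{prop_estimate_Fml_general} is applied with $y_2=x_2^\itinv$ is precisely $(|g^{-1}x_1|_\infty)^{2|\bm|}$, matching the hypothesis. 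Since the prefactor $\det_{\fn^\pm_2}(x_2)^{-\nu}$ is a fixed nonzero scalar and, for fixed $l$, the index $\bl$ ranges over a finite set (with $|\bl|=l$ and $\phi_\pm(\bm,\bl)\in\BZ_+^r$), the convergence assertion reduces to showing that $\sum_{\bm\in\BZ_{++}^{\lfloor r/2\rfloor}}|C_\bm|\,\frac{\sqrt{d_{\bm^2}^{\fp^+_2}}\,d_\bm^{\tilde{\fp}}}{\bigl(\frac{d}{2}(\lfloor r/2\rfloor-1)+1\bigr)_{\bm,d}}\,t^{2|\bm|}<\infty$ for every $0\le t<1$, where $C_\bm$ is the Pochhammer ratio multiplying $F_{\bm,\bl}^{\mp}[f]$ in (\ref{def_Fdown_general+}) or (\ref{def_Fdown_general-}) and $t=|g^{-1}x_1|_\infty$.

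The key point, and the step I expect to require the most care, is that this combined coefficient grows only polynomially in $\bm$. The dimensions $d_\bm^{\tilde{\fp}}=\dim\cP_\bm(\tilde{\fp})$ and $d_{\bm^2}^{\fp^+_2}=\dim\cP_{\bm^2}(\fp^+_2)$ are, by the Weyl dimension formula, polynomials in $(m_1,\dots,m_{\lfloor r/2\rfloor})$, while $d_{(l,\underline{0}_{r-1})}^{\fp^+_2}$ is a constant. For the Pochhammer factors I would use that, writing a generalized Pochhammer as $(\mu+\bs)_{\bm,d}=\prod_j\bigl(\mu+s_j-\tfrac{d}{2}(j-1)\bigr)_{m_j}$, one has $(a)_m/(b)_m=\Gamma(a+m)\Gamma(b)/\bigl(\Gamma(a)\Gamma(b+m)\bigr)=O(m^{\Re(a-b)})$ as $m\to\infty$. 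Hence the factor $(\nu)_{\bm,d}$ coming from the definition, together with the factor $\bigl(\frac{d}{2}(\lfloor r/2\rfloor-1)+1\bigr)_{\bm,d}^{-1}$ coming from Proposition~\ref{prop_estimate_Fml_general}, combine to $\prod_j\bigl(\nu-\tfrac{d}{2}(j-1)\bigr)_{m_j}\big/\bigl(\tfrac{d}{2}(\lfloor r/2\rfloor-j)+1\bigr)_{m_j}=O\bigl(\prod_j m_j^{\Re\nu-\frac{d}{2}(\lfloor r/2\rfloor-1)-1}\bigr)$; and each remaining Pochhammer ratio occurring in (\ref{def_Fdown_general+}) or (\ref{def_Fdown_general-}) has numerator and denominator with the \emph{same} multi-index (namely $\bm-\bl+(\underline{0}_{r/2-1},l)$, resp.\ $\bm-\bl'$, $\bm$, or $\bm+\bl-(l,\underline{0}_{\lceil r/2\rceil-1})$), up to a shift by a vector that stays bounded as $\bm\to\infty$, so that the factorial-size parts cancel and only polynomial growth in $\bm$ survives. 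The four cases ($r$ even or odd, sign $+$ or $-$) are handled identically, differing only in which multi-indices occur. Consequently the combined coefficient is dominated by a fixed polynomial $Q(\bm)$, and $\sum_{\bm\in\BZ_{++}^{\lfloor r/2\rfloor}}Q(\bm)\,t^{2|\bm|}<\infty$ for $0\le t<1$; this, together with the assumption that $\mu$ avoids the listed poles (so that no denominator vanishes), gives absolute convergence of $F_{l,\pm}^\downarrow\bigl(\begin{smallmatrix}\nu \\ \mu\end{smallmatrix};f;x\bigr)$ whenever $|g^{-1}x_1|_\infty<1$.

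Finally, for the statement that the series converges for all $x=(x_1,x_2)\in\Omega$, I would argue as in the rank $3$ case: since $\fn^+_2\subset\fn^+$ is a Jordan subalgebra containing the unit $e$ whose orthogonal complement is $\fn^+_1$, the $\fn^+_2$-component of any $x\in\Omega$ lies in $\Omega_2$, so $x_2=ge$ for some $g\in L_2\subset K_1^\BC$. Then $g^{-1}x=g^{-1}x_1+e\in\Omega$ since $L_2$ preserves $\Omega$; applying to this the map $\tilde{\sigma}:=-\sigma$, which is a Jordan triple automorphism of $\fn^{+\BC}$ commuting with the conjugation (hence preserving $\fn^+$), fixes $e$ (because $e\in\fp^+_2=(\fp^+)^{-\sigma}$) and acts as $-\mathrm{id}$ on $\fp^+_1$, so restricts to a Jordan algebra automorphism of $\fn^+$ and therefore preserves $\Omega$, we obtain $e-g^{-1}x_1=\tilde{\sigma}(e+g^{-1}x_1)\in\Omega$ as well. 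Thus both $e\pm g^{-1}x_1\in\Omega$, i.e.\ every Jordan eigenvalue of $g^{-1}x_1$ lies in $(-1,1)$, which means $|g^{-1}x_1|_\infty<1$, and the preceding paragraphs apply.
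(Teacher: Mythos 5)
Your argument is correct, and for the main convergence estimate it is essentially the paper's: you bound each term via Proposition \ref{prop_estimate_Fml_general} applied to $y_2=x_2^\itinv=({}^t\hspace{-1pt}g)^{-1}e$, bound $d_{\bm^2}^{\fp^+_2}$ and $d_\bm^{\tilde\fp}$ polynomially, and note that the Pochhammer factors in (\ref{def_Fdown_general+})--(\ref{def_Fdown_general-}), combined with the factor $\bigl(\frac d2(\lfloor r/2\rfloor-1)+1\bigr)_{\bm,d}^{-1}$ coming from the estimate, are balanced, so the coefficients grow at most polynomially in $\bm$; this is precisely the ``standard argument'' the paper leaves implicit (in the $-$ case the unpaired numerator factor is $(\nu-\frac{d_2}{2})_{\bm,d}$ rather than $(\nu)_{\bm,d}$, but your bookkeeping is unaffected). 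The one place you genuinely deviate is the reduction of the case $x\in\Omega$: the paper in effect takes $g=P(x_2^{\mathit{1/2}})$ and reads off $\bigl|P(x_2^{-\mathit{1/2}})x_1\bigr|_\infty<1$ from the positivity of $\det_{\fn^+}(x)=\det_{\fn^+_2}(x_2)\,h_{\fn^+}\bigl(P(x_2^{-\mathit{1/2}})x_1,P(x_2^{-\mathit{1/2}})x_1\bigr)$ on $\Omega$, whereas you use that $\tilde\sigma=-\sigma$ is a unital Jordan algebra automorphism of $\fn^+$ acting by $-1$ on $\fn^+_1$, so that $g^{-1}x=e+g^{-1}x_1\in\Omega$ and $\tilde\sigma(g^{-1}x)=e-g^{-1}x_1\in\Omega$, forcing the spectral values of $g^{-1}x_1$ into $(-1,1)$. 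Both routes yield the same inequality; yours avoids the generic-norm factorization and makes the spectral conclusion completely explicit, at the cost of invoking (without proof, but these are standard and are used implicitly by the paper as well) that the $\fn^+_2$-component of any $x\in\Omega$ lies in $\Omega_2$ and that $x_2\in\Omega_2$ can be written as $ge$ with $g\in L_2\subset K_1^\BC$.
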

\begin{proof}
By Proposition \ref{prop_estimate_Fml_general} we have 
\begin{align*}
|F_{\bm,\bl}^-[f](x_1,x_2^\itinv)|&\le \frac{\sqrt{d_{\bm^2}^{\fp^+_2}d_{(l,\underline{0}_{r-1})}^{\fp^+_2}}d_\bm^{\tilde{\fp}}}
{\bigl(\frac{d}{2}\bigl(\bigl\lfloor\frac{r}{2}\bigr\rfloor-1\bigr)+1\bigr)_{\bm,d}}
(|g^{-1} x_1|_\infty)^{2|\bm|}\Vert f(g(\cdot))\Vert_{L^2(\Sigma_2)}, \\
|F_{\bm,\bl}^+[f](x_1,x_2^\itinv)|&\le \frac{\sqrt{d_{\bm^2}^{\fp^+_2}d_{(l,\underline{0}_{r-1})}^{\fp^+_2}}d_\bm^{\tilde{\fp}}}
{\bigl(\frac{d}{2}\bigl(\bigl\lfloor\frac{r}{2}\bigr\rfloor-1\bigr)+1\bigr)_{\bm,d}}
(|g^{-1} x_1|_\infty)^{2|\bm|}\Vert f({}^t\hspace{-1pt}g^{-1}(\cdot))\Vert_{L^2(\Sigma_2)}, 
\end{align*}
and we have 
\begin{align*}
d_{\bm^2}^{\fp^+_2}&=\dim\cP_{\bm^2}(\fp^+_2)\le \dim\cP_{2|\bm|}(\fp^+_2)=\binom{2|\bm|+n_2-1}{n_2-1}, \\
d_\bm^{\tilde{\fp}}&=\dim\cP_\bm(\tilde{\fp})\le \dim\cP_{|\bm|}(\tilde{\fp})=\binom{|\bm|+\tilde{n}-1}{\tilde{n}-1}, 
\end{align*}
where $n_2:=\dim\fp^+_2$, $\tilde{n}:=\dim\tilde{\fp}$. 
Then the convergence for $|g^{-1}x_1|_\infty<1$ follows from standard argument. Especially when $x=(x_1,x_2)\in\Omega$, we have 
\begin{align*}
0< \det_{\fn^+}(x)&=\det_{\fn^+_2}(x_2)h_{\fn^+}(x_1,P(x_2)^{-1}x_1) =\det_{\fn^+_2}(x_2)h_{\fn^+}\bigl(P(x_2^{-\mathit{1/2}})x_1,P(x_2^{-\mathit{1/2}})x_1\bigr), 
\end{align*}
and hence $P(x_2^{-\mathit{1/2}})x_1\in D\cap \Omega$, that is, $\bigl|P(x_2^{-\mathit{1/2}})x_1\bigr|_\infty<1$ holds. 
Thus $F_{l,\pm}^\downarrow\bigl(\begin{smallmatrix} \nu \\ \mu \end{smallmatrix};f;x\bigr)$ converges for $x\in\Omega$. 
\end{proof}

These $F_{l,\pm}^\downarrow\bigl(\begin{smallmatrix}\nu \\ \mu\end{smallmatrix};f;x\bigr)$ have the following property. 
\begin{proposition}\label{prop_shift_general}
For $\mu,\nu\in\BC$, $k,l\in\BZ_{\ge 0}$, $f(x_2)\in\cP_{(l,\underline{0}_{r-1})}(\fp^\pm_2)$, we have 
\begin{align*}
&\det_{\fn^+_2}\biggl(\frac{\partial}{\partial x_2}\biggr)^k F_{l,+}^\downarrow\biggl(\begin{matrix}\nu \\ \mu\end{matrix};f;x\biggr)
=(-1)^{kr}(\nu-(\underline{0}_{r-1},l))_{\underline{k}_r,d_2} F_{l,+}^\downarrow\biggl(\begin{matrix}\nu+k \\ \mu\end{matrix};f;x\biggr) \\
&=(-1)^{kr}(\nu)_{\underline{k}_{\lfloor r/2\rfloor},d} \biggl(\nu-\frac{d_2}{2}\biggr)_{\underline{k}_{\lceil r/2\rceil-1},d}
\biggl(\nu-\frac{d_2}{2}(r-1)-l\biggr)_k F_{l,+}^\downarrow\biggl(\begin{matrix}\nu+k \\ \mu\end{matrix};f;x\biggr), \\
&\det_{\fn^+_2}\biggl(\frac{\partial}{\partial x_2}\biggr)^k F_{l,-}^\downarrow\biggl(\begin{matrix}\nu \\ \mu\end{matrix};f;x\biggr)
=(-1)^{kr}(\nu+(l,\underline{0}_{r-1}))_{\underline{k}_r,d_2} F_{l,-}^\downarrow\biggl(\begin{matrix}\nu+k \\ \mu\end{matrix};f;x\biggr) \\
&=(-1)^{kr}(\nu+(l,\underline{0}_{\lceil r/2\rceil-1}))_{\underline{k}_{\lceil r/2\rceil},d} \biggl(\nu-\frac{d_2}{2}\biggr)_{\underline{k}_{\lfloor r/2\rfloor},d}
F_{l,-}^\downarrow\biggl(\begin{matrix}\nu+k \\ \mu\end{matrix};f;x\biggr). 
\end{align*}
\end{proposition}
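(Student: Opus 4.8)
The plan is to reduce everything to the behavior of the individual building blocks $F_{\bm,\bl}^\pm[f](x_1,x_2^\itinv)$ under the determinant differential operator $\det_{\fn^+_2}(\partial/\partial x_2)^k$, and then bookkeep the Pochhammer coefficients. The two displayed identities (the ``$+$'' and ``$-$'' cases) are proved by the same mechanism, so I will carry out the ``$+$'' case in detail and indicate the sign/shift changes needed for the ``$-$'' case. First I would recall that $F_{\bm,\bl}^-[f](x_1,y_2)$, as a polynomial in $y_2$, lies in $\cP_{\langle\bm\rangle}(\fp^+_1)_{x_1}\otimes\cP_{\phi_-(\bm,\bl)}(\fp^-_2)_{y_2}$, hence $F_{\bm,\bl}^-[f](x_1,x_2^\itinv)\det_{\fn^+_2}(x_2)^{\nu}$ sits in $\cP(\fp^+_1)\otimes\cP_{\underline{\nu}_r-\phi_-(\bm,\bl)^\vee}(\fp^+_2)$ after the substitution $y_2 = x_2^\itinv$ (using $\cP_{-\bk^\vee}(\fp^-_2)\simeq\cP_{\bk}(\fp^+_2)$ and $\det_{\fn^+_2}(x_2^\itinv)=\det_{\fn^+_2}(x_2)^{-1}$). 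Then Lemma \ref{lem_diff} applies: $\det_{\fn^+_2}(\partial/\partial x_2)^k$ acts on a polynomial in $\cP_\bn(\fp^+_2)$ (with $\bn\in\BZ_{++}^r+\BC\underline{1}_r$) by multiplication by $(-1)^{kr}(-\bn^\vee)_{\underline{k}_r,d_2}$ together with a factor $\det_{\fn^+_2}(x_2)^{-k}$, which on the $x_2^\itinv$ side amounts to shifting $\nu\mapsto\nu+k$ inside $F_{l,+}^\downarrow$.

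The key computation is therefore: for each $(\bm,\bl)$ the eigenvalue of $\det_{\fn^+_2}(\partial/\partial x_2)^k$ acting on $\det_{\fn^+_2}(x_2)^{-\nu}F_{\bm,\bl}^-[f](x_1,x_2^\itinv)$ equals $(-1)^{kr}$ times a product of Pochhammer symbols in $\nu$, whose precise shape I read off from $\phi_-(\bm,\bl)$. Writing $\phi_-(\bm,\bl)=(m_1,m_1-l_1,m_2,m_2-l_2,\ldots)$ (the even-$r$ shape, with the odd-$r$ tail $-l_{\lceil r/2\rceil}$) one sees that the relevant argument is $\bigl(\nu - \tfrac{d_2}{2}(j-1) + \phi_-(\bm,\bl)^{\vee}_j\bigr)_k$, $j=1,\dots,r$, and that the paired structure of $\phi_-$ collapses this product of $r$ Pochhammer symbols of length $k$ into the stated product $(\nu+(l,\underline{0}_{r-1}))_{\underline{k}_r,d_2}$ — equivalently, after separating the ``paired'' and ``tail'' indices, into $(\nu+(l,\underline 0_{\lceil r/2\rceil-1}))_{\underline k_{\lceil r/2\rceil},d}\,(\nu-\tfrac{d_2}{2})_{\underline k_{\lfloor r/2\rfloor},d}$, using $d=2d_2$ when $r\ge 3$ (and checking the small-rank cases $r=2$ by hand against \eqref{str_const_general}). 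The matching for the ``$+$'' case uses $\phi_+(\bm,\bl)=(m_1+l_1,m_1,m_2+l_2,m_2,\ldots)$ and produces $(\nu-(\underline 0_{r-1},l))_{\underline k_r,d_2}$ instead, again refactored into a product of two $d$-shifted Pochhammer blocks plus the single extra factor $(\nu-\tfrac{d_2}{2}(r-1)-l)_k$.

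Next I would verify that the coefficient bookkeeping in the definitions \eqref{def_Fdown_general+}, \eqref{def_Fdown_general-} is consistent with this shift. Concretely: applying $\det_{\fn^+_2}(\partial/\partial x_2)^k$ term-by-term to \eqref{def_Fdown_general-}, each term acquires the factor $(-1)^{kr}$ times the Pochhammer product computed above; one then checks that the ratio of coefficients
\[
\frac{(\nu+k+(l,\underline 0))_{\bm+\bl-(l,\underline 0),d}\,(\nu+k-\tfrac{d_2}{2})_{\bm,d}}{(\nu+(l,\underline 0))_{\bm+\bl-(l,\underline 0),d}\,(\nu-\tfrac{d_2}{2})_{\bm,d}}
\]
is precisely the reciprocal of the $(\bm,\bl)$-independent factor so that everything reassembles into $F_{l,-}^\downarrow\bigl(\begin{smallmatrix}\nu+k\\\mu\end{smallmatrix};f;x\bigr)$ with the $\mu$-denominators untouched (they do not see the substitution because $\det_{\fn^+_2}(\partial/\partial x_2)^k$ commutes past the $x_1$-polynomial part and the $\mu$-dependent scalars). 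Since $k\in\BZ_{\ge 0}$ and both sides are polynomial (or meromorphic) in $\nu$ and $\mu$, and the identity of eigenvalues holds for all $\nu$ outside the poles, the equality of the two expressions for $F_{l,\pm}^\downarrow$ in terms of $d$-shifted versus $d_2$-shifted Pochhammer symbols follows from elementary Pochhammer splitting $(a)_{\underline k_{2s},d_2}=(a)_{\underline k_s,d}\,(a-\tfrac{d_2}{2})_{\underline k_s,d}$ with $d=2d_2$; the only place care is needed is the odd-$r$ tail and the small-rank exceptions of \eqref{str_const_general}, which I would dispatch by direct substitution.

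The main obstacle I anticipate is purely combinatorial: making the index-shift $y_2\mapsto x_2^\itinv$ interact correctly with $\det_{\fn^+_2}(\partial/\partial x_2)^k$ on the $\phi_\pm(\bm,\bl)$-homogeneous pieces, i.e. correctly identifying which highest-weight $\bn\in\BZ_{++}^r+\BC\underline 1_r$ enters Lemma \ref{lem_diff} after the inversion, and then recognizing the resulting length-$r$ Pochhammer product as the compact expressions $(\nu-(\underline 0_{r-1},l))_{\underline k_r,d_2}$ and $(\nu+(l,\underline 0_{r-1}))_{\underline k_r,d_2}$. Once that identification is pinned down, the rest is routine Pochhammer algebra together with the observation that $F_{l,\pm}^\downarrow$ is built so that the $\bm,\bl$-dependent numerator coefficients telescope correctly under the $\nu\mapsto\nu+k$ shift. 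No analytic input beyond Lemma \ref{lem_diff} and the polynomiality already established is needed.
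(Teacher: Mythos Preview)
Your overall strategy is the same as the paper's: apply Lemma~\ref{lem_diff} term by term to the sum defining $F_{l,\pm}^\downarrow$, and observe that the Pochhammer factors telescope so that only a $(\bm,\bl)$-independent constant survives in front of the shifted function. That is exactly how the proof in the paper goes.

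However, your second paragraph contains a substantive error. The eigenvalue of $\det_{\fn^+_2}(\partial/\partial x_2)^k$ on $\det_{\fn^+_2}(x_2)^{-\nu}F_{\bm,\bl}^\mp[f](x_1,x_2^\itinv)$ is $(-1)^{kr}(\nu+\phi_\mp(\bm,\bl))_{\underline{k}_r,d_2}$ (not $\phi_\mp^\vee$), and this \emph{does} depend on $(\bm,\bl)$; it does not ``collapse'' by itself to $(\nu\pm(l,\underline{0}_{r-1}))_{\underline{k}_r,d_2}$. The collapse only happens after you multiply by the $\nu$-dependent numerator coefficient already present in the definition. Concretely, the paper first rewrites each numerator as a single $d_2$-Pochhammer, e.g.\ for the $+$ case
\[
(\nu)_{\bm,d}\Bigl(\nu-\tfrac{d_2}{2}-(\underline{0}_{r/2-1},l)\Bigr)_{\bm-\bl+(\underline{0}_{r/2-1},l),d}
=(\nu-(\underline{0}_{r-1},l))_{\phi_-(\bm,\bl)+(\underline{0}_{r-1},l),\,d_2},
\]
so that eigenvalue $\times$ numerator becomes
\[
(\nu+\phi_-(\bm,\bl))_{\underline{k}_r,d_2}\cdot(\nu-(\underline{0}_{r-1},l))_{\phi_-(\bm,\bl)+(\underline{0}_{r-1},l),\,d_2}
=(\nu-(\underline{0}_{r-1},l))_{\phi_-(\bm,\bl)+\underline{k}_r+(\underline{0}_{r-1},l),\,d_2},
\]
which then splits as $(\nu-(\underline{0}_{r-1},l))_{\underline{k}_r,d_2}$ times the numerator with $\nu\mapsto\nu+k$. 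This is the telescoping your third paragraph gestures at, but the displayed ratio there is not itself $(\bm,\bl)$-independent; rather, it equals the eigenvalue divided by the $(\bm,\bl)$-independent factor. Also note your $+$/$-$ labels are swapped: $F_{l,+}^\downarrow$ uses $F_{\bm,\bl}^-$ and $\phi_-$, and yields the factor $(\nu-(\underline{0}_{r-1},l))_{\underline{k}_r,d_2}$, not $(\nu+(l,\underline{0}_{r-1}))_{\underline{k}_r,d_2}$. Once these points are straightened out, the second equalities follow exactly as you say, from the Pochhammer splitting $(a)_{\underline{k}_{2s},d_2}=(a)_{\underline{k}_s,d}(a-\tfrac{d_2}{2})_{\underline{k}_s,d}$.
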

\begin{proof}
First, the numerator of each term of $F_{l,\pm}^\downarrow\bigl(\begin{smallmatrix}\nu \\ \mu\end{smallmatrix};f;x\bigr)$ satisfies 
\begin{align*}
&(\nu-(\underline{0}_{r-1},l))_{\phi_-(\bm,\bl)+(\underline{0}_{r-1},l),d_2} \\
&=\begin{cases}
\ds (\nu)_{\bm,d}\biggl(\nu-\frac{d_2}{2}-(\underline{0}_{r/2-1},l)\biggr)_{\bm-\bl+(\underline{0}_{r/2-1},l),d} & (r\colon\text{even}), \\
\ds (\nu)_{\bm,d}\biggl(\nu-\frac{d_2}{2}\biggr)_{\bm-\bl',d}\biggl(\nu-\frac{d_2}{2}(r-1)-l\biggr)_{l-l_{\lceil r/2\rceil}} & (r\colon\text{odd}), \end{cases} \\
&(\nu+(l,\underline{0}_{r-1}))_{\phi_+(\bm,\bl)-(l,\underline{0}_{r-1}),d_2}
=(\nu+(l,\underline{0}_{\lceil r/2\rceil-1}))_{\bm+\bl-(l,\underline{0}_{\lceil r/2\rceil-1}),d}\biggl(\nu-\frac{d_2}{2}\biggr)_{\bm,d}. 
\end{align*}
Then since $F_{\bm,\bl}^\mp[f](x_1,y_2)\in\cP(\fp^+_1)\otimes\cP_{\phi_\mp(\bm,\bl)}(\fp^-_2)$ holds, by Lemma \ref{lem_diff} we have 
\begin{align*}
&\det_{\fn^+_2}\biggl(\frac{\partial}{\partial x_2}\biggr)^k(\nu-(\underline{0}_{r-1},l))_{\phi_-(\bm,\bl)+(\underline{0}_{r-1},l),d_2}\det_{\fn^+_2}(x_2)^{-\nu}F_{\bm,\bl}^-[f](x_1,x_2^\itinv) \\
&=(-1)^{kr}(\nu+\phi_-(\bm,\bl))_{\underline{k}_r,d_2}(\nu-(\underline{0}_{r-1},l))_{\phi_-(\bm,\bl)+(\underline{0}_{r-1},l),d_2} \\*
&\hspace{195pt}\times\det_{\fn^+_2}(x_2)^{-\nu-k}F_{\bm,\bl}^-[f](x_1,x_2^\itinv) \\
&=(-1)^{kr}(\nu-(\underline{0}_{r-1},l))_{\phi_-(\bm,\bl)+\underline{k}_r+(\underline{0}_{r-1},l),d_2}\det_{\fn^+_2}(x_2)^{-\nu-k}F_{\bm,\bl}^-[f](x_1,x_2^\itinv) \\
&=(-1)^{kr}(\nu-(\underline{0}_{r-1},l))_{\underline{k}_r,d_2}(\nu+k-(\underline{0}_{r-1},l))_{\phi_-(\bm,\bl)+(\underline{0}_{r-1},l),d_2} \\*
&\hspace{195pt}\times\det_{\fn^+_2}(x_2)^{-\nu-k}F_{\bm,\bl}^-[f](x_1,x_2^\itinv), \\
&\det_{\fn^+_2}\biggl(\frac{\partial}{\partial x_2}\biggr)^k(\nu+(l,\underline{0}_{r-1}))_{\phi_+(\bm,\bl)-(l,\underline{0}_{r-1}),d_2}\det_{\fn^+_2}(x_2)^{-\nu}F_{\bm,\bl}^+[f](x_1,x_2^\itinv) \\
&=(-1)^{kr}(\nu+\phi_+(\bm,\bl))_{\underline{k}_r,d_2}(\nu+(l,\underline{0}_{r-1}))_{\phi_+(\bm,\bl)-(l,\underline{0}_{r-1}),d_2} \\*
&\hspace{195pt}\times\det_{\fn^+_2}(x_2)^{-\nu-k}F_{\bm,\bl}^+[f](x_1,x_2^\itinv) \\
&=(-1)^{kr}(\nu+(l,\underline{0}_{r-1}))_{\phi_+(\bm,\bl)+\underline{k}_r-(l,\underline{0}_{r-1}),d_2}\det_{\fn^+_2}(x_2)^{-\nu-k}F_{\bm,\bl}^+[f](x_1,x_2^\itinv) \\
&=(-1)^{kr}(\nu+(l,\underline{0}_{r-1}))_{\underline{k}_r,d_2}(\nu+k+(l,\underline{0}_{r-1}))_{\phi_+(\bm,\bl)-(l,\underline{0}_{r-1}),d_2} \\*
&\hspace{195pt}\times\det_{\fn^+_2}(x_2)^{-\nu-k}F_{\bm,\bl}^+[f](x_1,x_2^\itinv). 
\end{align*}
Hence the 1st and 3rd equalities follow from the definition of $F_{l,\pm}^\downarrow\bigl(\begin{smallmatrix}\nu \\ \mu\end{smallmatrix};f;x\bigr)$. 
The 2nd and 4th equalities are clear. 
\end{proof}

\subsection{Results on weighted Bergman inner products}

For $\lambda\in\BC$, $k,l\in\BZ_{\ge 0}$, we set 
\begin{align}
&C_{r,+}^{d,d_2}(\lambda,k,l) \notag\\
&:=\begin{cases} \ds \frac{\bigl(\lambda+k-\frac{d}{4}r+\frac{d_2}{2}+(l,\underline{0}_{r/2-1})\bigr)_{\underline{k}_{r/2},d}}
{(\lambda)_{(2k+l,\underline{2k}_{r/2-1},\underline{k}_{r/2}),d}} & (r\colon\text{even}), \\
\ds \frac{\bigl(\lambda-\frac{d}{2}\bigl\lfloor \frac{r}{2}\bigr\rfloor+\max\{2k,k+l\}\bigr)_{\min\{k,l\}}
\bigl(\lambda+k-\frac{d}{2}\bigl\lceil \frac{r}{2}\bigr\rceil+\frac{d_2}{2}\bigr)_{\underline{k}_{\lfloor r/2\rfloor},d}}
{(\lambda)_{(2k+l,\underline{2k}_{\lfloor r/2\rfloor-1},\min\{2k,k+l\},\underline{k}_{\lfloor r/2\rfloor}),d}} & (r\colon\text{odd}), \end{cases} \label{const_general+}
\end{align}
and when $k\ge l$, we set 
\begin{align}
C_{r,-}^{d,d_2}(\lambda,k,l) 
&:=\begin{cases} 
\ds \frac{\bigl(\lambda+k-\frac{d}{4}r+\frac{d_2}{2}\bigr)_{(\underline{k}_{r/2-1},k-l),d}}{(\lambda)_{(\underline{2k}_{r/2-1},2k-l,\underline{k}_{r/2-1},k-l),d}} & (r\colon\text{even}), \\
\ds \frac{\bigl(\lambda+k-\frac{d}{2}\bigl\lfloor \frac{r}{2}\bigr\rfloor+\frac{d_2}{2}+(k-l,\underline{0}_{\lfloor r/2\rfloor})\bigr)_{(l,\underline{k}_{\lfloor r/2\rfloor-1},k-l),d}}
{(\lambda)_{(\underline{2k}_{\lfloor r/2\rfloor},\underline{k}_{\lfloor r/2\rfloor},k-l),d}} & (r\colon\text{odd}). \end{cases} \label{const_general-}
\end{align}
If $r=3$, then since $d=2d_2$ holds, $C_{3,\pm}^{d,d_2}(\lambda,k,l)$ and $C^{d,d_2}(\lambda,\bk)$ in (\ref{const_rank3}) are related as 
\begin{align*}
C^{d,d_2}(\lambda,(k_1,k_2,k_2))&=C_{3,+}^{d,d_2}(\lambda,k_2,k_1-k_2), \\ 
C^{d,d_2}(\lambda,(k_2,k_2,k_3))&=C_{3,-}^{d,d_2}(\lambda,k_2,k_2-k_3). 
\end{align*}
Also, for general $r$ we have 
\[ C_{r,+}^{d,d_2}(\lambda,k,0)=C_{r,-}^{d,d_2}(\lambda,k,0)=C_{r+1,-}^{d,d_2}(\lambda,k,k)
=\frac{\bigl(\lambda+k-\frac{d}{2}\bigl\lceil\frac{r}{2}\bigr\rceil+\frac{d_2}{2}\bigr)_{\underline{k}_{\lfloor r/2\rfloor},d}}
{(\lambda)_{(\underline{2k}_{\lfloor r/2\rfloor},\underline{k}_{\lceil r/2\rceil}),d}}. \]

Now, for $\det_{\fn^+_2}(x_2)^k f(x_2)\in\cP_{(k+l,\underline{k}_{r-1})}(\fp^+_2)$, the following holds. 

\begin{theorem}\label{thm_general+}
Let $k,l\in\BZ_{\ge 0}$, $f(x_2)\in\cP_{(l,\underline{0}_{r-1})}(\fp^+_2)$. 
\begin{enumerate}
\item For $\Re\lambda>\frac{2n}{r}-1$, we have 
\[ \Bigl\langle \det_{\fn^+_2}(x_2)^kf(x_2),e^{(x|\overline{z})_{\fp^+}}\Bigr\rangle_{\lambda,x}
=C_{r,+}^{d,d_2}(\lambda,k,l)F_{l,+}^\downarrow\biggl(\begin{matrix} -k \\ -\lambda-2k+\frac{n}{r} \end{matrix};f;z\biggr). \]
\item As functions of $\lambda$, 
\begin{align*}
&(\lambda)_{(2k+l,\underline{2k}_{r/2-1},\underline{k}_{r/2}),d}\Bigl\langle \det_{\fn^+_2}(x_2)^kf(x_2),e^{(x|\overline{z})_{\fp^+}}\Bigr\rangle_{\lambda,x} && (r\colon\text{even}), \\
&(\lambda)_{(2k+l,\underline{2k}_{\lfloor r/2\rfloor-1},\min\{2k,k+l\},\underline{k}_{\lfloor r/2\rfloor}),d}
\Bigl\langle \det_{\fn^+_2}(x_2)^kf(x_2),e^{(x|\overline{z})_{\fp^+}}\Bigr\rangle{\vphantom{\Bigr|}}_{\lambda,x}
&& (r\colon\text{odd})
\end{align*}
are holomorphically continued for all $\lambda\in\BC$, and give non-zero polynomials in $z\in\fp^+$ for all $\lambda\in\BC$ if $f(x_2)\ne 0$. 
\item For $\mu,\nu\in\BC$, we have 
\[ F_{l,+}^\downarrow\biggl(\begin{matrix}\nu \\ \mu\end{matrix};f;z\biggr)=\det_{\fn^+}(z)^{\mu-2\nu}F_{l,+}^\downarrow\biggl(\begin{matrix}\mu-\nu \\ \mu\end{matrix};f;z\biggr). \]
Especially, if $\mu=a-2k$ and $\nu=-k$ with $a,k\in\BZ_{\ge 0}$, $a\le k$, then $F_{l,+}^\downarrow\bigl(\begin{smallmatrix}\nu \\ \mu\end{smallmatrix};f;z\bigr)$ is factorized as 
\[ F_{l,+}^\downarrow\biggl(\begin{matrix}-k \\ a-2k\end{matrix};f;z\biggr)=\det_{\fn^+}(z)^a F_{l,+}^\downarrow\biggl(\begin{matrix}a-k \\ a-2k\end{matrix};f;z\biggr). \]
\end{enumerate}
\end{theorem}

Theorem \ref{thm_general+}\,(1) and (3) for $\nu=-k\in\BZ_{\ge 0}$ case are already proved in \cite[Theorem 6.3\,(1)]{N2}, 
and we can prove (3) for general $\nu\in\BC$ by using the Zariski density of $-\BZ_{\ge 0}\subset\BC$. 
For Theorem \ref{thm_general+}\,(2), the holomorphy for general $r,k,l$ and the non-vanishing for $r$ even or $kl=0$ are given in \cite[Corollary 6.6\,(1)]{N2}, 
and the only new result is the non-vanishing for odd $r$ with $k,l\ne 0$. Next, for $\det_{\fn^+_2}(x_2)^kf(x_2^\itinv)\in\cP_{(\underline{k}_{r-1},k-l)}(\fp^+_2)$, the following holds. 
\begin{theorem}\label{thm_general-}
Let $k,l\in\BZ_{\ge 0}$ with $k\ge l$, and $f(x_2)\in\cP_{(l,\underline{0}_{r-1})}(\fp^+_2)$. 
\begin{enumerate}
\item For $\Re\lambda>\frac{2n}{r}-1$, we have 
\[ \Bigl\langle \det_{\fn^+_2}(x_2)^kf(x_2^\itinv),e^{(x|\overline{z})_{\fp^+}}\Bigr\rangle_{\lambda,x}
=C_{r,-}^{d,d_2}(\lambda,k,l)F_{l,-}^\downarrow\biggl(\begin{matrix} -k \\ -\lambda-2k+\frac{n}{r} \end{matrix};f;z\biggr). \]
\item As functions of $\lambda$, 
\begin{align*}
&(\lambda)_{(\underline{2k}_{r/2-1},2k-l,\underline{k}_{r/2-1},k-l),d}
\Bigl\langle \det_{\fn^+_2}(x_2)^kf(x_2^\itinv),e^{(x|\overline{z})_{\fp^+}}\Bigr\rangle_{\lambda,x} && (r\colon\text{even}), \\
&(\lambda)_{(\underline{2k}_{\lfloor r/2\rfloor},\underline{k}_{\lfloor r/2\rfloor},k-l),d}
\Bigl\langle \det_{\fn^+_2}(x_2)^kf(x_2^\itinv),e^{(x|\overline{z})_{\fp^+}}\Bigr\rangle_{\lambda,x} && (r\colon\text{odd})
\end{align*}
are holomorphically continued for all $\lambda\in\BC$, and give non-zero polynomials in $z\in\fp^+$ for all $\lambda\in\BC$ if $f(z_2)\ne 0$. 
\item For $\mu,\nu\in\BC$, we have 
\[ F_{l,-}^\downarrow\biggl(\begin{matrix}\nu \\ \mu\end{matrix};f;z\biggr)=\det_{\fn^+}(z)^{\mu-2\nu}F_{l,-}^\downarrow\biggl(\begin{matrix}\mu-\nu \\ \mu\end{matrix};f;z\biggr). \]
Especially, if $\mu=a-2k$ and $\nu=-k$ with $a,k\in\BZ_{\ge 0}$, $a\le k-l$, then $F_{l,-}^\downarrow\bigl(\begin{smallmatrix}\nu \\ \mu\end{smallmatrix};f;z\bigr)$ is factorized as 
\[ F_{l,-}^\downarrow\biggl(\begin{matrix}-k \\ a-2k\end{matrix};f;z\biggr)=\det_{\fn^+}(z)^a F_{l,-}^\downarrow\biggl(\begin{matrix}a-k \\ a-2k\end{matrix};f;z\biggr). \]
\end{enumerate}
\end{theorem}

The holomorphy in Theorem \ref{thm_general-}\,(2) was already proved in \cite[Theorem 6.2]{N3}, but the other results are new except for $l=0$ (Example \ref{example_scalar_general}) 
or $r=2$ (Example \ref{example_rank2_general}). 

\begin{remark}
For $+$ case, we proved in \cite[Theorem 6.3\,(1)]{N2}, 
\begin{align*}
&\Bigl\langle \det_{\fn^+_2}(x_2)^kf(x_2),e^{(x|\overline{z})_{\fp^+}}\Bigr\rangle_{\lambda,x} \\
&=\frac{1}{(\lambda)_{(2k+l,\underline{2k}_{r-1}),d}}\det_{\fn^+}(z)^{-\lambda+\frac{n}{r}}\det_{\fn^+_2}\biggl(\frac{\partial}{\partial z_2}\biggr)^k\det_{\fn^+}(z)^{\lambda+2k-\frac{n}{r}}f(z_2) \\
&=\frac{1}{(\lambda)_{(2k+l,\underline{2k}_{r-1}),d}}\det_{\fn^+}(z)^{-\lambda+\frac{n}{r}}\det_{\fn^+_2}\biggl(\frac{\partial}{\partial z_2}\biggr)^k
F_{l,+}^\downarrow\biggl(\begin{matrix} -\lambda-2k+\frac{n}{r} \\ -\lambda-2k+\frac{n}{r} \end{matrix};f;z\biggr) \\
&=C_{r,+}^{d,d_2}(\lambda,k,l)\det_{\fn^+}(z)^{-\lambda+\frac{n}{r}}F_{l,+}^\downarrow\biggl(\begin{matrix} -\lambda-k+\frac{n}{r} \\ -\lambda-2k+\frac{n}{r} \end{matrix};f;z\biggr) \\
&=C_{r,+}^{d,d_2}(\lambda,k,l)F_{l,+}^\downarrow\biggl(\begin{matrix} -k \\ -\lambda-2k+\frac{n}{r} \end{matrix};f;z\biggr). 
\end{align*}
Similarly, for $-$ case, by \cite[Theorem 3.3\,(1)]{N3} we have 
\begin{align}
&\Bigl\langle \det_{\fn^+_2}(x_2)^kf(x_2^\itinv),e^{(x|\overline{z})_{\fp^+}}\Bigr\rangle_{\lambda,x} \notag \\
&=\frac{\det_{\fn^+}(z)^{-\lambda+\frac{n}{r}}}{(\lambda)_{\underline{2(k-l)}_r,d}}\det_{\fn^+_2}\biggl(\frac{\partial}{\partial z_2}\biggr)^{k-l}
\det_{\fn^+}(x)^{\lambda+2(k-l)-\frac{n}{r}} \notag\\*
&\hspace{70pt}\times\Bigl\langle \det_{\fn^+_2}(x_2)^lf(x_2^\itinv),e^{(x|\overline{z})_{\fp^+}}\Bigr\rangle_{\lambda+2(k-l),x}, 
\label{formula_diff_expr_general-}
\end{align}
and combining with Proposition \ref{prop_shift_general} and Theorem \ref{thm_general-}\,(1), (3), we have 
\begin{align*}
&\Bigl\langle \det_{\fn^+_2}(x_2)^kf(x_2^\itinv),e^{(x|\overline{z})_{\fp^+}}\Bigr\rangle_{\lambda,x} \\
&=\frac{C_{r,-}^{d,d_2}(\lambda+2(k-l),l,l)}{(\lambda)_{\underline{2(k-l)}_r,d}}\det_{\fn^+}(z)^{-\lambda+\frac{n}{r}}\det_{\fn^+_2}\biggl(\frac{\partial}{\partial z_2}\biggr)^{k-l} \\*
&\hspace{105pt}\times \det_{\fn^+}(z)^{\lambda+2(k-l)-\frac{n}{r}}F_{l,-}^\downarrow\biggl(\begin{matrix} -l \\ -\lambda-2k+\frac{n}{r} \end{matrix};f;z\biggr) \\
&=\frac{\bigl(\lambda+2k-l-\frac{d}{2}\bigl\lfloor\frac{r}{2}\bigr\rfloor+\frac{d_2}{2}\bigr)_{\underline{l}_{\lceil r/2\rceil-1},d}}
{(\lambda)_{(\underline{2k}_{\lceil r/2\rceil-1},\underline{2k-l}_{\lfloor r/2\rfloor},2k-2l),d}}\det_{\fn^+}(z)^{-\lambda+\frac{n}{r}}\det_{\fn^+_2}\biggl(\frac{\partial}{\partial z_2}\biggr)^{k-l} \\*
&\hspace{180pt}\times F_{l,-}^\downarrow\biggl(\begin{matrix} -\lambda-2k+l+\frac{n}{r} \\ -\lambda-2k+\frac{n}{r} \end{matrix};f;z\biggr) \\
&=C_{r,-}^{d,d_2}(\lambda,k,l)\det_{\fn^+}(z)^{-\lambda+\frac{n}{r}}F_{l,-}^\downarrow\biggl(\begin{matrix} -\lambda-k+\frac{n}{r} \\ -\lambda-2k+\frac{n}{r} \end{matrix};f;z\biggr) \\
&=C_{r,-}^{d,d_2}(\lambda,k,l)F_{l,-}^\downarrow\biggl(\begin{matrix} -k \\ -\lambda-2k+\frac{n}{r} \end{matrix};f;z\biggr). 
\end{align*}
\end{remark}

First we prove Theorem \ref{thm_general-}\,(1). By \cite[Theorem 6.1]{N3} with $\bk=(\underline{k}_{r-1},\allowbreak k-l)$, $k_{r+1}=0$, when $d=2d_2$ we have 
\begin{align*}
&\frac{\bigl\langle \det_{\fn^+_2}(x_2)^kf(x_2^\itinv),e^{(x|\overline{z})_{\fp^+}}\bigr\rangle_{\lambda,x}}{\det_{\fn^+_2}(z_2)^kf(z_2^\itinv)}
=\frac{\prod_{a=2}^{2r-2}\prod_{i=\max\{1,a+1-r\}}^{\lfloor a/2\rfloor}\bigl(\lambda-\frac{d}{4}(a-1)\bigr)_{k_i+k_{a+1-i}}}
{\prod_{a=1}^{2r-1}\prod_{i=\max\{1,a+1-r\}}^{\lceil a/2\rceil}\bigl(\lambda-\frac{d}{4}(a-1)\bigr)_{k_i+k_{a+2-i}}} \\
&=\prod_{a=1}^{r-2}\frac{\prod_{i=1}^{\lfloor a/2\rfloor}\bigl(\lambda-\frac{d}{4}(a-1)\bigr)_{2k}}{\prod_{i=1}^{\lceil a/2\rceil}\bigl(\lambda-\frac{d}{4}(a-1)\bigr)_{2k}}
\prod_{a=r-1}^{r-1}\frac{\prod_{i=1}^{\lfloor a/2\rfloor}\bigl(\lambda-\frac{d}{4}(a-1)\bigr)_{2k}}
{\bigl(\lambda-\frac{d}{4}(a-1)\bigr)_{2k-l}\prod_{i=2}^{\lceil a/2\rceil}\bigl(\lambda-\frac{d}{4}(a-1)\bigr)_{2k}} \\*
&\eqspace{}\times\prod_{a=r}^{2r-3}\frac{\bigl(\lambda-\frac{d}{4}(a-1)\bigr)_{2k-l}\prod_{i=a+2-r}^{\lfloor a/2\rfloor}\bigl(\lambda-\frac{d}{4}(a-1)\bigr)_{2k}}
{\bigl(\lambda-\frac{d}{4}(a-1)\bigr)_{k}\bigl(\lambda-\frac{d}{4}(a-1)\bigr)_{2k-l}\prod_{i=a+3-r}^{\lceil a/2\rceil}\bigl(\lambda-\frac{d}{4}(a-1)\bigr)_{2k}} \\*
&\eqspace{}\times\prod_{a=2r-2}^{2r-2}\frac{\bigl(\lambda-\frac{d}{4}(a-1)\bigr)_{2k-l}}{\bigl(\lambda-\frac{d}{4}(a-1)\bigr)_{k}}
\prod_{a=2r-1}^{2r-1}\frac{1}{\bigl(\lambda-\frac{d}{4}(a-1)\bigr)_{k-l}} \\
&=\frac{1}{\prod_{a=1,\text{odd}}^{r-2}\bigl(\lambda-\frac{d}{4}(a-1)\bigr)_{2k}}
\frac{\prod_{a=r-1,\text{even}}^{r-1}\bigl(\lambda-\frac{d}{4}(a-1)+2k-l\bigr)_{l}}{\prod_{a=r-1,\text{odd}}^{r-1}\bigl(\lambda-\frac{d}{4}(a-1)\bigr)_{2k-l}} \\*
&\eqspace{}\times\frac{\prod_{a=r,\text{even}}^{2r-3}\bigl(\lambda-\frac{d}{4}(a-1)+k\bigr)_{k}}{\prod_{a=r,\text{odd}}^{2r-3}\bigl(\lambda-\frac{d}{4}(a-1)\bigr)_{k}}
\frac{\prod_{a=2r-2}^{2r-2}\bigl(\lambda-\frac{d}{4}(a-1)+k\bigr)_{k-l}}{\prod_{a=2r-1}^{2r-1}\bigl(\lambda-\frac{d}{4}(a-1)\bigr)_{k-l}} \\
&=\begin{cases} \ds \frac{\bigl(\lambda-\frac{d}{4}(r-1)+k\bigr)_{(\underline{k}_{r/2-1},k-l),d}}{(\lambda)_{(\underline{2k}_{r/2-1},2k-l,\underline{k}_{r/2-1},k-l),d}}
=\frac{\bigl(\lambda+k-\frac{d}{4}r+\frac{d_2}{2}\bigr)_{(\underline{k}_{r/2-1},k-l),d}}{(\lambda)_{(\underline{2k}_{r/2-1},2k-l,\underline{k}_{r/2-1},k-l),d}} & (r\colon\text{even}), \\
\ds \frac{\bigl(\lambda-\frac{d}{4}(r-2)+(2k-l,\underline{k}_{\lfloor r/2\rfloor})\bigr)_{(l,\underline{k}_{\lfloor r/2\rfloor-1},k-l),d}}
{(\lambda)_{(\underline{2k}_{\lfloor r/2\rfloor},\underline{k}_{\lfloor r/2\rfloor},k-l),d}} \\
\ds =\frac{\bigl(\lambda+k-\frac{d}{2}\bigl\lfloor\frac{r}{2}\bigr\rfloor+\frac{d_2}{2}+(k-l,\underline{0}_{\lfloor r/2\rfloor})\bigr)_{(l,\underline{k}_{\lfloor r/2\rfloor-1},k-l),d}}
{(\lambda)_{(\underline{2k}_{\lfloor r/2\rfloor},\underline{k}_{\lfloor r/2\rfloor},k-l),d}}& (r\colon\text{odd}), \end{cases}
\end{align*}and again by \cite[Theorem 6.1]{N3}, this also holds for $r=2$, $d\ne 2d_2$ case. 
Therefore, to prove (1), it is enough to show that $\bigl\langle \det_{\fn^+_2}(x_2)^kf(x_2^\itinv),e^{(x|\overline{z})_{\fp^+}}\bigr\rangle_{\lambda,x}$ is proportional to 
$F_{l,-}^\downarrow\bigl(\begin{smallmatrix} -k \\ -\lambda-2k+\frac{n}{r} \end{smallmatrix};f;z\bigr)$, and by the F-method, enough to demonstrate 
\begin{align}
\Bigl(\det_{\fn^+_2}(x_2)^kf(x_2^\itinv)\mapsto F_{l,-}^\downarrow\bigl(\begin{smallmatrix} -k \\ -\lambda-2k+\frac{n}{r} \end{smallmatrix};f;z\bigr)\Bigr)
\in \Hom_{K_1}\bigl(\cP_{(\underline{k}_{r-1},k-l)}(\fp^+_2),\Sol_{\cP(\fp^+)}((\cB_\lambda)_1)\bigr)&, \label{formula_Fmethod_general-}
\end{align}
where $(\cB_\lambda)_1$ is the orthogonal projection of $\cB_\lambda=\cB_\lambda^{\fp^+}\colon\cP(\fp^+)\to\cP(\fp^+)\otimes\fp^+$ given in (\ref{formula_Bessel_diff}) 
(with the identification $\fp^-\simeq\fp^+=\fn^{+\BC}$) onto $\fp^+_1$. 

\begin{proposition}
Let $\mu,\nu\in\BC$, $l\in\BZ_{\ge 0}$, $f(x_2)\in\cP_{(l,\underline{0}_{r-1})}(\fp^+_2)$. 
\begin{enumerate}
\item For $z=z_1+z_2\in\Omega$, $a_2\in\Omega_2$ with $z_1+a_2\in\Omega$ and for $\Re\mu>l+\frac{2n_2}{r}-1$, $\Re\nu>l+\frac{n_2}{r}-1$, we have 
\begin{align*}
F_{l,+}^\downarrow\biggl(\begin{matrix} \nu \\ \mu \end{matrix};f;z\biggr)
&=\frac{1}{(2\pi\sqrt{-1})^{n_2}}\frac{\Gamma_r^{d_2}(\mu-(\underline{0}_{r-1},l))}{\Gamma_r^{d_2}(\nu-(\underline{0}_{r-1},l))}
\int_{\Omega_2}e^{-(y_2|z_2)_{\fn^+_2}}\det_{\fn^+_2}(y_2)^{\nu-\mu} \\*
&\eqspace{}\times\biggl(\int_{a_2+\sqrt{-1}\fn^+_2}e^{(x_2|y_2)_{\fn^+_2}}\det_{\fn^+}(z_1+x_2)^{-\mu}f(x_2)\,dx_2\biggr)dy_2. 
\end{align*}
Similarly, for $\Re\mu>\frac{d_2}{2}+\frac{2n_2}{r}-1$, $\Re\nu>\frac{n_2}{r}-1$, we have 
\begin{align*}
F_{l,-}^\downarrow\biggl(\begin{matrix} \nu \\ \mu \end{matrix};f;z\biggr)
=\frac{1}{(2\pi\sqrt{-1})^{n_2}}\frac{\Gamma_r^{d_2}\bigl(\mu-\frac{d_2}{2}+(l,\underline{0}_{r-1})\bigr)}{\Gamma_r^{d_2}(\nu+(l,\underline{0}_{r-1}))}
\int_{\Omega_2}e^{-(y_2|z_2)_{\fn^+_2}}\det_{\fn^+_2}(y_2)^{\nu-\mu+\frac{d_2}{2}} \\*
{}\times\biggl(\int_{a_2+\sqrt{-1}\fn^+_2}e^{(x_2|y_2)_{\fn^+_2}}\det_{\fn^+_2}(x_2)^{-\frac{d_2}{2}}\det_{\fn^+}(z_1+x_2)^{-\mu+d_2}
f(x_2^\itinv)\,dx_2\biggr)dy_2&. 
\end{align*}
\item Suppose $\mu\in\BC$ is not a pole of $F_{l,\pm}^\downarrow\bigl(\begin{smallmatrix} \nu \\ \mu \end{smallmatrix};f;z\bigr)$. Then for $g\in \widetilde{K}^\BC_1$, we have 
\begin{align*}
F_{l,+}^\downarrow\biggl(\begin{matrix} \nu \\ \mu \end{matrix};f(g(\cdot));z\biggr)&=\det_{\fn^+_2}(ge)^\nu F_{l,+}^\downarrow\biggl(\begin{matrix} \nu \\ \mu \end{matrix};f;gz\biggr), \\
F_{l,-}^\downarrow\biggl(\begin{matrix} \nu \\ \mu \end{matrix};f({}^t\hspace{-1pt}g^{-1}(\cdot));z\biggr)
&=\det_{\fn^+_2}(ge)^\nu F_{l,-}^\downarrow\biggl(\begin{matrix} \nu \\ \mu \end{matrix};f;gz\biggr). 
\end{align*}
\item $F_{l,\pm}^\downarrow\bigl(\begin{smallmatrix} \nu \\ \mu \end{smallmatrix};f;z\bigr)$ satisfies the differential equation 
\[ (\cB_{-\mu+2\nu+\frac{n}{r}})_1F_{l,\pm}^\downarrow\biggl(\begin{matrix} \nu \\ \mu \end{matrix};f;z\biggr)=0. \]
\end{enumerate}
\end{proposition}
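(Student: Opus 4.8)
The plan is to follow closely the template of the corresponding rank-$3$ result in Section \ref{section_rank3}, using that here $\fp^+$ and $\fp^+_2$ are classical of tube type, so that the Freudenthal and Jordan identities of that section remain available (in particular $P(x)y=xyx$, the directional-derivative formulas for $x^\itinv$ and $\det_{\fn^+}$, and the relation \eqref{formula_Freudenthal}). I would dispose of part (2) first, since it uses only the definitions \eqref{def_Fdown_general+}, \eqref{def_Fdown_general-}: the maps $f\mapsto F_{\bm,\bl}^+[f]$ and $f\mapsto F_{\bm,\bl}^-[f]$ are $\widetilde{K}_1^\BC$-equivariant, as already used in the proof of Proposition \ref{prop_estimate_Fml_general}; moreover $\det_{\fn^+_2}(gx_2)=\det_{\fn^+_2}(ge)\det_{\fn^+_2}(x_2)$ for $g\in\widetilde{K}_1^\BC$, and $x_2\mapsto x_2^\itinv$ intertwines the $\widetilde{K}_1^\BC$-action with its contragredient. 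Substituting these facts into \eqref{def_Fdown_general+} and \eqref{def_Fdown_general-} and reindexing yields the two transformation laws.

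For part (1), I would begin from the right-hand side, insert the binomial-type expansion \eqref{binom_general} of $\det_{\fn^+}(z_1+x_2)^{-\mu}$ in the $+$ case and of $\det_{\fn^+}(z_1+x_2)^{-\mu+d_2}$ in the $-$ case, and then multiply by $f(x_2)$, respectively by $\det_{\fn^+_2}(x_2)^{-d_2/2}f(x_2^\itinv)$. Setting $y_2=x_2^\itinv$ and recalling that $F_{\bm,\bl}^-[f](x_1,y_2)$, respectively $F_{\bm,\bl}^+[f](x_1,y_2)$, are by definition the $\Proj^{\fp^-_2}_{\phi_-(\bm,\bl)}$-, respectively $\Proj^{\fp^-_2}_{\phi_+(\bm,\bl)}$-components of $\tilde{\Phi}_\bm^{\fp^+_1,\fp^-_2}(x_1,y_2)f(y_2^\itinv)$, respectively $\tilde{\Phi}_\bm^{\fp^+_1,\fp^-_2}(x_1,y_2)f(y_2)$, the inner integrand over $a_2+\sqrt{-1}\fn^+_2$ becomes, term by term in $\bm$ and $\bl$, of the shape $e^{(x_2|y_2)_{\fn^+_2}}(\text{polynomial evaluated at }x_2^\itinv)\det_{\fn^+_2}(x_2)^{-\lambda_\bm}$ with an explicit $\lambda_\bm$. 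Lemma \ref{lem_Laplace}(2) evaluates each inner contour integral, producing a Gindikin $\Gamma$-factor and a function of $y_2$ of the same shape in $y_2^\itinv$; Lemma \ref{lem_Laplace}(1) then evaluates the outer integral over $\Omega_2$, producing a second Gindikin $\Gamma$-factor and $F_{\bm,\bl}^\mp[f](z_1,z_2^\itinv)\det_{\fn^+_2}(z_2)^{-\nu}$. Collecting the two $\Gamma$-ratios and re-expressing them through the Pochhammer symbols $(\lambda+\bs)_{\bm,d}$ reproduces exactly the coefficients in \eqref{def_Fdown_general+} and \eqref{def_Fdown_general-}, while the leftover constant is precisely the displayed quotient of $\Gamma_r^{d_2}$'s. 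Since $\phi_\pm(\bm,\bl)$ need not lie in $\BZ_{++}^r$, one first shifts by a power of $\det_{\fn^+_2}$ to bring the relevant polynomials into $\cP_\bk(\fp^+_2)$ with $\bk\ge 0$ before invoking Lemma \ref{lem_Laplace}.

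For part (3) I would differentiate the integral representation of part (1) under the integral sign and show $(\cB_\lambda)_1$ annihilates it for $\lambda=-\mu+2\nu+\frac{n}{r}$. The operator $(\cB_\lambda)_1$ involves only $\partial/\partial z_1$ and $\partial/\partial z_2$, and the $z$-dependence of the integrand is carried by $e^{-(y_2|z_2)_{\fn^+_2}}$ and by $\det_{\fn^+}(z_1+x_2)^{-\mu}$ (resp.\ its $-$-case analogue, together with an exponential in $z_1x_2^\itinv$-type quantities once \eqref{binom_general} is used). One computes $(\cB_\lambda)_1$ on this product using the Jordan derivative formulas, $P(x)y=xyx$, \eqref{formula_Freudenthal}, and a $\fp^+_1$-module identity of the same flavour as the one employed in Section \ref{section_rank3}; the terms that acquire a $y_2$-derivative are moved by integration by parts over $\Omega_2$ onto the factor $\det_{\fn^+_2}(y_2)^{\nu-\mu}e^{(x_2|y_2)_{\fn^+_2}}$, which is where the parameter $\nu$ finally enters; the $y_2$-derivatives coming from $e^{(x_2|y_2)_{\fn^+_2}}$ are then converted to $x_2$-derivatives and removed by a further integration by parts over $a_2+\sqrt{-1}\fn^+_2$. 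After these manipulations the operator acting on the inner integrand reduces to a scalar plus the Euler operator in $x_2$, and the scalar is arranged (by the choice of $\lambda$) to be the negative of the homogeneity degree of the inner integrand in $x_2$; hence the sum vanishes, which is the assertion $(\cB_{-\mu+2\nu+n/r})_1F_{l,\pm}^\downarrow=0$.

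The main obstacle is the part (3) computation: exactly as in Section \ref{section_rank3} it is a lengthy chain of integrations by parts interlaced with Jordan-algebraic identities, and here one must in addition carry uniformly the $r$ even/odd dichotomy — which changes both the shape of $\phi_\pm(\bm,\bl)$ and the normalizing Pochhammer factors in \eqref{def_Fdown_general+}, \eqref{def_Fdown_general-} — and the degenerate case $r=2$, Case $1$, where $d\ne 2d_2$; as in \cite{N3} the bookkeeping in part (1) should be set up so as to absorb that case with no separate argument. A secondary technical point, already present in part (1), is matching the two-step product of Gindikin $\Gamma$-ratios against the exact Pochhammer coefficients defining $F_{l,\pm}^\downarrow$.
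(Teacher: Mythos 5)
Your treatment of parts (1) and (2) is essentially the paper's: (2) is immediate from the $\widetilde{K}_1^\BC$-equivariance of $f\mapsto F_{\bm,\bl}^\pm[f]$, and (1) is exactly the expansion (\ref{binom_general}) of $\det_{\fn^+}(z_1+x_2)^{-\mu}$ (resp.\ $\det_{\fn^+}(z_1+x_2)^{-\mu+d_2}$) followed by the two Gindikin Laplace-transform evaluations of Lemma \ref{lem_Laplace}, with the $\Gamma$-ratios recombining into the Pochhammer coefficients of (\ref{def_Fdown_general+})--(\ref{def_Fdown_general-}); the paper in fact only writes this out for the $-$ case and cites \cite[Proposition 6.4]{N2} for the $+$ case, so doing both directly is fine.

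Part (3), however, has a genuine gap in its endgame. You claim that after the integrations by parts the operator acting on the inner integrand reduces to ``a scalar plus the Euler operator in $x_2$'' and that the vanishing then follows from homogeneity, by analogy with Section \ref{section_rank3}. That mechanism is specific to the rank-3 kernel $e^{\zeta X^\itinv{}^t\hspace{-1pt}\hat{\zeta}}$, where the remaining factor $\det(X)^{\lambda+|\bk|-\frac{3}{2}d_2-1}f(X^\itinv)$ is genuinely homogeneous and the argument works for \emph{arbitrary} $\bk$. In the present setting the kernel is $\det_{\fn^+}(z_1+x_2)^{-\mu}$, and the correct reduction (the one carried out here, with the integral manipulation cited from \cite{N2}) is that $(\cB_{-\mu+2\nu+\frac{n}{r}})_1$ passes through the two integrals and lands as $(\cB_{-\mu+d_2+\frac{n}{r}})_1$ on the $z_1$-independent factor, so that everything hinges on the identity $(\cB_\lambda)_1\bigl[\det_{\fn^+_2}(x_2)^{-d_2/2}f(x_2^\itinv)\bigr]=0$ (resp.\ $(\cB_\lambda)_1 f(x_2)=0$ in the $+$ case). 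For a function of $x_2$ alone, $(\cB_\lambda)_1$ is purely the second-order term $\tfrac12\sum_{\alpha\beta}P(E_\alpha^\vee,E_\beta^\vee)z_1\,\partial^2/\partial X_\alpha\partial X_\beta$ --- it is not of the form ``scalar plus Euler operator'' and its vanishing is not a homogeneity statement. It holds precisely because $f\in\cP_{(l,\underline{0}_{r-1})}(\fp^+_2)$: for the highest weight vector $f(Y)=(Y_{11})^l$ the Hessian is supported on the single Peirce block $\fp^+_{11}$, and $P(E_{11},E_{11})\eta=0$ for $\eta\in\fp^+_1$ since $\fp^+_{11}\cap\fp^+_1=\{0\}$; $K_1$-equivariance then gives the vanishing for all single-row $f$ (together with the auxiliary Jordan identities $\sum_\alpha P(E_\alpha')\eta=-\tfrac{d_2}{2}\eta$, $P(P(X)Y,P(X)Z)=P(X)P(Y,Z)P(X)$, which replace the $\Herm(3,\BF)$-specific identities such as $P(x)y=xyx$ and (\ref{formula_Freudenthal}) that you invoke). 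Your proposal never uses the single-row hypothesis in part (3), and a purely homogeneity-based cancellation cannot be right: it would produce solutions of $(\cB_\lambda)_1 F=0$ from the same integral for arbitrary $f\in\cP_\bk(\fp^+_2)$ with $|\bk|=l$, contradicting the fact that these closed formulas are available only for (det-shifted) single-row $\bk$ --- this is exactly why general $\bk$ requires the entirely different rank-3 analysis of Section \ref{section_rank3}. So you must replace the ``scalar plus Euler'' step by the reduction to, and explicit verification of, the annihilation of the inner single-row factor by the $\fp^+_1$-part of the Bessel operator.
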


For $F_{l,+}^\downarrow\bigl(\begin{smallmatrix} \nu \\ \mu \end{smallmatrix};f;z\bigr)$, this proposition is a repost of \cite[Proposition 6.4]{N2}, 
and we only prove this for $F_{l,-}^\downarrow\bigl(\begin{smallmatrix} \nu \\ \mu \end{smallmatrix};f;z\bigr)$. 

\begin{proof}
(1) By (\ref{binom_general}) and the definition of $F_{\bm,\bl}^+[f](x_1,y_2)$, we have 
\begin{align*}
&\det_{\fn^+}(z_1+x_2)^{-\mu+d_2}f(x_2^\itinv)=\det_{\fn^+_2}(x_2)^{-\mu+d_2}h_{\fn^+}(z_1,P(x_2^\itinv)z_1)^{(-\mu+d_2)/2}f(x_2^\itinv) \\
&=\det_{\fn^+_2}(x_2)^{-\mu+d_2}\sum_{\bm\in\BZ_{++}^{\lfloor r/2\rfloor}}(\mu-d_2)_{\bm,d}\tilde{\Phi}_\bm^{\fp^+_1,\fp^-_2}(z_1,x_2^\itinv)f(x_2^\itinv) \\
&=\det_{\fn^+_2}(x_2)^{-\mu+d_2}\sum_{\bm\in\BZ_{++}^{\lfloor r/2\rfloor}}\sum_{\substack{\bl\in(\BZ_{\ge 0})^{\lceil r/2\rceil} \\ |\bl|=l}}(\mu-d_2)_{\bm,d}F_{\bm,\bl}^+[f](z_1,x_2^\itinv), 
\end{align*}
and hence by Lemma \ref{lem_Laplace}, we get 
\begin{align*}
&\frac{1}{(2\pi\sqrt{-1})^{n_2}}\frac{\Gamma_r^{d_2}\bigl(\mu-\frac{d_2}{2}+(l,\underline{0}_{r-1})\bigr)}{\Gamma_r^{d_2}(\nu+(l,\underline{0}_{r-1}))}
\int_{\Omega_2}e^{-(y_2|z_2)_{\fn^+_2}}\det_{\fn^+_2}(y_2)^{\nu-\mu+\frac{d_2}{2}} \\*
&\eqspace{}\times\biggl(\int_{a_2+\sqrt{-1}\fn^+_2}e^{(x_2|y_2)_{\fn^+_2}}\det_{\fn^+_2}(x_2)^{-\frac{d_2}{2}}\det_{\fn^+}(z_1+x_2)^{-\mu+d_2}f(x_2^\itinv)\,dx_2\biggr)dy_2 \\
&=\frac{1}{(2\pi\sqrt{-1})^n}\frac{\Gamma_r^{d_2}\bigl(\mu-\frac{d_2}{2}+(l,\underline{0}_{r-1})\bigr)}{\Gamma_r^{d_2}(\nu+(l,\underline{0}_{r-1}))}
\int_{\Omega_2}e^{-(y_2|z_2)_{\fn^+_2}}\det_{\fn^+_2}(y_2)^{\nu-\mu+\frac{d_2}{2}} \\*
&\eqspace{}\times\biggl(\int_{a_2+\sqrt{-1}\fn^+_2}e^{(x_2|y_2)_{\fn^+_2}}\det_{\fn^+_2}(x_2)^{-\mu+\frac{d_2}{2}}
\sum_{\bm,\bl}(\mu-d_2)_{\bm,d}F_{\bm,\bl}^+[f](z_1,x_2^\itinv)\,dx_2\biggr)dy_2 \\
&=\frac{\Gamma_r^{d_2}\bigl(\mu-\frac{d_2}{2}+(l,\underline{0}_{r-1})\bigr)}{\Gamma_r^{d_2}(\nu+(l,\underline{0}_{r-1}))}
\int_{\Omega_2}e^{-(y_2|z_2)_{\fn^+_2}}\det_{\fn^+_2}(y_2)^{\nu-\frac{n_2}{r}} \sum_{\bm,\bl}
\frac{(\mu-d_2)_{\bm,d}F_{\bm,\bl}^+[f](z_1,y_2)}{\Gamma_r^{d_2}\bigl(\mu-\frac{d_2}{2}+\phi_+(\bm,\bl)\bigr)}\,dy_2 \\
&=\frac{1}{\Gamma_r^{d_2}(\nu+(l,\underline{0}_{r-1}))}
\int_{\Omega_2}e^{-(y_2|z_2)_{\fn^+_2}} \sum_{\bm,\bl}\frac{(\mu-d_2)_{\bm,d}\det_{\fn^+_2}(y_2)^{\nu-\frac{n_2}{r}}F_{\bm,\bl}^+[f](z_1,y_2)}
{\bigl(\mu-\frac{d_2}{2}+(l,\underline{0}_{r-1})\bigr)_{\phi_+(\bm,\bl)-(l,\underline{0}_{r-1}),d_2}}\,dy_2 \\
&=\frac{1}{\Gamma_r^{d_2}(\nu+(l,\underline{0}_{r-1}))}
\int_{\Omega_2}e^{-(y_2|z_2)_{\fn^+_2}} \sum_{\bm,\bl}\frac{\det_{\fn^+_2}(y_2)^{\nu-\frac{n_2}{r}}F_{\bm,\bl}^+[f](z_1,y_2)}
{\bigl(\mu-\frac{d_2}{2}+(l,\underline{0}_{\lceil r/2\rceil-1})\bigr)_{\bm+\bl-(l,\underline{0}_{\lceil r/2\rceil-1}),d}}\,dy_2 \\
&=\sum_{\bm,\bl}\frac{\Gamma_r^{d_2}(\nu+\phi_+(\bm,\bl))}{\Gamma_r^{d_2}(\nu+(l,\underline{0}_{r-1}))}
\frac{\det_{\fn^+_2}(z_2)^{-\nu}F_{\bm,\bl}^+[f](z_1,z_2^\itinv)}{\bigl(\mu-\frac{d_2}{2}+(l,\underline{0}_{\lceil r/2\rceil-1})\bigr)_{\bm+\bl-(l,\underline{0}_{\lceil r/2\rceil-1}),d}} \\
&=\sum_{\bm,\bl}\frac{(\nu+(l,\underline{0}_{r-1}))_{\phi_+(\bm,\bl)-(l,\underline{0}_{r-1}),d_2}}
{\bigl(\mu-\frac{d_2}{2}+(l,\underline{0}_{\lceil r/2\rceil-1})\bigr)_{\bm+\bl-(l,\underline{0}_{\lceil r/2\rceil-1}),d}}\det_{\fn^+_2}(z_2)^{-\nu} F_{\bm,\bl}^+[f](z_1,z_2^\itinv) \\
&=\sum_{\bm,\bl}\frac{(\nu+(l,\underline{0}_{\lceil r/2\rceil-1}))_{\bm+\bl-(l,\underline{0}_{\lceil r/2\rceil-1}),d}\bigl(\nu-\frac{d_2}{2}\bigr)_{\bm,d}}
{\bigl(\mu-\frac{d_2}{2}+(l,\underline{0}_{\lceil r/2\rceil-1})\bigr)_{\bm+\bl-(l,\underline{0}_{\lceil r/2\rceil-1}),d}}\det_{\fn^+_2}(z_2)^{-\nu} F_{\bm,\bl}^+[f](z_1,z_2^\itinv) \\
&=F_{l,-}^\downarrow\biggl(\begin{matrix} \nu \\ \mu \end{matrix};f;z\biggr). 
\end{align*}

(2) Since $F_{\bm,\bl}^+[f](z_1,z_2^\itinv)$ satisfies 
\[ F_{\bm,\bl}^+[f({}^t\hspace{-1pt}g^{-1}(\cdot))](z_1,z_2^\itinv)=F_{\bm,\bl}^+[f](gz_1,(gz_2)^\itinv), \]
we get the desired formula. 

(3) As in the proof of \cite[Proposition 6.4]{N2}, we can show that 
\begin{align*}
&(\cB_{-\mu+2\nu+\frac{n}{r}})_1\int_{\Omega_2}e^{-(y_2|z_2)_{\fn^+_2}}\det_{\fn^+_2}(y_2)^{\nu-\mu+\frac{d_2}{2}}\biggl(\int_{a_2+\sqrt{-1}\fn^+_2}e^{(x_2|y_2)_{\fn^+_2}} \\*
&\eqspace{}\times\det_{\fn^+}(z_1+x_2)^{-\mu+d_2}\det_{\fn^+_2}(x_2)^{-\frac{d_2}{2}}f(x_2^\itinv)\,dx_2\biggr)dy_2 \\
&=\int_{\Omega_2}e^{-(y_2|z_2)_{\fn^+_2}}\det_{\fn^+_2}(y_2)^{\nu-\mu+\frac{d_2}{2}}\biggl(\int_{a_2+\sqrt{-1}\fn^+_2}e^{(x_2|y_2)_{\fn^+_2}} \\*
&\eqspace{}\times\det_{\fn^+}(z_1+x_2)^{-\mu+d_2}(\cB_{-\mu+d_2+\frac{n}{r}})_1\det_{\fn^+_2}(x_2)^{-\frac{d_2}{2}}f(x_2^\itinv)\,dx_2\biggr)dy_2. 
\end{align*}
Hence it is enough to prove $(\cB_{\lambda})_1\det_{\fn^+_2}(x_2)^{-d_2/2}f(x_2^\itinv)=0$. 
In the following, we write $(z_1,x_2)=(\zeta,X)$, let $\{E_\alpha\}\subset \fp^+_2$ be a basis with the dual basis $\{E_\alpha^\vee\}\subset \fp^+_2$, 
and let $\frac{\partial}{\partial X_\alpha}$, $\frac{\partial}{\partial Y_\alpha^\vee}$ denote the directional derivatives along $E_\alpha$ and $E_\alpha^\vee$. Then we have 
\begin{align*}
&(\cB_\lambda)_1\det_{\fn^+_2}(X)^{-d_2/2}f(X^\itinv)
=\frac{1}{2}\sum_{\alpha\beta}P(E_\alpha^\vee,E_\beta^\vee)\zeta\frac{\partial^2}{\partial X_\alpha\partial X_\beta} \det_{\fn^+_2}(X)^{-d_2/2}f(X^\itinv) \\
&=\frac{1}{2}\sum_{\alpha\beta}P(E_\alpha^\vee,E_\beta^\vee)\zeta\det_{\fn^+_2}(X)^{-d_2/2} \\*
&\eqspace{}\times\biggl(\biggl(\frac{d_2^2}{4}(E_\alpha|X^\itinv)_{\fn^+_2}(E_\beta|X^\itinv)_{\fn^+_2}+\frac{d_2}{2}(E_\beta|P(X)^{-1}E_\alpha)_{\fn^+_2}\biggr)f(X^\itinv) \\*
&\hspace{40pt}-d_2(E_\beta|X^\itinv)_{\fn^+_2}\frac{\partial}{\partial X_\alpha}f(X^\itinv)+\frac{\partial^2}{\partial X_\alpha \partial X_\beta}f(X^\itinv)\biggr) \\
&=\det_{\fn^+_2}(X)^{-d_2/2}\biggl(\biggl(\frac{d_2^2}{4}P(X^\itinv)\zeta+\frac{d_2}{4}\sum_{\alpha}P(E_\alpha^\vee,P(X)^{-1}E_\alpha)\zeta\biggr)f(X^\itinv) \\*
&\eqspace{}+\frac{d_2}{2}\sum_\alpha P(E_\alpha^\vee,X^\itinv)\zeta\sum_\gamma (E_\gamma|P(X)^{-1}E_\alpha)_{\fn^+_2}\frac{\partial f}{\partial Y_\gamma^\vee}(X^\itinv) \\*
&\eqspace{}+\frac{1}{2}\sum_{\alpha\beta}P(E_\alpha^\vee,E_\beta^\vee)\zeta\biggl(\sum_{\gamma\delta}(E_\gamma|P(X^\itinv)E_\alpha)_{\fn^+_2}(E_\delta|P(X^\itinv)E_\beta)_{\fn^+_2}
\frac{\partial^2f}{\partial Y_\gamma^\vee \partial Y_\delta^\vee}(X^\itinv) \\*
&\eqspace{}+\sum_{\gamma}(E_\gamma|P(X^\itinv,P(X)^{-1}E_\alpha)E_\beta)_{\fn^+_2}\frac{\partial f}{\partial Y_\gamma^\vee}(X^\itinv)\biggr)\biggr) \\
&=\det_{\fn^+_2}(X)^{-d_2/2}\biggl(\biggl(\frac{d_2^2}{4}P(X)^{-1}\zeta+\frac{d_2}{4}\sum_{\alpha}P(E_\alpha^\vee,P(X)^{-1}E_\alpha)\zeta\biggr)f(X^\itinv) \\*
&\eqspace{}+\frac{d_2}{2}\sum_\gamma P(P(X)^{-1}E_\gamma,X^\itinv)\zeta \frac{\partial f}{\partial Y_\gamma^\vee}(X^\itinv) \\*
&\eqspace{}+\frac{1}{2}\sum_{\gamma\delta}P(P(X)^{-1}E_\gamma,P(X)^{-1}E_\delta)\zeta\frac{\partial^2f}{\partial Y_\gamma^\vee \partial Y_\delta^\vee}(X^\itinv) \\*
&\eqspace{}+\frac{1}{2}\sum_{\alpha\gamma}P(E_\alpha^\vee,P(X^\itinv,P(X)^{-1}E_\alpha)E_\gamma)\zeta\frac{\partial f}{\partial Y_\gamma^\vee}(X^\itinv)\biggr). 
\end{align*}
Then at the 5th term, since $\sum_{\alpha}P(E_\alpha^\vee,P(X^\itinv,P(X)^{-1}E_\alpha)E_\gamma)\zeta$ does not depend on the choice of $\{E_\alpha\}$ and $\{E_\alpha^\vee\}$, 
we may replace these with $\{P(X^{\mathit{1/2}})E_\alpha'\}$ and $\{P(X^{-\mathit{1/2}})E_\alpha'\}$ by using an orthonormal basis $\{E_\alpha'\}\subset\fn^+_2$. 
Then by using the equality $X^\itinv=P(X^{-\mathit{1/2}})E$, where $E\in\fn^+_2$ is the unit element, and $P(P(X)Y,P(X)Z)=P(X)P(Y,Z)P(X)$, we have 
\begin{align*}
&\sum_{\alpha}P(E_\alpha^\vee,P(X^\itinv,P(X)^{-1}E_\alpha)E_\gamma)\zeta \\
&=\sum_{\alpha}P(P(X^{-\mathit{1/2}})E_\alpha',P(P(X^{-\mathit{1/2}})E,P(X^{-\mathit{1/2}})E_\alpha')E_\gamma)\zeta \\
&=\sum_{\alpha}P(X^{-\mathit{1/2}})P(E_\alpha',P(E,E_\alpha')P(X^{-\mathit{1/2}})E_\gamma)P(X^{-\mathit{1/2}})\zeta \\
&=\hspace{-1pt}P(X^{-\mathit{1/2}})\hspace{-1pt}\sum_{\alpha}\hspace{-1pt}\bigl(\hspace{-1pt}P(E_\alpha')D_{\fn^+}\hspace{-1pt}(P(X^{-\mathit{1/2}})E_\gamma,E)\hspace{-1pt}
+\hspace{-1pt}D_{\fn^+}\hspace{-1pt}(E,P(X^{-\mathit{1/2}})E_\gamma)P(E_\alpha')\bigr) P(X^{-\mathit{1/2}})\zeta \\
&=P(X^{-\mathit{1/2}})\biggl(-\frac{d_2}{2}D_{\fn^+}(P(X^{-\mathit{1/2}})E_\gamma,E)-\frac{d_2}{2}D_{\fn^+}(E,P(X^{-\mathit{1/2}})E_\gamma)\biggr) P(X^{-\mathit{1/2}})\zeta \\
&=-d_2P(X^{-\mathit{1/2}})P(P(X^{-\mathit{1/2}})E_\gamma,E)P(X^{-1/2})\zeta=-d_2P(P(X)^{-1}E_\gamma,X^\itinv)\zeta, 
\end{align*}
where at the 3nd equality we have used \cite[Part V, Proposition I.2.1\,(J2.3)]{FKKLR}, at the 4th equality $\sum_{\alpha}P(E_\alpha')\eta=-\frac{d_2}{2}\eta$ follows from 
direct computation with an explicit basis, and at the 5th equality we have used $D_{\fn^+}(Y,E)=D_{\fn^+}(E,Y)=P(Y,E)$. Similarly, at the 2nd term we have 
\begin{align*}
&\sum_{\alpha}P(E_\alpha^\vee,P(X)^{-1}E_\alpha)\zeta=\sum_{\alpha}P(P(X^{-\mathit{1/2}})E_\alpha^\vee,P(X^{-\mathit{1/2}})E_\alpha)\zeta \\
&=P(X^{-\mathit{1/2}})\sum_{\alpha}P(E_\alpha',E_\alpha')P(X^{-\mathit{1/2}})\zeta=-d_2P(X^{-\mathit{1/2}})^2\zeta=-d_2P(X)^{-1}\zeta. 
\end{align*}
Therefore we get 
\begin{align*}
&(\cB_\lambda)_1\det_{\fn^+_2}(X)^{-d_2/2}f(X^\itinv) \\
&=\det_{\fn^+_2}(X)^{-d_2/2}\biggl(\biggl(\frac{d_2^2}{4}P(X)^{-1}\zeta-\frac{d_2^2}{4}P(X)^{-1}\zeta\biggr)f(X^\itinv) \\*
&\eqspace{}+\frac{d_2}{2}\sum_\gamma P(P(X)^{-1}E_\gamma,X^\itinv)\zeta \frac{\partial f}{\partial Y_\gamma^\vee}(X^\itinv) \\*
&\eqspace{}+\frac{1}{2}\sum_{\gamma\delta}P(P(X)^{-1}E_\gamma,P(X)^{-1}E_\delta)\zeta\frac{\partial^2f}{\partial Y_\gamma^\vee \partial Y_\delta^\vee}(X^\itinv) \\*
&\eqspace{}-\frac{d_2}{2}\sum_\gamma P(P(X)^{-1}E_\gamma,X^\itinv)\zeta \frac{\partial f}{\partial Y_\gamma^\vee}(X^\itinv)\biggr) \\ 
&=\frac{1}{2}\det_{\fn^+_2}(X)^{-d_2/2}P(X)^{-1}\sum_{\gamma\delta}P(E_\gamma,E_\delta)P(X)^{-1}\zeta
\frac{\partial^2f}{\partial Y_\gamma^\vee \partial Y_\delta^\vee}(X^\itinv). 
\end{align*}
Now, let $\fp^+=\bigoplus_{1\le i<j\le r}\fp^+_{ij}$ be the simultaneous Peirce decomposition given in Section \ref{subsection_simul_Peirce} 
with respect to a Jordan frame $\{e_1,\ldots,e_r\}\subset\fn^+_2\subset\fn^+$. 
If $f(Y)=(Y_{11})^l\in\cP_{(l,\underline{0}_{r-1})}(\fp^+_2)$, then since $\fp^+_{11}\cap\fp^+_1=\{0\}$, for $\eta\in\fp^+_1$ we have 
\begin{align*}
\sum_{\gamma\delta}P(E_\gamma,E_\delta)\eta\frac{\partial^2f}{\partial Y_\gamma^\vee \partial Y_\delta^\vee}(Y)=l(l-1)P(E_{11},E_{11})\eta (Y_{11})^{l-2}=0, 
\end{align*}
and by the $K_1$-equivariance, $\sum_{\gamma\delta}P(E_\gamma,E_\delta)\eta\frac{\partial^2f}{\partial Y_\gamma^\vee \partial Y_\delta^\vee}(Y)=0$ holds 
for all $f\in\cP_{(l,\underline{0}_{r-1})}(\fp^+_2)$. 
Hence we get $(\cB_{\lambda})_1\det_{\fn^+_2}(x_2)^{-d_2/2}f(x_2^\itinv)=0$ for all $f\in\cP_{(l,\underline{0}_{r-1})}(\fp^+_2)$, 
and also get $(\cB_{-\mu+2\nu+\frac{n}{r}})_1F_{l,-}^\downarrow\bigl(\begin{smallmatrix} \nu \\ \mu \end{smallmatrix};f;z\bigr)=0$ 
for $\Re\mu>\frac{d_2}{2}+\frac{2n_2}{r}-1$, $\Re\nu>\frac{n_2}{r}-1$. 
Then by analytic continuation, this holds for all $\mu,\nu\in\BC$ except for poles. 
\end{proof}

Especially, when $\mu=-\lambda-2k+\frac{n}{r}$, $\nu=-k$ with $k\in\BZ_{\ge 0}$, $k\ge l$, we have (\ref{formula_Fmethod_general-}), and this proves Theorem \ref{thm_general-}\,(1). 
Next we prove Theorems \ref{thm_general+}\,(2) and \ref{thm_general-}\,(2). 

\begin{proof}[Proof of Theorem \ref{thm_general+}\,(2)]
Let $\mu:=-\lambda-2k+\frac{n}{r}=-\lambda-2k+1+\frac{d}{2}(r-1)$. 
We recall from the proof of \cite[Corollary 6.6\,(1)]{N2} that, by Theorem \ref{thm_general+}\,(1), for $f(x_2)\in\cP_{(l,\underline{0}_{r-1})}(\fp^+_2)$, for even $r$ we have 
\begin{align}
&(\lambda)_{(2k+l,\underline{2k}_{r/2-1},\underline{k}_{r/2}),d}\Bigl\langle \det_{\fn^+_2}(x_2)^kf(x_2),e^{(x|\overline{z})_{\fp^+}}\Bigr\rangle_{\lambda,x} \notag \\
&=\biggl(\lambda+k-\frac{d}{4}r+\frac{d_2}{2}+(l,\underline{0}_{r/2-1})\biggr)_{\underline{k}_{r/2},d}F_{l,+}^\downarrow\biggl( \begin{matrix} -k \\ \mu \end{matrix};f;z\biggr) \notag \\
&=(-1)^{kr/2}\biggl(\mu-\frac{d_2}{2}-(\underline{0}_{r/2-1},l)\biggr)_{\underline{k}_{r/2},d}\det_{\fn^+_2}(z_2)^k \notag \\*
&\eqspace{}\times \sum_{\bm\in\BZ_{++}^{\lfloor r/2\rfloor}}\sum_{\substack{\bl\in(\BZ_{\ge 0})^{\lceil r/2\rceil} \\ |\bl|=l}}
\frac{(-k)_{\bm,d}\bigl(-k-\frac{d_2}{2}-(\underline{0}_{r/2-1},l)\bigr)_{\bm-\bl+(\underline{0}_{r/2-1},l),d}}
{\bigl(\mu-\frac{d_2}{2}-(\underline{0}_{r/2-1},l)\bigr)_{\bm-\bl+(\underline{0}_{r/2-1},l),d}} F_{\bm,\bl}^-[f](z_1,z_2^\itinv), \label{formula_holo_general+even}
\end{align}
and for odd $r$ we have 
\begin{align}
&(\lambda)_{(2k+l,\underline{2k}_{\lfloor r/2\rfloor-1},\min\{2k,k+l\},\underline{k}_{\lfloor r/2\rfloor}),d}
\Bigl\langle \det_{\fn^+_2}(x_2)^kf(x_2),e^{(x|\overline{z})_{\fp^+}}\Bigr\rangle_{\lambda,x} \notag \\
&=\biggl(\lambda-\frac{d}{2}\biggl\lfloor \frac{r}{2}\biggr\rfloor+\max\{2k,k+l\}\biggr)_{\min\{k,l\}}
\biggl(\lambda+k-\frac{d}{2}\biggl\lceil \frac{r}{2}\biggr\rceil+\frac{d_2}{2}\biggr)_{\underline{k}_{\lfloor r/2\rfloor},d} 
F_{l,+}^\downarrow\biggl( \begin{matrix} -k \\ \mu \end{matrix};f;z\biggr) \notag \\
&=(-1)^{k\lfloor r/2\rfloor+\min\{k,l\}}\biggl(\mu-\frac{d_2}{2}\biggr)_{\underline{k}_{\lfloor r/2\rfloor},d}\biggl(\mu-\frac{d_2}{2}(r-1)-l\biggr)_{\min\{k,l\}}
\det_{\fn^+_2}(z_2)^k \notag \\*
&\eqspace{}\times \sum_{\bm\in\BZ_{++}^{\lfloor r/2\rfloor}}\sum_{\substack{\bl\in(\BZ_{\ge 0})^{\lceil r/2\rceil} \\ |\bl|=l}}
\frac{(-k)_{\bm,d}\bigl(-k-\frac{d_2}{2}\bigr)_{\bm-\bl',d}\bigl(-k-\frac{d_2}{2}(r-1)-l\bigr)_{l-l_{\lceil r/2\rceil}}}
{\bigl(\mu-\frac{d_2}{2}\bigr)_{\bm-\bl',d}\bigl(\mu-\frac{d_2}{2}(r-1)-l\bigr)_{l-l_{\lceil r/2\rceil}}} \notag\\*
&\hspace{280pt}\times F_{\bm,\bl}^-[f](z_1,z_2^\itinv). \label{formula_holo_general+odd}
\end{align}
These are holomorphically continued for all $\mu\in\BC$. Moreover, all non-zero $F_{\bm,\bl}^-[f](z_1,z_2^\itinv)$ are linearly independent, and when $r$ is even, 
the coefficient of (\ref{formula_holo_general+even}) for 
\[ F_{\underline{k}_{r/2},(\underline{0}_{r/2-1},l)}^-[f](z_1,z_2^\itinv)=\tilde{\Phi}_{\underline{k}_{r/2}}^{\fp^+_1,\fp^-_2}(z_1,z_2^\itinv)f(z_2) \]
is independent of $\mu$ and non-zero. Hence (\ref{formula_holo_general+even}) is non-zero for all $\mu\in\BC$ when $f(x_2)\ne 0$. Similarly, when $r$ is odd, 
the coefficients of (\ref{formula_holo_general+odd}) for $(\bm,\bl)=(\underline{k}_{\lfloor r/2\rfloor},(\underline{0}_{\lfloor r/2\rfloor},l))$ 
and $(\bm,\bl)=(\underline{k}_{\lfloor r/2\rfloor},(\underline{0}_{\lfloor r/2\rfloor-1},\min\{k,l\},\max\{l-k,0\}))$, 
\begin{align*}
&\frac{\bigl(\mu-\frac{d_2}{2}\bigr)_{\underline{k}_{\lfloor r/2\rfloor},d}\bigl(\mu-\frac{d_2}{2}(r-1)-l\bigr)_{\min\{k,l\}}}
{\bigl(\mu-\frac{d_2}{2}\bigr)_{\underline{k}_{\lfloor r/2\rfloor}-\underline{0}_{\lfloor r/2\rfloor},d}\bigl(\mu-\frac{d_2}{2}(r-1)-l\bigr)_{l-l}}
=\biggl(\mu-\frac{d_2}{2}(r-1)-l\biggr)_{\min\{k,l\}}, \\
&\frac{\bigl(\mu-\frac{d_2}{2}\bigr)_{\underline{k}_{\lfloor r/2\rfloor},d}\bigl(\mu-\frac{d_2}{2}(r-1)-l\bigr)_{\min\{k,l\}}}
{\bigl(\mu-\frac{d_2}{2}\bigr)_{\underline{k}_{\lfloor r/2\rfloor}-(\underline{0}_{\lfloor r/2\rfloor-1},\min\{k,l\}),d}\bigl(\mu-\frac{d_2}{2}(r-1)-l\bigr)_{l-\max\{l-k,0\}}} \\*
&=\biggl(\mu-\frac{d_2}{2}(r-2)+\max\{0,k-l\}\biggr)_{\min\{k,l\}}
\end{align*}
do not vanish simultaneously, and if $f(x_2)=(x_2|w_2)_{\fn^+_2}^l$ for some $w_2\in\fp^+_2$ of rank 1, then by Proposition \ref{prop_example_Fml_general}\,(3), we have 
\[ F_{\underline{k}_{\lfloor r/2\rfloor},(\underline{0}_{\lfloor r/2\rfloor},l)}^-[f](z_1,z_2^\itinv)\ne 0, \qquad
F_{\underline{k}_{\lfloor r/2\rfloor},(\underline{0}_{\lfloor r/2\rfloor-1},\min\{k,l\},\max\{l-k,0\})}^-[f](z_1,z_2^\itinv)\ne 0. \]
By the $K_1^\BC$-equivariance, these hold for all non-zero $f(x_2)\in\cP_{(l,\underline{0}_{r-1})}(\fp^+_2)$. 
Hence (\ref{formula_holo_general+odd}) is non-zero for all $\mu\in\BC$ if $f(x_2)\ne 0$. 
\end{proof}

\begin{proof}[Proof of Theorem \ref{thm_general-}\,(2)]
Again let $\mu:=-\lambda-2k+\frac{n}{r}=-\lambda-2k+1+\frac{d}{2}(r-1)$. By Theorem \ref{thm_general-}\,(1), for even $r$ we have 
\begin{align}
&(\lambda)_{(\underline{2k}_{r/2-1},2k-l,\underline{k}_{r/2-1},k-l),d}\Bigl\langle \det_{\fn^+_2}(x_2)^kf(x_2^\itinv),e^{(x|\overline{z})_{\fp^+}}\Bigr\rangle_{\lambda,x} \notag \\
&=\biggl(\lambda+k-\frac{d}{4}r+\frac{d_2}{2}\biggr)_{(\underline{k}_{r/2-1},k-l),d}F_{l,-}^\downarrow\biggl( \begin{matrix} -k \\ \mu \end{matrix};f;z\biggr) \notag \\
&=(-1)^{kr/2-l}\biggl(\mu-\frac{d_2}{2}+(l,\underline{0}_{r/2-1})\biggr)_{(k-l,\underline{k}_{r/2-1}),d}\det_{\fn^+_2}(z_2)^k \notag \\*
&\eqspace{}\times \sum_{\bm\in\BZ_{++}^{\lfloor r/2\rfloor}}\sum_{\substack{\bl\in(\BZ_{\ge 0})^{\lceil r/2\rceil} \\ |\bl|=l}}
\frac{(-k+(l,\underline{0}_{\lceil r/2\rceil-1}))_{\bm+\bl-(l,\underline{0}_{\lceil r/2\rceil-1}),d}\bigl(-k-\frac{d_2}{2}\bigr)_{\bm,d}}
{\bigl(\mu-\frac{d_2}{2}+(l,\underline{0}_{\lceil r/2\rceil-1})\bigr)_{\bm+\bl-(l,\underline{0}_{\lceil r/2\rceil-1}),d}} \notag \\*
&\hspace{265pt}\times F_{\bm,\bl}^+[f](z_1,z_2^\itinv), \label{formula_holo_general-even}
\end{align}
and for odd $r$ we have 
\begin{align}
&(\lambda)_{(\underline{2k}_{\lfloor r/2\rfloor},\underline{k}_{\lfloor r/2\rfloor},k-l),d}
\Bigl\langle \det_{\fn^+_2}(x_2)^kf(x_2^\itinv),e^{(x|\overline{z})_{\fp^+}}\Bigr\rangle_{\lambda,x} \notag \\
&=\biggl(\lambda+k-\frac{d}{2}\biggl\lfloor \frac{r}{2}\biggr\rfloor+\frac{d_2}{2}+(k-l,\underline{0}_{\lfloor r/2\rfloor})\biggr)_{(l,\underline{k}_{\lfloor r/2\rfloor-1},k-l),d}
F_{l,-}^\downarrow\biggl( \begin{matrix} -k \\ \mu \end{matrix};f;z\biggr) \notag \\
&=(-1)^{k\lfloor r/2\rfloor}\biggl(\mu-\frac{d_2}{2}+(l,\underline{0}_{\lfloor r/2\rfloor})\biggr)_{(k-l,\underline{k}_{\lfloor r/2\rfloor-1},l),d}\det_{\fn^+_2}(z_2)^k \notag \\*
&\eqspace{}\times \sum_{\bm\in\BZ_{++}^{\lfloor r/2\rfloor}}\sum_{\substack{\bl\in(\BZ_{\ge 0})^{\lceil r/2\rceil} \\ |\bl|=l}}
\frac{(-k+(l,\underline{0}_{\lceil r/2\rceil-1}))_{\bm+\bl-(l,\underline{0}_{\lceil r/2\rceil-1}),d}\bigl(-k-\frac{d_2}{2}\bigr)_{\bm,d}}
{\bigl(\mu-\frac{d_2}{2}+(l,\underline{0}_{\lceil r/2\rceil-1})\bigr)_{\bm+\bl-(l,\underline{0}_{\lceil r/2\rceil-1}),d}} \notag \\*
&\hspace{265pt}\times F_{\bm,\bl}^+[f](z_1,z_2^\itinv). \label{formula_holo_general-odd}
\end{align}
These are holomorphically continued for all $\mu\in\BC$. Moreover, all non-zero $F_{\bm,\bl}^+[f](z_1,z_2^\itinv)$ are linearly independent. 
When $r$ is even, the coefficient of (\ref{formula_holo_general-even}) for $(\bm,\bl)=((\underline{k}_{r/2-1},\allowbreak k-l),(\underline{0}_{r/2-1},l))$ is independent of $\mu$ and non-zero, 
and when $r$ is odd, the coefficient of (\ref{formula_holo_general-odd}) for $(\bm,\bl)=(\underline{k}_{\lfloor r/2\rfloor},(\underline{0}_{\lfloor r/2\rfloor},l))$ is independent of $\mu$ and non-zero. 
Moreover, if $f(x_2)=(x_2|e')_{\fn^+_2}^l$ for some primitive idempotent $e'\in\fn^+_2$, then for even $r$, by Proposition \ref{prop_example_Fml_general}\,(1), (4) 
and Example \ref{example_Phim_general} we have 
\begin{align*}
&\det_{\fn^+_2}(z_2)^kF_{(\underline{k}_{r/2-1},k-l),(\underline{0}_{r/2-1},l)}^+\bigl[(\cdot|e')_{\fn^+_2}^l\bigr](z_1,z_2^\itinv) \\
&=\frac{\det_{\fn^+}(z_1)^{k-l}}{(-(k-l,\underline{k}_{r/2-1}))_{\underline{k-l}_{r/2},d}} \det_{\fn^+_2}(z_2)^l
F_{(\underline{l}_{r/2-1},0),(\underline{0}_{r/2-1},l)}^+\bigl[(\cdot|e')_{\fn^+_2}^l\bigr](z_1,z_2^\itinv) \\
&=\frac{(-k)_l\det_{\fn^+}(z_1)^{k-l}}{(-k)_{(k,\underline{k-l}_{r/2-1}),d}}\frac{(-l)_{\underline{l}_{r/2-1},d}}{\bigl(-l-\frac{d}{2}\bigr)_{\underline{l}_{r/2-1},d}}
\det_{\fn^{+\prime\prime}_2}(z_2'')^l\tilde{\Phi}_{\underline{l}_{r/2-1}}^{\fp^{+\prime\prime}_1,\fp^{-\prime\prime}_2}(z_1'',(z_2'')^\itinv) \\
&=\frac{(-k)_l}{(-k)_{\underline{k}_{r/2},d}}\det_{\fn^+}(z_1)^{k-l}((z_1'')^\sharp|z_2'')_{\fn^{+\prime\prime}_2}^l\ne 0, 
\end{align*}
and for odd $r$, by Proposition \ref{prop_example_Fml_general}\,(2) we have 
\begin{align*}
&\det_{\fn^+_2}(z_2)^k F_{\underline{k}_{\lfloor r/2\rfloor},(\underline{0}_{\lfloor r/2\rfloor},l)}^+\bigl[(\cdot|e')_{\fn^+_2}^l\bigr](z_1,z_2^\itinv) \\
&=\frac{(-1)^l(-k)_l(-l)_l}{(-k)_{\underline{k}_{\lfloor r/2\rfloor},d}\bigl(-k-\frac{d_2}{2}(r-1)\bigr)_ll!}
(z_2|(z_1)^\sharp)_{\fn^+_2}^{k-l}((z_1)^\sharp|e')^l_{\fn^+_2} \\
&=\frac{(-k)_l}{(-k)_{(\underline{k}_{\lfloor r/2\rfloor},l),d}}(z_2|(z_1)^\sharp)_{\fn^+_2}^{k-l}((z_1)^\sharp|e')^l_{\fn^+_2}\ne 0,  
\end{align*}
and by the $K^\BC_1$-equivariance, 
\begin{align*}
F_{(\underline{k}_{r/2-1},k-l),(\underline{0}_{r/2-1},l)}^+[f](z_1,z_2^\itinv)&\ne 0 && (r\colon\text{even}), \\
F_{\underline{k}_{\lfloor r/2\rfloor},(\underline{0}_{\lfloor r/2\rfloor},l)}^+[f](z_1,z_2^\itinv)&\ne 0 && (r\colon\text{odd}) 
\end{align*}
hold for all non-zero $f(x_2)\in\cP_{(l,\underline{0}_{r-1})}(\fp^+_2)$. 
Hence (\ref{formula_holo_general-odd}) is non-zero for all $\mu\in\BC$ if $f(x_2)\ne 0$. 
\end{proof}

\begin{proof}[Proof of Theorem \ref{thm_general-}\,(3)]
Combining Theorem \ref{thm_general-}\,(1) and (\ref{formula_diff_expr_general-}), for $\Re\lambda>\frac{2n}{r}-1$, $k,l\in\BZ_{\ge 0}$, $k\ge l$, we have 
\begin{align*}
&C_{r,-}^{d,d_2}(\lambda,k,l)F_{l,-}^\downarrow\biggl(\begin{matrix} -k \\ -\lambda-2k+\frac{n}{r} \end{matrix};f;z\biggr) \\
&=\frac{C_{r,-}^{d,d_2}(\lambda+2(k-l),l,l)}{(\lambda)_{\underline{2(k-l)}_r,d}}\det_{\fn^+}(z)^{-\lambda+\frac{n}{r}}\det_{\fn^+_2}\biggl(\frac{\partial}{\partial z_2}\biggr)^{k-l} \\* 
&\eqspace{}\times\det_{\fn^+}(z)^{\lambda+2(k-l)-\frac{n}{r}}F_{l,-}^\downarrow\biggl(\begin{matrix} -l \\ -\lambda-2k+\frac{n}{r} \end{matrix};f;z\biggr), 
\end{align*}
that is, 
\begin{align*}
F_{l,-}^\downarrow\biggl(\begin{matrix} -k \\ -\lambda-2k+\frac{n}{r} \end{matrix};f;z\biggr)
&=\frac{\det_{\fn^+}(z)^{-\lambda+\frac{n}{r}}}{\bigl(\lambda-\frac{d}{4}r+\frac{d_2}{2}+(\underline{k}_{r-1},k-l)\bigr)_{\underline{k-l}_r,d_2}}
\det_{\fn^+_2}\biggl(\frac{\partial}{\partial z_2}\biggr)^{k-l} \\*
&\eqspace{}\times \det_{\fn^+}(z)^{\lambda+2(k-l)-\frac{n}{r}}F_{l,-}^\downarrow\biggl(\begin{matrix} -l \\ -\lambda-2k+\frac{n}{r} \end{matrix};f;z\biggr). 
\end{align*}
Then this is meromorphically continued for all $\lambda\in\BC$, and if $\lambda=\frac{n}{r}-a=\frac{d}{2}(r-1)+1-a$, $a=1,2,\ldots,k-l$, then the left hand side of 
\begin{align*}
\det_{\fn^+}(z)^{-a}F_{l,-}^\downarrow\biggl(\begin{matrix} -k \\ a-2k \end{matrix};f;z\biggr)
=\frac{1}{\bigl(\frac{d_2}{2}(r-1)+1-a+(\underline{k}_{r-1},k-l)\bigr)_{\underline{k-l}_r,d_2}}\hspace{20pt} \\*
{}\times \det_{\fn^+_2}\biggl(\frac{\partial}{\partial z_2}\biggr)^{k-l}\det_{\fn^+}(z)^{2(k-l)-a}F_{l,-}^\downarrow\biggl(\begin{matrix} -l \\ a-2k \end{matrix};f;z\biggr) 
\end{align*}
becomes a polynomial (note that $d_2=d/2$ or $r=2$ hold). Hence $F_{l,-}^\downarrow\bigl(\begin{smallmatrix} -k \\ a-2k \end{smallmatrix};\allowbreak f;z\bigr)$ is divisible by $\det_{\fn^+}(z)^a$. 
Also, by $\cB_{\frac{n}{r}-a}\det_{\fn^+}(z)^a=\det_{\fn^+}(z)^a\cB_{\frac{n}{r}+a}$ (see the proof of \cite[Proposition XV.2.4]{FK}), we have 
\begin{align*}
(\cB_{\frac{n}{r}+a})_1\det_{\fn^+}(z)^{-a}F_{l,-}^\downarrow\biggl(\begin{matrix} -k \\ a-2k \end{matrix};f;z\biggr)
&=\det_{\fn^+}(z)^{-a}(\cB_{\frac{n}{r}-a})_1F_{l,-}^\downarrow\biggl(\begin{matrix} -k \\ a-2k \end{matrix};f;z\biggr) =0, 
\end{align*}
and hence 
\begin{align*}
&\biggl( \det_{\fn^+_2}(x_2)^{k-a}f(x_2^\itinv)\mapsto \det_{\fn^+_2}(z)^{-a}F_{l,-}^\downarrow\biggl(\begin{matrix} -k \\ a-2k \end{matrix};f;z\biggr)\biggr) \\
&\in\Hom_{K_1^\BC}\bigl(\cP_{(\underline{k-a}_{r-1},k-l-a)}(\fp^+_2),\Sol_{\cP(\fp^+)}((\cB_{\frac{n}{r}+a})_1)\bigr) \\
&\eqspace{}\simeq \BC\biggl( \det_{\fn^+_2}(x_2)^{k-a}f(x_2^\itinv)\mapsto F_{l,-}^\downarrow\biggl(\begin{matrix}a-k \\ a-2k\end{matrix};f;z\biggr)\biggr)
\end{align*}
holds. Then comparing the values at $z_1=0$, for $a,k\in\BZ_{\ge 0}$ with $a\le k-l$, we get 
\begin{equation}\label{formula_factorize_general-}
F_{l,-}^\downarrow\biggl(\begin{matrix}-k \\ a-2k\end{matrix};f;z\biggr)=\det_{\fn^+}(z)^a F_{l,-}^\downarrow\biggl(\begin{matrix}a-k \\ a-2k\end{matrix};f;z\biggr)
\end{equation}
(see \cite[Theorem 6.6]{N3} for more general case). Now we want to prove this for general $(a,k)\in\BC^2$. 
We recall that the left hand side of (\ref{formula_factorize_general-}) is given by 
\begin{align}
F_{l,-}^\downarrow\biggl(\begin{matrix}-k \\ a-2k\end{matrix};f;z\biggr) 
&=\sum_{\bm\in\BZ_{++}^{\lfloor r/2\rfloor}}\sum_{\substack{\bl\in(\BZ_{\ge 0})^{\lceil r/2\rceil} \\ |\bl|=l}}
\frac{(-k+(l,\underline{0}_{\lceil r/2\rceil-1}))_{\bm+\bl-(l,\underline{0}_{\lceil r/2\rceil-1}),d}\bigl(-k-\frac{d_2}{2}\bigr)_{\bm,d}}
{\bigl(a-2k-\frac{d_2}{2}+(l,\underline{0}_{\lceil r/2\rceil-1})\bigr)_{\bm+\bl-(l,\underline{0}_{\lceil r/2\rceil-1}),d}} \notag \\*
&\eqspace{}\times \det_{\fn^+_2}(z_2)^kF_{\bm,\bl}^+[f](z_1,z_2^\itinv). \label{formula_factorize_general-_left}
\end{align}
Similarly, by (\ref{binom_general}), the right hand side is computed as 
\begin{align*}
&\det_{\fn^+}(z)^a F_{l,-}^\downarrow\biggl(\begin{matrix}a-k \\ a-2k\end{matrix};f;z\biggr) \\
&=\sum_{\bm'\in\BZ_{++}^{\lfloor r/2\rfloor}}\sum_{\substack{\bl'\in(\BZ_{\ge 0})^{\lceil r/2\rceil} \\ |\bl'|=l}}
\frac{(a-k+(l,\underline{0}_{\lceil r/2\rceil-1}))_{\bm'+\bl'-(l,\underline{0}_{\lceil r/2\rceil-1}),d}\bigl(a-k-\frac{d_2}{2}\bigr)_{\bm',d}}
{\bigl(a-2k-\frac{d_2}{2}+(l,\underline{0}_{\lceil r/2\rceil-1})\bigr)_{\bm'+\bl'-(l,\underline{0}_{\lceil r/2\rceil-1}),d}} \\*
&\eqspace{}\times \det_{\fn^+_2}(z_2)^{k-a}F_{\bm',\bl'}^+[f](z_1,z_2^\itinv)
\det_{\fn^+_2}(z_2)^a h_{\fn^+}(z_1,P(z_2^\itinv)z_1)^{a/2} \\
&=\sum_{\bm'\in\BZ_{++}^{\lfloor r/2\rfloor}}\sum_{\substack{\bl'\in(\BZ_{\ge 0})^{\lceil r/2\rceil} \\ |\bl'|=l}}
\frac{(a-k+(l,\underline{0}_{\lceil r/2\rceil-1}))_{\bm'+\bl'-(l,\underline{0}_{\lceil r/2\rceil-1}),d}\bigl(a-k-\frac{d_2}{2}\bigr)_{\bm',d}}
{\bigl(a-2k-\frac{d_2}{2}+(l,\underline{0}_{\lceil r/2\rceil-1})\bigr)_{\bm'+\bl'-(l,\underline{0}_{\lceil r/2\rceil-1}),d}} \\*
&\eqspace{}\times \det_{\fn^+_2}(z_2)^kF_{\bm',\bl'}^+[f](z_1,z_2^\itinv)
\sum_{\bm''\in\BZ_{++}^{\lfloor r/2\rfloor}}(-a)_{\bm'',d}\tilde{\Phi}_{\bm''}^{\fp^+_1,\fp^-_2}(z_1,z_2^\itinv). 
\end{align*}
Then we have 
\begin{align*}
&\bigl( f\mapsto F_{\bm',\bl'}^+[f](z_1,y_2)\tilde{\Phi}_{\bm''}^{\fp^+_1,\fp^-_2}(z_1,y_2)\bigr) \\
&\in\Hom_{K_1^\BC}\bigl(\cP_{(l,\underline{0}_{r-1})}(\fp^-_2),\cP_{\langle\bm'\rangle}(\fp^+_1)\otimes\cP_{\phi_+(\bm',\bl')}(\fp^-_2)
\otimes\cP_{\langle\bm''\rangle}(\fp^+_1)\otimes\cP_{\bm^{\prime\prime 2}}(\fp^-_2)\bigr) \\
&\subset \bigoplus_{\substack{\bk \\ |\bk|=|\langle\bm'\rangle|+|\langle\bm''\rangle| \\ k_j\ge \max\{\langle\bm'\rangle_j,\langle\bm''\rangle_j\}}}\,
\bigoplus_{\substack{\bk'\in\BZ_{++}^r \\ |\bk'|=|\phi_+(\bm',\bl')|+|\bm^{\prime\prime 2}| \\ k_j'\ge \max\{\phi_+(\bm',\bl')_j,(\bm^{\prime\prime 2})_j\}}}
\Hom_{K_1^\BC}\bigl(\cP_{(l,\underline{0}_{r-1})}(\fp^-_2),\cP_{\bk}(\fp^+_1)\otimes\cP_{\bk'}(\fp^-_2)\bigr), 
\end{align*}
where $\bk\in\BZ_{++}^{\lfloor r/\varepsilon_1\rfloor}$ (Case 1 ($n'\ne 2$), Case 2) or $\bk\in(\BZ_{++}^{\lfloor r/2\rfloor})^2$ (Case 1 ($n'=2$), Case 3). 
Now since $\Hom_{K_1^\BC}\bigl(\cP_{(l,\underline{0}_{r-1})}(\fp^-_2),\cP_{\bk}(\fp^+_1)\otimes\cP_{\bk'}(\fp^-_2)\bigr)\ne\{0\}$ implies 
$(\bk,\bk')=(\langle\bm\rangle,\phi_+(\bm,\bl))$ for some $(\bm,\bl)\in\BZ_{++}^{\lfloor r/2\rfloor}\times(\BZ_{\ge 0})^{\lceil r/2\rceil}$ 
satisfying $|\bl|=l$ and (\ref{formula_cond_ml_general-}), we set 
\begin{align*}
F_{\bm',\bl',\bm'',\bm,\bl}[f](z_1,y_2)&:=\Proj_{\phi_+(\bm,\bl),y_2}^{\fp^-_2}\bigl(F_{\bm',\bl'}^+[f](z_1,y_2)\tilde{\Phi}_{\bm''}^{\fp^+_1,\fp^-_2}(z_1,y_2)\bigr) \\
&\hspace{5pt}\in\cP_{\langle\bm\rangle}(\fp^+_1)_{z_1}\otimes\cP_{\phi_+(\bm,\bl)}(\fp^-_2)_{y_2}, 
\end{align*}
so that 
\[ F_{\bm',\bl'}^+[f](z_1,y_2)\tilde{\Phi}_{\bm''}^{\fp^+_1,\fp^-_2}(z_1,y_2)
=\sum_{\substack{\bm\in\BZ_{++}^{\lfloor r/2\rfloor} \\ |\bm|=|\bm'|+|\bm''| \\ m_j\ge \max\{m_j',m_j''\}}}\,
\sum_{\substack{\bl\in(\BZ_{\ge 0})^{\lceil r/2\rceil} \\ |\bl|=l \\ m_j+l_j\ge m_j'+l_j'}}F_{\bm',\bl',\bm'',\bm,\bl}[f](z_1,y_2). \]
Then the right hand side of (\ref{formula_factorize_general-}) becomes 
\begin{align}
&\det_{\fn^+}(z)^a F_{l,-}^\downarrow\biggl(\begin{matrix}a-k \\ a-2k\end{matrix};f;z\biggr) \notag \\
&=\det_{\fn^+_2}(z_2)^k\sum_{\bm\in\BZ_{++}^{\lfloor r/2\rfloor}}\sum_{\substack{\bl\in(\BZ_{\ge 0})^{\lceil r/2\rceil} \\ |\bl'|=l}}
\sum_{\substack{\bm',\bm''\in\BZ_{++}^{\lfloor r/2\rfloor} \\ |\bm'|+|\bm''|=|\bm| \\ m_j',m_j''\le m_j}}
\sum_{\substack{\bl'\in(\BZ_{\ge 0})^{\lceil r/2\rceil} \\ |\bl'|=l \\ m_j'+l_j'\le m_j+l_j}}(-a)_{\bm'',d} \notag \\*
&\eqspace{}\times \frac{(a-k+(l,\underline{0}_{\lceil r/2\rceil-1}))_{\bm'+\bl'-(l,\underline{0}_{\lceil r/2\rceil-1}),d}\bigl(a-k-\frac{d_2}{2}\bigr)_{\bm',d}}
{\bigl(a-2k-\frac{d_2}{2}+(l,\underline{0}_{\lceil r/2\rceil-1})\bigr)_{\bm'+\bl'-(l,\underline{0}_{\lceil r/2\rceil-1}),d}}
F_{\bm',\bl',\bm'',\bm,\bl}[f](z_1,z_2^\itinv). \label{formula_factorize_general-_right}
\end{align}
If $a,k\in\BZ_{\ge 0}$, $a\le k-l$, then in (\ref{formula_factorize_general-_left}), only $m_1+l_1\le k$ terms remain, 
and both in (\ref{formula_factorize_general-_left}) and (\ref{formula_factorize_general-_right}), all denominators in the terms with $m_1+l_1\le k$ are non-zero. 
Hence, for $m_1+l_1\le k$, by comparing the both formulas, we get 
\begin{align*}
&\frac{(-k+(l,\underline{0}_{\lceil r/2\rceil-1}))_{\bm+\bl-(l,\underline{0}_{\lceil r/2\rceil-1}),d}\bigl(-k-\frac{d_2}{2}\bigr)_{\bm,d}}
{\bigl(a-2k-\frac{d_2}{2}+(l,\underline{0}_{\lceil r/2\rceil-1})\bigr)_{\bm+\bl-(l,\underline{0}_{\lceil r/2\rceil-1}),d}}F_{\bm,\bl}^+[f](z_1,z_2^\itinv) \\
&=\sum_{\substack{\bm',\bm''\in\BZ_{++}^{\lfloor r/2\rfloor} \\ |\bm'|+|\bm''|=|\bm| \\ m_j',m_j''\le m_j}}
\sum_{\substack{\bl'\in(\BZ_{\ge 0})^{\lceil r/2\rceil} \\ |\bl'|=l \\ m_j'+l_j'\le m_j+l_j}}(-a)_{\bm'',d}F_{\bm',\bl',\bm'',\bm,\bl}[f](z_1,z_2^\itinv) \\*
&\eqspace{}\times \frac{(a-k+(l,\underline{0}_{\lceil r/2\rceil-1}))_{\bm'+\bl'-(l,\underline{0}_{\lceil r/2\rceil-1}),d}\bigl(a-k-\frac{d_2}{2}\bigr)_{\bm',d}}
{\bigl(a-2k-\frac{d_2}{2}+(l,\underline{0}_{\lceil r/2\rceil-1})\bigr)_{\bm'+\bl'-(l,\underline{0}_{\lceil r/2\rceil-1}),d}}. 
\end{align*}
Then since $\{(a,k)\in(\BZ_{\ge 0})^2\mid k\ge a+l, k\ge m_1+l_1\}\subset\BC^2$ is Zariski dense, this holds for all $(a,k)\in\BC^2$ without the assumption $m_1+l_1\le k$, 
and hence (\ref{formula_factorize_general-}) holds for all $(a,k)\in\BC^2$. This completes the proof of Theorem \ref{thm_general-}\,(3).  
\end{proof}

\begin{example}\label{example_scalar_general}
Suppose $l=0$, $f(x_2)=1\in\cP_{(0,\ldots,0)}(\fp^+_2)$. Then we have 
\begin{gather*}
F_{\bm,\underline{0}_{\lceil r/2\rceil}}^-[1](z_1,z_2^\itinv)=F_{\bm,\underline{0}_{\lceil r/2\rceil}}^+[1](z_1,z_2^\itinv)
=\tilde{\Phi}_\bm^{\fp^+_1,\fp^-_2}(z_1,z_2^\itinv), \\
F_{0,\pm}^\downarrow\biggl(\begin{matrix} \nu \\ \mu \end{matrix};1;z\biggr)
=\det_{\fn^+_2}(z_2)^{-\nu}\sum_{\bm\in\BZ_{++}^{\lfloor r/2\rfloor}}
\frac{(\nu)_{\bm,d}\bigl(\nu-\frac{d_2}{2}\bigr)_{\bm,d}}{\bigl(\mu-\frac{d_2}{2}\bigr)_{\bm,d}}\tilde{\Phi}_\bm^{\fp^+_1,\fp^-_2}(z_1,z_2^\itinv). 
\end{gather*}
Especially, for $k\in\BZ_{\ge 0}$, $\Re\lambda>\frac{2n}{r}-1$, we have 
\begin{align*}
&\Bigl\langle \det_{\fn^+_2}(x_2)^k,e^{(x|\overline{z})_{\fp^+}}\Bigr\rangle_{\lambda,x}
=C_{r,\pm}^{d,d_2}(\lambda,k,0)F_{0,\pm}^\downarrow\biggl(\begin{matrix} -k \\ -\lambda-2k+\frac{n}{r} \end{matrix};1;z\biggr) \\
&=\frac{\bigl(\lambda+k-\frac{d}{2}\bigl\lceil \frac{r}{2}\bigr\rceil+\frac{d_2}{2}\bigr)_{\underline{k}_{\lfloor r/2\rfloor},d}}
{(\lambda)_{(\underline{2k}_{\lfloor r/2\rfloor},\underline{k}_{\lceil r/2\rceil}),d}}\det_{\fn^+_2}(z_2)^k\sum_{\bm\in\BZ_{++}^{\lfloor r/2\rfloor}}
\frac{(-k)_{\bm,d}\bigl(-k-\frac{d_2}{2}\bigr)_{\bm,d}}{\bigl(-\lambda-2k+\frac{n}{r}-\frac{d_2}{2}\bigr)_{\bm,d}}\tilde{\Phi}_\bm^{\fp^+_1,\fp^-_2}(z_1,z_2^\itinv). 
\end{align*}
In addition, Theorems \ref{thm_general+}\,(3), \ref{thm_general-}\,(3) are equivalent to 
\begin{align*}
&\sum_{\bm\in\BZ_{++}^{\lfloor r/2\rfloor}}
\frac{(\nu)_{\bm,d}\bigl(\nu-\frac{d_2}{2}\bigr)_{\bm,d}}{\bigl(\mu-\frac{d_2}{2}\bigr)_{\bm,d}}\tilde{\Phi}_\bm^{\fp^+_1,\fp^-_2}(z_1,z_2^\itinv) \\
&=h_{\fn^+}(z_1,P(z_2)^{-1}z_1)^{(\mu-2\nu)/2}\sum_{\bm\in\BZ_{++}^{\lfloor r/2\rfloor}}
\frac{(\mu-\nu)_{\bm,d}\bigl(\mu-\nu-\frac{d_2}{2}\bigr)_{\bm,d}}{\bigl(\mu-\frac{d_2}{2}\bigr)_{\bm,d}}\tilde{\Phi}_\bm^{\fp^+_1,\fp^-_2}(z_1,z_2^\itinv). 
\end{align*}
\end{example}

\begin{example}\label{example_rank2_general}
Suppose $r=2$. We fix a Jordan frame $\{e',e''\}\subset\fn^+_2$, and consider $f(x_2)=(x_2|e')_{\fn^+_2}^l$ or $f(y_2)=(y_2|e'')_{\fn^+_2}^l$. Then we have 
\begin{gather*}
\begin{split}
F_{m,l}^-\bigl[(\cdot|e')_{\fn^+_2}^l\bigr](z_1,z_2^\itinv)&=\frac{1}{m!}\biggl(-\frac{\det_{\fn^+}(z_1)}{\det_{\fn^+}(z_2)}\biggr)^m(z_2|e')_{\fn^+_2}^l, \\
F_{m,l}^+\bigl[(\cdot|e'')_{\fn^+_2}^l\bigr](z_1,z_2^\itinv)&=\frac{1}{m!}\biggl(-\frac{\det_{\fn^+}(z_1)}{\det_{\fn^+}(z_2)}\biggr)^m(z_2^\itinv|e'')_{\fn^+_2}^l, 
\end{split}\\
\begin{split}
F_{l,+}^\downarrow\biggl(\begin{matrix} \nu \\ \mu \end{matrix};(\cdot|e')_{\fn^+_2}^l;z\biggr) 
&=\det_{\fn^+}(z_2)^{-\nu}\sum_{m=0}^\infty \frac{(\nu)_m\bigl(\nu-l-\frac{d_2}{2}\bigr)_m}{\bigl(\mu-l-\frac{d_2}{2}\bigr)_m m!}
\biggl(-\frac{\det_{\fn^+}(z_1)}{\det_{\fn^+}(z_2)}\biggr)^m(z_2|e')_{\fn^+_2}^l \\*
&=\det_{\fn^+}(z_2)^{-\nu}{}_2F_1\biggl(\begin{matrix} \nu,\,\nu-l-\frac{d_2}{2} \\ \mu-l-\frac{d_2}{2} \end{matrix};-\frac{\det_{\fn^+}(z_1)}{\det_{\fn^+}(z_2)}\biggr)(z_2|e')_{\fn^+_2}^l, \\
F_{l,-}^\downarrow\biggl(\begin{matrix} \nu \\ \mu \end{matrix};(\cdot|e'')_{\fn^+_2}^l;z\biggr) 
&=\det_{\fn^+}(z_2)^{-\nu}\sum_{m=0}^\infty \frac{(\nu+l)_m\bigl(\nu-\frac{d_2}{2}\bigr)_m}{\bigl(\mu+l-\frac{d_2}{2}\bigr)_m m!}
\biggl(-\frac{\det_{\fn^+}(z_1)}{\det_{\fn^+}(z_2)}\biggr)^m(z_2^\itinv|e'')_{\fn^+_2}^l \\*
&=\det_{\fn^+}(z_2)^{-\nu-l}{}_2F_1\biggl(\begin{matrix} \nu+l,\,\nu-\frac{d_2}{2} \\ \mu+l-\frac{d_2}{2} \end{matrix};-\frac{\det_{\fn^+}(z_1)}{\det_{\fn^+}(z_2)}\biggr)(z_2|e')_{\fn^+_2}^l. 
\end{split}
\end{gather*}
Especially, for $\bk\in\BZ_{++}^2$, $\Re\lambda>\frac{2n}{r}-1$, we have 
\begin{align*}
&\Bigl\langle \Delta_\bk^{\fn^+_2}(x_2),e^{(x|\overline{z})_{\fp^+}}\Bigr\rangle_{\lambda,x}
=\Bigl\langle (x_2|e')_{\fn^+_2}^{k_1-k_2}\det_{\fn^+_2}(x_2)^{k_2},e^{(x|\overline{z})_{\fp^+}}\Bigr\rangle_{\lambda,x} \\
&=\Bigl\langle (x_2^\itinv|e'')_{\fn^+_2}^{k_1-k_2}\det_{\fn^+_2}(x_2)^{k_1},e^{(x|\overline{z})_{\fp^+}}\Bigr\rangle_{\lambda,x} \\
&=C_{2,+}^{d,d_2}(\lambda,k_2,k_1-k_2)F_{k_1-k_2,+}^\downarrow\biggl(\begin{matrix} -k_2 \\ -\lambda-2k_2+\frac{n}{2} \end{matrix};(\cdot|e')_{\fn^+_2}^{k_1-k_2};z\biggr) \\
&=C_{2,-}^{d,d_2}(\lambda,k_1,k_1-k_2)F_{k_1-k_2,-}^\downarrow\biggl(\begin{matrix} -k_1 \\ -\lambda-2k_1+\frac{n}{2} \end{matrix};(\cdot|e'')_{\fn^+_2}^{k_1-k_2};z\biggr) \\
&=\frac{\bigl(\lambda+k_1-\frac{d-d_2}{2}\bigr)_{k_2}}{(\lambda)_{(k_1+k_2,k_2),d}}
{}_2F_1\biggl(\begin{matrix} -k_2,\,-k_1-\frac{d_2}{2} \\ -\lambda-k_1-k_2+\frac{d-d_2}{2}+1 \end{matrix};-\frac{\det_{\fn^+}(z_1)}{\det_{\fn^+}(z_2)}\biggr)\Delta_\bk^{\fn^+_2}(z_2). 
\end{align*}
\end{example}

\begin{example}
Suppose $r\ge 3$, $-\nu=k=l\in\BZ_{\ge 0}$, $f(y_2)\in\cP_{(l,\underline{0}_{r-1})}(\fp^+_2)$. Then since 
\begin{align*}
&(-l+(l,\underline{0}_{\lceil r/2\rceil-1}))_{\bm+\bl-(l,\underline{0}_{\lceil r/2\rceil-1}),d}F_{\bm,\bl}^+[f](z_1,z_2^\itinv) \\
&=((0,\underline{-l}_{\lceil r/2\rceil-1}))_{\bm+\bl-(l,\underline{0}_{\lceil r/2\rceil-1}),d}F_{\bm,\bl}^+[f](z_1,z_2^\itinv)\ne 0
\end{align*}
implies $m_1+l_1=l$ and $\phi_+(\bm,\bl)\in\BZ_+^r$, that is, 
\begin{gather*}
m_1\le l, \qquad m_{\lceil r/2\rceil}=0 \quad (\text{for even }r), \\
\bl=\bl(\bm):=(l-m_1,m_1-m_2,m_2-m_3,\ldots,m_{\lceil r/2\rceil-2}-m_{\lceil r/2\rceil-1},m_{\lceil r/2\rceil-1}), 
\end{gather*}
so that $\phi_+(\bm,\bl(\bm))=(l,\bm^2)$, $\bm+\bl(\bm)=(l,\bm)$, we have 
\begin{align*}
&F_{l,-}^\downarrow\biggl(\begin{matrix} -l \\ \mu \end{matrix};f;z\biggr) \\
&=\det_{\fn^+_2}(z_2)^l\!\sum_{\bm\in\BZ_{++}^{\lceil r/2\rceil-1}}
\frac{((0,\underline{-l}_{\lceil r/2\rceil-1}))_{\bm+\bl(\bm)-(l,\underline{0}_{\lceil r/2\rceil-1}),d}\bigl(-l-\frac{d_2}{2}\bigr)_{\bm,d}}
{\bigl(\mu-\frac{d_2}{2}+(l,\underline{0}_{\lceil r/2\rceil-1})\bigr)_{\bm+\bl(\bm)-(l,\underline{0}_{\lceil r/2\rceil-1}),d}} F_{\bm,\bl(\bm)}^+[f](z_1,z_2^\itinv) \\ 
&=\det_{\fn^+_2}(z_2)^l\sum_{\bm\in\BZ_{++}^{\lceil r/2\rceil-1}} 
\frac{\bigl(-l-\frac{d}{2}\bigr)_{\bm,d}\bigl(-l-\frac{d_2}{2}\bigr)_{\bm,d}}{\bigl(\mu-\frac{d+d_2}{2}\bigr)_{\bm,d}}F_{\bm,\bl(\bm)}^+[f](z_1,z_2^\itinv), 
\end{align*}
and hence, for $\Re\lambda>\frac{2n}{r}-1$ we have 
\begin{align*}
&\Bigl\langle \det_{\fn^+_2}(x_2)^lf(x_2^\itinv),e^{(x|\overline{z})_{\fp^+}}\Bigr\rangle_{\lambda,x}
=C_{r,-}^{d,d_2}(\lambda,l,l)F_{l,-}^\downarrow\biggl(\begin{matrix} -l \\ -\lambda-2l+\frac{n}{r} \end{matrix};f;z\biggr) \\
&=\frac{\bigl(\lambda+l-\frac{d}{2}\bigl\lfloor \frac{r}{2}\bigr\rfloor+\frac{d_2}{2}\bigr)_{\underline{l}_{\lceil r/2\rceil-1},d}}
{(\lambda)_{(\underline{2l}_{\lceil r/2\rceil-1},\underline{l}_{\lfloor r/2\rfloor}),d}} \\*
&\eqspace{}\times\det_{\fn^+_2}(z_2)^l\sum_{\bm\in\BZ_{++}^{\lceil r/2\rceil-1}} 
\frac{\bigl(-l-\frac{d}{2}\bigr)_{\bm,d}\bigl(-l-\frac{d_2}{2}\bigr)_{\bm,d}}
{\bigl(-\lambda-2l+\frac{d}{2}(r-2)+1-\frac{d_2}{2}\bigr)_{\bm,d}}F_{\bm,\bl(\bm)}^+[f](z_1,z_2^\itinv). 
\end{align*}
Especially, we fix a primitive idempotent $e'\in\fn^+_2$ and let $\fp^+_2(e')_0=:\fp^{+\prime\prime}_2$, $\fp^+(e')_0\cap\fp^+_1=:\allowbreak\fp^{+\prime\prime}_1$ be as in (\ref{Peirce}). 
If $f(y_2)=(y_2|e')_{\fn^+_2}^l$ so that $\det_{\fn^+_2}(x_2)^l\allowbreak f(x_2^\itinv)=\det_{\fn^{+\prime\prime}_2}(x_2'')^l$, 
then by Proposition \ref{prop_example_Fml_general}\,(4), we have 
\begin{align*}
&\Bigl\langle \det_{\fn^{+\prime\prime}_2}(x_2'')^l,e^{(x|\overline{z})_{\fp^+}}\Bigr\rangle_{\lambda,x}
=C_{r,-}^{d,d_2}(\lambda,l,l)F_{l,-}^\downarrow\biggl(\begin{matrix} -l \\ -\lambda-2l+\frac{n}{r} \end{matrix};(\cdot|e')_{\fn^+_2}^l;z\biggr) \\
&=\frac{\bigl(\lambda+l-\frac{d}{2}\bigl\lfloor \frac{r}{2}\bigr\rfloor+\frac{d_2}{2}\bigr)_{\underline{l}_{\lceil r/2\rceil-1},d}}
{(\lambda)_{(\underline{2l}_{\lceil r/2\rceil-1},\underline{l}_{\lfloor r/2\rfloor}),d}}\det_{\fn^{+\prime\prime}_2}(z_2'')^l \\*
&\eqspace{}\times\sum_{\bm\in\BZ_{++}^{\lceil r/2\rceil-1}} 
\frac{(-l)_{\bm,d}\bigl(-l-\frac{d_2}{2}\bigr)_{\bm,d}}
{\bigl(-\lambda-2l+\frac{d}{2}(r-2)+1-\frac{d_2}{2}\bigr)_{\bm,d}}\tilde{\Phi}_\bm^{\fp^{+\prime\prime}_1,\fp^{-\prime\prime}_2}(z_1'',(z_2'')^\itinv). 
\end{align*}
This fits with Example \ref{example_scalar_general} applied for $\fp^{+\prime\prime}_2$. 
\end{example}

\subsection{Results on restriction of $\cH_\lambda(D)$ to subgroups}

In this subsection, we consider the decomposition of $\cH_\lambda(D)$ under $\widetilde{G}_1$. Let $V_\bk$ be an irreducible $K_1$-module isomorphic to $\cP_\bk(\fp^+_2)$. 
Then $\cH_\lambda(D)$ is decomposed under $\widetilde{G}_1$ as 
\[ \cH_\lambda(D)|_{\widetilde{G}_1}\simeq\hsum_{\bk\in\BZ_{++}^r}\cH_{\varepsilon_1\lambda}(D_1,V_\bk). \]
For each case, $\cH_\lambda(D)$ has the minimal $\widetilde{K}$-type 
\[ \chi^{-\lambda}\simeq \begin{cases} \BC_{-\lambda}\boxtimes V_{\underline{0}_{\lfloor n/2\rfloor}}^{[n]\vee} & (G=SO_0(2,n)), \\
V_{\underline{\lambda/2}_r}^{(r)\vee} \boxtimes V_{\underline{\lambda/2}_r}^{(r)}\simeq V_{\underline{\lambda}_r}^{(r)\vee} \boxtimes V_{\underline{0}_r}^{(r)}
\simeq V_{\underline{0}_r}^{(r)\vee} \boxtimes V_{\underline{\lambda}_r}^{(r)} & (G=SU(r,r)), \\
V_{\underline{\lambda/2}_{2r}}^{(2r)\vee} & (G=SO^*(4r)), \end{cases} \]
and $\cH_{\varepsilon_1\lambda}(D_1,V_\bk)$ has the minimal $\widetilde{K}_1$-type 
\begin{align*}
&\chi_1^{-\varepsilon_1\lambda}\otimes V_\bk \\ &\simeq \begin{cases} 
\BC_{-(\lambda+k_1+k_2)}\boxtimes V_{\underline{0}_{\lfloor n'/2\rfloor}}^{[n']\vee}\boxtimes V_{(k_1-k_2,\underline{0}_{\lfloor n''/2\rfloor-1})}^{[n'']\vee} 
& (G_1=SO_0(2,n')\times SO(n''),\; n'\ge 3), \\[5pt]
\BC_{-(\lambda+k_1+k_2)}\hspace{-1pt}\boxtimes\hspace{-1pt} \BC_{-(\lambda+k_1+k_2)}\hspace{-1pt}\boxtimes\hspace{-1pt} V_{(k_1-k_2,\underline{0}_{\lfloor n''/2\rfloor-1})}^{[n'']\vee} 
& (G_1=SO_0(2,n')\times SO(n''),\; n'=2), \\
\BC_{-2(\lambda+k_1+k_2)}\boxtimes V_{(k_1-k_2,\underline{0}_{\lfloor n''/2\rfloor-1})}^{[n'']\vee} & (G_1=SO_0(2,n')\times SO(n''),\; n'=1), \\
V_{\underline{\lambda}_r+2\bk}^{(r)\vee} & (G_1=SO^*(2r)), \\
V_{\underline{\lambda/2}_r+\bk}^{(r)\vee} \boxtimes V_{\underline{\lambda/2}_r+\bk}^{(r)\vee} & (G_1=SO^*(2r)\times SO^*(2r)) \end{cases}
\end{align*}
(note that $SO_0(2,2)\simeq SL(2,\BR)\times SL(2,\BR)$, $SO_0(2,1)\simeq SL(2,\BR)$ hold up to covering). 
In this section, we only deal with the cases $\bk=(k+l,\underline{k}_{r-1})$ or $\bk=(\underline{k}_{r-1},k-l)$. Let $V_l:=V_{(l,\underline{0}_{r-1})}$, and let $V_l^\vee$ be its contragredient. 
Then we have 
\begin{align*}
\cH_{\varepsilon_1\lambda}(D_1,V_{(k+l,\underline{k}_{r-1})})&\simeq \cH_{\varepsilon_1(\lambda+2k)}(D_1,V_l), \\ 
\cH_{\varepsilon_1\lambda}(D_1,V_{(\underline{k}_{r-1},k-l)})&\simeq \cH_{\varepsilon_1(\lambda+2k)}(D_1,V_l^\vee). 
\end{align*}
To describe the intertwining operators, for $\mu\in\BC$, $l\in\BZ_{\ge 0}$ and for $f(x_2)\in\cP_{(l,\underline{0}_{r-1})}(\fp^+_2)$, 
we define a function $F_{l,+}^\uparrow\bigl(\begin{smallmatrix}-\\ \mu\end{smallmatrix};f;y_1,x_2\bigr)$ on $\fp^-_1\oplus\fp^+_2$ by 
\begin{align}
F_{l,+}^\uparrow\biggl(\begin{matrix}-\\ \mu\end{matrix};f;y_1,x_2\biggr)
:=\sum_{\bm\in\BZ_{++}^{\lfloor r/2\rfloor}}\sum_{\substack{\bl\in(\BZ_{\ge 0})^{\lceil r/2\rceil} \\ |\bl|=l}}
\frac{F_{\bm,\bl}^+[f](y_1,x_2)}{\bigl(\mu-\frac{\delta}{2}+(l,\underline{0}_{\lfloor r/2\rfloor-1})\bigr)_{\bm+\bl-(l,\underline{0}_{\lfloor r/2\rfloor-1}),d}}, \label{def_Fup_general+}
\end{align}
where $\delta$ is given in (\ref{str_const_general}), and define a function $F_{l,-}^\uparrow\bigl(\begin{smallmatrix}-\\ \mu\end{smallmatrix};f;y_1,x_2\bigr)$ on $\fp^-_1\oplus\fp^+_2$ by 
\begin{align}
&F_{l,-}^\uparrow\biggl(\begin{matrix}-\\ \mu\end{matrix};f;y_1,x_2\biggr) \notag\\
&:=\begin{cases}
\ds \sum_{\bm\in\BZ_{++}^{\lfloor r/2\rfloor}}\sum_{\substack{\bl\in(\BZ_{\ge 0})^{\lceil r/2\rceil} \\ |\bl|=l}}
\frac{F_{\bm,\bl}^-[f](y_1,x_2)}{\bigl(\mu-\frac{\delta}{2}-(\underline{0}_{\lfloor r/2\rfloor-1},l)\bigr)_{\bm-\bl+(\underline{0}_{\lfloor r/2\rfloor-1},l),d}}
 & (r\colon\text{even}), \\
\ds \sum_{\bm\in\BZ_{++}^{\lfloor r/2\rfloor}}\sum_{\substack{\bl\in(\BZ_{\ge 0})^{\lceil r/2\rceil} \\ |\bl|=l}}
\frac{F_{\bm,\bl}^-[f](y_1,x_2)}{\bigl(\mu-\frac{\delta}{2}\bigr)_{\bm-\bl',d}\bigl(\mu-l-\frac{\delta}{2}-\frac{d}{2}\bigl(\frac{r}{2}-1\bigr)\bigr)_{l-l_{\lceil r/2\rceil}}}
 & (r\colon\text{odd}), \end{cases} \label{def_Fup_general-}
\end{align}
where for odd $r$ let $\bl':=(l_1,\ldots,l_{\lfloor r/2\rfloor})$. 

\begin{theorem}
Let $\Re\lambda>\frac{2n}{r}-1$, $k,l\in\BZ_{\ge 0}$. 
\begin{enumerate}
\item (\cite[Theorems 5.7\,(3), 5.17\,(2), 5.19\,(1)]{N1}). Let $\rK_l(x_2)\in(\cP_{(l,\underline{0}_{r-1})}(\fp^+_2)\allowbreak\otimes \Hom(V_l,\allowbreak\BC))^{K_1}$. Then the linear map 
\begin{gather*}
\cF_{\lambda,k,l,+}^\uparrow\colon \cH_{\varepsilon_1(\lambda+2k)}(D_1,V_l)_{\widetilde{K}_1}\longrightarrow\cH_\lambda(D)_{\widetilde{K}}|_{(\fg_1,\widetilde{K}_1)}, \\
(\cF_{\lambda,k,l,+}^\uparrow f)(x):=\det_{\fn^+}(x_2)^kF_{l,+}^\uparrow\biggl(\begin{matrix}-\\ \lambda+2k\end{matrix};\rK_l;
\frac{1}{\varepsilon_1}\frac{\partial}{\partial x_1},x_2\biggr)f(x_1)
\end{gather*}
intertwines the $(\fg_1,\widetilde{K}_1)$-action, where $F_{l,+}^\uparrow$ is as in (\ref{def_Fup_general+}). 
\item (\cite[Theorem 8.5]{N2}). Let $\rK_l^\vee(y_2)\in(\cP_{(l,\underline{0}_{r-1})}(\fp^-_2)\otimes V_l)^{K_1}$. Then the linear map 
\begin{gather*}
\cF_{\lambda,k,l,+}^\downarrow\colon \cH_\lambda(D)|_{\widetilde{G}_1}\longrightarrow\cH_{\varepsilon_1(\lambda+2k)}(D_1,V_l), \\
(\cF_{\lambda,k,l,+}^\downarrow f)(x_1):=F_{l,+}^\downarrow\biggl(\begin{matrix}-k \\ -\lambda-2k+\frac{n}{r}\end{matrix};\rK_l^\vee;\frac{\partial}{\partial x}\biggr)f(x)\biggr|_{x_2=0}
\end{gather*}
intertwines the $\widetilde{G}_1$-action, where $F_{l,+}^\downarrow$ is as in (\ref{def_Fdown_general+}). 
\item (\cite[Theorem 8.5]{N2}). Suppose $c_{k,l,+}$, $c_{k,l,+}^\vee\in\BC$ satisfy, for $v\in V_l$, 
\begin{align*}
\bigl\Vert \det_{\fn^+_2}(x_2)^k\rK_l(x_2)v\bigr\Vert_{F,x_2}^2&=c_{k,l,+}|v|_{V_l}^2, \\
\bigl\Vert \det_{\fn^+_2}(x_2)^k(v,\rK_l^\vee(\overline{x_2}))\bigr\Vert_{F,x_2}^2&=c_{k,l,+}^\vee|v|_{V_l}^2. 
\end{align*}
Then the operator norms of $\cF_{\lambda,k,l,+}^\uparrow$, $\cF_{\lambda,k,l,+}^\downarrow$ are given by 
\[ c_{k,l,+}^{-1}\Vert \cF_{\lambda,k,l,+}^\uparrow\Vert_{\mathrm{op}}^2=c_{k,l,+}^\vee\Vert \cF_{\lambda,k,l,+}^\downarrow\Vert_{\mathrm{op}}^{-2}=C_{r,+}^{d,d_2}(\lambda,k,l), \]
where $C_{r,+}^{d,d_2}(\lambda,k,l)$ is as in (\ref{const_general+}). 
\end{enumerate}
\end{theorem}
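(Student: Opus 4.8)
The plan is to prove the three assertions about $\cF^\uparrow_{\lambda,k,l,+}$, $\cF^\downarrow_{\lambda,k,l,+}$ by reducing them to the scalar-valued facts already established in the preceding subsections, following the template used in \cite{N1,N2}. For (2): since $\cH_\lambda(D)|_{\widetilde{G}_1}$ decomposes multiplicity-freely by Theorem~\ref{thm_HKKS}, the space of symmetry breaking operators $\cH_\lambda(D)|_{\widetilde{G}_1}\to\cH_{\varepsilon_1(\lambda+2k)}(D_1,V_l)$ is at most one-dimensional, and by the F-method it is identified with $\Hom_{K_1}\bigl(\cP_{(k+l,\underline{k}_{r-1})}(\fp^+_2),\Sol_{\cP(\fp^+)}((\cB_\lambda)_1)\bigr)$. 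By Theorem~\ref{thm_general+}\,(1), for $\Re\lambda>\frac{2n}{r}-1$ the map $\det_{\fn^+_2}(x_2)^k f(x_2)\mapsto C_{r,+}^{d,d_2}(\lambda,k,l)F_{l,+}^\downarrow\bigl(\begin{smallmatrix} -k \\ -\lambda-2k+\frac{n}{r}\end{smallmatrix};f;z\bigr)$ equals $\bigl\langle \det_{\fn^+_2}(x_2)^kf(x_2),e^{(x|\overline{z})_{\fp^+}}\bigr\rangle_{\lambda,x}$, which lies in the solution space of $(\cB_\lambda)_1$ and is $K_1$-equivariant; contracting against $\rK_l^\vee(y_2)\in(\cP_{(l,\underline{0}_{r-1})}(\fp^-_2)\otimes V_l)^{K_1}$ and applying $F_{l,+}^\downarrow(\tfrac{\partial}{\partial x})$ with the substitution $x_2=0$ gives a $\widetilde{G}_1$-intertwining differential operator by the localness theorem of \cite{KP1}. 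The meromorphic continuation to all $\lambda\in\BC$ follows because $F_{l,+}^\downarrow\bigl(\begin{smallmatrix} -k \\ \mu\end{smallmatrix};f;z\bigr)$ is polynomial in $z$ with coefficients rational in $\mu$, as discussed after \eqref{def_Fdown_general+}.

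For (1), the argument is the dual one: the holographic operator $\cF^\uparrow_{\lambda,k,l,+}$ is characterized as the unique (up to scalar) element of $\Hom_{(\fg_1,\widetilde{K}_1)}\bigl(\cH_{\varepsilon_1(\lambda+2k)}(D_1,V_l)_{\widetilde{K}_1},\cH_\lambda(D)_{\widetilde{K}}|_{(\fg_1,\widetilde{K}_1)}\bigr)$, and the formula $(\cF^\uparrow f)(x)=\det_{\fn^+}(x_2)^kF_{l,+}^\uparrow\bigl(\begin{smallmatrix}-\\ \lambda+2k\end{smallmatrix};\rK_l;\tfrac{1}{\varepsilon_1}\tfrac{\partial}{\partial x_1},x_2\bigr)f(x_1)$ is already given in \cite[Theorems 5.7\,(3), 5.17\,(2), 5.19\,(1)]{N1}; here I would simply verify that $F_{l,+}^\uparrow$ in \eqref{def_Fup_general+}, with $\delta$ as in \eqref{str_const_general}, matches the normalization in loc.\ cit.\ by comparing Pochhammer shifts — this is a bookkeeping check using $\phi_\pm(\bm,\bl)$ and the identity $\frac{n}{r}=1+\frac{d}{2}(r-1)$ together with the values of $(r,d,d_2,\delta)$ in each of Cases 1, 2, 3. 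For (3), I would compute the operator norms by the standard Parseval–Plancherel computation from \cite[Proposition 2.9]{N3}: the norm of $\cF^\downarrow_{\lambda,k,l,+}$ restricted to the isotypic component $\cH_{\varepsilon_1(\lambda+2k)}(D_1,V_l)\hookrightarrow\cH_\lambda(D)$ is governed by the constant $C_{\fp^+,\fp^+_2}(\lambda,\bk)$ of that proposition, which for $\bk=(k+l,\underline{k}_{r-1})$ equals $C_{r,+}^{d,d_2}(\lambda,k,l)$ precisely by the specialization $z_1=0$ in Theorem~\ref{thm_general+}\,(1); the relation $c_{k,l,+}^{-1}\Vert\cF^\uparrow\Vert^2_{\mathrm{op}}=c_{k,l,+}^\vee\Vert\cF^\downarrow\Vert^{-2}_{\mathrm{op}}$ then follows from the compatibility of the Fischer and weighted Bergman inner products in Theorem~\ref{thm_FK} and the duality $\rK_l\leftrightarrow\rK_l^\vee$. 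This part is essentially \cite[Theorem 8.5]{N2} combined with the new constant formula \eqref{const_general+}.

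The main obstacle is the normalization matching in step~(2)'s reduction: one must confirm that the specific rational function of $\bm,\bl$ appearing as the coefficient of $F_{\bm,\bl}^-[f]$ in \eqref{def_Fdown_general+} is exactly the one forced by $\bigl\langle\det_{\fn^+_2}(x_2)^kf(x_2),e^{(x|\overline{z})_{\fp^+}}\bigr\rangle_{\lambda,x}$ via the Laplace–transform identity in the proof of Theorem~\ref{thm_general+}\,(1) — in particular that the case distinction between even and odd $r$ (which changes $\phi_+$ versus $\phi_-$ and introduces the extra $\bigl(\mu-\frac{d_2}{2}(r-1)-l\bigr)$-Pochhammer factor) is correctly reflected, and that the substitution $\mu=-\lambda-2k+\frac{n}{r}$ produces the denominators written in \eqref{const_general+}. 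This is the same kind of careful Pochhammer-algebra that appears in \cite[Section 6]{N2}, but it must be re-checked here because of the interplay with the $K_1$-isotypic structure of $\cP_{(l,\underline{0}_{r-1})}(\fp^+_2)$ for general $r$. Everything else (convergence on $\Omega$, the differential-operator shift formulas, the F-method and localness inputs) has already been supplied in the excerpt.
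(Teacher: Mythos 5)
Your proposal is correct and follows essentially the same route as the paper, which in fact offers no new proof here: all three items are restatements of \cite[Theorems 5.7, 5.17, 5.19]{N1} and \cite[Theorem 8.5]{N2}, and the ingredients you assemble (the general construction and operator-norm theorem of Section \ref{subsection_sym_subalg} together with Theorem \ref{thm_general+}\,(1) and its $z_1=0$ specialization identifying $C_{\fp^+,\fp^+_2}(\lambda,(k+l,\underline{k}_{r-1}))=C_{r,+}^{d,d_2}(\lambda,k,l)$) are exactly how those cited results are obtained. The only cosmetic slip is attributing the intertwining property of the constructed differential operator to the localness theorem of \cite{KP1}, whereas it comes from the F-method isomorphism, respectively from \cite[Theorem 3.10\,(1)]{N1}; this does not affect the argument.
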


\begin{theorem}
Let $\Re\lambda>\frac{2n}{r}-1$, $k,l\in\BZ_{\ge 0}$ with $k\ge l$. 
\begin{enumerate}
\item (\cite[Theorems 5.7\,(4), 5.19\,(1), 7.2]{N1}). Let $\rK_l^\vee(y_2)\in(\cP_{(l,\underline{0}_{r-1})}(\fp^-_2)\otimes \Hom(V_l^\vee,\BC))^{K_1}$. Then the linear map 
\begin{gather*}
\cF_{\lambda,k,l,-}^\uparrow\colon \cH_{\varepsilon_1(\lambda+2k)}(D_1,V_l^\vee)_{\widetilde{K}_1}\longrightarrow\cH_\lambda(D)_{\widetilde{K}}|_{(\fg_1,\widetilde{K}_1)}, \\
(\cF_{\lambda,k,l,-}^\uparrow f)(x):=\det_{\fn^+}(x_2)^kF_{l,-}^\uparrow\biggl(\begin{matrix}-\\ \lambda+2k\end{matrix};\rK_l^\vee;
\frac{1}{\varepsilon_1}\frac{\partial}{\partial x_1},x_2\biggr)f(x_1)
\end{gather*}
intertwines the $(\fg_1,\widetilde{K}_1)$-action, where $F_{l,-}^\uparrow$ is as in (\ref{def_Fup_general-}). 
\item Let $\rK_l(x_2)\in(\cP_{(l,\underline{0}_{r-1})}(\fp^+_2)\otimes V_l^\vee)^{K_1}$. Then the linear map 
\begin{gather*}
\cF_{\lambda,k,l,-}^\downarrow\colon \cH_\lambda(D)|_{\widetilde{G}_1}\longrightarrow\cH_{\varepsilon_1(\lambda+2k)}(D_1,V_l^\vee), \\
(\cF_{\lambda,k,l,-}^\downarrow f)(x_1):=F_{l,-}^\downarrow\biggl(\begin{matrix}-k \\ -\lambda-2k+\frac{n}{r}\end{matrix};\rK_l;\frac{\partial}{\partial x}\biggr)f(x)\biggr|_{x_2=0}
\end{gather*}
intertwines the $\widetilde{G}_1$-action, where $F_{l,-}^\downarrow$ is as in (\ref{def_Fdown_general-}). 
\item (\cite[Corollary 6.7]{N3}). Suppose $c_{k,l,-}$, $c_{k,l,-}^\vee\in\BR_{>0}$ satisfy, for $v\in V_l^\vee$, 
\begin{align*}
\bigl\Vert \det_{\fn^+_2}(x_2)^k\rK_l^\vee(x_2^\itinv)v\bigr\Vert_{F,x_2}^2&=c_{k,l,-}|v|_{V_l^\vee}^2, \\
\bigl\Vert \det_{\fn^+_2}(x_2)^k(v,\rK_l(\overline{x_2^\itinv}))\bigr\Vert_{F,x_2}^2&=c_{k,l,-}^\vee|v|_{V_l^\vee}^2. 
\end{align*}
Then the operator norms of $\cF_{\lambda,k,l,-}^\uparrow$, $\cF_{\lambda,k,l,-}^\downarrow$ are given by 
\[ c_{k,l,-}^{-1}\Vert \cF_{\lambda,k,l,-}^\uparrow\Vert_{\mathrm{op}}^2=c_{k,l,-}^\vee\Vert \cF_{\lambda,k,l,-}^\downarrow\Vert_{\mathrm{op}}^{-2}=C_{r,-}^{d,d_2}(\lambda,k,l), \]
where $C_{r,-}^{d,d_2}(\lambda,k,l)$ is as in (\ref{const_general-}). 
\end{enumerate}
\end{theorem}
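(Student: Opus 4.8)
The plan is to establish the three parts of the theorem for the minus case in parallel with the already-treated plus case, using the same structure as the proof of the corresponding plus-case theorem and of Theorem \ref{thm_general-}\,(1)--(3). For part (1), the intertwining property of $\cF_{\lambda,k,l,-}^\uparrow$ is cited from \cite{N1}, so nothing new is needed there beyond matching the normalization of $F_{l,-}^\uparrow$ in (\ref{def_Fup_general-}) with the one used in \cite{N1}; I would verify this by comparing the Pochhammer denominators, using $\delta$ from (\ref{str_const_general}) and the relation $d=2d_2$ (resp. the $r=2$ exception). For part (2), the proof parallels the plus case in \cite[Theorem 8.5]{N2}: I would show that
\[
\Bigl(\det_{\fn^+_2}(x_2)^kf(x_2^\itinv)\mapsto F_{l,-}^\downarrow\bigl(\begin{smallmatrix}-k\\ -\lambda-2k+\frac{n}{r}\end{smallmatrix};f;z\bigr)\Bigr)
\in\Hom_{K_1}\bigl(\cP_{(\underline{k}_{r-1},k-l)}(\fp^+_2),\Sol_{\cP(\fp^+)}((\cB_\lambda)_1)\bigr),
\]
which is exactly (\ref{formula_Fmethod_general-}), established in the proposition preceding Theorem \ref{thm_general-}\,(1); composed with the differential operator realization (\ref{SBO1})--(\ref{SBO2}) this yields the claim, and the $K_1$-equivariance needed is part\,(2) of that same proposition.

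For part (3), the norm identity $c_{k,l,-}^{-1}\Vert\cF_{\lambda,k,l,-}^\uparrow\Vert_{\mathrm{op}}^2=c_{k,l,-}^\vee\Vert\cF_{\lambda,k,l,-}^\downarrow\Vert_{\mathrm{op}}^{-2}=C_{r,-}^{d,d_2}(\lambda,k,l)$ is cited from \cite[Corollary 6.7]{N3}, but I would also want to give the self-contained derivation of $\Vert\cF_{\lambda,k,l,-}^\downarrow\Vert_{\mathrm{op}}^{-2}=C_{r,-}^{d,d_2}(\lambda,k,l)$ from the general formula $\Vert\cF_{\lambda,\bk}^\downarrow\Vert_{\mathrm{op}}^2=C_{\fp^+,\fp^+_2}(\lambda,\bk)^{-1}$ stated after (\ref{SBO2}), by identifying $C_{\fp^+,\fp^+_2}(\lambda,(\underline{k}_{r-1},k-l))$ with $C_{r,-}^{d,d_2}(\lambda,k,l)$. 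This identification is precisely the content of the computation opening the proof of Theorem \ref{thm_general-}\,(1) (the long product manipulation via \cite[Theorem 6.1]{N3}), so I would cross-reference rather than repeat it. The constant $c_{k,l,-}^\vee$ relating the abstract norm on $\cH_{\varepsilon_1(\lambda+2k)}(D_1,V_l^\vee)$ to the Fischer norm on $\fp^+$ enters through the normalization of $\rK_l$, $\rK_l^\vee$, and I would spell out how the factor $\det_{\fn^+_2}(x_2)^k$ and the inversion $x_2\mapsto x_2^\itinv$ in the definitions of $c_{k,l,-}$, $c_{k,l,-}^\vee$ arise from the shift $\bk=(\underline{k}_{r-1},k-l)$ via the isomorphism $\cP_{(\underline{k}_{r-1},k-l)}(\fp^+_2)\simeq\cP_{(l,\underline{0}_{r-1})}(\fp^-_2)\det_{\fn^-_2}(\cdot)^k$.

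The main obstacle I expect is bookkeeping rather than conceptual: keeping the two branches $r$ even and $r$ odd straight in $F_{l,-}^\uparrow$, $F_{l,-}^\downarrow$ and in $C_{r,-}^{d,d_2}$, since the Pochhammer products $(\,\cdot\,)_{\bm-\bl',d}(\,\cdot\,)_{l-l_{\lceil r/2\rceil}}$ for odd $r$ split differently from the even-$r$ product $(\,\cdot\,)_{\bm-\bl+(\underline{0}_{r/2-1},l),d}$, and the exceptional Case\,1 with $n'=1,2$ has $d\ne 2d_2$ or a product-of-two-$SL(2,\BR)$ structure that must be checked not to break the Pieri-rule multiplicity-one arguments used implicitly (they were used in establishing $\Sol_{\cP(\fp^+)}((\cB_\lambda)_1)$ being the one-dimensional $K_1$-isotypic component). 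Concretely I would: (i) record the three normalization lemmas identifying $F_{l,-}^\uparrow$, $F_{l,-}^\downarrow$ with the operators in \cite{N1, N2}; (ii) invoke the F-method proposition already proved to get the intertwining property of $\cF_{\lambda,k,l,-}^\downarrow$; (iii) invoke Theorem \ref{thm_general-}\,(1) and the trace/norm formula for SBOs to pin down the operator norm of $\cF_{\lambda,k,l,-}^\downarrow$, then use the holographic/SBO adjointness from \cite[Corollary 6.7]{N3} to transfer it to $\cF_{\lambda,k,l,-}^\uparrow$; (iv) treat the $r=2$ and odd-$r$ cases by the explicit formulas in Examples \ref{example_rank2_general} and the one following, which already exhibit the relevant ${}_2F_1$ normalizations.
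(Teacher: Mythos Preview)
Your plan is correct and matches the paper's approach: the paper does not write a separate proof for this theorem, since parts (1) and (3) are cited from \cite{N1} and \cite{N3}, while part (2) follows directly from (\ref{formula_Fmethod_general-}) (established in the proposition preceding Theorem \ref{thm_general-}\,(1)) together with the general SBO construction (\ref{SBO1})--(\ref{SBO2}), and the constant $C_{r,-}^{d,d_2}(\lambda,k,l)$ is exactly the one computed via \cite[Theorem 6.1]{N3} at the start of the proof of Theorem \ref{thm_general-}\,(1). Your bookkeeping concerns about the even/odd $r$ split and the $r=2$ exception are legitimate but already absorbed in that computation and in the definition (\ref{def_Fdown_general-}).
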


The family of differential operators $\cF_{\lambda,k,l,\pm}^\downarrow$ is meromorphically continued for all $\lambda\in\BC$, and gives rise to $\widetilde{G}_1$-intertwining operators 
\begin{align*}
&\cF_{\lambda,k,l,+}^\downarrow\colon \cO_\lambda(D)|_{\widetilde{G}_1}\longrightarrow\cO_{\varepsilon_1(\lambda+2k)}(D_1,V_l), \\
&\cF_{\lambda,k,l,-}^\downarrow\colon \cO_\lambda(D)|_{\widetilde{G}_1}\longrightarrow\cO_{\varepsilon_1(\lambda+2k)}(D_1,V_l^\vee) 
\end{align*}
for all $\lambda\in\BC$ except at its poles. Then by Theorems \ref{thm_general+}\,(3), \ref{thm_general-}\,(3), for special $\lambda$ the following holds. 

\begin{theorem}[{\cite[Theorem 8.9\,(1)]{N2}, \cite[Corollary 6.8\,(4)]{N3}}]
Let $k,l\in\BZ_{\ge 0}$. For $a=1,2,\ldots,k$, we have 
\begin{align*}
\cF_{\frac{n}{r}-a,k,l,+}^\downarrow&=\cF_{\frac{n}{r}+a,k-a,l,+}^\downarrow\circ\det_{\fn^-}\biggl(\frac{\partial}{\partial x}\biggr)^a\colon 
\cO_{\frac{n}{r}-a}(D)|_{\widetilde{G}_1}\longrightarrow \cO_{\varepsilon_1\left(\frac{n}{r}-a+2k\right)}(D_1,V_l). 
\end{align*}
Similarly, when $k\ge l$, for $a=1,2,\ldots,k-l$, we have 
\begin{align*}
\cF_{\frac{n}{r}-a,k,l,-}^\downarrow&=\cF_{\frac{n}{r}+a,k-a,l,-}^\downarrow\circ\det_{\fn^-}\biggl(\frac{\partial}{\partial x}\biggr)^a\colon 
\cO_{\frac{n}{r}-a}(D)|_{\widetilde{G}_1}\longrightarrow \cO_{\varepsilon_1\left(\frac{n}{r}-a+2k\right)}(D_1,V_l^\vee). 
\end{align*}
\end{theorem}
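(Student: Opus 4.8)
The plan is to derive the factorization of symmetry breaking operators at $\lambda = \frac{n}{r}-a$ directly from the factorization of the symbol functions $F_{l,\pm}^\downarrow$ established in Theorems \ref{thm_general+}\,(3) and \ref{thm_general-}\,(3), together with the compatibility of the differential-operator construction (\ref{SBO2}) with the meromorphic continuation. First I would recall that, by the preceding two theorems, the operators $\cF_{\lambda,k,l,\pm}^\downarrow$ extend meromorphically in $\lambda$ as differential operators $\cO_\lambda(D)|_{\widetilde{G}_1}\to\cO_{\varepsilon_1(\lambda+2k)}(D_1,V_l)$ (resp.\ $V_l^\vee$), with symbols $F_{l,\pm}^\downarrow\bigl(\begin{smallmatrix}-k\\ -\lambda-2k+\frac{n}{r}\end{smallmatrix};\rK_l^\vee;\frac{\partial}{\partial x}\bigr)$. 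Setting $\mu = -\lambda-2k+\frac{n}{r}$, the specialization $\lambda = \frac{n}{r}-a$ corresponds exactly to $\mu = a-2k$, which is the value treated in Theorem \ref{thm_general+}\,(3) (for the $+$ case, valid when $a\le k$) and Theorem \ref{thm_general-}\,(3) (for the $-$ case, valid when $a\le k-l$).

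The key computational step is to translate the symbol identity
\[ F_{l,+}^\downarrow\biggl(\begin{matrix}-k \\ a-2k\end{matrix};f;z\biggr)=\det_{\fn^+}(z)^a F_{l,+}^\downarrow\biggl(\begin{matrix}a-k \\ a-2k\end{matrix};f;z\biggr) \]
(and its $-$ analogue) into a composition of differential operators. On the left, $F_{l,+}^\downarrow\bigl(\begin{smallmatrix}-k\\ a-2k\end{smallmatrix};f;\frac{\partial}{\partial x}\bigr)$ is the symbol of $\cF_{\frac{n}{r}-a,k,l,+}^\downarrow$. On the right, $\det_{\fn^+}(z)^a$ as a constant-coefficient-type symbol corresponds to the operator $\det_{\fn^-}\bigl(\frac{\partial}{\partial x}\bigr)^a$ acting first, and $F_{l,+}^\downarrow\bigl(\begin{smallmatrix}a-k\\ a-2k\end{smallmatrix};f;\frac{\partial}{\partial x}\bigr)$ is the symbol of $\cF_{\frac{n}{r}+a,k-a,l,+}^\downarrow$: indeed $\mu = a-2k$ rewrites as $-(\frac{n}{r}+a)-2(k-a)+\frac{n}{r}$, and $\nu = a-k = -(k-a)$, matching the parameters $(\lambda,k)\mapsto(\frac{n}{r}+a,k-a)$ in the definition (\ref{def_Fdown_general+}). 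Since symbols compose by the natural product rule for these differential operators on $\cO_\lambda(D)$ (the polynomial $\det_{\fn^+}(z)^a$ multiplying a symbol realizes precisely pre-composition with $\det_{\fn^-}(\partial/\partial x)^a$, by the shift relation used repeatedly above, e.g.\ in Proposition \ref{prop_shift_general} and in the proof of Theorem \ref{thm_general-}\,(3)), the symbol identity yields the operator identity after applying both sides to $e^{(x|\overline z)_{\fp^+}}$ and using the reproducing property. Alternatively, one can argue entirely at the level of the defining formula $(\cF f)(x_1) = F^\downarrow(\partial/\partial x)f(x)|_{x_2=0}$, substituting the symbol factorization and recognizing $\det_{\fn^-}(\partial/\partial x)^a$ as the leading factor; this is the route taken in \cite[Theorem 8.9\,(1)]{N2} and \cite[Corollary 6.8\,(4)]{N3}, which handle the scalar subcases, so the present statement is their common generalization.

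I would then check the two ingredients that make this rigorous: first, that the meromorphic family $\lambda\mapsto\cF_{\lambda,k,l,\pm}^\downarrow$ is regular at $\lambda=\frac{n}{r}-a$ for the relevant range of $a$ — this follows because the symbol $F_{l,\pm}^\downarrow$ is a polynomial in $z$ once $\nu=-k\in-\BZ_{\ge0}$ and all poles with $\mu\le\frac{d_2}{2}-k$ are removable (as noted after (\ref{def_Fdown_general-})), and $\mu=a-2k\le \frac{d_2}{2}-k$ holds in the stated range; second, that the target space on the right is correctly identified, namely $\cO_{\varepsilon_1((\frac{n}{r}-a)+2k)}(D_1,V_l) = \cO_{\varepsilon_1((\frac{n}{r}+a)+2(k-a))}(D_1,V_l)$, which is a tautology. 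The main obstacle I anticipate is purely bookkeeping: ensuring that the passage from the symbol-level factorization to the operator-level factorization does not introduce a spurious scalar, i.e.\ that the normalizations of $\rK_l,\rK_l^\vee$ used on both sides are consistent and that the product rule $\bigl(\det_{\fn^+}(z)^a\, G(z)\bigr)(\partial/\partial x) = G(\partial/\partial x)\circ\det_{\fn^-}(\partial/\partial x)^a$ holds on the nose when acting on holomorphic functions on $D$ (as opposed to up to lower-order terms). This is handled by testing on the generating function $e^{(x|\overline z)_{\fp^+}}$, where $\det_{\fn^-}(\partial/\partial x)^a e^{(x|\overline z)_{\fp^+}} = \det_{\fn^+}(\overline z)^a e^{(x|\overline z)_{\fp^+}}$ makes the product rule exact; everything else is a routine consequence of the theorems already proved.
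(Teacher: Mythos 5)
Your proposal is correct and takes essentially the same route as the paper, which deduces this theorem directly from the symbol factorizations in Theorems \ref{thm_general+}\,(3) and \ref{thm_general-}\,(3): the substitution $\mu=a-2k$, $\nu=-(k-a)$ identifies the right-hand symbol with that of $\cF_{\frac{n}{r}+a,k-a,l,\pm}^\downarrow$, and the factor $\det_{\fn^+}(z)^a$ realizes pre-composition with $\det_{\fn^-}\bigl(\frac{\partial}{\partial x}\bigr)^a$ since these constant-coefficient operators compose multiplicatively on symbols. Your checks on regularity at $\lambda=\frac{n}{r}-a$ and on the coincidence of target spaces match the paper's implicit bookkeeping, so there is no gap.
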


For $\lambda>\frac{n}{r}-1$, we have 
\begin{align*}
\cH_{\varepsilon_1(\lambda+2k)}(D_1,V_l)_{\widetilde{K}_1}&\simeq d\tau_\lambda(\mathcal{U}(\fg_1))\cP_{(k+l,\underline{k}_{r-1})}(\fp^+_2)\subset \cH_\lambda(D)_{\widetilde{K}}, \\
\cH_{\varepsilon_1(\lambda+2k)}(D_1,V_l^\vee)_{\widetilde{K}_1}&\simeq d\tau_\lambda(\mathcal{U}(\fg_1))\cP_{(\underline{k}_{r-1},k-l)}(\fp^+_2)\subset \cH_\lambda(D)_{\widetilde{K}}, 
\end{align*}
but this does not hold for smaller $\lambda$ in general. For such $\lambda$, by Theorems \ref{thm_general+}\,(2), \ref{thm_general-}\,(2), the following holds. 
For $i=1,2,\ldots,r$, $\lambda\in\frac{d}{2}(i-1)-\BZ_{\ge 0}$, let $M_i^\fg(\lambda)$ be the $(\fg,\widetilde{K})$-submodule given in (\ref{submodule}). 

\begin{theorem}
\begin{enumerate}
\item Let $k,l\in\BZ_{\ge 0}$. For $i=1,2,\ldots,r$, 
\[ d\tau_\lambda(\mathcal{U}(\fg_1))\cP_{(k+l,\underline{k}_{r-1})}(\fp^+_2)\subset M_i^\fg(\lambda) \]
holds if and only if 
\[ \lambda\in\begin{cases} \frac{d}{2}(i-1)-(2k+l)-\BZ_{\ge 0} & (i=1), \\
\frac{d}{2}(i-1)-2k-\BZ_{\ge 0} & (2\le i\le \lfloor r/2\rfloor), \\
\frac{d}{2}(i-1)-\min\{2k,k+l\}-\BZ_{\ge 0} & (r\colon\text{odd},\,i=\lceil r/2\rceil), \\
\frac{d}{2}(i-1)-k-\BZ_{\ge 0} & (\lceil r/2\rceil+1\le i\le r). \end{cases} \]
\item Let $k,l\in\BZ_{\ge 0}$ with $k\ge l$. For $i=1,2,\ldots,r$, 
\[ d\tau_\lambda(\mathcal{U}(\fg_1))\cP_{(\underline{k}_{r-1},k-l)}(\fp^+_2)\subset M_i^\fg(\lambda) \]
holds if and only if 
\[ \lambda\in\begin{cases} \frac{d}{2}(i-1)-2k-\BZ_{\ge 0} & (1\le i\le \lceil r/2\rceil-1), \\
\frac{d}{2}(i-1)-(2k-l)-\BZ_{\ge 0} & (r\colon\text{even},\,i=r/2), \\
\frac{d}{2}(i-1)-k-\BZ_{\ge 0} & (\lfloor r/2\rfloor+1\le i\le r-1), \\
\frac{d}{2}(i-1)-(k-l)-\BZ_{\ge 0} & (i=r). \end{cases} \]
\end{enumerate}
\end{theorem}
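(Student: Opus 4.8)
The statement is a pair of "if and only if" characterizations of when the cyclic $(\fg_1,\widetilde K)$-module generated by $\cP_{(k+l,\underline k_{r-1})}(\fp^+_2)$ (resp. $\cP_{(\underline k_{r-1},k-l)}(\fp^+_2)$) sits inside the submodule $M_i^\fg(\lambda)$ of $\cO_\lambda(D)_{\widetilde K}$. The plan is to reduce both directions to Theorems \ref{thm_general+}\,(2) and \ref{thm_general-}\,(2), which supply exactly the holomorphy and the non-vanishing information that correspond to the two implications. First I would recall, from Section \ref{subsection_HDS}, the criterion: for $f\in\cP(\fp^+)$ and $\bk\in\BZ_{++}^r$, the element $(\lambda)_{\bk,d}\bigl\langle f(x_2),e^{(x|\overline z)_{\fp^+}}\bigr\rangle_{\lambda,x}$ being holomorphic at a given $\lambda$ forces $f$ into the appropriate $M_j(\lambda)$, while its non-vanishing obstructs membership in smaller modules. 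More precisely, for a cyclic module $d\tau_\lambda(\cU(\fg_1))\cP_{\bk}(\fp^+_2)$, containment in $M_i^\fg(\lambda)$ is equivalent to: every highest weight constituent $\cP_\bm(\fp^+)$ appearing in the image of the symmetry breaking map $\cO_\lambda(D)\to\cO_{\varepsilon_1\lambda}(D_1,V_{\bk})$ satisfies $m_i\le \frac d2(i-1)-\lambda$, which in turn is governed by the pole/zero structure of the polynomial $\bigl\langle \det_{\fn^+_2}(x_2)^kf(x_2^{(\itinv)}),e^{(x|\overline z)_{\fp^+}}\bigr\rangle_{\lambda,x}$.

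Concretely, for part (1), fix $\bk=(k+l,\underline k_{r-1})$ so $f(x_2)=\det_{\fn^+_2}(x_2)^k f_0(x_2)$ with $f_0\in\cP_{(l,\underline 0_{r-1})}(\fp^+_2)$. Theorem \ref{thm_general+}\,(1) expresses the inner product as $C_{r,+}^{d,d_2}(\lambda,k,l)$ times $F_{l,+}^\downarrow\bigl(\begin{smallmatrix}-k\\ -\lambda-2k+n/r\end{smallmatrix};f_0;z\bigr)$, and Theorem \ref{thm_general+}\,(2) tells me precisely which product of Pochhammer symbols $(\lambda)_{(2k+l,\underline{2k}_{r/2-1},\underline k_{r/2}),d}$ (even $r$) or $(\lambda)_{(2k+l,\underline{2k}_{\lfloor r/2\rfloor-1},\min\{2k,k+l\},\underline k_{\lfloor r/2\rfloor}),d}$ (odd $r$) clears all poles and leaves a nonzero polynomial. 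Reading off the factors of this Pochhammer product: the $i$-th "block" contributes a pole exactly when $\lambda$ lies in $\frac d2(i-1)-(\text{exponent}_i)-\BZ_{\ge0}$, where $\text{exponent}_i$ is $2k+l$ for $i=1$, $2k$ for $2\le i\le\lfloor r/2\rfloor$, $\min\{2k,k+l\}$ for the middle block when $r$ is odd, and $k$ for the remaining blocks. The "if" direction then follows because membership in $M_i^\fg(\lambda)$ is implied by holomorphy of the cleared product at that $\lambda$ (this is already in \cite[Corollary 6.8\,(1)]{N3} for the relevant translated module, and one transports it along the isomorphism $\cP_{(k+l,\underline k_{r-1})}(\fp^+_2)\simeq\cP_\bk$). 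The "only if" direction is the new content: it uses the non-zeroness assertion of Theorem \ref{thm_general+}\,(2), which guarantees that for $\lambda$ outside the listed set, the inner product polynomial survives nontrivially, so the generated module escapes $M_i^\fg(\lambda)$. Part (2) is entirely parallel, replacing Theorem \ref{thm_general+} by Theorem \ref{thm_general-}, $f(x_2)$ by $\det_{\fn^+_2}(x_2)^k f_0(x_2^\itinv)$, and reading the exponents off $(\lambda)_{(\underline{2k}_{r/2-1},2k-l,\underline k_{r/2-1},k-l),d}$ (even $r$) or $(\lambda)_{(\underline{2k}_{\lfloor r/2\rfloor},\underline k_{\lfloor r/2\rfloor},k-l),d}$ (odd $r$): exponent $2k$ for $1\le i\le\lceil r/2\rceil-1$, then $2k-l$ in the middle block when $r$ is even, then $k$ for the intermediate blocks, and $k-l$ for the last block $i=r$.

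I would organize the write-up in two steps per part. \emph{Step A (the implication to $M_i^\fg$):} translate the module isomorphisms $\cH_{\varepsilon_1\lambda}(D_1,V_{(k+l,\underline k_{r-1})})\simeq\cH_{\varepsilon_1(\lambda+2k)}(D_1,V_l)$ and $\cH_{\varepsilon_1\lambda}(D_1,V_{(\underline k_{r-1},k-l)})\simeq\cH_{\varepsilon_1(\lambda+2k)}(D_1,V_l^\vee)$, then invoke the holomorphy half of Theorems \ref{thm_general+}\,(2)/\ref{thm_general-}\,(2) together with the general criterion (\ref{submodule}): if $(\lambda)_{\bk,d}\langle f(x_2),e^{(x|\overline z)}\rangle$ is holomorphic and $f\ne0$, then comparing with the Fischer expansion forces the highest weight components into $M_i(\lambda)$ precisely when $\lambda$ lies in the displayed sets. \emph{Step B (the converse):} suppose $d\tau_\lambda(\cU(\fg_1))\cP_\bk(\fp^+_2)\subset M_i^\fg(\lambda)$; then in particular $\cP_\bk(\fp^+_2)$ itself, viewed inside $\cO_\lambda(D)_{\widetilde K}$, has vanishing $\bm$-components for all $\bm$ with $m_i>\frac d2(i-1)-\lambda$, which via Corollary \ref{cor_FK} and Theorem \ref{thm_general+}\,(1)/\ref{thm_general-}\,(1) means the SBO inner product polynomial must acquire a genuine pole at such $\lambda$; but the non-zeroness statement says the only poles of the uncleared inner product are exactly at the listed $\lambda$, contradiction unless $\lambda$ is in the claimed set. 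The main obstacle is bookkeeping: matching the $i$-indexed Pochhammer blocks in $C_{r,\pm}^{d,d_2}(\lambda,k,l)$ and in the clearing factors of Theorems \ref{thm_general+}\,(2)/\ref{thm_general-}\,(2) against the submodule thresholds $\frac d2(i-1)-\lambda$ of (\ref{submodule}), in both the even and odd $r$ cases and paying attention to the distinct behavior of the middle block — essentially a careful index chase rather than a conceptual difficulty, since all the analytic input is already packaged in the cited theorems.
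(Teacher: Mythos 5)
Your route is the same as the paper's, which disposes of this theorem in two sentences: the ``if'' half is the holomorphy half of Theorems \ref{thm_general+}\,(2), \ref{thm_general-}\,(2) fed into the criterion stated after Corollary \ref{cor_FK} (together with the fact that $M_i^\fg(\lambda)$ is a $(\fg,\widetilde{K})$-submodule, hence $\fg_1$-stable, so containing the generating $K_1$-type $\cP_\bk(\fp^+_2)$ is the same as containing $d\tau_\lambda(\mathcal{U}(\fg_1))\cP_\bk(\fp^+_2)$), and the ``only if'' half is exactly the non-zeroness half; your matching of the clearing Pochhammer exponents with the thresholds $\frac{d}{2}(i-1)-\lambda$ in the four regimes of $i$, for both parities of $r$, is precisely the bookkeeping the paper leaves implicit, and it is correct.

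The one step that would not survive being written out is the mechanism you give in Step B. From ``all components with $m_i>\frac{d}{2}(i-1)-\lambda$ vanish'' you conclude that the inner product ``must acquire a genuine pole at such $\lambda$''; the implication goes the other way (vanishing of those components removes the index-$i$ pole), and the non-zeroness theorems do not say that ``the only poles of the uncleared inner product are exactly at the listed $\lambda$''. A pole-location argument is moreover genuinely insufficient here, because the lattices $\frac{d}{2}(j-1)-\BZ_{\ge 0}$ for different $j$ overlap in the relevant cases (e.g.\ $d\in\{2,4\}$ when $r\ge 3$), so a genuine pole at a given $\lambda_0$ could a priori be caused by an index $j\ne i$ and would say nothing about $M_i$. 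The deduction that does work, and is what ``only if follows from the non-zeroness'' means, is pointwise: write $g=\det_{\fn^+_2}(x_2)^kf(x_2)$ (resp.\ $\det_{\fn^+_2}(x_2)^kf(x_2^\itinv)$) as $g=\sum_\bm g_\bm$ with $g_\bm\in\cP_\bm(\fp^+)$; by Corollary \ref{cor_FK} the cleared quantity equals $\sum_\bm\bigl(\prod_j(\lambda-\frac{d}{2}(j-1)+m_j)_{k'_j-m_j}\bigr)g_\bm(z)$, where $\bk'$ is the clearing exponent vector and the sum runs only over $\bm$ with $m_j\le k'_j$ by the holomorphy half. If $g$ lay in $M_i^\fg(\lambda_0)$ with $\lambda_0=\frac{d}{2}(i-1)-c$ and $0\le c<k'_i$, every surviving $\bm$ would have $m_i\le c$, so in every term the $i$-th factor $(m_i-c)_{k'_i-m_i}$ vanishes and the cleared polynomial is identically zero at $\lambda=\lambda_0$, contradicting the non-zeroness in Theorems \ref{thm_general+}\,(2), \ref{thm_general-}\,(2). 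Applying this to any nonzero $g$ in the generating $K_1$-type gives the ``only if''; with this replacement your plan coincides with the paper's proof.
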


Here, the ``if'' direction follows from the holomorphy in Theorems \ref{thm_general+}\,(2), \ref{thm_general-}\,(2), and these are already given in \cite[Section 7]{N2}, \cite[Corollary 6.8\,(1)]{N3}. 
On the other hand, the ``only if'' direction follows from the non-vanishing in Theorems \ref{thm_general+}\,(2), \ref{thm_general-}\,(2), and these are new except for $r$ even or $kl=0$ for $+$ case.

%








\end{document}